\newtheorem{theorem}{Theorem}[section]
\newtheorem{lm}[theorem]{Lemma}
\newtheorem{tr}[theorem]{Theorem}
\newtheorem{cor}[theorem]{Corollary}
\newtheorem{df}[theorem]{Definition}
\newtheorem{rem}[theorem]{Remark}
\newtheorem{pr}[theorem]{Proposition}
\newcommand{\tc}[1]{\textcolor{red}{#1}}
\newcommand{\cal}{\mathcal}
\begin{document}
\title{Pseudocompact algebras and highest weight categories}
\author{Frantisek Marko}
\address{Penn State Hazleton, 76 University Drive, Hazleton PA 18202, USA}
\email{fxm13@psu.edu}
\author{Alexandr N. Zubkov}
\address{Omsk State Pedagogical University, Chair of Geometry, 644099 Omsk-99, Tuhachevskogo Embankment 14, Russia}
\email{zubkov@iitam.omsk.net.ru}
\begin{abstract} We develop a new approach to highest weight categories $\cal{C}$ with good (and cogood) posets of weights via pseudocompact algebras
by introducing ascending (and descending) quasi-hereditary pseudocompact algebras. For $\cal{C}$ admitting a Chevalley duality, we define and investigate tilting modules and Ringel duals of the corresponding pseudocompact algebras. Finally, we illustrate all these concepts on an explicit example of the general linear supergroup $GL(1|1)$.
\end{abstract}
\maketitle

\section*{Introduction}
The concept of the tilting module appeared in the early 1980's in the area of algebraic groups in \cite{don0}
and in finite-dimensional associative algebras in \cite{bb}, and it quickly became one of the most important concepts in the representation theory of algebras.

The highest weight categories were introduced in \cite{cps}. They established themselves as one of the cornerstone concepts that play significant roles in the areas of the representation theory of algebraic groups, Lie algebras and superalgebras, and associative algebras.
In many cases, like blocks of the category $\mathcal O$ and Schur algebras, every finitely-generated ideal $\Gamma$ of the partially ordered set of weights $\Lambda$ is finite. In these cases the full subcategory of objects belonging to $\Gamma$ is equivalent to a category of modules over a finite-dimensional quasi-hereditary algebra. Consequently, the concept of quasi-hereditary algebras was investigated intensively within the realm of Artin algebras. 

The concepts of tilting modules and quasi-hereditary algebras were synthesized in the concept of the Ringel dual that provides an important tool for relating quasi-hereditary algebras using endomorphisms of tilting modules.   

It was proved in \cite{z} that the category $GL(m|n)-smod$ of left rational supermodules over a general linear supergroup $GL(m|n)$ is the highest weight category. An analogous statement is valid for the category of right rational supermodules. However, the corresponding poset of weights $\Lambda$ does not satisfy the above condition that every finitely-generated ideal $\Gamma$ of the partially ordered set of weights $\Lambda$ is finite. Therefore, the established language and machinery of 
quasi-hereditary algebras and tilting modules are not applicable to this motivating example.

In this paper we suggest how to deal with some cases when the ideal $\Gamma$ of $\Lambda$ is infinite. 
Since a highest weight category is an abelian category of finite type, it can be regarded as a right comodule category over a coalgebra. Equivalently, this can be viewed as a left discrete module category over a pseudocompact algebra.  
If the corresponding set of weights $\Lambda$ (or a finitely-generated ideal $\Gamma$ of $\Lambda$) is not finite, then the corresponding pseudocompact algebra is necessary infinitely dimensional. 

In Section 1, we start by recalling definitions and basic results regarding highest weight categories, good filtrations, quasi-hereditary algebras, pseudocompact algebras and categories of comodules.

In Section 2, we first derive some results for pseudocompact algebras; then we restrict our attention to the case when $\Lambda$ satisfies mild assumptions, namely we assume that $\Lambda$ is good or cogood. (See Definitions \ref{d2} and \ref{d7}, respectively.) 
Corresponding to highest weight categories with finitely-generated good (or cogood) $\Lambda$, we then develop a theory of ascending (descending) quasi-hereditary pseudocompact algebras.

In Section 3 we generalize the concept of a tilting module from the finite-dimensional case.
In this section we follow closely the Appendix in \cite{don1}.
To define tilting objects in a highest weight category ${\cal C}$, we need to define standard objects first.
But for that we need to have enough projective objects. Such a requirement is not always satisfied for ${\cal C}$. (See Remark \ref{remark32}.)
Nevertheless, it is possible that a full subcategory ${\cal C}[\Gamma]_f$ of ${\cal C}$ consisting of finite objects belonging to a finitely-generated ideal $\Gamma$ of the poset of weight $\Lambda$ of ${\cal C}$ has enough projectives.
For $\lambda\in \Gamma$, let $L(\lambda)$ be a simple object of highest weight $\lambda$ and $P_{\Gamma}(\lambda)$ be a projective cover of $L(\lambda)$ (in ${\cal C}[\Gamma]_f$).
Then we can define $\Delta(\lambda)$ as a largest quotient of $P_{\Gamma}(\lambda)$ whose composition factors have weights $\mu\leq\lambda$. 
Formally, this definition has a weak point since it depends on the choice of $\Gamma$.
To resolve all of these problems we assume that ${\cal C}$ admits a Chevalley duality. This guarantees the existence of projectives and standard objects in any ${\cal C}[\Gamma]$. Additionally, the definition of standard objects does not depend on the choice of $\Gamma$. 

Afterwards, we work together with left costandard and standard objects and homological properties of objects having increasing or decreasing costandard or standard filtrations. We define a tilting object as an object that has an increasing standard filtration and a decreasing costandard filtration (possibly both of them infinite). Let us note that, when doing modifications of the Appendix in \cite{don1}, we need to make substantial changes - say we use a different definition of a defect set, or the proof of the existence of tilting objects uses induction on more subtle parameters, and so on.

In Section 4, we consider the relationship between the left (discrete) standard modules and the right (pseudocompact) standard modules under the Chevalley duality. (In the case of the Chevalley duality, the right pseudocompact modules are also discrete modules because they are finite-dimensional.) We also characterize the Chevalley duality in terms of properties of the corresponding pseudocompact algebra. 

In Section 5, we define the Ringel dual $R$ of an ascending pseudocompact quasi-hereditary algebra $A$ with poset $\Gamma$ and analyze the properties of the corresponding Ringel functor and algebra $R$. In particular, $R$ is a descending pseudocompact quasi-hereditary algebra with respect to the $\Gamma$ considered with the opposite order. We believe that the Ringel functor is an equivalence of certain subcategories of costandardly and standardly filtered modules. Since the Ringel functor preserves extensions (see Theorem \ref{t52}), this is true for modules of finite length. In order to prove this in general, a symmetric theory for descending algebras is to be developed in the next article.

Finally, in Section 6, we consider an example of the general linear group $GL(1|1)$ in depth. First, we describe simple, costandard and injective $GL(1|1)$-supermodules completely; then we define the pseudocompact Schur superalgebra $S_r$ and describe its structure; and then we describe the Ringel dual of the related object $\Hat{S}_r$. We conclude the paper with topics for further investigation.
 
\section{Preliminaries}
We start by introducing definitions, notations and general assumptions with which we shall be working throughout the whole paper.

Let $K$ be an algebraically closed field and $\cal C$ be a $K$-abelian category. 
Suppose that $\cal C$ contains enough
injective modules and it is {\em locally artinian}, that is, $\cal C$
admits arbitrary direct unions of subobjects and any object is a
union of its subobjects of finite length (briefly, finite
subobjects). In particular, a composition factor $L$ of an object
$M$ is, by definition, a composition factor of a finite subobject of $M$.
The multiplicity of $L$ in $M$, denoted by $[M : L]$, is defined to
be the supremum of multiplicities $[S : L]$ over all finite
subobjects $S$ of $M$. Additionally, suppose that $\cal C$
satisfies the Grothendieck condition (condition AB5 in the terminology
of \cite{bd}). Then, by the final remark in Chapter 6, \S 3 of [6], any object in $\cal C$ has an injective envelope. 
Moreover, if $\phi: M\to N$ is an epimorphism and $K$ is a finite
subobject of $N$, then there is a finite subobject $K'$ of $M$ such that
$\phi |_{K'} : K'\to K$ is also an epimorphism. In particular, $\cal
C$ is a Grothendieck category with generators consisting of finite
subobjects.

For any poset $(\Gamma, \leq)$ and $\gamma\in\Gamma$, denote by $(\gamma]$ a (possibly
infinite) closed interval $\{\mu |\mu\leq\gamma\}$ and denote by $(\gamma)$ an open interval
$\{\mu |\mu <\gamma\}$. 

The category $\cal C$ is said to be a {\em highest weight category}
if there is an interval-finite poset $(\Lambda, \leq)$ (a set of highest
weights of $\cal C$) such that all non-isomorphic simple objects
are indexed by elements from $\Lambda$, say $\{L(\lambda)|\lambda\in\Lambda\}$, 
if there is a collection of {\em
costandard} objects $\{\nabla(\lambda)|\lambda\in\Lambda\}$ such that 
$L(\lambda)\subseteq\nabla(\lambda)$, and if
$[\nabla(\lambda)/L(\lambda) : L(\mu)]\neq 0$ implies $\mu
<\lambda$. Additionally, it is required that 
for any $\lambda, \mu\in\Lambda$, the dimensions 
$\dim_K Hom_{\cal C}(\nabla(\lambda), \nabla(\mu))$ and the multiplicities
$[\nabla(\lambda) : L(\mu)]$ are finite, and an injective envelope $I(\lambda)$ of
any $L(\lambda)$ has a (finite or infinite)\ {\em good (or costandard) filtration}
\[0\subseteq I_1\subseteq I_2\subseteq\ldots.\]
A filtration of $I$ is called {\em good} if 
$\bigcup_{n\geq 1}I_n=I(\lambda), I_1=\nabla(\lambda)$,
the factor $I_n/I_{n-1}$ is isomorphic to 
$\nabla(\mu_n)$ for a particular $\mu_n>\lambda$, 
and $\mu\in\Lambda$ equals $\mu_n$ for only finitely many indices $n$.
It follows that, for any objects of finite length $M, N\in \cal C$, the spaces
$Hom_{\cal C}(M, N)$ and $Ext^1_{\cal C}(M, N)$ are finite
dimensional. (See Lemma 3.2 of \cite{cps}.) Therefore $\cal C$ is of
finite type. (See \cite{tak}.)

Related to the concept of a highest weight category is a {\em quasi-hereditary} algebra. (See \cite{cps,dr,don1}.) 
Let $A$ be a $K$-algebra such that there is chain 
of right (or left) ideals 
\[0=H_0\subsetneq H_1\subsetneq\ldots\subsetneq H_n=A\]
of $A$ such that 
\begin{enumerate}
\item $H_n/H_{n-1}$ is a projective right (or left) $A/H_{n-1}$-module, 
\item $Hom_{A}(H_n/H_{n-1}, A/H_n)=0$, and
\item $Hom_{A}(H_n/H_{n-1}, \ rad (H_n/H_{n-1}))=0$. 
\end{enumerate}
\noindent Then $A$ is said to be a {\em quasi-hereditary} algebra and the above chain of ideals is called a {\em defining system of heredity ideals}.

For concepts, terminology, and basic results regarding pseudocompact algebras 
and discrete modules, we refer to \cite{br, gab, sim, v, vv}. A
$K$-algebra $R$ is called {\em pseudocompact} if $R$ is a complete
Hausdorff topological algebra with a basis $\{I\}$ of
neighborhoods at zero consisting of two-sided ideals of finite
codimension. In particular, $R$ is homeomorphic to the inverse
limit of finite-dimensional algebras $R/I$. A right $R$-module $M$
is said to be pseudocompact if $M$ is a complete Hausdorff
topological module with a basis $\{N\}$ of neighborhoods at zero
consisting of submodules of finite codimension. Again, $M$ is
homeomorphic to the inverse limit of finite-dimensional modules
$M/N$. The category $PC-R$ of pseudocompact right $R$-modules with
continuous homomorphisms is an abelian category with exact inverse
limits and with enough projective modules.

The dual category $R-Dis$ consists of all discrete left
$R$-modules. The duality functors $R-Dis\to PC-R$ and $PC-R\to
R-Dis$ are defined by $S\mapsto S^*=Hom_K(S, K)$ and $M\mapsto M^{\star}$,
respectively.
Here, $M^{\star}=hom_K(M, K)$ consists of every $\phi\in M^*$
such that ker $\phi$ contains an open submodule $N$ of $M$;
and a linear topology of $S^*$ is defined by submodules
$N^{\perp}=\{\phi\in S^* |\phi(N)=0\}$ for all finitely-generated submodules $N$ of $S$. 
The category $R-Dis$ is a locally
artinian Grothendieck category with enough injective modules. Its
generators are discrete finite-dimensional $R$-modules $R/I$,
where $I$ runs over all open left ideals of $R$. Denote by $R-dis$ a
full subcategory of $R-Dis$ consisting of all finite-dimensional modules.

Let $C$ be a $K$-coalgebra. We denote by $Comod-C$ the category of all
right $C$-comodules and by $comod-C$ a full subcategory of $Comod-C$
consisting of all finite-dimensional comodules. The dual
pseudocompact algebra $C^*$ has a basis of neighborhoods at zero
consisting of all two-sided ideals $D^{\perp}$ for all finite-dimensional subcoalgebras $D$ of $C$. 
There are isomorphisms of categories $Comod-C\simeq C^*-Dis$ and $comod-C\simeq
C^*-dis$ that identify objects as $K$-spaces. Under these isomorphisms, a $C$-comodule structure
of $M$, given by the coaction map $\tau_M(m)=\sum m_1\otimes c_2$ where $m, m_1\in M$ and  $c_2\in C$,
corresponds to a discrete left $C^*$-module structure given by $xm=\sum x(c_2)m_1$, where
$x\in C^*$. (See \cite{sim}.)
By a theorem of Takeuchi (see \cite{tak}), a highest weight category $\cal C$ is equivalent to $Comod-C$ for a coalgebra $C$. In particular,
${\cal C}\simeq C^*-Dis$.

Finally, for $\Gamma\subseteq\Lambda$ we can define functors $O^{\Gamma}$ and $O_{\Gamma}$. We can say that $M\in{\cal C}$
belongs to $\Gamma$ if and only if all composition factors $L(\lambda)$ of $M$ 
satisfy the condition that $\lambda\in\Gamma$. Any $N\in\cal C$ contains
a largest subobject $O_{\Gamma}(N)$ which belongs to $\Gamma$. 
Symmetrically, $N$ contains a unique minimal
subobject $O^{\Gamma}(N)$ such that $N/O^{\Gamma}(N)$ belongs to
$\Gamma$. For example, $\nabla(\lambda)=O_{(\lambda]}(I(\lambda))$ 
for each $\lambda\in\Lambda$.
The full subcategory of ${\cal C}$ consisting of all objects $M$ such that 
$O_{\Gamma}(M)=M$ is denoted by ${\cal C}[\Gamma]$. It is obvious
that $O_{\Gamma}$ is a left exact functor from $\cal C$ to ${\cal
C}[\Gamma]$ which commutes with direct sums. The functor
$O^{\Gamma} : {\cal C}\to {\cal C}$ also commutes with direct sums
but it is not right exact in general. In fact, for any exact
sequence
\[0\to X\to Y\to Z\to 0,\]
the map $O^{\Gamma}(Y)\to O^{\Gamma}(Z)$ is an epimorphism and
$O^{\Gamma}(X)\subseteq X\bigcap O^{\Gamma}(Y)$; but it is possible
that $O^{\Gamma}(X)$ is a proper subobject of
$O^{\Gamma}(Y)\bigcap X$.

\section{Pseudocompact quasi-hereditary algebras}

Let $B$ be a pseudocompact algebra. In what
follows, all pseudocompact $B$-modules are right modules and all discrete
$B$-modules are left modules unless otherwise stated.

We shall first derive certain properties of pseudocompact modules.

\subsection{Pseudocompact modules}

The following observation is useful.

\begin{lm}\label{lm1}
If $f : M\to N$ is a morphism in $PC-B$ and $V$ is a closed submodule of $M$, then $f(V)$ is
a closed submodule of $N$.
\end{lm}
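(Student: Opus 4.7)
The plan is to exploit the linearly compact structure of pseudocompact modules. First I would observe that the closed submodule $V$ of the pseudocompact module $M$ is itself pseudocompact when equipped with the subspace topology, and the restriction $g := f|_V : V \to N$ is continuous. It therefore suffices to prove the slightly stronger assertion that any continuous $B$-linear map $g : P \to N$ of pseudocompact modules has closed image, which I will establish by showing $\overline{g(P)} \subseteq g(P)$.

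Pick $n \in \overline{g(P)}$. Since the open submodules $U \subseteq N$ form a basis of neighborhoods of $0$ and $N$ is Hausdorff, the condition $n \in \overline{g(P)}$ means $n \in g(P) + U$ for every such $U$, i.e.\ the set $S_U := g^{-1}(n+U) \subseteq P$ is nonempty for every open $U \subseteq N$. Because $g$ is continuous, $g^{-1}(U)$ is an open (hence closed) submodule of $P$, and $S_U$, being nonempty, is a coset of $g^{-1}(U)$. As $U$ shrinks, the family $\{S_U\}$ is downward directed.

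The main step is to conclude that $\bigcap_U S_U \neq \emptyset$. For each open submodule $W \subseteq P$, the image of $S_U$ in the finite-dimensional quotient $P/W$ is a coset of the subspace $(g^{-1}(U)+W)/W$; as $U$ shrinks this subspace eventually stabilizes (for $U$ small enough one has $g^{-1}(U) \subseteq W$, whence the coset reduces to a single element of $P/W$). Thus the images of $S_U$ in $P/W$ form a filtered family of nonempty cosets whose intersection $T^W \subseteq P/W$ is nonempty. The family $\{T^W\}_W$ is compatible with the transition maps $P/W' \twoheadrightarrow P/W$, and the surjectivity of the latter, together with the eventual stabilization, makes the transitions $T^{W'} \to T^W$ surjective. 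Since $P = \varprojlim_W P/W$ and each $T^W$ is a nonempty coset in a finite-dimensional space, the standard inverse-limit argument for pseudocompact (linearly compact) modules (cf.\ \cite{gab,br}) yields an element $p \in \varprojlim_W T^W \subseteq P$. By construction $g(p) \in n+U$ for every open $U \subseteq N$, so $g(p) = n$ by the Hausdorff property of $N$, and $n \in g(P)$ as required.

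The main obstacle is the last step: producing a genuine element $p \in \bigcap_U S_U$ from the per-quotient stabilizations. This is precisely the place where pseudocompactness of $P$ (and hence of $V$) is used; equivalently, one may cite directly the known first isomorphism theorem in $PC\text{-}B$, which asserts that for a continuous morphism $g : P \to Q$ of pseudocompact modules the induced map $P/\ker g \to g(P)$ is a topological isomorphism, so that $g(P)$ is automatically closed in $Q$.
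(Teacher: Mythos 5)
Your proof reaches the same conclusion as the paper's, but by a more hands-on linear compactness argument, whereas the paper simply invokes Proposition 10 of Chapter IV, \S 3 of \cite{gab} to identify $f(V)$ with the inverse limit $\lim\limits_{\longleftarrow}(V+M_i)/M_i$ and hence with $\overline{f(V)}$. Your route---reducing to a continuous map $g:P\to N$ of pseudocompact modules, intersecting the cosets $S_U=g^{-1}(n+U)$, and extracting a point via per-quotient stabilization---is correct in outline, and if one takes the linear compactness of pseudocompact modules as known it can be compressed considerably: the $S_U$ form a filtered family of nonempty cosets of open (hence closed) submodules of the linearly compact module $P$, so $\bigcap_U S_U\neq\emptyset$ at once. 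Two small points on the detailed version. First, the parenthetical ``for $U$ small enough $g^{-1}(U)\subseteq W$'' is false in general: one always has $\ker g\subseteq g^{-1}(U)$, and $\ker g$ need not lie in $W$. The stabilization of the subspaces $(g^{-1}(U)+W)/W$ nevertheless holds simply because $P/W$ is finite-dimensional, so the surrounding argument is unharmed. Second, producing an element of $\lim\limits_{\longleftarrow}T^W$ from a surjective projective system of nonempty cosets over an arbitrary directed poset is not a formality; this is precisely where the linear compactness of $P$ enters, and it is cleaner to cite it outright---which is what your closing alternative (the first isomorphism theorem in $PC-B$) does, and is in effect the same appeal the paper makes to Gabriel.
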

\begin{proof}
Let $\{N_i | i\in I\}$ be a basis of neighborhoods at zero in the module $N$.
Then $\{M_i=f^{-1}(N_i) | i\in I\}$ is a family of open submodules in $M$
and $\ker f =\cap_{i\in I} M_i$. By Proposition 10 of Chapter IV, \S 3 of \cite{gab}, the image $f(V)$ is isomorphic to  
$\lim\limits_{\longleftarrow} (V+M_i/M_i)$. Since the map $f$ induces 
an isomorphism 
$\lim\limits_{\longleftarrow} (V+M_i/M_i)\simeq
\lim\limits_{\longleftarrow} (f(V)+N_i/N_i)=\overline{f(V)},$
we conclude that $f(V)= \overline{f(V)}$ and $f(V)$ is closed.
\end{proof}

As a consequence of the above lemma, we obtain that a sum of finitely many closed submodules of a
pseudocompact module $M$ is again a closed submodule of $M$, and any
finitely-generated $B$-submodule of $M$ is also closed. Additionally,
if $V$ is a closed submodule of $M$ and $J$ is a right closed ideal
of $R$, then \tc{$VJ$} is a closed submodule of $M$.

Denote the radical of $M$ by $rad M$, that is, $rad M$ is an intersection of all maximal
open submodules of $M$; and denote a factormodule $M/rad M$ by $top(M)$. 
It can be easily checked that $rad B$ is,
in fact, the Jacobson radical of $B$. Moreover, $rad B$ coincides with
an intersection of all open maximal two-sided ideals of $B$.
(See \cite{br}.) 

The radical of a projective module is described in the next lemma.

\begin{lm}\label{lm2}
If $P\in PC-B$ is a projective module, then $rad
P=P(rad B)$.
\end{lm}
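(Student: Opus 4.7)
The plan is to establish the two inclusions $P(rad\,B)\subseteq rad\,P$ and $rad\,P\subseteq P(rad\,B)$ separately. First I would observe that both are closed submodules of $P$: $rad\,P$ is closed as an intersection of open submodules, while $P(rad\,B)$ is closed by the remark following Lemma \ref{lm1}, since $rad\,B$ is the intersection of the open maximal two-sided ideals of $B$ and hence is a closed right ideal.

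For $P(rad\,B)\subseteq rad\,P$, let $N$ be an arbitrary maximal open submodule of $P$. Then $P/N$ is a finite-dimensional simple discrete right $B$-module, and since $rad\,B$ is the Jacobson radical of $B$, it annihilates every such simple module. Hence $P\cdot rad\,B\subseteq N$, and taking the intersection over all maximal open $N$ yields $P(rad\,B)\subseteq rad\,P$.

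For the reverse inclusion I would reduce to the case of a pseudocompact free module. A projective object in $PC-B$ is a direct summand of some $\widehat{F}=\prod_{i\in I}B$ carrying the product topology, so it suffices to prove $rad\,\widehat{F}\subseteq \widehat{F}(rad\,B)$ and then restrict to a direct summand, using the fact that taking the radical is compatible with direct sum decompositions. The key identification is $\widehat{F}(rad\,B)=\prod_{i\in I}rad\,B$: algebraic products lie in the right-hand side because the $B$-action is coordinatewise, and any element of $\prod_{i\in I}rad\,B$ is the limit in the product topology of its finite-support truncations, each of which lies in the algebraic submodule generated by $\widehat{F}\cdot rad\,B$. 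This gives
\[
\widehat{F}/\widehat{F}(rad\,B)\simeq \prod_{i\in I}(B/rad\,B).
\]
Since $B/rad\,B$ is a semisimple pseudocompact algebra (a product of matrix algebras over division rings, by the Wedderburn-type structure theory for pseudocompact algebras), every pseudocompact module over $B/rad\,B$ is a product of simple modules and hence has zero radical. Thus $rad\,\widehat{F}\subseteq \widehat{F}(rad\,B)$, which then passes to the direct summand $P$.

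The main obstacle I foresee is the density identification $\widehat{F}(rad\,B)=\prod_{i\in I}rad\,B$ together with the appeal to semisimplicity of pseudocompact modules over $B/rad\,B$. These are the places where the topological structure must be combined with the algebraic action, rather than being handled by the classical finite-dimensional argument verbatim.
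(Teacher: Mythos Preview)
Your argument is correct, and both inclusions are handled soundly. The identification $\widehat{F}(rad\,B)=\prod_{i\in I}rad\,B$ is fine under the paper's convention (the remark after Lemma~\ref{lm1} ensures this product is closed, and the finite-support truncations are dense), and the semisimplicity of pseudocompact modules over $B/rad\,B$ is standard. The passage to a direct summand is also valid: for a topological decomposition $\widehat{F}=P\oplus Q$ one has $rad\,\widehat{F}=rad\,P\oplus rad\,Q$ and $\widehat{F}(rad\,B)=P(rad\,B)\oplus Q(rad\,B)$, so the inclusion descends.

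The paper takes a different route for the reverse inclusion. Rather than reducing to free pseudocompact modules, it invokes Gabriel's structure theorem to write any projective as $P\simeq\prod_{i\in I}e_iB$ with $e_i$ primitive idempotents, then observes that a maximal open submodule of a single factor $e_iB$ yields one of $P$ by filling in the remaining factors, so it suffices to treat $P=e_iB$; that case is cited from \cite{v}. Your approach trades the decomposition into indecomposables for the coarser fact that projectives are summands of products of $B$, and replaces the cited indecomposable case by the global statement that $B/rad\,B$ is pseudocompact semisimple. The paper's version is shorter because it outsources both endpoints to references; yours is more self-contained and does not need the finer structure of projectives, at the cost of invoking the Wedderburn-type description of $B/rad\,B$.
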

\begin{proof} For any $M\in PC-B$, an inclusion $M(rad B)\subseteq rad M$ follows 
from Lemma 1.4 of \cite{br}. By Corollary 1 of Chapter IV, \S 3 of \cite{gab}, 
the projective module $P$ is isomorphic to $\prod_{i\in I} e_i B$, where 
$\{e_i|i\in I\}$ is a collection of (not necessarily different) primitive idempotents. If $N$ is a maximal open submodule of some $e_i B$, then
$\prod_{j\neq i} e_j B\bigoplus N$ is a maximal open submodule of
$P$. Therefore, it suffices to prove the statement of the lemma for every projective module $P= e_i B$; and this follows from 
Lemma 4.9 of \cite{v}.
\end{proof}

By Corollary 6.32 of \cite{bd}, every discrete
$B$-module has an injective envelope. Dually, every pseudocompact
$B$-module has a projective cover. Let $M$ be a simple
pseudocompact $B$-module. Then $M$ is a finite-dimensional and
discrete right $B$-module. If $P(M)\to M$ is a projective cover of $M$,
then its kernel is a unique maximal open submodule
of $P(M)$. In particular, this submodule coincides with $rad P(M)=
P(M)(rad B)$.

Since we shall be working with subspaces of type $eV$ and $Ve$, where $e$ is an idempotent of $B$,
it is important to observe the following relationship with closures.
If $M$ is a right (or left) pseudocompact $B$-module,
$V\subset M$ is a $K$-subspace of $M$, and
$e$ is an idempotent of $B$ such that $eV\subseteq V$ (or $Ve\subseteq V$, respectively), then
$\overline{eV}=e\overline{V}$ (or $\overline{Ve}=\overline{V}e$, respectively).

From now on, assume that $B=\prod_{i\in I} e_i B$, where $\{e_i|i\in I\}$ is a (summable) family of
primitive pairwise-orthogonal idempotents of the pseudocompact algebra $B$.
The simple modules and indecomposable projective modules are described below in terms of these idempotents.

\begin{pr}\label{pr1}
Any simple
pseudocompact $B$-module $M$ is isomorphic to $M(i)=top(e_i B)=e_i B/e_i (rad B)$ for some $i\in I$, and $P(M(i))\simeq e_i B$. Moreover,
$M(i)\simeq M(j)$ if and only if $e_i B\simeq e_j B$.
\end{pr}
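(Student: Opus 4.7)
The plan is to reduce to the case of a cyclic simple module generated by a single element, then use the decomposition $B=\prod_{i\in I}e_i B$ to find an idempotent that acts nontrivially, and finally invoke Lemma \ref{lm2} together with primitivity of the $e_i$ to identify the quotient with $M(i)$.

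First, given a simple $M\in PC\text{-}B$ and a nonzero $m\in M$, I would consider the continuous right $B$-module map $\mu_m:B\to M$ sending $b\mapsto mb$. Its image is a nonzero closed submodule of $M$ (closedness by Lemma \ref{lm1}), hence equals $M$ by simplicity. Since $1=\sum_{i\in I}e_i$ in the summable sense, we have $m=\sum_{i\in I}me_i$, so there exists some $i\in I$ with $me_i\neq 0$. Restricting $\mu_m$ to the closed direct summand $e_iB$ produces a nonzero continuous right $B$-module map $e_iB\to M$, which must be surjective by simplicity.

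Next, I would identify the kernel. Since $e_i$ is primitive, $e_iB$ is indecomposable in $PC\text{-}B$ and its endomorphism ring $e_iBe_i$ is local (this is the standard primitivity/local property in the pseudocompact setting; cf.\ \cite{br,v}). Therefore $e_iB$ admits a unique maximal closed submodule, which by Lemma \ref{lm2} equals $rad(e_iB)=e_i\,rad\,B$. The kernel of the surjection $e_iB\to M$ is a maximal closed submodule of $e_iB$ and hence coincides with $e_i\,rad\,B$. This gives $M\simeq e_iB/e_i\,rad\,B=M(i)$. For the projective cover claim, essentiality of $e_iB\to M(i)$ is immediate: any proper closed submodule of $e_iB$ lies in the unique maximal submodule $rad(e_iB)=\ker(e_iB\to M(i))$, and $e_iB$ is projective, so $P(M(i))\simeq e_iB$.

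Finally, for the isomorphism criterion, the forward direction $M(i)\simeq M(j)\Rightarrow e_iB\simeq e_jB$ follows from the uniqueness (up to isomorphism) of projective covers combined with the identification $P(M(i))\simeq e_iB$ established above. The reverse direction $e_iB\simeq e_jB\Rightarrow M(i)\simeq M(j)$ follows because an isomorphism of pseudocompact modules sends $rad$ to $rad$ (the radical is intrinsically characterized as the intersection of all maximal open submodules), and hence descends to an isomorphism between the tops $M(i)$ and $M(j)$.

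The main obstacle, and the only nontrivial structural input, is that primitivity of $e_i$ forces $e_iB$ to possess a unique maximal closed submodule (equivalently, $e_iBe_i$ is local). Everything else is then formal, but this step relies on the pseudocompact analogue of the classical Artinian fact and must be applied with care since $e_iB$ is generally infinite-dimensional; the appropriate reference in the topological setting is \cite{br,v}, together with Lemma \ref{lm2} to identify the radical explicitly as $e_i\,rad\,B$.
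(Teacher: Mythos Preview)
Your proof is correct. The paper's own proof consists of a single citation to Lemma~3.9 of \cite{vv}, whereas you have written out the argument directly: produce a surjection $e_iB\twoheadrightarrow M$ by choosing $i$ with $me_i\neq 0$, identify its (open) kernel with $e_i\,rad\,B$ via primitivity and Lemma~\ref{lm2}, and deduce the last equivalence from uniqueness of projective covers. This is essentially the content of the cited external lemma, made explicit. The only step that requires care is ``$e_iBe_i$ local $\Rightarrow$ $e_iB$ has a unique maximal closed submodule''; you rightly flag it and source it to \cite{br,v}, and it does follow from existence and uniqueness of projective covers in $PC\text{-}B$ (recorded just before the proposition) together with indecomposability of $e_iB$. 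Note that the paper establishes locality of $End_B(e_iB)$ only afterwards, in Corollary~\ref{c1}, so your decision to cite the external references rather than that corollary is the correct way to avoid circularity.
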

\begin{proof} It follows from Lemma 3.9 of \cite{vv}.
\end{proof}

\begin{cor}\label{c1}
For every $i\in I$, the algebra $End_B(e_i B)$ is a local pseudocompact algebra.
\end{cor}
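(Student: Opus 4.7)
The plan is to identify $E := End_B(e_iB)$ with the subalgebra $e_iBe_i$ and verify locality by showing every endomorphism is invertible precisely when its induced action on the unique simple quotient $M(i)$ is nonzero.

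First, the evaluation map $\phi\mapsto\phi(e_i)$ is an algebra isomorphism $E\cong e_iBe_i$, since right $B$-linearity forces $\phi(e_i)=\phi(e_i)e_i\in e_iBe_i$. The subspace $e_iBe_i$ inherits a pseudocompact topology from $B$: a fundamental system of neighborhoods of zero is $\{e_iIe_i\}$ for $I$ running over open two-sided ideals of $B$, each a two-sided ideal of finite codimension in $e_iBe_i$ (via the injection $e_iBe_i/e_iIe_i\hookrightarrow B/I$), with completeness and Hausdorffness inherited from $B$.

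For locality, every $\phi\in E$ preserves $rad(e_iB)=e_i(rad B)$ by Lemma \ref{lm2}, hence induces $\overline{\phi}\in End_B(M(i))$. Since $M(i)$ is simple and $K$ is algebraically closed, Schur's lemma gives $End_B(M(i))\cong K$, so we obtain an algebra homomorphism $\pi:E\to K$. I will show $\phi$ is invertible iff $\pi(\phi)\neq 0$; this forces the non-units to coincide with the proper ideal $\ker\pi$, whence $E$ is local with $rad E=\ker\pi$ and $E/rad E\cong K$. One direction is automatic. For the converse, assume $\pi(\phi)\neq 0$. Then $\phi(e_iB)$ is closed (Lemma \ref{lm1}) and not contained in $e_i(rad B)$. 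But every proper closed submodule $N\subsetneq e_iB$ must lie in $e_i(rad B)$: the nonzero pseudocompact module $e_iB/N$ admits a simple quotient $M(j)$, and the kernel of the composite surjection $e_iB\to M(j)$ is a maximal open submodule of $e_iB$, so by the remark before Proposition \ref{pr1} it equals the unique such submodule $e_i(rad B)$, forcing $j=i$ and $N\subseteq e_i(rad B)$. Hence $\phi$ is surjective; projectivity of $e_iB$ yields a splitting $s$ with $\phi s=id$, so $e_iB=\ker\phi\oplus s(e_iB)$, and the primitivity of $e_i$ (hence indecomposability of $e_iB$) forces $\ker\phi=0$.

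The main obstacle is the Nakayama-type step that every proper closed submodule of $e_iB$ lies in $e_i(rad B)$; it rests on the existence of simple quotients of arbitrary nonzero pseudocompact modules together with the uniqueness of the maximal open submodule of the projective cover $e_iB$ established in the paragraph before Proposition \ref{pr1}.
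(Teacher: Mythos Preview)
Your proof is correct and follows essentially the same route as the paper. Both arguments identify $End_B(e_iB)\cong e_iBe_i$ as a closed (hence pseudocompact) subalgebra of $B$, and both establish locality by showing that the non-units are exactly the endomorphisms whose image lies in $e_i(rad B)$---equivalently, those in the kernel of your map $\pi$ to $End_B(M(i))$. The paper phrases this as the equivalence ``$\phi$ not an automorphism $\Leftrightarrow Im\,\phi\subseteq e_i(rad B)$'' and additionally records the equivalent condition $\lim_{n\to\infty}\phi^n=0$, while you spell out more carefully why $Im\,\phi\not\subseteq e_i(rad B)$ forces $\phi$ to be an isomorphism (via the Nakayama-type step and the projectivity/indecomposability splitting argument). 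Your added detail is sound; the underlying idea is the same.
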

\begin{proof} For every $i\in I$, there is a natural isomorphism $End_B(e_i B)\simeq e_i Be_i$,
and $e_i Be_i$ is a closed subalgebra of $B$. Additionally, the condition that $\phi\in
End_B(e_i B)$ but $\phi$ is not an automorphism is equivalent to the condition
$Im \phi\subseteq e_i (rad B)$, which is, in turn, equivalent to the statement 
$\lim\limits_{n\to\infty} \phi^n=0$. Therefore, $rad
End_B(e_i B)\simeq e_i rad B e_i$, and $End_B(e_i B)/rad
End_B(e_i B)\simeq End_B(M(i))$ is a division algebra.
\end{proof}

Choose a subset $\Lambda$ of $I$, from the above decomposition
$B=\prod_{i\in I} e_i B$, 
whose elements are in one-to-one correspondence
with isomorphism classes of simple pseudocompact $B$-modules. 
A discrete $B$-module $L$ is simple if and only if $L^*\simeq
M(\lambda)$ for some $\lambda\in\Lambda$; in which case we denote
$L$ by $L(\lambda)$. If $I(\lambda)$ is an injective envelope of
$L(\lambda)$, then $I(\lambda)^*\simeq P(\lambda)=e_{\lambda} B$.
Moreover, $L(\lambda)$ is a composition factor of a discrete
$B$-module $N$ if and only if $M(\lambda)$ is a composition factor of
$N^*/K$ for some open submodule $K$ of $N^*$. In
particular, for any $S\in PC-B$, one can define the multiplicity of
$M(\lambda)$ in $S$, denoted $[S : M(\lambda)]$, as the supremum 
of all numbers $[S/K : M(\lambda)]$, where $[S/K:M(\lambda)]$ is the multiplicity of $M(\lambda)$ 
in a factormodule $S/K$, taken for every open submodule $K\subset S$.
Since $Hom_{PC-B}(P(\lambda), S)\simeq
Hom_{B-Dis}(S^{\star}, I(\lambda))$, we have $\dim_K
Hom_{PC-B}(P(\lambda), S)=\dim_K Se_{\lambda}=[S : M(\lambda)]=[S^{\star} : L(\lambda)]$.

Let $\Gamma$ be a subset of $\Lambda$. Analogously to Section 1, we say that a
pseudocompact $B$-module $S$ belongs to $\Gamma$ if $[S :
M(\lambda)]\neq 0$ implies $\lambda\in\Gamma$. For any $M\in PC-B$,
there exists a unique maximal closed submodule of $M$ which
belongs to $\Gamma$, which shall be denoted by $O_{\Gamma}(M)$. Also, there is a unique minimal closed
submodule $U$ of $M$ such that $M/U$ belongs to $\Gamma$. Such a module 
$U$ shall be denoted by $O^{\Gamma}(M)$. The modules $O_{\Gamma}(M)$ and $O^{\Gamma}(M)$ are described in the following lemma.

\begin{lm}\label{lm3}
Let $M$ be a pseudocompact $B$-module and $\Gamma\subseteq\Lambda$. Then 
\[O_{\Gamma}(M)=\overline{\sum_{N\subseteq M,\,
Ne_{\Lambda\setminus\Gamma}=0} N} \ \mbox{and} \
O^{\Gamma}(M)=Me_{\Lambda\setminus\Gamma}B,\]
where $e_{\Lambda\setminus\Gamma}=\sum_{\lambda\in\Lambda\setminus\Gamma}e_{\lambda}$.
\end{lm}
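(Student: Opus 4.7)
The plan is to reduce the condition ``belongs to $\Gamma$'' to an annihilation condition against the idempotent $e_{\Lambda\setminus\Gamma}$, and then run a short extremality argument for each of the two formulas.

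First I would exploit the identity $[S:M(\lambda)]=\dim_K Se_\lambda$ from the paragraph preceding the statement to record that a pseudocompact submodule $N\subseteq M$ belongs to $\Gamma$ if and only if $Ne_\lambda=0$ for every $\lambda\in\Lambda\setminus\Gamma$. Using that $\{e_\lambda\}_{\lambda\in\Lambda}$ is a summable family of pairwise-orthogonal idempotents and that right multiplication is continuous, this is in turn equivalent to $Ne_{\Lambda\setminus\Gamma}=0$: one direction follows by passing to the limit of the finite partial sums $\sum_{\lambda\in F}e_\lambda$, and the other from the identity $e_\mu e_{\Lambda\setminus\Gamma}=e_\mu$ valid for $\mu\in\Lambda\setminus\Gamma$.

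For $O_\Gamma(M)$, let $V=\sum_{Ne_{\Lambda\setminus\Gamma}=0}N$ range over submodules satisfying the annihilation condition. The annihilator $\{x\in M\mid xe_{\Lambda\setminus\Gamma}=0\}$ is closed as the preimage of $0$ under a continuous map, and the closure of a submodule is again a submodule by continuity of the module operations. Hence $\overline{V}$ is a closed submodule with $\overline{V}e_{\Lambda\setminus\Gamma}=0$, so it belongs to $\Gamma$. Since $O_\Gamma(M)$ itself is one of the $N$'s, $O_\Gamma(M)\subseteq V\subseteq\overline{V}$, while the maximality of $O_\Gamma(M)$ among closed submodules belonging to $\Gamma$ forces $\overline{V}\subseteq O_\Gamma(M)$.

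For $O^\Gamma(M)$, set $W=Me_{\Lambda\setminus\Gamma}B$. The consequence of Lemma~\ref{lm1} recorded just after its proof shows $W$ is closed, since $e_{\Lambda\setminus\Gamma}B$ is a closed right ideal of $B$. For $\lambda\in\Lambda\setminus\Gamma$ the identity $e_\lambda=e_{\Lambda\setminus\Gamma}e_\lambda$ gives $Me_\lambda\subseteq W$, and the elementary identity $N\cap Me_\lambda=Ne_\lambda$, valid for any submodule $N$ by idempotency of $e_\lambda$, yields $(M/W)e_\lambda=Me_\lambda/We_\lambda=0$; thus $M/W$ belongs to $\Gamma$. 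Conversely, for any closed $U$ with $M/U$ in $\Gamma$, the same reasoning gives $Me_\lambda\subseteq U$ for $\lambda\notin\Gamma$, hence $Me_\lambda B\subseteq U$; summing, closing, and using that $me_{\Lambda\setminus\Gamma}b$ is the limit of the net $\sum_{\lambda\in F}me_\lambda b$ as $F$ ranges over finite subsets of $\Lambda\setminus\Gamma$, I conclude $W\subseteq\overline{\sum_{\lambda\notin\Gamma}Me_\lambda B}\subseteq U$, which gives the required minimality. The only real technical obstacle is the bookkeeping around the topological idempotent $e_{\Lambda\setminus\Gamma}$: one must repeatedly pass between it and the individual $e_\lambda$ via summability and joint continuity of the module action. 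Once that translation is in place, both halves of the statement reduce to a maximality or minimality check that is essentially formal.
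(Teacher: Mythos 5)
Your argument is correct and follows essentially the same route as the paper's (very terse) proof: reduce ``belongs to $\Gamma$'' to the annihilation condition $Ne_{\Lambda\setminus\Gamma}=0$, observe that $Me_{\Lambda\setminus\Gamma}B$ is closed, and then run the obvious maximality/minimality checks. What you supply is precisely the bookkeeping that the paper compresses into ``by earlier remarks'' and ``our claim follows'': passing between the family $\{e_\lambda\}$ and the sum $e_{\Lambda\setminus\Gamma}$ via summability and continuity, verifying that the annihilator of $e_{\Lambda\setminus\Gamma}$ is closed so that $\overline{V}$ still belongs to $\Gamma$, and checking both that $M/Me_{\Lambda\setminus\Gamma}B$ belongs to $\Gamma$ and that any closed $U$ with $M/U$ in $\Gamma$ contains $Me_{\Lambda\setminus\Gamma}B$.
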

\begin{proof} By earlier remarks, $N\subseteq M$ belongs to $\Gamma$ if and only if
$Ne_{\Lambda\setminus\Gamma}=0$. Moreover, if $M/N$ belongs to
$\Gamma$, then $Me_{\Lambda\setminus\Gamma}B\subseteq N$. Since $Me_{\Lambda\setminus\Gamma}B$ is closed, our claim follows.
\end{proof}
By Lemma \ref{lm3}, $O^{\Gamma}(B)=Be_{\Lambda\setminus\Gamma}B$ is a
closed idempotent ideal of $A$ that shall be denoted by $H(\Gamma)$.
Moreover, for any $M\in PC-B$ we have $O^{\Gamma}(M)=H(\Gamma)M$.
(See also Appendix of \cite{don1}.)
Finally, observe that
\[O_{\Gamma}(M)^{\star}=M^{\star}/O^{\Gamma}(M^{\star}) \text{ and }
O^{\Gamma}(M)^{\star}=M^{\star}/O_{\Gamma}(M^{\star}),\] and
\[O_{\Gamma}(M^{\star})=(M/O^{\Gamma}(M))^{\star} \text{ and }
O^{\Gamma}(M^{\star})=(M/O_{\Gamma}(M))^{\star}.\]

For every $\lambda\in\Lambda$, we define a {\it standard} object
$\Delta(\lambda)=P(\lambda)/O^{(\lambda]}(P(\lambda))$. 
Then $\Delta(\lambda)^{\star}=\nabla(\lambda)$ and $P(\lambda)^\star=I(\lambda)$; and 
the following lemma is a
reformulation of properties of a highest weight category.

\begin{lm}\label{lm4}
The category $B-Dis$ is a highest weight category with respect to
an interval-finite poset $(\Lambda , \leq)$ if and only if every projective pseudocompact $B$-module $P(\lambda)$ has a
descending standard filtration (of closed submodules) where 
\[P(\lambda)=N_0\supseteq N_1\supseteq N_2\supseteq\ldots\]
such that $\bigcap_{k\geq 1}N_k=0$, and for every $k\geq 0$ we
have $N_{k}/N_{k+1}\simeq\Delta(\mu_k)$ where $\mu_k\geq\lambda$ and
$\mu_k=\lambda$ if and only if $k=0$. Additionally, for a given $\mu\in\Lambda$,
$\mu=\mu_n$ for only finitely many values of $n$; and also $\dim_K Hom_{PC-B}(\Delta(\lambda), \Delta(\mu))$ and
$[\Delta(\lambda) : M(\mu)]$ are finite for all $\lambda,
\mu\in\Lambda$.
\end{lm}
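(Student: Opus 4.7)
The plan is to invoke the $K$-duality between $PC-B$ and $B-Dis$, under which $P(\lambda)^\star = I(\lambda)$ and $\Delta(\lambda)^\star = \nabla(\lambda)$, to convert descending filtrations by closed submodules of the projectives $P(\lambda)$ into ascending filtrations by discrete subobjects of the injectives $I(\lambda)$, and vice versa. With this dictionary in place the lemma is essentially a direct reformulation of the definition of a highest weight category applied on the pseudocompact side.

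First I would set up the dictionary carefully. For any closed submodule $N \subseteq P(\lambda)$ the quotient $P(\lambda)/N$ is pseudocompact, and $(P(\lambda)/N)^\star$ embeds in $I(\lambda)$ as the subobject $N^\perp$ of functionals vanishing on $N$; conversely, every discrete subobject $J \subseteq I(\lambda)$ arises from the closed submodule $N = \bigcap_{\phi \in J} \ker \phi$. Since $(-)^\star$ is an exact contravariant equivalence, this extends to an order-reversing bijection between descending chains of closed submodules of $P(\lambda)$ and ascending chains of discrete subobjects of $I(\lambda)$, with $(N_{k-1}/N_k)^\star \simeq N_k^\perp / N_{k-1}^\perp$ and with $\bigcap_k N_k = 0$ if and only if $\bigcup_k N_k^\perp = I(\lambda)$. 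The finiteness data match via the identity $[S : M(\mu)] = [S^\star : L(\mu)]$ recorded in the excerpt together with the Hom-duality $Hom_{PC-B}(X, Y) \simeq Hom_{B-Dis}(Y^\star, X^\star)$.

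The forward direction then follows by dualizing the good filtration of each $I(\lambda)$: the resulting descending chain $(N_k)_{k \geq 0}$ of closed submodules of $P(\lambda)$ has $N_0/N_1 \simeq \nabla(\lambda)^\star = \Delta(\lambda)$, $N_k/N_{k+1} \simeq \Delta(\mu_k)$ with $\mu_k > \lambda$ for $k \geq 1$, and $\bigcap_k N_k = 0$, with each weight appearing only finitely often because it did in the original filtration. The converse is the mirror image: starting from a descending standard filtration of $P(\lambda)$, the subobjects $N_k^\perp$ assemble into a good filtration of $I(\lambda)$, and the remaining highest weight category axioms --- finiteness of $\dim_K Hom_{\cal C}(\nabla(\lambda), \nabla(\mu))$ and of multiplicities $[\nabla(\lambda) : L(\mu)]$ --- transfer from the assumed pseudocompact finiteness hypotheses.

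The main technical obstacle will be keeping the correspondence clean with respect to topology and closures: one needs the annihilator submodules $N_k$ to be genuinely closed (as intersections of kernels of continuous evaluations, to which Lemma \ref{lm1} applies), the dualization of each short exact sequence $0 \to N_k \to N_{k-1} \to N_{k-1}/N_k \to 0$ of closed inclusions to produce precisely the subquotient $N_k^\perp/N_{k-1}^\perp$, and arbitrary intersections of closed submodules to dualize to unions of discrete subobjects. None of these is difficult in isolation, but keeping them coherent while tracking the ordering condition on weights is where the bookkeeping lies.
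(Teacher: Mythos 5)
Your proposal is correct and matches the paper's intent exactly: the paper gives no separate proof of Lemma \ref{lm4}, merely remarking that, since $(-)^\star$ is an exact contravariant equivalence with $P(\lambda)^\star=I(\lambda)$ and $\Delta(\lambda)^\star=\nabla(\lambda)$, the lemma ``is a reformulation of properties of a highest weight category.'' Your filling-in of the order-reversing correspondence between closed submodules of $P(\lambda)$ and discrete subobjects of $I(\lambda)$, the matching $\bigcap_k N_k = 0 \Leftrightarrow \bigcup_k N_k^\perp = I(\lambda)$, and the transfer of finiteness data via $[S:M(\mu)]=[S^\star:L(\lambda)]$ and Hom-duality is precisely this dictionary spelled out.
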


\subsection{Good posets}

From now on assume that $\Lambda$ is an interval-finite poset with respect to
a partial order $\leq$.

The following proposition relates a highest weight category $\cal C$ to each subcategory 
$\cal C[\Gamma]$ for every finitely-generated ideal $\Gamma$ of $\Lambda$.
 
\begin{pr}\label{pr3}
Let $\cal C$ be a locally-artinian category which satisfies the Grothendieck condition AB5. Assume that nonisomorphic simple objects of $\cal C$ 
are indexed by elements of an interval-finite poset $(\Lambda, \leq)$, $\cal C$ has enough injective objects, and every injective envelope of
a simple object belongs to a finitely or countably generated ideal of $\Lambda$. 

Then $\cal C$ is a highest weight category if and only if, for every
finitely-generated ideal $\Gamma\subseteq\Lambda$, the full
subcategory ${\cal C}[\Gamma]$ is a highest weight category.
\end{pr}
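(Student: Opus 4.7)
The statement is an equivalence and I would prove it in two directions, with the functor $O_{\Gamma}$ and its right-adjoint property to the inclusion ${\cal C}[\Gamma]\hookrightarrow{\cal C}$ as the principal technical tool. The forward implication restricts the good filtration of each $I(\lambda)$ to its $\Gamma$-part, while the backward implication splices countably many such restricted filtrations into a global one, and this is exactly where the hypothesis that $I(\lambda)$ has weight-support in a countably generated ideal of $\Lambda$ enters essentially.

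For $(\Rightarrow)$, fix a finitely-generated ideal $\Gamma\subseteq\Lambda$. The simples of ${\cal C}[\Gamma]$ are $\{L(\lambda):\lambda\in\Gamma\}$; the costandard objects $\nabla(\lambda)$ for $\lambda\in\Gamma$ lie in ${\cal C}[\Gamma]$ because $\Gamma$ is a lower set, and the required Hom- and multiplicity-finiteness conditions transfer from ${\cal C}$. The injective envelope of $L(\lambda)$ in ${\cal C}[\Gamma]$ is $J(\lambda):=O_{\Gamma}(I(\lambda))$: since the inclusion ${\cal C}[\Gamma]\hookrightarrow{\cal C}$ is exact, its right adjoint $O_{\Gamma}$ preserves injectives, and essentiality over $L(\lambda)$ descends from ${\cal C}$. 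To endow $J(\lambda)$ with a good filtration I start from the good filtration $0\subseteq I_{1}\subseteq I_{2}\subseteq\ldots$ of $I(\lambda)$ and set $J_{n}=O_{\Gamma}(I_{n})=I_{n}\cap J(\lambda)$. Each quotient $J_{n}/J_{n-1}$ embeds into $I_{n}/I_{n-1}=\nabla(\mu_{n})$, which (being a subobject of $I(\lambda)$, whose socle is $L(\lambda)$) has simple socle $L(\mu_{n})$, so local artinianness forces $J_{n}/J_{n-1}=0$ whenever $\mu_{n}\notin\Gamma$. The remaining task is to show $J_{n}/J_{n-1}=\nabla(\mu_{n})$ when $\mu_{n}\in\Gamma$, which I would handle by reordering the good filtration of $I(\lambda)$ so that all $\Gamma$-factors form an initial segment whose union is $J(\lambda)$.

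For $(\Leftarrow)$, assume ${\cal C}[\Gamma]$ is a highest weight category for every finitely-generated ideal $\Gamma$. I define $\nabla(\lambda)$ as the costandard of ${\cal C}[(\lambda]]$ attached to $\lambda$. This notion is consistent across different finitely-generated ideals $\Gamma\ni\lambda$ because the costandard in ${\cal C}[\Gamma]$ must coincide with $O_{(\lambda]}$ of the ${\cal C}[\Gamma]$-injective envelope of $L(\lambda)$, and the adjunction identity $O_{(\lambda]}\circ O_{\Gamma}=O_{(\lambda]}$ forces agreement. The simple-socle, composition-factor, and Hom-finiteness axioms for ${\cal C}$ all reduce to the corresponding statements in some ${\cal C}[(\lambda]\cup(\mu]]$. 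The remaining task is a good filtration of each $I(\lambda)$. Using the hypothesis, I fix a countably generated ideal $\Xi$ of $\Lambda$ containing the weight-support of $I(\lambda)$ and write $\Xi=\bigcup_{k\geq 1}\Gamma_{k}$ with each $\Gamma_{k}$ finitely-generated and $\Gamma_{k}\subseteq\Gamma_{k+1}$. Each $I_{k}(\lambda):=O_{\Gamma_{k}}(I(\lambda))$ is the injective envelope of $L(\lambda)$ in ${\cal C}[\Gamma_{k}]$ and carries a good filtration by hypothesis. Applying the forward direction internally to the highest weight category ${\cal C}[\Gamma_{k+1}]$ with sub-ideal $\Gamma_{k}$, the good filtration of $I_{k+1}(\lambda)$ can be arranged so that its $\Gamma_{k}$-initial segment coincides with the good filtration of $I_{k}(\lambda)$. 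Passing to the union then yields a good filtration of $\bigcup_{k}I_{k}(\lambda)=O_{\Xi}(I(\lambda))=I(\lambda)$, and the local-finiteness condition follows because each weight appears finitely often already at the stage where it is introduced.

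The principal obstacle, shared by both directions, is the compatibility of $O_{\Gamma}$ with good filtrations: namely, that $O_{\Gamma}$ applied to an object with good filtration again carries a good filtration, equivalent to saying that a good filtration can be rearranged so that its factors indexed by a lower set of weights precede the others. In the finite-length setting this is a standard manipulation based on $Ext^{1}_{\cal C}(\nabla(\mu),\nabla(\mu'))=0$ for $\mu\not\geq\mu'$ and induction, but in the present countable-length setting the reordering must be carried out as a careful inductive construction while verifying that the resulting limit still enjoys the local-finiteness requirement that each weight appears only finitely many times. Once this combinatorial reordering is in place, the rest of the proof assembles through the adjunction between $O_{\Gamma}$ and the inclusion ${\cal C}[\Gamma]\hookrightarrow{\cal C}$.
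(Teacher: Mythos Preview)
Your proposal is correct and follows the same strategy as the paper: the forward direction is the content of Theorem~3.5 of \cite{cps} (which the paper simply cites), and the backward direction splices good filtrations of the $O_{\Gamma_k}(I(\lambda))$ along an exhausting chain $\Gamma_1\subseteq\Gamma_2\subseteq\cdots$, exactly as you outline. One small correction: $\nabla(\mu_n)=I_n/I_{n-1}$ is a subquotient of $I(\lambda)$, not a subobject, though it still has simple socle $L(\mu_n)$ by the highest-weight axioms, so your conclusion stands. For local finiteness the paper makes explicit the step you compress into one clause: if $\mu\in\Gamma_i$ then $\nabla(\mu)$ cannot occur in any good filtration of $O_{\Gamma_j}(I)/O_{\Gamma_i}(I)$ for $j>i$, since otherwise reordering (Lemma~3.2(b) of \cite{cps}) would place some $\nabla(\mu')$ with $\mu'\leq\mu\in\Gamma_i$ at the bottom of that quotient, contradicting the maximality of $O_{\Gamma_i}(I)$ among subobjects belonging to $\Gamma_i$.
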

\begin{proof} The sufficient condition follows from Theorem 3.5 and Definition 3.1 (c)
of \cite{cps}. Conversely, let $L(\lambda)$ be a simple object in
$\cal C$ and $I=I(\lambda)$ be its injective envelope. If $\Gamma$
is a finitely-generated ideal containing $\lambda$, then
$I_{\Gamma}=O_{\Gamma}(I)$ is a finite injective envelope of
$L(\lambda)$ in ${\cal C}[\Gamma]$. Moreover, the costandard
object $\nabla_{\Gamma}(\lambda)$ in ${\cal C}[\Gamma]$ equals
$O_{(\lambda]}(I_{\Gamma})=O_{(\lambda]}(I)$, and therefore it
does not depend on the choice of $\Gamma$. Let us denote $\nabla_{\Gamma}(\lambda)$ simply by
$\nabla(\lambda)$. Each $\nabla(\lambda)$ obviously satisfies
condition (b) of Definition 3.1 of \cite{cps}. 
Assume $I$ belongs to an ideal
$\Gamma$ which is a union of an increasing chain of finitely-generated ideals, $\Gamma_i$ for all $i\geq 1$. By Theorem 3.5 of \cite{cps},
each $O_{\Gamma_i}(I)$ appears as a term in a finite good
filtration of $O_{\Gamma_j}(I)$, where $j >i$. 
For each $i\geq 1$ we can construct a good filtration of $O_{\Gamma_{i+1}}(I)/O_{\Gamma_i}(I)$ and combine it with
a given good filtration of $O_{\Gamma_i}(I)$. Proceeding this way step-by-step, we can construct a good filtration of $I$. 
The statement of the proposition shall follow if we show that if $\lambda\in\Gamma_i$, then
$\nabla(\lambda)$ does not appear as a factor of any good
filtration of $O_{\Gamma_j}(I)/O_{\Gamma_i}(I)$ for any $j > i$.
If $\nabla(\lambda)$ appears as a factor of a good
filtration of $O_{\Gamma_j}(I)/O_{\Gamma_i}(I)$ for some $j>i$, 
then Lemma 3.2(b) of \cite{cps} guarantees that there is a
good filtration of $O_{\Gamma_j}(I)/O_{\Gamma_i}(I)$ such that its first
factor is $\nabla(\mu)$, where $\mu\leq\lambda$. Then $\mu\in\Gamma_i$
and $O_{\Gamma_i}(I)$ is a proper subobject of an object
$N\subseteq O_{\Gamma_j}(I)$ which belongs to $\Gamma_i$. Thus we get a contradiction.
\end{proof}

As a consequence of the previous proposition, we shall concentrate on highest weight categories with 
certain finitely-generated/cogenerated posets of weights. Additionally, from now on, we shall assume that 
all costandard objects are finite.  

\begin{df}\label{d1}
Let $(\Gamma , \leq)$ be a poset and let $\lambda , \mu\in\Gamma$. We
call $\mu$ a {\it predecessor} of $\lambda$ if $\mu <\lambda$ and
there is no $\pi\in\Lambda$ such that $\mu <\pi <\lambda$. 
\end{df}

In what follows we shall denote a predecessor of $\lambda$ by $\lambda'$.

Now we are ready to formulate conditions on the poset $\Gamma$ that shall allow us to define the concept of an
ascending quasi-hereditary pseudocompact algebra later.

\begin{df}\label{d2}
A poset $(\Gamma, \leq)$ is said to be good if both of the following conditions are satisfied.

1) Each non-minimal element of $\Gamma$ has at least one but
only finitely many predecessors.

2) If $\mu <\lambda$, then there is a predecessor $\lambda'$ of $\lambda$ such that $\mu\leq\lambda'$.
\end{df}
If $\Gamma$ is interval-finite, then the second condition of the above definition is automatically satisfied.

The next proposition characterizes interval-finite good posets.

\begin{pr}\label{pr2}
A poset $(\Lambda, \leq)$ is interval-finite and good poset if and only if every finitely-generated ideal $\Gamma\subseteq\Lambda$  has a descending chain of finitely-generated subideals where
\[\Gamma=\Gamma_0\supseteq\Gamma_1\supseteq\Gamma_2\supseteq\ldots\]
such that $|\Gamma_k\setminus\Gamma_{k+1}| <\infty$ for every $k\geq 0$ and $\bigcap_{k\geq 0}\Gamma_k=\emptyset$. 
Moreover, the above chain of subideals can be chosen in such a way that the elements
of $\Gamma_k\setminus\Gamma_{k+1}$ are pairwise incomparable generators of $\Gamma_k$ for each $k\geq 0$.
\end{pr}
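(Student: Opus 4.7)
The plan is to build the chain by iteratively stripping off maximal elements, and to prove both implications by analyzing this canonical construction. I first observe that the stronger requirement that $\Gamma_k\setminus\Gamma_{k+1}$ be a set of pairwise incomparable generators of $\Gamma_k$ is equivalent to $\Gamma_k\setminus\Gamma_{k+1}=\max(\Gamma_k)$: every maximal element must occur in any generating set, and an antichain of generators that omits a maximal element would contain a generator strictly below another.

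For the forward direction, assume $\Lambda$ is interval-finite and good, fix a finitely generated ideal $\Gamma$, and set $\Gamma_0=\Gamma$ and $\Gamma_{k+1}=\Gamma_k\setminus\max(\Gamma_k)$. Removing the maximal elements from an ideal still yields an ideal, since any element $y\leq x\in\Gamma_{k+1}$ is dominated by $x$ in $\Gamma_k$ and therefore cannot itself be maximal in $\Gamma_k$. To see that $\Gamma_{k+1}$ is finitely generated I use goodness: any $x\in\Gamma_{k+1}$ lies strictly below some $m\in\max(\Gamma_k)$, so condition 2 of Definition \ref{d2} supplies a predecessor $m'$ of $m$ with $x\leq m'$, and condition 1 bounds the number of such $m'$. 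Hence the predecessors of the finitely many maximal elements of $\Gamma_k$ generate $\Gamma_{k+1}$, and in particular $|\Gamma_k\setminus\Gamma_{k+1}|<\infty$.

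The step I expect to be the main obstacle is proving $\bigcap_{k\geq 0}\Gamma_k=\emptyset$. I argue by contradiction: given $x\in\bigcap_k\Gamma_k$, for each $k$ choose $y_k\in\max(\Gamma_k)$ with $y_k\geq x$ (such $y_k$ exists because $x$ itself is not maximal in $\Gamma_k$, else $x\in\Gamma_k\setminus\Gamma_{k+1}$). Since $\Gamma_0$ has finitely many generators, some generator $m$ satisfies $y_k\leq m$ for infinitely many $k$, so infinitely many $y_k$ lie in the finite interval $[x,m]$ by interval-finiteness. Pigeonhole yields $y_i=y_j$ with $i<j$; but $y_i\in\Gamma_i\setminus\Gamma_{i+1}$ while $y_j\in\Gamma_j\subseteq\Gamma_{i+1}$, a contradiction.

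For the converse direction, assume every finitely generated ideal admits such a chain and fix $\mu\leq\lambda$ in $\Lambda$. Applying the hypothesis to $\Gamma=(\lambda]$ and letting $k_1,k_0$ be the indices with $\lambda\in\Gamma_{k_1}\setminus\Gamma_{k_1+1}$ and $\mu\in\Gamma_{k_0}\setminus\Gamma_{k_0+1}$ (necessarily $k_1\leq k_0$ since $\Gamma_k$ is a down-set), any $x\in[\mu,\lambda]$ lies in $\Gamma_{k_1}$ but not $\Gamma_{k_0+1}$, so $[\mu,\lambda]\subseteq\bigcup_{j=k_1}^{k_0}(\Gamma_j\setminus\Gamma_{j+1})$ is finite. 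For goodness condition 1, observe that $(\lambda]$ has $\lambda$ as its unique maximal element, forcing $\Gamma_0\setminus\Gamma_1=\{\lambda\}$; the maximal elements of $\Gamma_1=(\lambda]\setminus\{\lambda\}$ are then precisely the predecessors of $\lambda$, and this set is nonempty (since $\lambda$ is non-minimal, so $\Gamma_1$ is a nonempty finitely generated ideal) and finite. Condition 2 of goodness is automatic from interval-finiteness, as noted after Definition \ref{d2}, by picking the penultimate element of any maximal chain from $\mu$ to $\lambda$.
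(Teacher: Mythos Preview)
Your proof is correct and follows essentially the same route as the paper. Both constructions produce the identical chain: your iterative removal of $\max(\Gamma_k)$ is exactly the paper's stratification by the height function $ht(\mu)=\max_i\{\text{longest chain from }\mu\text{ to }\lambda_i\}$, since $\max(\Gamma_k)=\{\mu:ht(\mu)=k\}$. The only substantive difference is in verifying $\bigcap_k\Gamma_k=\emptyset$: the paper gets this for free because interval-finiteness makes $ht(\mu)$ finite, so $\mu\notin\Gamma_{ht(\mu)+1}$; you instead run a pigeonhole argument inside a finite interval $[x,m]$. Both are valid, the height formulation is just a bit more economical. For the converse, both you and the paper invoke the strong form of the chain (with $\Gamma_k\setminus\Gamma_{k+1}$ the incomparable generators), reading the ``Moreover'' clause as part of the hypothesis. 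One small wording issue: in your opening observation the phrase ``an antichain of generators that omits a maximal element would contain a generator strictly below another'' should read ``contains a non-maximal element'' --- the claim and its proof are standard, only the sentence is garbled.
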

\begin{proof} Assume $(\Lambda, \leq)$ is an interval-finite and good poset and
$\Gamma=\bigcup_{1\leq i\leq t}(\lambda_i]$ is a finitely-generated ideal of $\Lambda$. 
Without a loss of generality we can assume that the elements $\lambda_i$ and $\lambda_j$ are
pairwise incomparable for $i\neq j$. If $\mu\leq\lambda_i$, then there is a chain
$\mu=\mu_0 <\mu_1 <\ldots <\mu_s=\lambda_i$ of a maximal length $s$ such that 
$\mu_j=\mu'_{j+1}$ for  $0\leq j\leq s-1$.
Denote the number $s$ by $ht_i(\mu)$ and set
$ht(\mu)=\max\limits_{i, \mu\leq\lambda_i} ht_i(\mu)$. It is
obvious that for any $k\geq 0$ a set $V_k=\{\mu\in\Gamma |
ht(\mu)=k\}$ is finite and a set $\Gamma_k=\{\mu\in\Gamma |
ht(\mu)\geq k\}$ coincides with a finitely-generated ideal
$\bigcup_{\mu\in V_k} (\mu]$. Since the elements of
$\Gamma_k\setminus\Gamma_{k+1}$ are pairwise incomparable,
the necessary condition follows. 

Conversely, suppose that the necessary
condition holds and assume that $\lambda$ is not a minimal element of
$\Lambda$. Take $\Gamma=(\lambda]$ and consider a chain
\[\Gamma=\Gamma_0\supseteq\Gamma_1\supseteq\Gamma_2\supseteq\ldots\]
as in the statement of the proposition, such that the elements of $\Gamma_k\setminus\Gamma_{k+1}$
are pairwise incomparable generators of $\Gamma_k$ for each $k\geq 0$.
If $\mu <\lambda$, then there is a $k > 0$ such that
$\mu\in\Gamma_k\setminus\Gamma_{k+1}$. Thus $[\mu
,\lambda]\subseteq\Gamma\setminus\Gamma_{k+1}$, which implies that $\Lambda$
is interval-finite. Finally, a predecessor of $\lambda$ either belongs to $\Gamma\setminus\Gamma_1$ or it is a generator of
$\Gamma_1$, and therefore $\Lambda$ is a good poset.
\end{proof}

In what follows we shall always assume that a chain
\[\Gamma=\Gamma_0\supseteq\Gamma_1\supseteq\Gamma_2\supseteq\ldots\]
is chosen so that the elements of $\Gamma_k\setminus\Gamma_{k+1}$ are pairwise incomparable
generators of $\Gamma_k$ for each $k\geq 0$.

\subsection{Ascending quasi-hereditary algebras}

The main concept of this part is that of an ascending quasi-hereditary pseudocompact algebra defined as below.

\begin{df}\label{d3}
Let $A$ be a pseudocompact algebra.
Suppose that all indecomposable projective $A$-modules are finite-dimensional and there is an ascending chain 
of closed ideals of $A$ where
\[0=H_0\subsetneq H_1\subsetneq\ldots\subsetneq H_n\subsetneq\ldots\]
such that 
\begin{enumerate}
\item \label{k1} for any open right ideal $I\subsetneq A$ there is an index $t$ such that $H_t\not\subseteq I$.

\noindent \hskip -0.53in Also suppose the following additional conditions are satisfied for every 

\noindent \hskip -0.53in $n\geq 1$.
\hskip 0.53in
\item \label{k2} $H_n/H_{n-1}$ is a projective pseudocompact $A/H_{n-1}$-module such that   
the number of its non-isomorphic indecomposable projective factors is finite.
\item \label{k3} $Hom_{PC-A}(H_n/H_{n-1}, A/H_n)=0$.
\item \label{k4}
$Hom_{PC-A}(H_n/H_{n-1}, \ rad (H_n/H_{n-1}))=0$. 
\end{enumerate}
Then $A$ is said to be an ascending quasi-hereditary pseudocompact algebra and the
above chain of ideals is called a defining system of ascending ideals.
\end{df}

The initial justification for discussing the above notion of an ascending quasi-hereditary algebra is provided by the following theorem. 
\begin{tr}\label{tr2}
If $A-Dis$ is a highest weight category with respect to a good finitely-generated poset
($\Gamma, \leq)$, then $A$ is an ascending quasi-hereditary pseudocompact
algebra with respect to a defining ascending system of closed ideals such that 
$H_n=H(\Gamma_n)$, where each $\Gamma_n$ is constructed as in Proposition \ref{pr2}. 
\end{tr}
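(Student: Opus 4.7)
The plan is to take $H_n=H(\Gamma_n)$ with $\Gamma_n$ the descending chain of subideals guaranteed by Proposition \ref{pr2}, and to verify the four conditions of Definition \ref{d3} in order. By Lemma \ref{lm3} each $H_n=\overline{Ae_{\Gamma\setminus\Gamma_n}A}$ is a closed ideal, and the chain is strictly ascending: for $\mu\in\Gamma_n\setminus\Gamma_{n+1}\subseteq\Gamma_n$, the simple $L(\mu)$ still lies in $A/H_n$, so $e_\mu\notin H_n$; but $e_\mu\in H_{n+1}$ since $\mu\in\Gamma\setminus\Gamma_{n+1}$. Condition (1) follows because, given a proper open right ideal $I$, finite-dimensionality of the image of $1=\sum_\lambda e_\lambda$ in $A/I$ forces $e_\lambda\notin I$ for some $\lambda$; since $\bigcap_n\Gamma_n=\emptyset$, this $\lambda$ eventually lies in $\Gamma\setminus\Gamma_n$ and then $H_n\not\subseteq I$.

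For the remaining conditions I would pass to the pseudocompact quotient $B=A/H_{n-1}$, whose discrete module category inherits a highest weight structure with poset $\Gamma_{n-1}$, and set $F=\Gamma_{n-1}\setminus\Gamma_n$, a finite antichain of maximal elements of $\Gamma_{n-1}$. For each $\lambda\in F$ there is no weight in $\Gamma_{n-1}$ strictly above $\lambda$, so the standard filtration of $P_B(\lambda)$ produced by Lemma \ref{lm4} collapses to $P_B(\lambda)=\Delta(\lambda)=e_\lambda B$, a finite-dimensional projective. Since $L(\lambda)$ has multiplicity one in $\Delta(\lambda)$ and every other composition factor has weight strictly below $\lambda$, no nonzero morphism $\Delta(\lambda)\to rad\,\Delta(\lambda)$ can exist; combined with Corollary \ref{c1} this forces $End_B(\Delta(\lambda))$ to be a finite-dimensional local division algebra over algebraically closed $K$, hence $End_B(\Delta(\lambda))=K$.

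Now $H_n/H_{n-1}$ is identified via Lemma \ref{lm3} with $\overline{Be_F B}=\overline{\sum_{\lambda\in F}Be_\lambda B}$. Interpreting each $Be_\lambda B$ as the trace of $\Delta(\lambda)=e_\lambda B$ in $B$, and using $End_B(\Delta(\lambda))=K$, I expect an isomorphism of right pseudocompact $B$-modules $Be_\lambda B\simeq\prod_{J_\lambda}\Delta(\lambda)$, where $J_\lambda$ indexes a $K$-basis of $Be_\lambda$. Because the elements of $F$ are pairwise incomparable, $Hom(\Delta(\lambda),\Delta(\mu))=0$ for $\lambda\neq\mu$ in $F$, so these traces are independent and $H_n/H_{n-1}=\bigoplus_{\lambda\in F}Be_\lambda B$ is a finite direct sum of projective pseudocompact $B$-modules with exactly $|F|<\infty$ indecomposable types, verifying condition (2). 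Conditions (3) and (4) then reduce to weight bookkeeping: every morphism out of $H_n/H_{n-1}$ is controlled by its action on $top(H_n/H_{n-1})=\bigoplus_{\lambda\in F}M(\lambda)^{J_\lambda}$; the target in (3) is $A/H_n$, whose composition factors lie in $\Gamma_n$ while each $\lambda\in F$ satisfies $\lambda\notin\Gamma_n$, and the target in (4) is $rad(H_n/H_{n-1})=\bigoplus rad(\Delta(\lambda))^{J_\lambda}$, whose composition factors have weight strictly below some element of $F$, so $M(\lambda)$ for $\lambda\in F$ never appears; in both cases the morphism must vanish on the top, hence is zero.

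The main obstacle is the identification $Be_\lambda B\simeq\prod_{J_\lambda}\Delta(\lambda)$. In the finite-dimensional setting this is a routine trace-of-projective-with-field-endomorphism-ring calculation, but in our pseudocompact setting one must replace direct sums by topological products, confirm that the closure in $\overline{Be_\lambda B}$ agrees with this product, and check the identification is actually a homeomorphism of pseudocompact modules. The remark before Proposition \ref{pr1} on $\overline{eV}=e\overline{V}$, together with Lemma \ref{lm1} and the finite-dimensionality of each $\Delta(\lambda)$, should allow this comparison to be carried out without further surprises.
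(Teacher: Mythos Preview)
Your verification of conditions (1), (3), and (4) is essentially the same as the paper's. There are two points where you diverge.

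First, you omit a preliminary requirement of Definition~\ref{d3}: that \emph{every} indecomposable projective pseudocompact $A$-module is finite-dimensional, not just the $P_B(\lambda)$ with $\lambda$ maximal in $\Gamma_{n-1}$. The paper handles this at the outset by observing that since $\Gamma$ is good and finitely generated, the set $\{\mu\mid\mu\geq\lambda\}$ is finite for every $\lambda\in\Gamma$; hence each $I(\lambda)$ has a finite good filtration by finite-dimensional costandard objects and is therefore finite-dimensional, and dually so is each $P(\lambda)$. This step is short but not optional.

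Second, for condition~(2) the paper takes a more direct route that sidesteps the trace-ideal obstacle you flag. Instead of analyzing $Be_\lambda B$, the paper uses the product decomposition $B=\prod_\mu P_B(\mu)^{m_\mu}$ together with the fact that $O^{\Gamma_n}$ distributes over products to write
\[
H_n/H_{n-1}=\prod_\mu O^{\Gamma_n}\bigl(P_B(\mu)\bigr)^{m_\mu}.
\]
It then argues, via a rearrangement of the standard filtration (dual of Lemma~3.2(b) in \cite{cps}), that each $P_B(\mu)$ contains a submodule $M$ which is a direct sum of those standard factors $\Delta(\nu)$ with $\nu\in F$, and that $P_B(\mu)/M$ is filtered by $\Delta(\pi)$ with $\pi\in\Gamma_n$; so $O^{\Gamma_n}(P_B(\mu))=M$. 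Since each such $\Delta(\nu)=P_B(\nu)$ is projective, it would split off the indecomposable $P_B(\mu)$; hence $M=0$ when $\mu\in\Gamma_n$ and $M=P_B(\mu)=\Delta(\mu)$ when $\mu\in F$. This exhibits $H_n/H_{n-1}$ directly as a product of the finitely many projective types $\{\Delta(\lambda)\mid\lambda\in F\}$, with no need to compare $\overline{Be_\lambda B}$ with a topological product of copies of $\Delta(\lambda)$. Your trace-ideal approach is not wrong and would ultimately recover the same module, but the factor-by-factor computation of $O^{\Gamma_n}$ on the product makes the projectivity of $H_n/H_{n-1}$ immediate and avoids the closure and completeness issues you correctly identify as the main obstacle.
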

\begin{proof}
It is obvious that a set $\{\mu |\mu\geq\lambda\}$ for any $\lambda\in\Gamma$ is finite.
Therefore every indecomposable injective module in $A-Dis$ has a finite good filtration. By our assumption, indecomposable 
injective modules are of finite length and are therefore finite-dimensional. Consequently, 
all indecomposable projective modules in $PC-A$ are finite-dimensional. 

According to Lemma \ref{lm3}, a descending chain of finitely-generated subideals $\Gamma_n$ of $\Gamma$ from Proposition \ref{pr2} yields an ascending chain of closed ideals $H_n$ of $A$. 

For a proper right open ideal $I$ of $A$, let $\{M(\lambda_i)\}_{1\leq i\leq
\ell}$ be a collection of all pairwise non-isomorphic composition
factors of $A/I$. 
Since $\bigcap_{k\geq0}\Gamma_k=\emptyset$, there is an index $t$ such that 
$\lambda_i\notin\Gamma_t$ for every $1\leq i\leq \ell$.
Then $H_t\not\subseteq I$, which proves condition (\ref{k1}) of Definition \ref{d3}.

Using Theorem 3.5 of \cite{cps}, we obtain that
$(A/H_n)-Dis=(A-Dis)[\Gamma_n]$ is a highest weight category
with respect to the poset $(\Gamma_n , \leq)$.
Therefore, we can proceed by induction on $n$ and it is enough to prove the
conditions (\ref{k2}), (\ref{k3}) and (\ref{k4}) of Definition \ref{d3} for $n=0$.

Let $A=\prod_{\lambda\in \Gamma} P(\lambda)^{m_{\lambda}}$,
where $m_{\lambda}$ can be infinite. Then
$$H_1=\prod_{\lambda\in\Lambda} O^{\Gamma_1}(P(\lambda))^{m_{\lambda}}.$$ 
By the dual of Lemma 3.2(b) and (v) from \cite{cps}, each module $P(\lambda)$ contains 
a submodule $M$ that is a direct sum of modules isomorphic to $\Delta(\mu)$, where $\mu\notin \Gamma_1$.
Since the factormodule $P(\lambda)/M$ is filtered by modules $\Delta(\pi)$, where $\pi\in\Gamma_1$, we 
get $O^{\Gamma_1}(P(\lambda))=M$. 
Therefore, $O^{\Gamma_1}(P(\lambda))=P(\lambda)=\Delta(\lambda)$ if $\lambda\notin \Gamma_1$,
and $O^{\Gamma_1}(P(\lambda))=0$ if $\lambda\in \Gamma_1$.
Thus each $O^{\Gamma_1}(P(\lambda))$ is a projective module; and consequently, so is $H_1$. 
Since there are only finitely many non-isomorphic indecomposable projective factors of $H(\Gamma_1)$, namely 
those isomorphic to modules $\Delta(\mu)$, where $\mu\notin\Gamma_1$,
condition (\ref{k2}) holds.

Condition (\ref{k3}) follows from the definition of functors $O^{\Gamma}$. 

If $\lambda\notin\Gamma_1$, then 
$[\Delta(\lambda) : M(\lambda)]=1$ implies $rad H_1$ belongs to $\Gamma_1$, that is 
$Hom_{PC-A}(H_1, rad H_1)=0$. This proves condition (\ref{k4}).
\end{proof}

Our next goal is to show that, vice-versa, if $A$ is an ascending quasi-hereditary algebra, 
then $A-Dis$ is a highest weight category.
Assume that $A$ is an ascending quasi-hereditary pseudocompact algebra and 
isomorphism classes of simple pseudocompact $A$-modules are indexed by elements $\lambda$ of a set $\Lambda$, and denote by
$M(\lambda)$ a simple pseudocompact $A$-module corresponding to $\lambda$.

 Then $M(\lambda)\simeq A/I$ for
an open right maximal ideal $I$ of $A$. According to condition (\ref{k1}) of Definition \ref{d3}, there is a maximal number
$k(\lambda)$ such that $H_{k(\lambda)}\subseteq I$.

The following pair of lemmas is of crucial technical importance.

\begin{lm}\label{lm5}
The index $k=k(\lambda)$, defined above, does not depend on a choice of the open right maximal ideal $I$ of $A$. 
Moreover, for any number $t$, there are only finitely many $\lambda\in\Lambda$ such that $k(\lambda)=t$.
\end{lm}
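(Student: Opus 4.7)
The plan is to reformulate $k(\lambda)$ intrinsically as a property of the right $A$-module $M(\lambda)$, and then use condition (\ref{k2}) of Definition \ref{d3} to bound the set of weights at each level.

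For the first assertion, note that if $M(\lambda)\simeq A/I$ as right $A$-modules, then $M(\lambda)\cdot H_k$ is the image of the multiplication $A/I\times H_k\to A/I$, namely $(AH_k+I)/I=(H_k+I)/I$, since $H_k$ is a two-sided closed ideal and $AH_k=H_k$. Thus $M(\lambda)H_k=0$ if and only if $H_k\subseteq I$, and $k(\lambda)$ equals the largest $k$ with $M(\lambda)H_k=0$. This characterization depends only on the isomorphism class of $M(\lambda)$, not on the choice of $I$.

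For the second assertion, fix $t$ and set $\bar{A}=A/H_t$, $\bar{H}=H_{t+1}/H_t$. The condition $k(\lambda)=t$ says that $M(\lambda)$ is a simple right $\bar{A}$-module with $M(\lambda)\bar{H}\neq 0$; in particular the image of $e_\lambda$ in $\bar{A}$ (still denoted $e_\lambda$) is a nonzero primitive idempotent. Using $M(\lambda)=e_\lambda\bar{A}/e_\lambda rad \bar{A}$ together with $\bar{A}\bar{H}=\bar{H}$, the nonvanishing condition is equivalent to $e_\lambda\bar{H}\not\subseteq e_\lambda rad \bar{A}$. Now $e_\lambda\bar{H}$ is a closed right $\bar{A}$-submodule of $e_\lambda\bar{A}$ (by Lemma \ref{lm1}, applied to left multiplication by $e_\lambda$), and by Corollary \ref{c1}, $e_\lambda\bar{A}=P_{\bar{A}}(\lambda)$ is a local pseudocompact module whose unique maximal open submodule is $e_\lambda rad \bar{A}$. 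A pseudocompact Nakayama-type argument (a nonzero pseudocompact module has a nonzero top, since the finite-dimensional quotients $M/U$ do) then forces $e_\lambda\bar{H}=e_\lambda\bar{A}$; in particular, $e_\lambda\in\bar{H}$.

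With $e_\lambda\in\bar{H}$, the Peirce decomposition $\bar{H}=e_\lambda\bar{H}\oplus(1-e_\lambda)\bar{H}=e_\lambda\bar{A}\oplus(1-e_\lambda)\bar{H}$ exhibits $P_{\bar{A}}(\lambda)$ as an indecomposable projective direct summand of $\bar{H}$. By condition (\ref{k2}) of Definition \ref{d3}, only finitely many non-isomorphic indecomposable projectives appear as summands of $\bar{H}$, and since distinct $\lambda\neq\mu$ give non-isomorphic simple tops and hence non-isomorphic projective covers $P_{\bar{A}}(\lambda)\not\simeq P_{\bar{A}}(\mu)$, only finitely many $\lambda$ can satisfy $k(\lambda)=t$. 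The main obstacle is the identification $e_\lambda\bar{H}=e_\lambda\bar{A}$, which requires Lemma \ref{lm1} together with a pseudocompact Nakayama argument; after that, the extraction of $P_{\bar{A}}(\lambda)$ as a direct summand is a standard Peirce-type splitting.
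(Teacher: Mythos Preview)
Your proof is correct. For the first assertion, your approach is genuinely cleaner than the paper's: you observe that $k(\lambda)$ is the largest $k$ with $M(\lambda)H_k=0$, a condition that manifestly depends only on the isomorphism class of $M(\lambda)$. The paper instead takes two presentations $A/I$ and $A/I_1$, introduces $I_2=I\cap I_1$, and uses condition (\ref{k3}) of Definition \ref{d3} to derive a contradiction from $k>k_2$. Your argument avoids condition (\ref{k3}) entirely for this part and is a one-line module-theoretic reformulation.

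For the second assertion, both proofs reach the same endpoint --- that $M(\lambda)$ with $k(\lambda)=t$ appears in $top(H_{t+1}/H_t)$, whence condition (\ref{k2}) bounds the number of such $\lambda$ --- but by slightly different routes. The paper argues directly from $H_{t+1}+I=A$ that $M(\lambda)$ is a simple quotient of the projective $H_{t+1}/H_t$ and hence a summand of its top (via Lemma \ref{lm2}). You instead pass through the idempotent $e_\lambda$, show $e_\lambda\bar{H}=e_\lambda\bar{A}$, deduce $e_\lambda\in\bar{H}$, and split off $P_{\bar{A}}(\lambda)$ by a Peirce decomposition. This is a bit longer but perfectly valid; note that since Definition \ref{d3} assumes all indecomposable projectives are finite-dimensional, $e_\lambda\bar{A}$ is finite-dimensional and the closedness and Nakayama steps you flag as obstacles are in fact automatic. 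One small point you leave implicit: that $e_\lambda$ remains nonzero and primitive in $\bar{A}=A/H_t$ follows from $M(\lambda)H_t=0$, which gives $e_\lambda H_t\subseteq e_\lambda\,rad A$, so $e_\lambda\bar{A}$ still has the simple top $M(\lambda)$.
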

\begin{proof} 
Since $H_{k+1}+I=A$, $M(\lambda)$ is a direct factor of $top(H_{k+1}/H_k)$. Let $M(\lambda)\simeq A/I_1$ for another open
right ideal $I_1$ different from $I$. Let $k_1$ and $k_2$ be maximal numbers such that 
$H_{k_1}\subseteq I_1$ and $H_{k_2}\subseteq I_2=I\bigcap I_1$, respectively.  It is clear that $k,k_1\geq k_2$. 
Assume that $k > k_2$. Then $H_{{k_2}+1}+I_2=I$ and $M(\lambda)\simeq I/I_2$ is a direct factor of 
$top(H_{k_2 +1}/H_{k_2})$ by Lemma \ref{lm2}.
By condition (\ref{k3}) of Definition \ref{d3}, $M(\lambda)$ does not appear as a composition factor
of $A/H_{k_2 +1}$. On the other hand, $M(\lambda)$ is a factor of
$H_{k+1}/H_{k_2 +1}$ which is a contradiction implying that $k=k_2$. Symmetrically, we also get $k=k_1$. 

If $k(\lambda)=t$, then $M(\lambda)$ is a direct factor of $top(H_{t+1}/H_t)$. By condition (\ref{k2}) of Definition \ref{d3}, 
there are only finitely many such $\lambda\in \Lambda$.
\end{proof}

\begin{lm}\label{lm6}
For any $\lambda\in\Lambda$, a maximal index $k$, such that 
 $M(\lambda)$ is a composition factor of $A/H_k$, equals $k(\lambda)$.
\end{lm}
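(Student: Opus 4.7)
The plan is to prove $k=k(\lambda)$, where $k$ denotes the maximal index such that $M(\lambda)$ is a composition factor of $A/H_k$, by establishing the two inequalities separately.

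For the lower bound $k\geq k(\lambda)$, I would simply invoke the observation made at the outset of the proof of Lemma \ref{lm5}: since $H_{k(\lambda)+1}+I=A$ for an open maximal right ideal $I$ with $A/I\simeq M(\lambda)$, the simple module $M(\lambda)$ is a direct factor of $top(H_{k(\lambda)+1}/H_{k(\lambda)})$. As $H_{k(\lambda)+1}/H_{k(\lambda)}$ sits inside $A/H_{k(\lambda)}$ as a submodule, it follows at once that $M(\lambda)$ is a composition factor of $A/H_{k(\lambda)}$, giving $k\geq k(\lambda)$.

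For the upper bound $k\leq k(\lambda)$, I would argue by contradiction and suppose that $M(\lambda)$ is a composition factor of $A/H_{k(\lambda)+1}$. By the pseudocompact notion of composition factor recalled earlier in the paper, $M(\lambda)$ then appears as a composition factor of some finite-dimensional quotient $(A/H_{k(\lambda)+1})/K$ for an open submodule $K$. Since the ideal $H_{k(\lambda)+1}$ annihilates $A/H_{k(\lambda)+1}$ on the right, it annihilates this finite quotient, and hence every one of its simple subquotients --- in particular $M(\lambda)$. Writing $M(\lambda)\simeq A/I'$ for some open maximal right ideal $I'$, the relation $M(\lambda)\cdot H_{k(\lambda)+1}=0$ forces $H_{k(\lambda)+1}\subseteq I'$. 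Applying Lemma \ref{lm5} to this particular $I'$ now yields $k(\lambda)\geq k(\lambda)+1$, which is absurd.

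The only step that takes a little care is the passage from ``$M(\lambda)$ is a composition factor of $A/H_{k(\lambda)+1}$'' to ``$H_{k(\lambda)+1}$ annihilates $M(\lambda)$''. It hinges on unwinding the pseudocompact definition of composition factor through a finite-dimensional subquotient, combined with the elementary fact that any subquotient of a module killed by a closed ideal remains killed by that ideal. Once this is in hand, the entire argument reduces to a direct appeal to Lemma \ref{lm5} in each direction.
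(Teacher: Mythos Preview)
Your proof is correct and follows essentially the same line as the paper's. The paper establishes $k\geq k(\lambda)$ in the same way, and for $k\leq k(\lambda)$ it picks $H_k\subseteq M\subseteq K$ with $K/M\simeq M(\lambda)$ and builds the epimorphism $A\to K/M$, $a\mapsto xa+M$, to exhibit an open maximal ideal containing $H_k$; your annihilation argument (using that $H_{k(\lambda)+1}$ is a two-sided ideal killing $A/H_{k(\lambda)+1}$, hence every subquotient) reaches the same conclusion by a slightly different route but with the same appeal to Lemma~\ref{lm5}.
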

\begin{proof} Clearly, $M(\lambda)$ is a composition factor of $A/H_{k(\lambda)}$, which implies $k\geq k(\lambda)$.
If $M(\lambda)$ is a composition factor of $A/H_k$, then
there is an open right ideal $M\subseteq K$ of $A$ such that
$H_k\subseteq M$ and $K/M\simeq M(\lambda)$. Choose $x\in
K\setminus M$ and construct a continuous epimorphism $A\to K/M$ by
$a\mapsto xa + M$. The kernel of this epimorphism is an open ideal of $A$ containing $H_k$; hence by 
Lemma \ref{lm5} we obtain $k(\lambda)\geq k$.
\end{proof}

Define a partial order on $\Lambda$ by $\lambda\ < \mu$ if and only if
$k(\lambda) > k(\mu)$. 

The following theorem that generalizes Theorem 3.6 of \cite{cps} (see
also Proposition A3.7 from \cite{don1}) completes the description of the relationship 
of ascending quasi-hereditary pseudocompact algebras to highest weight categories.

\begin{tr}\label{tr1}
Let $A$ be an ascending quasi-hereditary pseudocompact algebra with a
defining system $\{H_n\}$ of ascending ideals. Then
$A-Dis$ is a highest weight category with respect to a poset
$(\Lambda,\leq)$, where the partial order $\leq$ defined above depends on $\{H_n\}$.
\end{tr}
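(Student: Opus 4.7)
The plan is to verify the hypotheses of Lemma~\ref{lm4} by defining, for each $\lambda\in\Lambda$, the standard module $\Delta(\lambda) = e_\lambda A / e_\lambda H_{k(\lambda)}$ and producing a finite descending filtration of $P(\lambda) = e_\lambda A$ whose factors are the required $\Delta(\mu_i)$'s. Interval-finiteness of $(\Lambda,\leq)$ is automatic from Lemma~\ref{lm5}, since a bounded interval $[\mu,\lambda]$ forces $k(\pi)$ into the finite range $[k(\lambda),k(\mu)]$. The heart of the argument lies in analyzing the ascending chain of closed right submodules $0 = e_\lambda H_0 \subseteq e_\lambda H_1 \subseteq \ldots$ inside $P(\lambda)$; reversing this chain will yield the required descending filtration.

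First I would show that $e_\lambda H_{k(\lambda)+1} = P(\lambda)$: the quotient $P(\lambda)/e_\lambda H_{k(\lambda)+1}$ embeds into $A/H_{k(\lambda)+1}$, so if it were nonzero its top would be a quotient of $top(P(\lambda)) = M(\lambda)$, but Lemma~\ref{lm6} forbids $M(\lambda)$ from appearing as a composition factor of $A/H_{k(\lambda)+1}$. Next, for each $n$, I identify the factor $V_n := e_\lambda H_n / e_\lambda H_{n-1} \cong e_\lambda(H_n/H_{n-1})$. By condition~(2) of Definition~\ref{d3}, $H_n/H_{n-1}$ is projective over $A/H_{n-1}$, so decomposes as $\bigoplus_\sigma P_{A/H_{n-1}}(\sigma)^{m_\sigma}$ with $k(\sigma)\geq n-1$; condition~(3) is crucial here, because $Hom_{PC-A}(P_{A/H_{n-1}}(\sigma), A/H_n) \simeq (A/H_n)e_\sigma$ vanishes precisely when $k(\sigma) < n$ (using Lemma~\ref{lm6}), which forces $k(\sigma) = n-1$ and thus $P_{A/H_{n-1}}(\sigma) = \Delta(\sigma)$. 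Since the left $e_\lambda$-action on the bimodule $H_n/H_{n-1}$ is an idempotent endomorphism of right $A/H_{n-1}$-modules, $V_n$ is a direct summand of $H_n/H_{n-1}$, and Krull--Schmidt gives $V_n \cong \bigoplus \Delta(\sigma)$ with $k(\sigma) = n-1$. For $n \leq k(\lambda)$ these factors involve only $\sigma > \lambda$, while for $n = k(\lambda)+1$ the top factor $V_{k(\lambda)+1}$ has nonzero top---$\Delta(\lambda)\neq 0$ because $e_\lambda \notin H_{k(\lambda)}$, which follows from $e_\lambda \notin I$ for every maximal right ideal $I$ with $A/I \simeq M(\lambda)$---which is a quotient of $top(P(\lambda)) = M(\lambda)$; being also a sum of $\Delta(\sigma)$'s at level $k(\lambda)$ with semisimple top $M(\lambda)$, it must equal exactly one copy of $\Delta(\lambda)$.

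Refining each $V_n$ according to a chosen direct sum decomposition $V_n = \bigoplus_j \Delta(\sigma_{n,j})$ and pulling back preimages into $e_\lambda H_n$ produces a finite descending filtration $P(\lambda) = N_0 \supsetneq N_1 \supsetneq \ldots \supsetneq N_M = 0$ with $N_i/N_{i+1}\simeq\Delta(\mu_i)$, $\mu_0 = \lambda$, and $\mu_i > \lambda$ for $i \geq 1$. The remaining hypotheses of Lemma~\ref{lm4}---termination at $0$, finite multiplicity of each $\mu$ as some $\mu_i$, and finiteness of $\dim_K Hom_{PC-A}(\Delta(\lambda),\Delta(\mu))$ and $[\Delta(\lambda):M(\mu)]$---all hold since the filtration is finite and each standard module is finite-dimensional. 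The main technical obstacle is the identification of $V_n$ as a genuine direct sum of $\Delta(\sigma)$'s at level $n-1$: the abstract right $A/H_{n-1}$-module decomposition of $H_n/H_{n-1}$ does not canonically diagonalize the left $e_\lambda$-action, so one must combine the projectivity of $V_n$ (as a direct summand of a projective) with the precise use of condition~(3) of Definition~\ref{d3} through the vanishing of $(A/H_n)e_\sigma$, rather than naively pushing the bimodule decomposition through abstract isomorphisms.
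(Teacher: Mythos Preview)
Your approach is close in spirit to the paper's but has a genuine gap: you never invoke condition~(\ref{k4}) of Definition~\ref{d3}, and without it the argument does not establish that $A-Dis$ is a highest weight category. The objects you call $\Delta(\sigma) = e_\sigma A / e_\sigma H_{k(\sigma)} = P_{A/H_{k(\sigma)}}(\sigma)$ are \emph{not a priori} the standard modules $P(\sigma)/O^{(\sigma]}(P(\sigma))$ that Lemma~\ref{lm4} requires. In the order $\leq$ you have $(\sigma] \subseteq \Lambda_{k(\sigma)} := \{\mu \mid k(\mu) \geq k(\sigma)\}$, with the inclusion typically proper whenever there is some $\mu\neq\sigma$ with $k(\mu)=k(\sigma)$. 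Hence $O^{\Lambda_{k(\sigma)}}(P(\sigma)) \subseteq O^{(\sigma]}(P(\sigma))$, and your $\Delta(\sigma)$ only surjects onto the correct standard module; its radical could contain a composition factor $M(\mu)$ with $\mu$ incomparable to $\sigma$. If so, the dual $\nabla(\sigma)$ would fail the axiom ``$[\nabla(\sigma)/L(\sigma):L(\mu)]\neq 0 \Rightarrow \mu<\sigma$'' of a highest weight category, and your filtration of $P(\lambda)$ would not be by standard objects in the sense of Lemma~\ref{lm4}.

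Condition~(\ref{k4}) is precisely what closes this gap: since your $\Delta(\sigma)$ is a direct summand of $H_{k(\sigma)+1}/H_{k(\sigma)}$, any $M(\mu)$ with $k(\mu)=k(\sigma)$ appearing in $\mathrm{rad}\,\Delta(\sigma)$ would yield a nonzero map $\Delta(\mu)\to \mathrm{rad}(H_{k(\sigma)+1}/H_{k(\sigma)})$, contradicting $Hom_{PC-A}(H_{k(\sigma)+1}/H_{k(\sigma)},\, \mathrm{rad}(H_{k(\sigma)+1}/H_{k(\sigma)}))=0$. Once you insert this step, your $\Delta(\sigma)$ agrees with $P(\sigma)/O^{(\sigma]}(P(\sigma))$ and the rest of your argument goes through. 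The paper's proof uses condition~(\ref{k4}) for exactly the same purpose (``the radical of $P(\lambda_i)$ does not contain composition factors isomorphic to any $M(\lambda_j)$''), though it organizes the induction differently, passing to $A/H_1$ rather than slicing each $P(\lambda)$ directly by the chain $e_\lambda H_n$.
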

\begin{proof} By Lemma \ref{lm5},  $(\Lambda, \leq)$ is an interval-finite and good poset. 
Using condition (\ref{k2}) of Definition \ref{d3}, we can write $H_1=\prod_{1\leq i\leq \ell} P(\lambda_i)^{m_i}$.
Then condition (\ref{k3}) implies $k(\lambda_i)=0$ for every $1\leq i\leq \ell$.
Conversely, assume that $k(\pi)=0$ for some $\pi\in\Lambda$ and 
write $M(\pi)=A/I$ for an open ideal $I$ of $A$.
Since $H_1\not\subseteq I$, we obtain that $M(\pi)$ is a direct summand of $top(H_1)$, and therefore 
$\pi=\lambda_i$ for some $1\leq i\leq \ell$.
We conclude that $\Lambda$ is generated by $\lambda_1, \ldots, \lambda_{\ell}$.

Define $\Lambda_1=\Lambda\setminus\{\lambda_1, \ldots, \lambda_{\ell}\}$.
We shall show that $H_1=H(\Lambda_1)$. Condition (\ref{k3}) of Definition \ref{d3} implies that $H(\Lambda_1)\subseteq H_1$.
If $H(\Lambda_1)\neq H_1$, then there is a proper open submodule $N$ of $H_1$ such that 
$H_1/N$ belongs to $\Lambda_1$. The module $H_1/N$ is a factormodule of a finite direct sum of projective modules of the form $P(\lambda_i)$
for some $1\leq i \leq \ell$.
Therefore one $M(\lambda_i)$ is a direct summand of $top(H_1/N)$; and this contradicts our assumption that
$H_1/N$ belongs to $\Lambda_1$. 

We can use induction on $t$ to show analogously that $H_t=H(\Lambda_t)$, where $\Lambda_t=\{\lambda|k(\lambda)\geq t\}$.

If $A=\prod_{\lambda\in\Lambda} P(\lambda)^{m_\lambda}$, then 
$\prod_{\lambda\in\Lambda} O^{\Lambda_1}(P(\lambda))^{m_{\lambda}}=H_1=\prod_{1\leq i\leq \ell} P(\lambda_i)^{m_i}$,
which shows that every $O^{\Gamma_1}(P(\lambda))$ is isomorphic to a
direct sum of copies of each $P(\lambda_i)$. 

Finally, condition (\ref{k4}) implies that the radical of $P(\lambda_i)$ does
not contain composition factors isomorphic to any $M(\lambda_j)$,
which means $P(\lambda_i)=\Delta(\lambda_i)$. 

Arguing by induction on
dimensions of indecomposable projective modules, we prove that all of them have required
finite $\Delta$-filtrations. Lemma \ref{lm4} concludes the proof.
\end{proof}

\subsection{Descending quasi-hereditary algebras}

In the definition of ascending quasi-hereditary algebra we have required an existence of an increasing filtration of closed ideals of $A$.
Now, in a similar fashion, we would like to consider a symmetric case when there is a descending filtration of closed ideals of $A$ where
\[A=G_0 \supsetneq G_1 \supsetneq \ldots \supsetneq G_n \supsetneq \ldots .\]

We start with a definition of a {\it successor} and a {\it cogood} poset.

\begin{df}\label{d6}
Let $(\Gamma , \leq)$ be a poset and let $\lambda , \mu\in\Gamma$. We
call $\mu$ a {\it successor} of $\lambda$ if and only if 
$\mu$ is a predecessor of $\lambda$ with respect to the opposite order $\leq^{op}$. 
\end{df}

In what follows we shall denote a successor of $\lambda$ by $'\lambda$.

The following symmetrical version of Definition \ref{d2} is needed in order to define descending quasi-hereditary pseudocompact algebras later.

\begin{df}\label{d7}
A poset $(\Gamma, \leq)$ is said to be 
cogood if the following conditions are satisfied.
\begin{enumerate}
\item  Each non-maximal element of $\Gamma$ has at least one but
only finitely many successors.
\item If $\mu > \lambda$, then
there is a successor $'\lambda$ of $\lambda$ such that $\mu\geq \ '\lambda$.
\end{enumerate}
\end{df}
Again, if $\Gamma$ is interval-finite, then the second condition of the above definition
is automatically satisfied. 

Clearly, the corresponding symmetrical version of  
Proposition \ref{pr2} holds.
\begin{pr}\label{pr4}
A poset $(\Lambda, \leq)$ is interval-finite and cogood if and only if every finitely-cogenerated coideal $\Gamma\subseteq\Lambda$  has a descending chain of finitely-cogenerated
subcoideals where 
\[\Gamma=\Gamma_0\supseteq\Gamma_1\supseteq\Gamma_2\supseteq\ldots\]
such that $|\Gamma_k\setminus\Gamma_{k+1}| <\infty$ for every $k\geq 0$ and $\cap_{k\geq 0}\Gamma_k = \emptyset$.
In this case coideals can be chosen such that elements of $\Gamma_k\setminus\Gamma_{k+1}$ are pairwise incomparable
cogenerators of $\Gamma_k$ for each $k\geq 0$.
\end{pr}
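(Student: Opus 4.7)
The plan is to deduce Proposition \ref{pr4} from Proposition \ref{pr2} by passing to the opposite order. First I would observe that the notions involved are systematically dual: a coideal of $(\Lambda,\leq)$ is precisely an ideal of $(\Lambda,\leq^{op})$, a finitely-cogenerated coideal corresponds to a finitely-generated ideal, the cogenerators of a coideal are exactly the generators of the same set viewed as an ideal in $\leq^{op}$, and by Definition \ref{d6} a successor of $\lambda$ in $(\Lambda,\leq)$ is a predecessor of $\lambda$ in $(\Lambda,\leq^{op})$. Consequently, comparing Definition \ref{d7} with Definition \ref{d2}, a poset $(\Lambda,\leq)$ is cogood if and only if $(\Lambda,\leq^{op})$ is good; and since interval-finiteness is manifestly self-dual, $(\Lambda,\leq)$ is interval-finite and cogood if and only if $(\Lambda,\leq^{op})$ is interval-finite and good.

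Once this dictionary is in place, I would simply apply Proposition \ref{pr2} to $(\Lambda,\leq^{op})$. A finitely-cogenerated coideal $\Gamma$ of $(\Lambda,\leq)$ is a finitely-generated ideal of $(\Lambda,\leq^{op})$; Proposition \ref{pr2} then supplies a descending chain $\Gamma=\Gamma_0\supseteq\Gamma_1\supseteq\Gamma_2\supseteq\ldots$ of finitely-generated subideals of $(\Lambda,\leq^{op})$, i.e.\ finitely-cogenerated subcoideals of $(\Lambda,\leq)$, such that $|\Gamma_k\setminus\Gamma_{k+1}|<\infty$ and $\bigcap_{k\geq 0}\Gamma_k=\emptyset$, and moreover allows the elements of $\Gamma_k\setminus\Gamma_{k+1}$ to be chosen pairwise incomparable generators of $\Gamma_k$ in $(\Lambda,\leq^{op})$. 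Pairwise incomparability does not depend on the direction of the order, and generators in $\leq^{op}$ are exactly cogenerators in $\leq$, which gives the forward direction of the conclusion. The converse is handled analogously, by feeding a chain with the stated properties into the converse half of the argument of Proposition \ref{pr2} applied to $(\Lambda,\leq^{op})$.

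Since the argument is a purely formal dualization, there is no genuine obstacle. The only point requiring attention is that every auxiliary notion introduced in the proof of Proposition \ref{pr2} must be replaced by its dual, for instance the height function $ht_i(\mu)$ built from maximal chains $\mu=\mu_0<\mu_1<\cdots<\mu_s=\lambda_i$ of predecessors becomes a coheight function built from maximal chains of successors descending from a cogenerator of $\Gamma$; this translation is automatic once the dictionary above is fixed, and therefore the proof of Proposition \ref{pr4} may, if desired, simply be omitted with a reference to Proposition \ref{pr2}.
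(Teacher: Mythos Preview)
Your proposal is correct and matches the paper's approach exactly: the paper does not give a separate proof of Proposition~\ref{pr4} at all, but simply remarks that ``the corresponding symmetrical version of Proposition~\ref{pr2} holds,'' which is precisely the dualization via $(\Lambda,\leq^{op})$ that you carry out.
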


Now we are ready to introduce descending quasi-hereditary pseudocompact algebras.

\begin{df}\label{d8}
Let $A$ be a pseudocompact algebra.
Suppose that there is a descending chain 
of closed ideals of $A$ where
\[A=G_0\supsetneq G_1\supsetneq\ldots\supsetneq G_n\supsetneq\ldots\]
which satisfy the following conditions.

\begin{enumerate}
\item \label{n1} For any open right ideal $I$ of $A$, there is a non-negative number $n=n(I)$ such that $G_n\subseteq I$.
\item \label{n2} For every $n\geq 1$, $A/G_n$ is an ascending quasi-hereditary pseudocompact algebra with respect to a defining
system of ascending ideals, $\{H_k=G_{n-k}/G_n | 0\leq k\leq n\}$.
\end{enumerate}
Then $A$ is said to be a descending quasi-hereditary pseudocompact algebra and the
above chain of ideals is called a defining system of descending ideals.
\end{df}

Since the algebras of the form $A/G_n$ and a defining system of ideals of the form $H_k$ in the condition (\ref{n2}) above are finite, 
we could replace (\ref{n2}) by an equivalent condition.

(2') For every $n\geq 1$, $A/G_n$ is a quasi-hereditary algebra with respect to a defining
system of heredity ideals, $\{H_k=G_{n-k}/G_n | 0\leq k\leq n\}$.

The justification of a notion of a descending quasi-hereditary algebra is presented in the following theorem.

\begin{tr}\label{tr3}
A pseudocompact algebra $A$ is descending quasi-hereditary if and only if
$A-Dis$ is a highest weight category with respect to an interval-finite, cogood and finitely-cogenerated
poset $\Lambda$.
\end{tr}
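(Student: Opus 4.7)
The strategy is to reduce both implications to the ascending case (Theorems \ref{tr1} and \ref{tr2}) by processing the filtration $\{G_n\}$ layer by layer, and then gluing via Proposition \ref{pr3}.

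For the forward direction, assume $A$ is descending quasi-hereditary. By condition \ref{d8}(\ref{n2}), each $A/G_n$ is ascending quasi-hereditary with defining system $\{G_{n-k}/G_n\}_{k=0}^n$, so Theorem \ref{tr1} equips $(A/G_n)-Dis$ with the structure of a highest weight category over a good finitely-generated poset $\Psi_n$. The order is controlled by the intrinsic index $j(\lambda)$, the least $j$ for which $M(\lambda)$ is an $A/G_j$-module, via $k_n(\lambda) = n - j(\lambda)$; so the $\Psi_n$ are nested with compatible partial orders, and their union $\Lambda$ acquires an interval-finite order. The minimal elements of $\Lambda$ lie in $\Psi_1$, finite by condition \ref{d3}(\ref{k2}), so $\Lambda$ is finitely-cogenerated, and successors of any non-maximal weight lie in the next non-empty level, finite by Lemma \ref{lm5}, which together with condition \ref{d7}(2) (automatic because $\mu>\lambda$ implies $j(\mu)>j(\lambda)$) gives cogoodness. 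Since \ref{d8}(\ref{n1}) forces every finitely-generated ideal of $\Lambda$ into some $\Psi_n$, each such $\mathcal{C}[\Gamma]$ is a highest weight subcategory of $(A/G_n)-Dis$; injective envelopes of simples are unions of the finite envelopes $I^{(n)}(\lambda)$ and so belong to countably generated ideals. Proposition \ref{pr3} then produces the highest weight structure on $A-Dis$.

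For the reverse direction, apply Proposition \ref{pr4} to obtain a descending chain of coideals $\Lambda = \Lambda_0 \supsetneq \Lambda_1 \supsetneq \cdots$ with $\bigcap_n \Lambda_n = \emptyset$ whose layers $\Lambda_n \setminus \Lambda_{n+1}$ consist of pairwise incomparable cogenerators of $\Lambda_n$. Set $\Psi_n = \Lambda \setminus \Lambda_n$, a chain of finite ideals exhausting $\Lambda$, and let $G_n = H(\Lambda_n) \subseteq A$ be the closed two-sided ideal for which $A/G_n$ is the finite-dimensional pseudocompact algebra governing the finite highest weight subcategory $\mathcal{C}[\Psi_n]$. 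The $G_n$ are strictly descending with $\bigcap_n G_n = 0$. Condition \ref{d8}(\ref{n1}) is immediate: for any open right ideal $I$, the quotient $A/I$ is finite-dimensional with composition factors in some $\Psi_n$, forcing $G_n \subseteq I$. For condition \ref{d8}(\ref{n2}) I would use the equivalent form (2') and verify that $\{G_{n-k}/G_n\}_{k=0}^n$ is a heredity chain for the finite-dimensional quasi-hereditary algebra $A/G_n$. The key combinatorial input is that for $\lambda \in \Psi_{n-k+1} \setminus \Psi_{n-k}$, any $\mu > \lambda$ in $\Psi_{n-k+1}$ would either lie in the ideal $\Psi_{n-k}$ (forcing $\lambda \in \Psi_{n-k}$, a contradiction) or in the same antichain $\Lambda_{n-k} \setminus \Lambda_{n-k+1}$ (impossible by incomparability); hence $\lambda$ is maximal in $\Psi_{n-k+1}$, so $\Delta(\lambda)$ coincides with the projective cover in $\mathcal{C}[\Psi_{n-k+1}]$ and $G_{n-k}/G_{n-k+1}$ is a direct sum of indecomposable projectives in $(A/G_{n-k+1})$-mod; conditions \ref{d3}(\ref{k3}) and \ref{d3}(\ref{k4}) then follow from standard good-filtration arguments for maximal-weight layers.

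The main obstacle is precisely this last step: verifying that $\{G_{n-k}/G_n\}$ really is a heredity chain for $A/G_n$, which relies on the combinatorial translation between the cogenerator condition on the coideal side (Proposition \ref{pr4}) and the antichain-of-maximal-elements condition on the ideal $\Psi_{n-k+1}$. Once this is handled, the rest of both directions is essentially bookkeeping with the ascending-case results and Proposition \ref{pr3}.
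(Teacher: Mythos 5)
Your proof follows the same two-pronged reduction the paper uses: in each direction it passes to the finite quotients $A/G_n$ via the ascending theory (Theorems \ref{tr1} and \ref{tr2}) and then glues the finite layers with Propositions \ref{pr3} and \ref{pr4}, exactly as in the paper's argument. There is one notational slip worth fixing: with the paper's convention $H(\Gamma)=Ae_{\Lambda\setminus\Gamma}A$, the ideal you want is $G_n=H(\Psi_n)$ rather than $H(\Lambda_n)$ --- the paper sets $G_n=H(\Gamma_n)$ with $\Gamma_n=\Lambda\setminus\Lambda_n$, which is your $\Psi_n$. Beyond that, your direct verification that each layer $\Psi_{n-k+1}\setminus\Psi_{n-k}=\Lambda_{n-k}\setminus\Lambda_{n-k+1}$ consists of elements maximal in $\Psi_{n-k+1}$ (because they are an antichain of cogenerators of the coideal $\Lambda_{n-k}$), hence $P(\lambda)=\Delta(\lambda)$ there and $G_{n-k}/G_{n-k+1}$ is projective over $A/G_{n-k+1}$, is more explicit than the paper's bare ``apply Theorem \ref{tr2}'', and it actually supplies a detail the paper glosses over: the chain $\Psi_n\supseteq\Psi_{n-1}\supseteq\cdots\supseteq\emptyset$ need not agree with a chain of the form produced by Proposition \ref{pr2} (the layers need not exhaust \emph{all} maximal elements of $\Psi_m$), so one cannot quote Theorem \ref{tr2} verbatim to obtain the specific defining system $\{G_{n-k}/G_n\}$ demanded by Definition \ref{d8}(2); one must rerun the argument, which is precisely what you do. So this is the same approach as the paper, carried out a little more carefully at that one point.
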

\begin{proof} For the necessary condition, choose a descending chain of subcoideals such that 
$$\Lambda=\Lambda_0\supseteq\Lambda_1\supseteq\Lambda_2\supseteq\ldots$$
as in Proposition \ref{pr4}. For every $n\geq 0$, denote a finite ideal $\Lambda\setminus\Lambda_n$ by
$\Gamma_n$ and  set $G_n=H(\Gamma_n)$. Since every finite subset of $\Lambda$ is contained in some $\Gamma_n$, condition (\ref{n1}) of
Definition \ref{d8} holds. For condition (\ref{n2}), observe that every $\Gamma_n$ is
a finite good poset and apply Theorem \ref{tr2} to a highest weight category 
$(A/G_n)-Dis=(A-Dis)[\Gamma_n]$. 

Conversely, assume that $A$ is an descending quasi-hereditary pseudocompact algebra and 
isomorphism classes of simple pseudocompact $A$-modules are indexed by elements $\lambda$ of a set $\Lambda$, and denote by
$M(\lambda)$ a simple pseudocompact $A$-module corresponding to $\lambda$.
For each $n\geq 0$, define a subset $\Gamma_n$ of $\Lambda$ consisting of every $\lambda$
for which $M(\lambda)$ appears as a composition factor of $A/G_n$.
Clearly, $\emptyset=\Gamma_0\subseteq\Gamma_1\subseteq\ldots,$ and condition (\ref{n1}) of Definition \ref{d8} implies $\bigcup_{k\geq 0}\Gamma_k=\Lambda$.
By Theorem \ref{tr1}, each $\Gamma_n$ has a structure of a good poset, say $(\Gamma_n, \leq_n)$, with respect to which $(A/G_n)-Dis$ is
a highest weight category. 
Additionally, if $t\geq n$, then the partial order $\leq_n$ is a restriction of $\leq_t$ on $\Gamma_n$.
Moreover, $(A/G_n)-Dis=((A/G_t)-Dis)[\Gamma_n]$, which implies $G_n/G_t=H(\Gamma_n)/G_t$. Since $A$ is a Hausdorff topological space, 
we have $\bigcap_{k\geq 0}G_k  =0$ and 
\[G_n=\lim\limits_{{\longleftarrow}_{t\geq n}} G_n/G_t =\lim\limits_{{\longleftarrow}_{t\geq n}} H(\Gamma_n)/G_t=
H(\Gamma_n).\]
Define a partial order $\leq$ on $\Lambda$ via $\lim\limits_{\longrightarrow} (\Gamma_n, \leq_n)$. By the above considerations, elements of
$\Gamma_n\setminus\Gamma_{n-1}$ are pairwise incomparable generators of $\Gamma_n$ for every $n\geq 1$. 
Proposition \ref{pr4} implies that $\Lambda$ is an interval-finite, cogood and finitely-cogenerated poset. 
Since $(A/G_n)-Dis=(A-Dis)[\Gamma_n]$ for each $n\geq 0$, Proposition \ref{pr3} completes the proof. 
\end{proof}

\section{Tilting objects in a highest weight category}

\subsection{$\Delta$- and $\nabla$-filtrations}

Let $\cal C$ be a highest weight category with respect to a good poset of weights
$(\Lambda, \leq)$. In particular, if $\Gamma$ is a finitely-generated ideal of $\Lambda$, then
$\Gamma$ is countable. 
In what follows, we assume that all costandard objects are {\em Schurian}, that is, $End_{\cal C}(\nabla(\lambda))=K$ for
any $\lambda\in\Lambda$. 

According to \cite{cps} (see also Proposition \ref{pr3}), if $\Gamma$ is a finitely-generated ideal, then ${\cal C}[\Gamma]$
is a highest weight category with costandard objects of the form 
$\nabla(\lambda)$ and finite injective envelopes such that
$I_{\Gamma}(\lambda)=O_{\Gamma}(I(\lambda))$ for $\lambda\in\Gamma$. 
Theorem 3.9 of \cite{cps} states that $Ext^i_{\cal
C}(M, N)= Ext^i_{{\cal C}[\Gamma]}(M, N)$ for any $M, N\in {\cal
C}[\Gamma]$.

Let $\cal R$ be a class of objects from $\cal C$. The following definition is standard.
\begin{df}\label{d31}
An increasing filtration of an object $M=\bigcup_{i\geq 1}M_i$ wherein
\[0=M_0\subseteq M_1\subseteq M_2\subseteq\ldots,\]
 such that $M_i/M_{i-1}\in {\cal R}$ for every $i\geq 1$, 
is called an increasing ${\cal R}$-filtration. If $M$ has a decreasing filtration wherein
\[M=M_0\supseteq M_1\supseteq M_2\supseteq\ldots,\]
such that $M_i/M_{i+1}\in {\cal R}$ for every $i\geq 0$ and $\bigcap_{i\geq
0}M_i=0$, then it is called a decreasing ${\cal R}$-filtration.
\end{df}
For example, a good filtration of an injective envelope can be called a $\nabla$-filtration, where $\nabla=
\{\nabla(\lambda) |\lambda\in\Lambda\}$.

The next lemma is an easy generalization of a well-known result.

\begin{lm}\label{lm31}
Let $\Gamma$ be a finitely-generated ideal of $\Lambda$ and $M\in{\cal C}[\Gamma]$. If $Ext^i_{\cal C}(M, \nabla(\lambda))\neq 0$ for $i
> 0$, then there is a composition factor $L(\mu)$ of $M$ such
that $\mu >\lambda$.
\end{lm}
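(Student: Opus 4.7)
The plan is to reduce to a finite setting, pull apart the good filtration of the injective envelope, and then dimension-shift.

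First, I would enlarge $\Gamma$ so that it contains $\lambda$: set $\Gamma' = \Gamma \cup (\lambda]$, which is still a finitely-generated ideal of $\Lambda$. Since $M\in{\cal C}[\Gamma]\subseteq{\cal C}[\Gamma']$ and $\nabla(\lambda)\in{\cal C}[\Gamma']$, Theorem 3.9 of \cite{cps} identifies $Ext^i_{\cal C}(M,\nabla(\lambda))$ with $Ext^i_{{\cal C}[\Gamma']}(M,\nabla(\lambda))$, so we may rename $\Gamma'$ as $\Gamma$ and work entirely in ${\cal C}[\Gamma]$. Because $\Gamma$ is interval-finite and finitely generated, the set $\{\mu\in\Gamma\mid \mu\geq\lambda\}$ is finite, so the good filtration of the (now finite) injective envelope $I_{\Gamma}(\lambda)=O_{\Gamma}(I(\lambda))$ is finite. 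It produces a short exact sequence
\[0\to\nabla(\lambda)\to I_{\Gamma}(\lambda)\to Q\to 0,\]
where $Q$ carries a finite filtration whose factors are costandard objects $\nabla(\mu_n)$ with $\mu_n>\lambda$.

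Next, I would apply $Hom_{\cal C}(M,-)$. Since $I_{\Gamma}(\lambda)$ is injective in ${\cal C}[\Gamma]$, we have $Ext^j_{\cal C}(M,I_{\Gamma}(\lambda))=0$ for $j\geq 1$, so the long exact sequence gives
\[Ext^i_{\cal C}(M,\nabla(\lambda))\simeq Ext^{i-1}_{\cal C}(M,Q)\quad(i\geq 2),\]
and an epimorphism $Hom_{\cal C}(M,Q)\twoheadrightarrow Ext^1_{\cal C}(M,\nabla(\lambda))$.

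Finally, I would induct on $i$. For the base case $i=1$: from $Hom_{\cal C}(M,Q)\neq 0$ and the short exact sequences induced by the finite filtration of $Q$, left-exactness of Hom yields $Hom_{\cal C}(M,\nabla(\mu_n))\neq 0$ for some $n$; a nonzero map $M\to\nabla(\mu_n)$ has image containing the simple socle $L(\mu_n)$, making $L(\mu_n)$ a composition factor of $M$ with $\mu_n>\lambda$. For the inductive step $i\geq 2$: $Ext^{i-1}_{\cal C}(M,Q)\neq 0$, together with the long exact sequences coming from the filtration of $Q$, forces $Ext^{i-1}_{\cal C}(M,\nabla(\mu_n))\neq 0$ for some $n$ (since otherwise the Ext groups would vanish all the way up by induction on the filtration length). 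The inductive hypothesis applied with $\lambda$ replaced by $\mu_n\in\Gamma$ produces a composition factor $L(\nu)$ of $M$ with $\nu>\mu_n>\lambda$. The main subtlety is the initial enlargement $\Gamma\mapsto\Gamma'$ (ensuring that $\nabla(\lambda)$ lives in a highest weight subcategory containing $M$ and that $I_\Gamma(\lambda)$ is of finite length); after that reduction the argument is a standard dimension-shifting exercise, made possible by the finiteness of the good filtration of $I_\Gamma(\lambda)$.
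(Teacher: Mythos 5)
Your proposal is correct and follows essentially the same route as the paper: reduce to the case $\lambda\in\Gamma$ (you make explicit what the paper leaves as a "without loss of generality"), take the short exact sequence $0\to\nabla(\lambda)\to I_{\Gamma}(\lambda)\to Q\to 0$ from the good filtration, and dimension-shift down to a nonzero $Hom$ into some $\nabla(\mu)$ with $\mu>\lambda$. The paper's one-line "induction on the index $i$" is exactly the shift you carry out in detail.
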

\begin{proof} Without a loss of generality, one can suppose that
$\lambda\in\Gamma$. Consider a short exact sequence
\[0\to\nabla(\lambda)\to I_{\Gamma}(\lambda)\to Q\to 0 ,\] where $Q$ has a finite $\nabla$-filtration with quotients of the form
$\nabla(\mu)$, where $\mu >\lambda$. A fragment 
\[Ext^{i-1}_{\cal C}(M, Q)\to Ext^i_{\cal C}(M, \nabla(\lambda))\to 0\]
of a corresponding long exact sequence
shows that $Ext^{i-1}_{\cal C}(M, Q)\neq 0$. An induction on the
index $i$ implies that $Hom_{\cal C}(M, \nabla(\mu))\neq 0$
for some $\mu
>\lambda$. Thus $L(\mu)$ is a composition factor of $M$.
\end{proof}

Denote by ${\cal C}_f$ a full subcategory of ${\cal C}$
consisting of all finite objects in ${\cal C}$. 

Our goal is to work towards a definition of a tilting object. In order to define a standard module $\Delta$, we need to make sure that we have 
enough projective objects.
From now on, we shall assume 
that for every finitely-generated ideal $\Gamma\subseteq\Lambda$ the subcategory ${\cal C}[\Gamma]_f$ has enough projectives. Additionally,
we shall assume that for every  $\lambda\in\Gamma$ the projective cover
$P_{\Gamma}(\lambda)$ of $L(\lambda)$ has a finite standard filtration, where the top quotient is $\Delta(\lambda)=
P_{\Gamma}(\lambda)/O^{(\lambda]}(P_{\Gamma}(\lambda))$ and every other quotient is $\Delta(\mu)$ for $\mu>\lambda$.
Besides, each $\Delta(\lambda)$ does not depend on the choice of an ideal $\Gamma$ such that $\lambda\in\Gamma$.
We shall denote the class $\{\Delta(\lambda) |\lambda\in\Lambda\}$ by $\Delta$.

These assumptions are satisfied if ${\cal C}_f$ has a {\em Chevalley duality}, that is, there is a duality $\tau : {\cal C}_f \to {\cal C}_f$ such that
$\tau(L(\lambda))\simeq L(\lambda)$ for each $\lambda\in \Lambda$.

\begin{lm}\label{lm32}
Assume ${\cal C}_f$ has a Chevalley duality. Then, for every finitely-generated ideal $\Gamma$ of $\Lambda$, the category ${\cal C}[\Gamma]_f$ has enough projective objects and each projective cover $P_{\Gamma}(\lambda)$ of $L(\lambda)$ has a finite standard filtration.
\end{lm}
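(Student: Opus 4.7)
The plan is to exploit the Chevalley duality $\tau : {\cal C}_f \to {\cal C}_f$ to transport the known injective structure on ${\cal C}[\Gamma]$ to a projective structure on ${\cal C}[\Gamma]_f$. First I would note that, since $\tau(L(\lambda)) \simeq L(\lambda)$ for every $\lambda$, $\tau$ preserves the property of belonging to $\Gamma$; combined with exactness on ${\cal C}_f$ this yields a restricted duality $\tau : {\cal C}[\Gamma]_f \to {\cal C}[\Gamma]_f$. By Proposition \ref{pr3}, ${\cal C}[\Gamma]$ is a highest weight category with injective envelopes $I_{\Gamma}(\lambda) = O_{\Gamma}(I(\lambda))$. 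Because the good filtration of $I_{\Gamma}(\lambda)$ only uses weights $\mu$ lying in $\bigcup_i [\lambda,\lambda_i]$ for the finitely many generators $\lambda_i$ of $\Gamma$, because $\Lambda$ is interval-finite, and because each $\nabla(\mu)$ appears only finitely often in any good filtration, this filtration is finite. The standing assumption that costandard objects are finite then forces $I_{\Gamma}(\lambda) \in {\cal C}_f$.

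Next I would set $P_{\Gamma}(\lambda) := \tau(I_{\Gamma}(\lambda))$ and verify it is a projective cover of $L(\lambda)$ in ${\cal C}[\Gamma]_f$. For any $M \in {\cal C}[\Gamma]_f$, the natural isomorphism $Hom_{\cal C}(\tau(I_{\Gamma}(\lambda)), M) \simeq Hom_{\cal C}(\tau(M), I_{\Gamma}(\lambda))$ (obtained from $\tau^2 \simeq \mathrm{id}$) together with the injectivity of $I_{\Gamma}(\lambda)$ in ${\cal C}[\Gamma]$ and the exactness of $\tau$ shows that $P_{\Gamma}(\lambda)$ is projective in ${\cal C}[\Gamma]_f$. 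Since $\tau$ sends the socle $L(\lambda)$ of $I_{\Gamma}(\lambda)$ to the top of $P_{\Gamma}(\lambda)$ and preserves indecomposability, $P_{\Gamma}(\lambda)$ is the projective cover of $L(\lambda)$. Finite direct sums of these over the composition factors of $M/\mathrm{rad}\,M$ then yield projective covers of arbitrary $M \in {\cal C}[\Gamma]_f$, so ${\cal C}[\Gamma]_f$ has enough projectives.

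For the standard filtration I would apply $\tau$ to the finite good filtration $0 = I_0 \subset I_1 = \nabla(\lambda) \subset \ldots \subset I_n = I_{\Gamma}(\lambda)$ with $I_k/I_{k-1} \simeq \nabla(\mu_k)$ and $\mu_k > \lambda$ for $k \geq 2$. Setting $N_k := \tau(I_{\Gamma}(\lambda)/I_{n-k})$ produces an ascending chain $0 = N_0 \subset N_1 \subset \ldots \subset N_n = P_{\Gamma}(\lambda)$ with quotients $N_k/N_{k-1} \simeq \tau(\nabla(\mu_{n-k+1}))$ and top quotient $\tau(\nabla(\lambda))$. The step I expect to be the main obstacle is matching this filtration with the paper's intrinsic definition $\Delta(\mu) = P_{\Gamma}(\mu)/O^{(\mu]}(P_{\Gamma}(\mu))$, i.e.\ proving $\Delta(\mu) = \tau(\nabla(\mu))$. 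The key observation is that since $\tau$ is an exact contravariant equivalence fixing simples up to isomorphism, it swaps the functors $O^{(\mu]}$ and $O_{(\mu]}$ on ${\cal C}_f$ via the identity $O^{(\mu]}(\tau X) = \tau(X/O_{(\mu]}(X))$. Applying this to $X = I_{\Gamma}(\mu)$ and using $O_{(\mu]}(I_{\Gamma}(\mu)) = \nabla(\mu)$ yields the exact sequence $0 \to O^{(\mu]}(P_{\Gamma}(\mu)) \to P_{\Gamma}(\mu) \to \tau(\nabla(\mu)) \to 0$; hence $\Delta(\mu) = \tau(\nabla(\mu))$, manifestly independent of $\Gamma$, and the chain $\{N_k\}$ is a standard filtration of $P_{\Gamma}(\lambda)$ in the sense of the paper.
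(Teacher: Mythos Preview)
Your proposal is correct and follows essentially the same approach as the paper: define $P_{\Gamma}(\lambda)=\tau(I_{\Gamma}(\lambda))$, identify $\Delta(\mu)=\tau(\nabla(\mu))$, and obtain the standard filtration of $P_{\Gamma}(\lambda)$ as the $\tau$-dual of the good filtration of $I_{\Gamma}(\lambda)$. The paper's proof states exactly these two facts in one sentence, while you have carefully supplied the verifications (finiteness of $I_{\Gamma}(\lambda)$, projectivity via the Hom-isomorphism from duality, and the interchange $O^{(\mu]}(\tau X)=\tau(X/O_{(\mu]}(X))$) that the paper leaves implicit.
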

\begin{proof} 
Applying the Chevalley duality, we obtain that $\Delta(\lambda)=\tau(\nabla(\lambda))$,
and every $P_{\Gamma}(\lambda)$ has a $\Delta$-filtration that is a Chevalley dual to the
$\nabla$-filtration of $I_{\Gamma}(\lambda)$ for each $\lambda\in\Gamma$.
\end{proof}
\begin{rem}\label{remark32}
The condition that every subcategory ${\cal C}[\Gamma]_f$ has enough projective objects does not imply that ${\cal C}$ has. 
For example, if ${\cal C}$ is a
category of rational $G$-modules over a reductive algebraic group $G$ (defined over an algebraically closed field of positive characteristic), then 
it contains a projective module if and only if $G$ is a finite extension of a torus. See \cite{don3}.

Let us also remark that if ${\cal C}=R-Dis$ for a pseudocompact algebra
$R$, then the modules $P_{\Gamma}(\lambda)$ and $\Delta(\lambda)$
differ from the projective and standard modules defined in Section 2.
In fact, they belong to different categories. Nevertheless, these objects are related and we shall discuss their relationship in 
Section 4.
\end{rem}

The following lemma is formulated for standard objects and it is a symmetric variant of Lemma \ref{lm31} .
\begin{lm}\label{lm33}
Let $\Gamma$ be a finitely-generated ideal of $\Lambda$ and $M\in {\cal
C}[\Gamma]$. If $Ext^i_{\cal C}(\Delta(\lambda), M)\neq 0$ for $i
> 0$, then there is a composition factor $L(\mu)$ of $M$ with $\mu
>\lambda$.
\end{lm}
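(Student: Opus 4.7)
My plan is to mirror the proof of Lemma \ref{lm31}, with the projective cover $P_{\Gamma}(\lambda)$ and its $\Delta$-filtration playing the dual roles of the injective envelope $I_{\Gamma}(\lambda)$ and its $\nabla$-filtration. After enlarging $\Gamma$ to the finitely-generated ideal $\Gamma\cup(\lambda]$ (which preserves both the hypothesis and the conclusion), I may assume $\lambda\in\Gamma$. Then by the standing assumption stated just before Lemma \ref{lm32}, the projective cover $P_{\Gamma}(\lambda)$ of $L(\lambda)$ in ${\cal C}[\Gamma]_f$ carries a finite standard filtration whose top quotient is $\Delta(\lambda)$ and whose remaining quotients are of the form $\Delta(\mu_j)$ with $\mu_j>\lambda$. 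This produces a short exact sequence
\[0\to K\to P_{\Gamma}(\lambda)\to\Delta(\lambda)\to 0,\]
in which $K$ inherits a finite $\Delta$-filtration with sections $\Delta(\mu_j)$, $\mu_j>\lambda$.

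Next I would apply $Hom_{\cal C}(-,M)$ and read off the long exact sequence. The crucial input is that $Ext^i_{\cal C}(P_{\Gamma}(\lambda),M)=0$ for every $i>0$. This follows from two facts: first, $P_{\Gamma}(\lambda)$ is projective not only in ${\cal C}[\Gamma]_f$ but in all of ${\cal C}[\Gamma]$ (any map from the finite object $P_{\Gamma}(\lambda)$ factors through a finite subobject of its target, and local artinianness of $\cal C$ lifts such a subobject along any epimorphism to a finite preimage, thereby reducing to the ${\cal C}[\Gamma]_f$ case); second, $Ext^i_{\cal C}(-,M)=Ext^i_{{\cal C}[\Gamma]}(-,M)$ on ${\cal C}[\Gamma]$ by Theorem 3.9 of \cite{cps}.

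With this vanishing in hand, the long exact sequence provides an isomorphism $Ext^{i-1}_{\cal C}(K,M)\simeq Ext^i_{\cal C}(\Delta(\lambda),M)$ for $i\geq 2$ and a surjection $Hom_{\cal C}(K,M)\to Ext^1_{\cal C}(\Delta(\lambda),M)$ for $i=1$. Breaking the finite filtration of $K$ into short exact sequences and tracing through the resulting long exact sequences, one obtains some index $j$ for which $Ext^{i-1}_{\cal C}(\Delta(\mu_j),M)\neq 0$ (respectively, a nonzero map $\Delta(\mu_j)\to M$ when $i=1$) with $\mu_j>\lambda$. In the base case $i=1$, any such nonzero map has image whose top is $L(\mu_j)$, so $L(\mu_j)$ is a composition factor of $M$. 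In the inductive step $i\geq 2$, applying the lemma at $\mu_j\in\Gamma$ yields a composition factor $L(\mu)$ of $M$ with $\mu>\mu_j>\lambda$.

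The step I expect to be most delicate is verifying projectivity of $P_{\Gamma}(\lambda)$ in the full subcategory ${\cal C}[\Gamma]$ rather than merely in ${\cal C}[\Gamma]_f$, since this is what secures the Ext-vanishing against objects of possibly infinite length; once this is granted, the remainder is a formal dimension-shift argument symmetric to the proof of Lemma \ref{lm31}.
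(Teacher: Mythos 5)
Your proof is correct and is precisely the dualization that the paper intends when it calls Lemma~\ref{lm33} ``a symmetric variant of Lemma~\ref{lm31}'': in place of $0\to\nabla(\lambda)\to I_\Gamma(\lambda)\to Q\to 0$ you use $0\to K\to P_\Gamma(\lambda)\to\Delta(\lambda)\to 0$, together with the Ext-comparison of Theorem~3.9 of \cite{cps}, and run the same dimension shift. The one genuinely non-symmetric point --- that $P_\Gamma(\lambda)$ is projective in all of ${\cal C}[\Gamma]$ and not merely in ${\cal C}[\Gamma]_f$, which you settle via local artinianness (factor a map out of the finite object $P_\Gamma(\lambda)$ through a finite subobject of the target and lift a finite preimage along the given epimorphism) --- is exactly the detail that deserves spelling out, and you handle it correctly.
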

Combining Lemmas \ref{lm31} and \ref{lm32}, we obtain the following corollary.
\begin{cor}(Compare this with Theorem 3.11 of \cite{cps}.)\label{c31}
For every $\lambda, \mu\in\Lambda$ and $i > 0$, we have $Ext^i_{\cal C}(\Delta(\lambda), \nabla(\mu))=0$.
\end{cor}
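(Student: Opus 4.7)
The plan is to deduce the Ext vanishing directly by pitting Lemma \ref{lm31} and Lemma \ref{lm33} against each other. Suppose, for contradiction, that $Ext^i_{\cal C}(\Delta(\lambda), \nabla(\mu))\neq 0$ for some $i>0$.

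First I would reduce to a subcategory of the form ${\cal C}[\Gamma]$. Choose any finitely-generated ideal $\Gamma\subseteq\Lambda$ containing both $\lambda$ and $\mu$; for instance, the ideal generated by $\{\lambda,\mu\}$. Both $\Delta(\lambda)$ and $\nabla(\mu)$ lie in ${\cal C}[\Gamma]$, and, by Theorem 3.9 of \cite{cps} quoted at the start of this subsection, the Ext computed in ${\cal C}$ agrees with the Ext computed in ${\cal C}[\Gamma]$. Thus the two lemmas can be applied inside ${\cal C}[\Gamma]$ to the same nonzero Ext group.

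Next I would extract two contradictory order relations. Applying Lemma \ref{lm31} with $M=\Delta(\lambda)$, the nonvanishing of $Ext^i_{\cal C}(\Delta(\lambda),\nabla(\mu))$ produces a composition factor $L(\nu)$ of $\Delta(\lambda)$ with $\nu>\mu$. But $\Delta(\lambda)=P_\Gamma(\lambda)/O^{(\lambda]}(P_\Gamma(\lambda))$ belongs to $(\lambda]$, so every such $\nu$ satisfies $\nu\leq\lambda$; hence $\mu<\lambda$. Symmetrically, applying Lemma \ref{lm33} with $M=\nabla(\mu)$ yields a composition factor $L(\pi)$ of $\nabla(\mu)$ with $\pi>\lambda$, and since $\nabla(\mu)=O_{(\mu]}(I(\mu))$ belongs to $(\mu]$, one has $\pi\leq\mu$; hence $\lambda<\mu$. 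The two inequalities $\mu<\lambda$ and $\lambda<\mu$ are incompatible, giving the desired contradiction.

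There is essentially no real obstacle here: the two earlier lemmas do all the work, and the only minor point to check is that the ambient Ext group may legitimately be computed inside ${\cal C}[\Gamma]$, so that both lemmas are applicable simultaneously. The argument also makes transparent why both the upper-bound structure of standard objects (composition factors $\leq\lambda$) and the upper-bound structure of costandard objects (composition factors $\leq\mu$) are needed; dropping either one leaves only a single order relation, which is not enough to conclude.
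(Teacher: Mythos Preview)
Your argument is correct and is exactly the intended proof: the paper derives the corollary by combining the two symmetric lemmas (Lemma~\ref{lm31} and Lemma~\ref{lm33}; the printed reference to Lemma~\ref{lm32} appears to be a typo), which together force $\mu<\lambda$ and $\lambda<\mu$ from a hypothetical nonzero Ext. Your preliminary passage to ${\cal C}[\Gamma]$ is harmless but not needed, since both lemmas are already stated for $Ext^i_{\cal C}$ with $M$ merely required to lie in some ${\cal C}[\Gamma]$.
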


\begin{df}\label{d33}
If $M\in{\cal C}$, then the set $\{\lambda | Ext^1_{\cal C}(\Delta(\lambda), M)\neq 0\}$ is denoted by $S(M)$ and called
the defect set of $M$.
\end{df}
Let us mention that our definition is different from the definition in \cite{don1}. In
fact, we do not require a defect set to be an ideal of $\Lambda$.

In the future we shall usually limit our considerations to restricted objects as defined below.

\begin{df}\label{d32}
An object $M\in {\cal C}[\Gamma]$ is said to be $\Gamma$-restricted
if $[M : L(\lambda)]$ is finite for every $\lambda\in\Gamma$.
\end{df}

From now on, $\Gamma$ shall be a finitely-generated ideal of $\Lambda$ unless
stated otherwise. If it does not lead to confusion, we shall omit
a subindex $\Gamma$ in our notation.

Existence of a $\Delta$ (or a $\nabla$)-filtration relates to the vanishing of certain extensions in the following
theorem that generalizes Corollary \ref{c31}.

\begin{tr}\label{t31}
Assume that $M$ is $\Gamma$-restricted. If $M$ has an increasing (or decreasing)
$\Delta$-filtration, then
$Ext^i_{\cal C}(M, \nabla(\lambda))=0$ for every $\lambda\in\Lambda$
and $i\geq 1$. If $M$ has an increasing
(or decreasing)  $\nabla$-filtration, then $Ext^i_{\cal
C}(\Delta(\lambda), M)=0$ for every $\lambda\in\Lambda$ and $i\geq
1$.
\end{tr}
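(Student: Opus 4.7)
The plan is to dispose of the finite-length case first and then to treat the two infinite filtration scenarios separately. If $M$ carries a finite $\Delta$-filtration (resp.\ $\nabla$-filtration), I would induct on its length: the one-step case is Corollary \ref{c31}, and the inductive step plugs a short exact sequence $0\to\Delta(\mu)\to M\to M'\to 0$ (resp.\ the analogous short exact sequence with $\nabla(\mu)$) into the long exact sequence of $Ext^{\bullet}_{\cal C}(-,\nabla(\lambda))$ (resp.\ $Ext^{\bullet}_{\cal C}(\Delta(\lambda),-)$), sandwiching the middle term between two vanishing flanks.

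For an increasing $\Delta$-filtration $M=\bigcup_{n\ge 1}M_n$, each $M_n$ has finite length and a finite $\Delta$-filtration, so the finite case yields $Ext^j_{\cal C}(M_n,\nabla(\lambda))=0$ for every $n$ and every $j\ge 1$. Fixing an injective resolution $\nabla(\lambda)\hookrightarrow J^{\bullet}$ and using $Hom_{\cal C}(M,J^k)=\varprojlim_n Hom_{\cal C}(M_n,J^k)$, the inverse-limit spectral sequence for $Ext$ collapses to a short exact sequence
\[0\to {\varprojlim}^{1}_n Ext^{i-1}_{\cal C}(M_n,\nabla(\lambda))\to Ext^i_{\cal C}(M,\nabla(\lambda))\to \varprojlim_n Ext^i_{\cal C}(M_n,\nabla(\lambda))\to 0.\]
For $i\ge 2$ both flanks vanish by the finite case. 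For $i=1$ the right end is zero, and Mittag--Leffler applies to the inverse system of finite-dimensional spaces $Hom_{\cal C}(M_n,\nabla(\lambda))$ (finite-dimensional because both $M_n$ and $\nabla(\lambda)$ have finite length), so the ${\varprojlim}^{1}$ term also vanishes.

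For a decreasing $\Delta$-filtration $M=M_0\supseteq M_1\supseteq\cdots$ with $\bigcap_n M_n=0$, the situation reverses: each quotient $M/M_n$ has a finite $\Delta$-filtration, so the long exact sequence for $0\to M_n\to M\to M/M_n\to 0$ forces isomorphisms $Ext^i_{\cal C}(M,\nabla(\lambda))\cong Ext^i_{\cal C}(M_n,\nabla(\lambda))$ for all $n$ and all $i\ge 1$. Since $\Lambda$ is interval-finite and $\Gamma$ is a finitely-generated ideal, the set $\{\mu\in\Gamma:\mu>\lambda\}$ is finite; combined with $\Gamma$-restrictedness of $M$, only finitely many filtration labels $\lambda_j$ can satisfy $\lambda_j>\lambda$. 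For $n$ past this bound, $M_n$ has no composition factor $L(\mu)$ with $\mu>\lambda$, and Lemma \ref{lm31} (contrapositively) forces $Ext^i_{\cal C}(M_n,\nabla(\lambda))=0$.

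The two $\nabla$-filtration cases run in parallel. The increasing case is a quick variant of the increasing $\Delta$ case: $\Delta(\lambda)$ has finite length, hence is Noetherian in the locally Artinian (hence locally Noetherian) Grothendieck category $\cal C$, so $Ext^i_{\cal C}(\Delta(\lambda),-)$ commutes with directed colimits of monomorphisms and the finite case gives the vanishing directly. The decreasing case transcribes the previous paragraph, with Lemma \ref{lm33} replacing Lemma \ref{lm31}. The main obstacle is the asymmetry between the two infinite cases for a fixed Ext variance: interval-finiteness of $\Gamma$ is used differently against ascending unions (where a $\varprojlim^1$ term must be controlled by Mittag--Leffler) and descending intersections (where a single composition-factor count in a finite interval suffices).
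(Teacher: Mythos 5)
Your proof is correct. The decreasing cases coincide essentially with the paper's argument: use Corollary~\ref{c31} to kill $Ext^i$ against the finite head $M/M_n$, and Lemma~\ref{lm31} (resp.\ Lemma~\ref{lm33}) together with $\Gamma$-restrictedness and interval-finiteness to kill $Ext^i$ against the tail $M_n$, then invoke the long exact sequence. Two small points worth spelling out there: additivity of composition multiplicities over the infinite decreasing filtration rests on the ``lifting of finite subobjects along epimorphisms'' property recalled in Section~1; and the step from ``all $\Delta$-factors of $M_n$ have label $\lambda_j \not> \lambda$'' to ``$M_n$ has no composition factor $L(\mu)$ with $\mu > \lambda$'' needs the observation that any finite subobject of $M_n$ meets some $M_j$ trivially, hence embeds in $M_n/M_j$.

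For the increasing cases, however, you take a genuinely different route from the paper. The paper's proof (stated only for the decreasing $\Delta$-case but declared ``similar'' for the rest) adapts \emph{uniformly}: by $\Gamma$-restrictedness and interval-finiteness there is a finite subobject $N\subseteq M$ with $[N:L(\nu)]=[M:L(\nu)]$ for every $\nu$ in the finite set $A=\{\nu\in\Gamma:\nu>\lambda\}$; for an increasing $\Delta$-filtration one then takes $n$ large enough that $N\subseteq M_n$, so that $M/M_n$ has no composition factor with weight $>\lambda$ and Lemma~\ref{lm31} kills $Ext^i(M/M_n,\nabla(\lambda))$, while Corollary~\ref{c31} kills $Ext^i(M_n,\nabla(\lambda))$. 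No $\varprojlim^1$ or spectral sequence is needed. Your alternative --- the Milnor sequence applied to the tower $Hom_{\cal C}(M_n,J^\bullet)$ (with surjective transition maps because $J^k$ is injective), plus Mittag--Leffler for the finite-dimensional $Hom$-term --- is valid, and your Noetherian/colimit argument for the increasing $\nabla$-case likewise works since $\cal C$ is locally finite. What the paper's approach buys is uniformity across all four cases and economy of means; what yours buys is that for the increasing $\Delta$-case you do not appear to use $\Gamma$-restrictedness at all (the finite-dimensionality of $Hom(M_n,\nabla(\lambda))$ comes from Lemma 3.2 of \cite{cps} and finite length of $M_n$), which is a mild strengthening for that subcase, at the cost of importing the $\varprojlim^1$ machinery.
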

\begin{proof} We consider only the case of a decreasing $\Delta$-filtration; all other cases are similar.
We can assume that $\lambda\in\Gamma=\bigcup_{j=1}^{k}(\pi_j]$. Since a set $A=\bigcup_{j=1}^{k}(\lambda , \pi_j]$
is finite, there is a finite subobject $N\subseteq M$ such that $[N : L(\nu)]=[M : L(\nu)]$ for every
$\nu\in A$. 
Let $M_k$ denote the $k$-th member
of a decreasing $\Delta$-filtration of $M$.
Then $N\bigcap M_k=0$ for a sufficiently large $k$. In this case $[M_k : L(\nu)]=0$ for every $\nu\in A$
and $Ext^i_{\cal C}(M_k, \nabla(\lambda))=0$ by
Lemma \ref{lm31}. Since $Ext^i_{\cal C}(M/M_k, \nabla(\lambda))=0$ by
Corollary \ref{c31}, the exactness of the fragment
\[Ext^i_{\cal C}(M/M_k, \nabla(\lambda))\to Ext^i_{\cal C}(M, \nabla(\lambda))
\to Ext^i_{\cal C}(M_k, \nabla(\lambda))\]
of the long exact sequence implies that $Ext^i_{\cal C}(M,\nabla(\lambda))=0$.
\end{proof}

The next lemma shows that it is enough to state the previous theorem only for $i=1$ since the vanishing of 
first extensions implies the vanishing of all higher extensions.

\begin{lm}\label{l34}
If $M$ belongs to $\Gamma$ and
$Ext^1_{\cal C}(\Delta(\lambda), M)=0$ for every $\lambda\in\Lambda$
(or $Ext^1_{\cal C}(M,\nabla(\lambda))=0$ for every $\lambda\in\Lambda$),
then $Ext^i_{\cal
C}(\Delta(\lambda), M)=0$ (or $Ext^i_{\cal C}(M,
\nabla(\lambda))=0$, respectively) for every
$\lambda\in\Lambda$ and $i > 1$.
\end{lm}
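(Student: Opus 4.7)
The plan is to use dimension shifting via the projective cover short exact sequence and induct on $i$, reducing $\mathrm{Ext}^i(\Delta(\lambda),M)$ to $\mathrm{Ext}^{i-1}(-,M)$ applied to a standardly filtered object. Fix $\lambda\in\Lambda$; by replacing $\Gamma$ with $\Gamma'=\Gamma\cup(\lambda]$ (still finitely generated, and $M$ still belongs to it), we may assume $\lambda\in\Gamma$. By our standing assumption there is a short exact sequence
\[0\to K_\lambda\to P_\Gamma(\lambda)\to\Delta(\lambda)\to 0\]
in ${\cal C}[\Gamma]_f$, where $K_\lambda$ admits a \emph{finite} $\Delta$-filtration with quotients $\Delta(\mu)$, $\mu>\lambda$.

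Next, I would show that $P_\Gamma(\lambda)$ is projective not merely in ${\cal C}[\Gamma]_f$ but in all of ${\cal C}[\Gamma]$. Given an epimorphism $\phi:N\to N''$ in ${\cal C}[\Gamma]$ and a morphism $f:P_\Gamma(\lambda)\to N''$, the image $f(P_\Gamma(\lambda))$ is a finite subobject of $N''$ (since $P_\Gamma(\lambda)$ has finite length); by the finite-lifting property recorded in Section 1 there is a finite subobject $K'\subseteq N$ mapping onto $f(P_\Gamma(\lambda))$, and the lift of $f$ exists because $P_\Gamma(\lambda)$ is projective in ${\cal C}[\Gamma]_f$. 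Together with Theorem 3.9 of \cite{cps}, which identifies $\mathrm{Ext}^i_{\cal C}(-,-)$ with $\mathrm{Ext}^i_{{\cal C}[\Gamma]}(-,-)$ on objects of ${\cal C}[\Gamma]$, this yields $\mathrm{Ext}^i_{\cal C}(P_\Gamma(\lambda),M)=0$ for every $i\geq 1$.

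Now I would induct on $i\geq 2$. The long exact sequence obtained from $\mathrm{Hom}_{\cal C}(-,M)$ applied to the displayed short exact sequence gives a surjection
\[\mathrm{Ext}^{i-1}_{\cal C}(K_\lambda,M)\twoheadrightarrow \mathrm{Ext}^i_{\cal C}(\Delta(\lambda),M).\]
A secondary induction on the length of the $\Delta$-filtration of $K_\lambda$, using at each step the long exact sequence $\mathrm{Ext}^{i-1}(\Delta(\mu),M)\to\mathrm{Ext}^{i-1}(-,M)\to\mathrm{Ext}^{i-1}(-,M)$, reduces the vanishing of $\mathrm{Ext}^{i-1}(K_\lambda,M)$ to the vanishing of $\mathrm{Ext}^{i-1}(\Delta(\mu),M)$ for the finitely many $\mu>\lambda$ appearing in the filtration. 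For $i=2$ this is exactly the hypothesis; for $i>2$ it is the inductive hypothesis applied at level $i-1$. This closes the induction and settles the first assertion.

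The second assertion is formally dual: use the short exact sequence
\[0\to\nabla(\lambda)\to I_\Gamma(\lambda)\to Q_\lambda\to 0,\]
where $Q_\lambda$ admits a finite $\nabla$-filtration by $\nabla(\mu)$, $\mu>\lambda$, and where $I_\Gamma(\lambda)$ is injective in ${\cal C}[\Gamma]$ (again invoking Theorem 3.9 of \cite{cps} to transport the vanishing to $\mathrm{Ext}^i_{\cal C}$). Applying $\mathrm{Hom}_{\cal C}(M,-)$ and running the same induction on $i$ together with the finite length of the filtration of $Q_\lambda$ produces the desired vanishing. The main technical point — and the only place where real work is needed — is the verification that $P_\Gamma(\lambda)$ is projective in the whole subcategory ${\cal C}[\Gamma]$ rather than only in its finite part, since otherwise the dimension shift cannot be executed for arbitrary (possibly non-finite) $M$; everything else is a bookkeeping exercise with long exact sequences.
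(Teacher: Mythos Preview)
Your proof is correct and follows essentially the same dimension-shifting argument as the paper: the short exact sequence $0\to Q\to P_{\Gamma}(\lambda)\to\Delta(\lambda)\to 0$ together with induction on $i$ and the finite $\Delta$-filtration of $Q$. The only differences are cosmetic: the paper uses Lemma~\ref{lm33} to conclude $\lambda\in\Gamma$ rather than enlarging $\Gamma$, and it silently treats $P_{\Gamma}(\lambda)$ as projective in all of ${\cal C}[\Gamma]$, a point you justify explicitly via the finite-lifting property.
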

\begin{proof} Assume that $Ext^i_{\cal C}(\Delta(\lambda), M)\neq 0$ 
for a weight $\lambda\in\Lambda$ and for $i > 1$.
Lemma \ref{lm33} implies that $\lambda\in\Gamma$ and we can work in
the category ${\cal C}[\Gamma]$. 
A short exact sequence
$$0\to Q\to P(\lambda)\to \Delta(\lambda)\to 0$$
induces an exact fragment
$$Ext^{i-1}_{\cal C}(Q, M)\to Ext^i_{\cal C}(\Delta(\lambda), M)\to 0 .$$
Since $Q$ has a $\Delta$-filtration with each factor $\Delta(\mu)$, where $\mu >\lambda$, one can argue by induction
on $i$ to get a contradiction. The proof of the second statement is analogous.
\end{proof}

The following theorem is a partial converse to Theorem \ref{t31}.

\begin{tr}\label{t32}
Let $M$ be a $\Gamma$-restricted object.
If $Ext^1_{\cal C}(\Delta(\lambda), M)=0$ for every $\lambda\in\Lambda$, 
then $M$ has a decreasing $\nabla$-filtration. 
If $Ext^1_{\cal C}(M, \nabla(\lambda))=0$ for every $\lambda\in\Lambda$, then $M$ has an increasing
$\Delta$-filtration.
\end{tr}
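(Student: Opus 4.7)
I sketch the proof of the second statement; the first follows by a symmetric argument. By Lemma \ref{l34}, the hypothesis upgrades to $Ext^j_{\cal C}(M,\nabla(\lambda))=0$ for every $j\geq 1$ and every $\lambda$. Using Proposition \ref{pr2}, fix a descending chain $\Gamma=\Gamma_0\supsetneq\Gamma_1\supsetneq\ldots$ of finitely-generated subideals with finite layers $V_k=\Gamma_k\setminus\Gamma_{k+1}$ and $\bigcap_k\Gamma_k=\emptyset$, and set $U_n:=O_{\Gamma\setminus\Gamma_n}(M)$. Since $\Gamma\setminus\Gamma_n$ is finite and $M$ is $\Gamma$-restricted, each $U_n$ has finite length; the $U_n$ ascend with $M=\bigcup_n U_n$. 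I will show each $U_n$ carries a finite increasing $\Delta$-filtration extending the one chosen at stage $n-1$, so that the union yields the filtration of $M$.

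\textbf{Key step.} I claim $Ext^1_{\cal C}(M/U_n,\nabla(\lambda))=0$ for every $\lambda$. For $\lambda\in\Gamma\setminus\Gamma_n$, the construction of the chain $\{\Gamma_n\}$ in Proposition \ref{pr2} ensures that no element of $\Gamma_n$ exceeds $\lambda$ in the poset; since all composition factors of $M/U_n$ lie in $\Gamma_n$, Lemma \ref{lm31} gives the vanishing. For $\lambda\in\Gamma_n$, the key observation is that $Hom_{\cal C}(U_n,\nabla(\lambda))=0$, because a nonzero map into $\nabla(\lambda)$ would put its socle $L(\lambda)$ among the composition factors of $U_n$, contradicting the support condition on $U_n$; combined with $Ext^1_{\cal C}(M,\nabla(\lambda))=0$, the long exact sequence for $0\to U_n\to M\to M/U_n\to 0$ then yields the desired vanishing. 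Lemma \ref{l34} now upgrades this to $Ext^j_{\cal C}(M/U_n,\nabla(\lambda))=0$ for every $j\geq 1$, and reading the same long exact sequence one step further forces $Ext^j_{\cal C}(U_n,\nabla(\lambda))=0$ for $j\geq 1$ and every $\lambda$.

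\textbf{Conclusion.} The vanishing on $M/U_n$ and $M/U_{n+1}$ gives surjections $Hom_{\cal C}(M,\nabla(\lambda))\twoheadrightarrow Hom_{\cal C}(U_n,\nabla(\lambda))$ and $Hom_{\cal C}(M,\nabla(\lambda))\twoheadrightarrow Hom_{\cal C}(U_{n+1},\nabla(\lambda))$, whence the restriction map $Hom_{\cal C}(U_{n+1},\nabla(\lambda))\twoheadrightarrow Hom_{\cal C}(U_n,\nabla(\lambda))$ is itself surjective; feeding this and $Ext^1_{\cal C}(U_{n+1},\nabla(\lambda))=0$ into the long exact sequence of $0\to U_n\to U_{n+1}\to U_{n+1}/U_n\to 0$ delivers $Ext^1_{\cal C}(U_{n+1}/U_n,\nabla(\lambda))=0$ for every $\lambda$. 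The finite object $U_{n+1}/U_n$ thus satisfies the vanishing hypothesis in the highest weight category ${\cal C}[\Gamma]_f$, which has enough projectives by the assumption preceding Lemma \ref{lm32}, and the classical finite Donkin--Ringel argument (pick a maximal weight in the finite support, peel off a copy of $\Delta$, induct on the support) produces a finite increasing $\Delta$-filtration. Concatenating inductively in $n$ and taking the union gives the desired increasing $\Delta$-filtration of $M$. \emph{Main obstacle.} The delicate step is propagating the Ext-vanishing from $M$ down to the finite layers $U_{n+1}/U_n$; this requires the simultaneous use of two long exact sequences together with the good-poset structure of $\Gamma$ (via Lemma \ref{lm31}) and the orthogonality $Hom_{\cal C}(L(\mu),\nabla(\lambda))=0$ for $\mu\neq\lambda$, playing the roles of the key inputs in Donkin's Appendix A.
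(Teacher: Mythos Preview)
Your argument has a genuine gap in the Key Step. You set $U_n = O_{\Gamma\setminus\Gamma_n}(M)$, the \emph{largest} subobject whose composition factors lie in the finite coideal $\Gamma\setminus\Gamma_n$, and then assert that ``all composition factors of $M/U_n$ lie in $\Gamma_n$''. This is false in general, even under the Ext-vanishing hypothesis. Take a two-element poset $\Gamma=\{a>b\}$ with $\Gamma_1=\{b\}$, and let $M=\Delta(a)$ be uniserial with top $L(a)$ and socle $L(b)$; by Corollary~\ref{c31} this $M$ satisfies $Ext^1_{\cal C}(M,\nabla(\lambda))=0$ for all $\lambda$. Here $U_1=O_{\{a\}}(M)=0$ because $soc(M)=L(b)$, so $M/U_1=M$ still has the composition factor $L(a)$ with $a\notin\Gamma_1$. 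Your Case~1 appeal to Lemma~\ref{lm31} therefore does not apply.

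The functor you actually need is $O^{\Gamma_n}$, not $O_{\Gamma\setminus\Gamma_n}$: with $U_n:=O^{\Gamma_n}(M)$ one has $M/U_n\in{\cal C}[\Gamma_n]$ \emph{by definition}, and Case~1 becomes correct. This is precisely the filtration the paper uses. Two other steps then require adjustment. First, finiteness of $U_n$ is no longer immediate from a support condition, since $O^{\Gamma_n}(M)$ may have composition factors with weights in $\Gamma_n$; the paper's argument is to pick a finite $N\subseteq O^{\Gamma_n}(M)$ exhausting the multiplicities $[O^{\Gamma_n}(M):L(\mu)]$ for $\mu\in\Gamma\setminus\Gamma_n$, observe that $O^{\Gamma_n}(M)/N$ then belongs to $\Gamma_n$, and invoke minimality to get $N=O^{\Gamma_n}(M)$. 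Second, your Case~2 vanishing $Hom_{\cal C}(U_n,\nabla(\lambda))=0$ for $\lambda\in\Gamma_n$ can no longer be read off from the support of $U_n$; instead use minimality directly: a nonzero $\phi:U_n\to\nabla(\lambda)$ would give $U_n/\ker\phi\hookrightarrow\nabla(\lambda)$, hence $U_n/\ker\phi$ belongs to $\Gamma_n$, and combined with $M/U_n\in{\cal C}[\Gamma_n]$ this forces $M/\ker\phi\in{\cal C}[\Gamma_n]$, contradicting that $U_n=O^{\Gamma_n}(M)$ is the smallest such subobject. With these repairs your outline matches the paper's proof.
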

\begin{proof} Suppose first that $Ext^1_{\cal C}(\Delta(\lambda), M)=0$ for all
$\lambda\in\Lambda$. Corresponding to a decreasing chain
of finitely-generated ideals of $\Lambda$ where
$$\Gamma=\Gamma_0\supseteq\Gamma_1\supseteq\ldots$$
such that $\Gamma\setminus\Gamma_k$ is finite for every $k\geq 0$
and $\bigcap_{k\geq 0}\Gamma_k =\emptyset$, 
there is a
decreasing chain of subobjects where
$$M=M_0\supseteq M_1\supseteq M_2\supseteq\ldots$$
such that $M_k=O_{\Gamma_k}(M)$,
every quotient $M/M_k$ is finite, and $\bigcap_{0\leq k}M_k=0$. 

The socle of every quotient $M/M_k$ belongs to $\Gamma\setminus\Gamma_k$, and each $M/M_k$ can be embedded into
a finite sum of finite indecomposable injective objects from ${\cal C}[\Gamma]$.

Consider the following fragment 
$$Hom_{\cal C}(\Delta(\mu), M/M_k)\to Ext^1_{\cal C}(\Delta(\mu), M_k)\to 0\to $$
$$\to Ext^1_{\cal C}(\Delta(\mu), M/M_k)\to Ext^2_{\cal C}(\Delta(\mu), M_k)$$
of the long exact sequence
Assume $Ext^1_{\cal C}(\Delta(\mu), M_k)\neq 0$. Then $\mu\in\Gamma_k$ by Lemma \ref{lm33}. 
On the other hand, the socle of $M/M_k$ belongs to $\Gamma\setminus\Gamma_k$, which implies $Hom_{\cal C}(\Delta(\mu),
M/M_k)=0$. Thus we have a contradiction. Therefore $Ext^1_{\cal C}(\Delta(\mu), M_k)=0$ for
every $\mu$. By Lemma \ref{l34}, $Ext^2_{\cal C}(\Delta(\mu),
M_k)=0$ for every $\mu$, which implies $Ext^1_{\cal
C}(\Delta(\mu), M/M_k)=0$ for every $\mu$. Finally, since every object $M/M_k$
is finite, we conclude the proof by standard arguments from
\cite{dr,jan}.

For the second statement, it is enough to prove that every subobject $O^{\Gamma_k}(M)$ is finite. In fact, $O^{\Gamma_k}(M)$
contains a finite subobject $N$ such that $[N : L(\mu)]=[O^{\Gamma_k}(M) : L(\mu)]$ for every $\mu\in\Gamma\setminus\Gamma_k$
because all the multiplicities are finite. In particular, $O^{\Gamma_k}(M)/N$ belongs to $\Gamma_k$, that is, $N=O^{\Gamma_k}(M)$.
The final argument is analogous to the proof of the first part of this theorem.
\end{proof}

As a consequence of Theorems \ref{t31} and \ref{t32} we obtain the following corollary.

\begin{cor}\label{c32}
Assume that $M$ is $\Gamma$-restricted
and that there is an exact sequence
$$0\to N\to M\to S\to 0 .$$
Then the following are true.
\begin{enumerate}
\item If both $M$ and $N$ have decreasing $\nabla$-filtrations,
then $S$ has a decreasing $\nabla$-filtration. 
\item If both $M$
and $S$ have increasing $\Delta$-filtrations, then $N$ has an
increasing $\Delta$-filtration. 
\item If $M$ has a decreasing
(increasing) $\nabla$ ($\Delta$)-filtration, then every direct
summand of $M$ has a decreasing (increasing) $\nabla$
($\Delta$)-filtration. 
\item If $M$ has a decreasing
$\nabla$-filtration, then the object $\nabla(\lambda)$ appears
exactly $(M : \nabla(\lambda))=\dim Hom_{\cal C}(\Delta(\lambda),
M)$ times as a section of the $\nabla$-filtration of $M$. 
Additionally, $(M : \nabla(\lambda))=(N :
\nabla(\lambda))+(S : \nabla(\lambda))$, provided that $N$ has a
decreasing $\nabla$-filtration. 
\item If $M$ has an increasing
$\Delta$-filtration, then the object $\Delta(\lambda)$ appears
exactly $(M :\Delta(\lambda))=\dim Hom_{\cal C}(M,
\nabla(\lambda))$ times as a section of the $\Delta$-filtration of $M$. 
Moreover, $(M :
\Delta(\lambda))=(N : \Delta(\lambda))+(S : \Delta(\lambda))$,
provided that $S$ has an increasing $\Delta$-filtration.
\end{enumerate}
\end{cor}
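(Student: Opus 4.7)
The overall plan is to reduce Corollary \ref{c32} to the combined force of Theorems \ref{t31} and \ref{t32}, which translate between the existence of $\Delta$- or $\nabla$-filtrations and the vanishing of $Ext^1$ against $\nabla(\lambda)$ or $\Delta(\lambda)$. All three of parts (1)--(3) then reduce to straightforward manipulation of the long exact Ext-sequence, while parts (4)--(5) require an additional dimension count using the Schurian assumption $End_{\cal C}(\nabla(\lambda))=K$.

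For part (1), I would apply $Hom_{\cal C}(\Delta(\lambda),-)$ to $0\to N\to M\to S\to 0$, obtaining the fragment
\[Ext^1_{\cal C}(\Delta(\lambda),M)\to Ext^1_{\cal C}(\Delta(\lambda),S)\to Ext^2_{\cal C}(\Delta(\lambda),N).\]
The outer terms vanish by Theorem \ref{t31}, so the middle term vanishes; since $S$ is $\Gamma$-restricted as a quotient of $M$, Theorem \ref{t32} delivers the required decreasing $\nabla$-filtration. Part (2) is symmetric, using $Hom_{\cal C}(-,\nabla(\lambda))$, the fact that $N$ is $\Gamma$-restricted as a subobject of $M$, and Theorem \ref{t32} in its other direction. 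Part (3) follows because any direct sum decomposition of $M$ splits every Ext group; thus if $M=A\oplus B$ has, say, $Ext^1_{\cal C}(\Delta(\lambda),M)=0$ for all $\lambda$, the same holds for each summand, and $\Gamma$-restrictedness is visibly inherited, so Theorem \ref{t32} applies to each summand.

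For part (4), the first task is to show that $\dim Hom_{\cal C}(\Delta(\lambda),M)$ is independent of the chosen filtration. Given a decreasing $\nabla$-filtration $M=M_0\supseteq M_1\supseteq\ldots$ with sections $\nabla(\mu_i)$, apply $Hom_{\cal C}(\Delta(\lambda),-)$ to $0\to M_N\to M\to M/M_N\to 0$; because $M_N$ is $\Gamma$-restricted and inherits a decreasing $\nabla$-filtration, Theorem \ref{t31} gives $Ext^1_{\cal C}(\Delta(\lambda),M_N)=0$, so this sequence is short exact on Hom. The finite quotient $M/M_N$ admits a filtration with sections $\nabla(\mu_0),\ldots,\nabla(\mu_{N-1})$; combined with $Ext^1_{\cal C}(\Delta(\lambda),\nabla(\mu))=0$ (Corollary \ref{c31}) and the Schurian identity $Hom_{\cal C}(\Delta(\lambda),\nabla(\mu))=\delta_{\lambda,\mu}K$, a straightforward induction gives that $\dim Hom_{\cal C}(\Delta(\lambda),M/M_N)$ equals the number of indices $i<N$ with $\mu_i=\lambda$. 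By $\Gamma$-restrictedness only finitely many such $i$ occur, so for $N$ large this stabilizes at $(M:\nabla(\lambda))$. Then the additivity statement follows from part (1) together with the long exact sequence: when $N$ has a decreasing $\nabla$-filtration, Theorem \ref{t31} forces $Ext^1_{\cal C}(\Delta(\lambda),N)=0$, keeping $0\to Hom_{\cal C}(\Delta(\lambda),N)\to Hom_{\cal C}(\Delta(\lambda),M)\to Hom_{\cal C}(\Delta(\lambda),S)\to 0$ exact. Part (5) is entirely dual.

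The main technical point I anticipate is establishing $Hom_{\cal C}(\Delta(\lambda),M_N)=0$ for sufficiently large $N$ (needed to make the stabilization above actually equal $\dim Hom_{\cal C}(\Delta(\lambda),M)$, not just bound it). The key is that $\Delta(\lambda)$ is finite under our running Chevalley-duality setup, since $\Delta(\lambda)=\tau(\nabla(\lambda))$ and all costandard objects are assumed finite; hence any morphism $\phi:\Delta(\lambda)\to M_N$ has finite image, and the descending chain $\phi(\Delta(\lambda))\cap M_{N+k}$ in the finite-length object $\phi(\Delta(\lambda))$ stabilizes. Since $\bigcap_{k\geq 0}M_{N+k}=0$, it must stabilize to zero, and then choosing $N$ so that $\mu_i\neq\lambda$ for all $i\geq N$ prevents $\phi$ from factoring nontrivially through any finite quotient $M_N/M_{N+k}$, forcing $\phi=0$. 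With this in hand, the short exact Hom-sequence pins $\dim Hom_{\cal C}(\Delta(\lambda),M)$ to the multiplicity of $\nabla(\lambda)$ as a filtration section, proving both well-definedness and the required equality.
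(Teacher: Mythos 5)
Your argument is correct and is exactly the fleshing-out that the paper intends: the paper gives no proof beyond the phrase ``As a consequence of Theorems \ref{t31} and \ref{t32}'', and you reduce parts (1)--(3) to these theorems via the long exact $Ext$-sequence, as expected. For parts (4)--(5) your counting argument (stabilization of $\dim Hom_{\cal C}(\Delta(\lambda),M/M_N)$ plus vanishing of $Hom_{\cal C}(\Delta(\lambda),M_N)$ for large $N$, using finiteness of $\Delta(\lambda)$ under the Chevalley-duality hypotheses) correctly supplies the details the authors omit; the only cosmetic quibble is that the identity $Hom_{\cal C}(\Delta(\lambda),\nabla(\mu))=\delta_{\lambda\mu}K$ follows from $End(L(\lambda))=K$ (automatic over an algebraically closed field) rather than specifically from the Schurian condition $End(\nabla(\lambda))=K$, and the logical order in your last paragraph should be to fix $N$ with $\mu_i\neq\lambda$ for $i\geq N$ first, then run the stabilization argument for each individual $\phi$.
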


\subsection{Tilting objects}

Now we are ready to define a tilting object in our setting.

\begin{df}\label{d34}
An object $T\in{\cal C}$ is called a tilting object if and only if it has an increasing $\Delta$-filtration and 
$Ext^1_{\cal C}(\Delta(\lambda), T)=0$ for every $\lambda\in\Lambda$.
\end{df}

The existence of indecomposable titling modules is established in the following theorem.

\begin{tr}\label{t33}
For any weight $\lambda\in\Lambda$, there is an indecomposable tilting object $T$ such that $T$ is $(\lambda]$-restricted,
$[T : L(\lambda)]=1$, and its
$\Delta$-filtration begins with $\Delta(\lambda)$.
\end{tr}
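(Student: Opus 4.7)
The plan is to build $T$ as a directed union $T=\bigcup_{k\geq 0}T_k$ of finite-length modules obtained by iterated universal extensions, organised along the height stratification of $(\lambda]$ supplied by Proposition \ref{pr2}. Fix a descending chain $(\lambda]=\Xi_0\supseteq\Xi_1\supseteq\ldots$ with $\bigcap_k\Xi_k=\emptyset$ and each $V_k=\Xi_k\setminus\Xi_{k+1}$ a finite set of pairwise incomparable generators of $\Xi_k$; since $\lambda$ alone generates $(\lambda]$ we have $V_0=\{\lambda\}$, and $V_k$ consists of the weights at height $k$ below $\lambda$. Set $T_0=\Delta(\lambda)$ and, given $T_{k-1}$, define $T_k$ as the universal extension
\begin{equation*}
0\to T_{k-1}\to T_k\to \bigoplus_{\mu\in V_k}\Delta(\mu)^{e_\mu^{(k)}}\to 0
\end{equation*}
whose class corresponds to a $K$-basis of $\bigoplus_{\mu\in V_k}Ext^1_{\cal C}(\Delta(\mu),T_{k-1})$; here $e_\mu^{(k)}=\dim_K Ext^1_{\cal C}(\Delta(\mu),T_{k-1})$, finite because $T_{k-1}$ and each $\Delta(\mu)$ are of finite length and $V_k$ itself is finite.

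The core inductive claim is that $Ext^1_{\cal C}(\Delta(\mu'),T_k)=0$ for every $\mu'$ with $ht(\mu')\leq k$. The case $\mu'\in V_k$ is immediate from the universal property; for $ht(\mu')<k$ one applies $Hom_{\cal C}(\Delta(\mu'),-)$ to the defining sequence and uses $Ext^1_{\cal C}(\Delta(\mu'),\Delta(\mu))=0$ for $\mu\in V_k$, which holds by Lemma \ref{lm33} since a relation $\mu'<\mu$ would force $ht(\mu')>ht(\mu)=k$. Setting $T=\bigcup_k T_k$ and concatenating the filtrations yields an increasing $\Delta$-filtration beginning with $\Delta(\lambda)=T_0$, all further factors being $\Delta(\mu)$ with $\mu<\lambda$. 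For $\nu\in(\lambda]$ the multiplicity $[T:L(\nu)]$ is a finite sum, because only $\mu$ in the finite interval $[\nu,\lambda]$ contributes and each $(T:\Delta(\mu))=e_\mu^{(ht(\mu))}$ is finite; in particular $[T:L(\lambda)]=1$ and $T$ is $(\lambda]$-restricted. Because $\cal C$ is locally Noetherian and every $\Delta(\mu)$ is of finite length, $Ext^1_{\cal C}(\Delta(\mu),-)$ commutes with the directed union, so $Ext^1_{\cal C}(\Delta(\mu),T)=\varinjlim_k Ext^1_{\cal C}(\Delta(\mu),T_k)=0$ and $T$ is tilting.

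For indecomposability, I plan to show that every idempotent in $End_{\cal C}(T)$ is $0$ or $1$. A computation along the filtration using $Hom_{\cal C}(\Delta(\lambda),\Delta(\mu))=0$ for $\mu<\lambda$ (the head of $\Delta(\lambda)$ is $L(\lambda)$, which is not a composition factor of $\Delta(\mu)$) gives $Hom_{\cal C}(\Delta(\lambda),T)=K$, so restriction to the submodule $\Delta(\lambda)\hookrightarrow T$ defines a ring homomorphism $\varepsilon\colon End_{\cal C}(T)\to K$. For an idempotent $e$ one has $\varepsilon(e)\in\{0,1\}$; the case $\varepsilon(e)=1$ is symmetric, so it suffices to show $\varepsilon(e)=0$ implies $e=0$. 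Here $e(\Delta(\lambda))=0$ puts $\Delta(\lambda)\subseteq(1-e)T$; Corollary \ref{c32}(5) together with $Hom_{\cal C}(eT,\nabla(\lambda))=0$ then yields $(eT:\Delta(\lambda))=0$, and the question reduces to whether a non-zero direct summand of $T$ can avoid $\Delta(\lambda)$ in its $\Delta$-filtration.

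I expect this last step to be the main obstacle, and the plan is to exploit the universality of the extension classes. Assuming $eT\neq 0$, take the smallest $k_0$ with $eT\cap T_{k_0}\neq 0$; then $eT\cap T_{k_0-1}=0$, so the image $N$ of $eT\cap T_{k_0}$ in $Q_{k_0}=T_{k_0}/T_{k_0-1}=\bigoplus_{\mu\in V_{k_0}}\Delta(\mu)^{e_\mu^{(k_0)}}$ is non-zero, and the inclusion $eT\cap T_{k_0}\hookrightarrow T_{k_0}$ splits the extension pulled back along $N\hookrightarrow Q_{k_0}$, forcing the restricted class $e_{k_0}|_N$ to vanish in $Ext^1_{\cal C}(N,T_{k_0-1})$. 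The goal is then to derive a contradiction by showing $e_{k_0}|_N\neq 0$ for every non-zero $N$: since $e_{k_0}$ represents a basis of $\bigoplus_\mu Ext^1_{\cal C}(\Delta(\mu),T_{k_0-1})$ and the weights of $V_{k_0}$ are pairwise incomparable, the check should reduce to a single $\mu\in V_{k_0}$ via the head of $N$, where picking any non-zero map $\Delta(\pi)\to N$ exhibits the non-vanishing as linear independence of the chosen basis. This forces $N=0$, hence $eT=0$, which completes the argument that $T$ is indecomposable.
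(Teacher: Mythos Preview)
Your universal-extension construction is sound and is a genuinely different route from the paper's. The paper does not take universal extensions: it extends by a \emph{single} $\Delta(\pi)$ at a time, allows the result to decompose, picks the indecomposable summand containing the previous stage, and uses a lexicographic ``characteristic'' $\chi(X)=(\dim Ext^1(\Delta(\mu_i),X))_{\mu_i\in V_k}$ together with a minimality argument to force the defect set into $\Gamma_{k+1}$. Thus indecomposability is built into the paper's construction, whereas you postpone it to the end. Your approach is cleaner on the Ext-vanishing side (one step kills a whole height level), but you must then pay for indecomposability afterwards, and this is exactly where your argument has a gap.

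The problem is the sentence ``picking any non-zero map $\Delta(\pi)\to N$ exhibits the non-vanishing as linear independence of the chosen basis''. As written, $N$ is only an arbitrary non-zero \emph{submodule} of $Q_{k_0}=\bigoplus_{\mu\in V_{k_0}}\Delta(\mu)^{e_\mu}$, and then the head of $N$ may consist entirely of $L(\pi)$ with $ht(\pi)>k_0$ (i.e.\ $N$ could sit inside $\bigoplus_\mu(\mathrm{rad}\,\Delta(\mu))^{e_\mu}$). For such $\pi$ the pull-back of the universal class to $Ext^1(\Delta(\pi),T_{k_0-1})$ is not controlled by your chosen basis, so the linear-independence argument does not apply. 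What you actually need is that $N$ be a \emph{direct summand} of $Q_{k_0}$; since the $\mu\in V_{k_0}$ are pairwise incomparable and $End(\Delta(\mu))=K$, $End(Q_{k_0})$ is a product of matrix algebras, so a direct summand of $Q_{k_0}$ is a sum of full $\Delta(\mu)$'s and your basis argument then works.

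The missing step that makes $N$ a summand is: show $T_k=O^{\Xi_{k+1}}(T)$ for all $k$. This is an easy induction (from $Hom(\Delta(\mu),L(\nu))=0$ for $\mu\in V_k$, $\nu\in\Xi_{k+1}$ one gets $Hom(T_k,L(\nu))\hookrightarrow Hom(T_{k-1},L(\nu))=0$). It implies every endomorphism of $T$ preserves each $T_k$; hence $e|_{T_{k_0}}$ is an idempotent, $T_{k_0-1}\subseteq(1-e)T_{k_0}$, and therefore $Q_{k_0}=e(T_{k_0})\oplus\big((1-e)T_{k_0}/T_{k_0-1}\big)$ with $N=e(T_{k_0})$ an honest direct summand. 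With this in hand your plan goes through; without it the final paragraph does not.
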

\begin{proof} Fix a decreasing chain of ideals of $\Lambda$ as before; say
\[\Gamma_0=(\lambda]\supseteq\Gamma_1\supseteq\Gamma_2\supseteq\ldots\]
such that $\Gamma_k\setminus\Gamma_{k+1}$ is finite for all $k\geq
0$, $\bigcap_{k\geq 0}\Gamma_k=\emptyset$, and the elements of $\Gamma_{k}\setminus \Gamma_{k+1}$ 
are pairwise incomparable generators of $\Gamma_k$.
For any $V\in{\cal C}$ we shall write $O^k(V)$ for $O^{\Gamma_k}(V)$.

We shall construct an increasing chain of finite objects where
$$0=V_0\subseteq V_1\subseteq V_2\subseteq\ldots$$
such that
\begin{enumerate}
\item $V_k/V_{k-1}$ has a $\Delta$-filtration, 
\item $O^t(V_k)=V_t$ for all $k\geq t$,
\item $V_k$ is indecomposable, and
\item $S(V_k)\subseteq \Gamma_k$ for all $k\geq 1$.
\end{enumerate}
It is clear that $V_1=\Delta(\lambda)$ satisfies the inductive hypothesis. Suppose
that we have already constructed a fragment of our filtration up
to the $k$-th term. Consider every finite object $X$,
belonging to $\Gamma_0=(\lambda]$, which satisfies the following
conditions:
\begin{enumerate}
\item $V_k\subseteq X$, 
\item$X/V_k$ has a $\Delta$-filtration, 
\item $O^{k+1}(X)=X$ and $O^t(X)=V_t$ for all $t\leq k$,
\item $X$ is indecomposable, and 
\item $S(X)\subseteq \Gamma_k$.
\end{enumerate}
Any such object $X$ shall be called $k$-{\it admissible}. It is obvious that $V_k$ is
$k$-admissible.
 
Denote $D_k=\Gamma_k\setminus\Gamma_{k+1}=\{\mu_1, \ldots, \mu_l\}$ and choose a linear order 
$\mu_1 >\mu_2 >\ldots >\mu_l$ on $D_k$.
Define a characteristic of a $k$-admissible object $X$ as $\chi(X)=(\chi_{\mu}(X))_{\mu\in D_k}$
\[=(\dim_K Ext^1_{\cal C}(\Delta(\mu_1), X),\ldots , \dim_K Ext^1_{\cal C}(\Delta(\mu_l), X)).\]
We introduce an order on characteristics of $k$-admissible objects by considering them as vectors of a
poset ${\mathbb N}^k$ with respect to the lexicographical order from left to right.

Choose a $k$-admissible object $X$ with a minimal characteristic. If
$S(X)\subset \Gamma_{k+1}$, then the set $V_{k+1}=X$.

Otherwise, take a minimal element
$\pi\in S(X)\setminus \Gamma_{k+1}$ and a non-split exact sequence
$$0\to X\to Y\to \Delta(\pi)\to 0,$$
where the object $Y$ belongs to $\Gamma_0=(\lambda]$. 
We shall show that $O^{k+1}(Y)=Y$ and $O^t(Y)=V_t$ for all $t\leq k$. Indeed, since no
non-zero factor of $\Delta(\pi)$ belongs to $\Gamma_{k+1}$,
we obtain that $O^{k+1}(Y)+X=Y$. 
Since $X=O^{k+1}(X)$, we get $X\subseteq O^{k+1}(Y)$, and therefore $Y=O^{k+1}(Y)$. 
Next, $\pi\in \Gamma_k$ implies $O^t(\Delta(\pi))=0$ for all $t\leq k$. Hence $O^t(Y)=O^t(X)=V_t$ for
all $t\leq k$.

Suppose that $Y=Y_1\bigoplus Y_2 \bigoplus \ldots \bigoplus Y_l$, where every $Y_i$ is an indecomposable object.
Then $V_k=O^k(Y_1)\bigoplus O^k(Y_2)\bigoplus \ldots \bigoplus O^k(Y_l)$ and all but one of these summands are zeros because
$V_k$ is indecomposable. Assume  $O^k(Y_2)=\ldots =O^k(Y_l)=0$. Then $O^t(Y_2)=\ldots =O^t(Y_l)=0$ and $O^t(Y_1)=V_t$  for $t\leq k$. 
Moreover, it is obvious that $O^{k+1}(Y_i)=Y_i$ for all $i=1,\ldots, l$. Finally, $Y/V_k=Y_1/V_k\bigoplus Y_2\bigoplus \ldots \bigoplus Y_l$ and 
$Y_1/V_k$ has a $\Delta$-filtration.

Take any $\mu\in\Lambda$ and consider the following fragment 
\[0\to Hom_{\cal C}(\Delta(\mu), X)\to Hom_{\cal C}(\Delta(\mu), Y)\to Hom_{\cal C}(\Delta(\mu), \Delta(\pi))\]
\[\to Ext^1_{\cal C}(\Delta(\mu), X)\to Ext^1_{\cal C}(\Delta(\mu), Y)\to Ext^1_{\cal C}(\Delta(\mu), \Delta(\pi))\]
of the long exact sequence.

Suppose that $\mu\in S(X)\setminus\Gamma_{k+1}$. If $\mu\neq\pi$,
then $Hom_{\cal C}(\Delta(\mu), \Delta(\pi))= Ext^1_{\cal
C}(\Delta(\mu), \Delta(\pi))=0$ because $\pi$ is minimal in $S(X)\setminus \Gamma_{k+1}$. In this case
$\chi_{\mu}(X)=\chi_{\mu}(Y)$.
If $\mu =\pi$, then one can give a verbatim proof of Lemma A4.1 from
\cite{don1} to derive that 
$\chi_{\pi}(Y)=\chi_{\pi}(X)-1$. 

Finally, if $\mu\in S(Y)\setminus
S(X)$, then $Ext^1_{\cal C}(\Delta(\mu), \Delta(\pi))\neq 0$, and
therefore $\mu <\pi$.

The above shows that $S(Y_1)\subseteq
S(Y)\subseteq\Gamma_k$ and $Y_1$ is $k$-admissible but
$\chi(Y_1)\leq\chi(Y)<\chi(X)$. This contradiction shows that $S(X)\subseteq \Gamma_{k+1}$ for a $k$-admissible object $X$ 
with a minimal characteristic.

Let $T=\bigcup_{1\leq k}V_k$. It is obvious that $T$ is $(\lambda]$-restricted.
Using Lemma 3.8(a) from \cite{cps}, we obtain that $Ext^1_{\cal C}(\Delta(\mu), T)=0$ for every $\mu\in\Lambda$.
To conclude the proof, we must recognize that $O^k(T)=V_k$ for all $k\geq 1$.
\end{proof}

From now on we shall denote the tilting object from the above theorem by
$T(\lambda)$. It is obvious that
$O^{1}(T(\lambda))=O^{(\lambda)}(T(\lambda))=\Delta(\lambda)$, and 
we shall call this subobject of $T(\lambda)$  a {\em standard bottom} of $T(\lambda)$.
By Theorem \ref{t32}, $T(\lambda)$ has a decreasing filtration,
$T(\lambda)/O_{(\lambda)}(T(\lambda))=\nabla(\lambda)$; and we shall call 
this factorobject of $T(\lambda)$ a {\em costandard top} of $T(\lambda)$.

Next, two technical lemmas shall be needed in the proofs of the following theorems.

\begin{lm}\label{l35}
Assume $Y\in{\cal C}$ is $\Gamma$-restricted and $Y$ has a filtration 
\[0=Y_0\subseteq Y_1 \subseteq Y_2 \subseteq \ldots \]
such that each factor $Y_{k}/Y_{k-1}$, for all $k\geq 1$, has a finite $\Delta$-filtration.
If $Z\in{\cal C}$ satisfies $Ext^1(\Delta(\mu), Z)=0$ for every $\mu\in \Lambda$, then
every morphism $Y_t\to Z$ can be extended to a morphism $Y\to Z$ for each $t\geq 0$.
\end{lm}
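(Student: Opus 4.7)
The plan is to extend $\phi_t$ one step at a time along the filtration and then glue by passing to the union. Suppose inductively that I have constructed $\phi_k : Y_k \to Z$ extending $\phi_t$ for some $k \geq t$. I would apply $Hom_{\cal C}(-, Z)$ to
$$0 \to Y_k \to Y_{k+1} \to Y_{k+1}/Y_k \to 0$$
to obtain the exact fragment
$$Hom_{\cal C}(Y_{k+1}, Z) \to Hom_{\cal C}(Y_k, Z) \to Ext^1_{\cal C}(Y_{k+1}/Y_k, Z),$$
so the existence of an extension $\phi_{k+1}$ of $\phi_k$ reduces to vanishing of the rightmost $Ext^1$.

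The technical heart of the argument will be verifying $Ext^1_{\cal C}(Y_{k+1}/Y_k, Z) = 0$. I would fix a finite $\Delta$-filtration $0 = N_0 \subset N_1 \subset \ldots \subset N_s = Y_{k+1}/Y_k$ with $N_i/N_{i-1} \simeq \Delta(\mu_i)$ and induct on $s$. The base case $s = 1$ is the hypothesis on $Z$, while the induction step follows from the $Ext$-sequence of $0 \to N_{i-1} \to N_i \to \Delta(\mu_i) \to 0$,
$$Ext^1_{\cal C}(\Delta(\mu_i), Z) \to Ext^1_{\cal C}(N_i, Z) \to Ext^1_{\cal C}(N_{i-1}, Z),$$
whose flanking terms both vanish by hypothesis and inductive assumption.

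Having chosen a compatible sequence $\{\phi_k\}_{k \geq t}$ with $\phi_{k+1}|_{Y_k} = \phi_k$ by dependent choice, and since $\cal C$ satisfies AB5 with $Y = \bigcup_{k \geq 0} Y_k$, these maps glue to a well-defined morphism $\phi : Y \to Z$ given by $\phi(y) = \phi_k(y)$ whenever $y \in Y_k$; this $\phi$ extends $\phi_t$ by construction. The crucial point at which the hypothesis of the lemma enters is the finiteness of the $\Delta$-filtration on each successive quotient $Y_{k+1}/Y_k$: without finiteness, the inductive d\'evissage for $Ext^1$-vanishing would fail, and a transfinite or limiting argument would be required in its place. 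Everything else — the step-wise extension, the choice of a compatible sequence, and the colimit passage — is routine given AB5.
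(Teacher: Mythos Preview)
Your proof is correct and essentially the same as the paper's: the paper phrases the argument via the identification $Hom_{\cal C}(Y,Z)=\lim\limits_{\leftarrow}Hom_{\cal C}(Y_k,Z)$ and shows that each restriction map $Hom_{\cal C}(Y_m,Z)\to Hom_{\cal C}(Y_t,Z)$ is surjective because $Ext^1_{\cal C}(Y_m/Y_t,Z)=0$, but this is exactly your step-by-step extension and gluing spelled out in inverse-limit language. The only cosmetic difference is that you extend one filtration step at a time while the paper jumps directly from $Y_t$ to $Y_m$; the underlying $Ext^1$-vanishing d\'evissage is identical.
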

\begin{proof}
Clearly $Y=\lim\limits_{\longrightarrow} Y_k$, which implies $Hom_{\cal C}(Y, Z)=\lim\limits_{\longleftarrow} Hom_{\cal C}(Y_k,Z)$.
Since $Y_m/Y_t$ has a finite $\Delta$-filtration for each $m\geq t$, we infer that $Ext^1(Y_m/Y_t,Z)=0$.
Therefore every restriction morphism  $Hom_{\cal C}(Y_m, Z)\to Hom_{\cal C}(Y_t, Z)$ is an epimorphism, and our claim follows.
\end{proof}

The following lemma is from folklore.
\begin{lm}\label{l36}
Let $\{A_t, \phi_{mt} : A_t\to A_m |1\leq m\leq t\}$ be a
projective spectrum of finite-dimensional local algebras, where each
$\phi_{mt}$ is an epimorphism. Then a pseudocompact algebra
$A=\lim\limits_{\longleftarrow} A_t$ is also local.
\end{lm}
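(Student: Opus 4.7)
The plan is to identify the unique maximal ideal of $A$ as the inverse limit of the maximal ideals of the $A_t$, and verify that its complement consists of units.

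First I would observe that since $K$ is algebraically closed and each $A_t$ is finite-dimensional and local, we have $A_t/\mathrm{rad}(A_t) \cong K$. The surjection $\phi_{mt}\colon A_t \to A_m$ induces a surjection $A_t/\mathrm{rad}(A_t) \twoheadrightarrow A_m/\mathrm{rad}(A_m)$, i.e., $K \twoheadrightarrow K$, which must be an isomorphism. In particular, $\phi_{mt}(\mathrm{rad}(A_t)) = \mathrm{rad}(A_m)$ and, more importantly, $\phi_{mt}^{-1}(\mathrm{rad}(A_m)) = \mathrm{rad}(A_t)$.

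Next, set $J = \varprojlim_t \mathrm{rad}(A_t)$. This is a closed two-sided ideal of $A$, and since inverse limits of finite-dimensional vector spaces are exact, we get
\[
A/J \;\cong\; \varprojlim_t (A_t/\mathrm{rad}(A_t)) \;\cong\; \varprojlim_t K \;\cong\; K,
\]
where the last isomorphism holds because all transition maps between the quotients are the identity of $K$. Therefore $J$ is an open maximal two-sided ideal of $A$ and $A/J$ is a division algebra.

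To conclude that $A$ is local, it suffices to show every $a = (a_t) \in A \setminus J$ is a unit. By the compatibility $\phi_{mt}^{-1}(\mathrm{rad}(A_m)) = \mathrm{rad}(A_t)$ established above, $a \notin J$ forces $a_t \notin \mathrm{rad}(A_t)$ for every $t$; hence each $a_t$ is a unit in the local algebra $A_t$. Set $b_t = a_t^{-1}$. Applying $\phi_{mt}$ to $a_t b_t = b_t a_t = 1$ yields $a_m \phi_{mt}(b_t) = \phi_{mt}(b_t) a_m = 1$, so $\phi_{mt}(b_t) = b_m$. Thus $(b_t)$ defines an element of $A$ inverse to $a$. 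Consequently $J$ coincides with $\mathrm{rad}(A)$, and $A$ is local.

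The main technical point is verifying that the inverses $b_t$ form a compatible family, but this is immediate from applying $\phi_{mt}$ to the relation $a_t b_t = 1$. The only place where one must be slightly careful is ensuring $\phi_{mt}^{-1}(\mathrm{rad}(A_m)) = \mathrm{rad}(A_t)$, not merely the weaker containment $\phi_{mt}(\mathrm{rad}(A_t)) \subseteq \mathrm{rad}(A_m)$; this equality follows from the surjectivity of $\phi_{mt}$ together with the fact that the induced map on residue fields is an isomorphism $K \to K$.
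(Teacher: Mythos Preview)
Your proof is correct and follows essentially the same line as the paper: both identify the candidate maximal ideal as $\mathcal{M}=\varprojlim \mathrm{rad}(A_t)$, use $A_t/\mathrm{rad}(A_t)=K$ and the surjectivity of the transition maps to obtain $A/\mathcal{M}=K$, and conclude. The only minor difference is in the last step, where the paper invokes pronilpotence of $\mathcal{M}$ while you argue directly that every element outside $\mathcal{M}$ is a unit; these are equivalent ways to finish.
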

\begin{proof} Since the ground field $K$ is algebraically closed, it follows that $A_t/rad A_t=K$ for every $t\geq 1$. In particular,
$\phi_{mt}(rad A_t)=rad A_m$ for every $1\leq m\leq t$. Since an inverse limit ${\cal M}$ of the induced spectrum of radicals
is pronilpotent and $A/{\cal M}=K$, the claim follows.
\end{proof}

The basic properties of $End_{\cal C}(T(\lambda))$ are described in the following theorem.

\begin{tr}\label{t34}(Also see Proposition \ref{p51} later.)
An algebra $A(\lambda)=End_{\cal C}(T(\lambda))$ is a local
pseudocompact algebra and $A(\lambda)/rad A(\lambda)=K$.
Moreover, $\phi\in rad A(\lambda)$ if and only if a restriction of $\phi$ to the
standard bottom of $T(\lambda)$ is zero or, equivalently, $\phi$
induces the zero endomorphism of the costandard top of $T(\lambda)$.
\end{tr}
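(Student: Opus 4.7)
The plan is to realize $A(\lambda)$ as the inverse limit of the finite-dimensional algebras $End_{\cal C}(V_k)$ coming from the construction in Theorem \ref{t33}, and then to identify $rad A(\lambda)$ as the unique codimension-one ideal via the natural homomorphism onto $End_{\cal C}(\Delta(\lambda))$ or $End_{\cal C}(\nabla(\lambda))$.

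First I would verify that both $O^{\Gamma}$ and $O_{\Gamma}$ are covariant on $\cal C$. For $O^\Gamma$, given $\phi:M\to N$, the composite $O^\Gamma(M)\to N\to N/O^\Gamma(N)$ realizes a quotient of $O^\Gamma(M)$ as a subobject of $N/O^\Gamma(N)\in\Gamma$; but $O^\Gamma(M)$ admits no nonzero quotient in $\Gamma$, since otherwise the kernel of such a quotient would yield a proper subobject $U\subsetneq O^\Gamma(M)$ with $M/U$ an extension of objects in $\Gamma$ (hence in $\Gamma$), contradicting minimality. Covariance of $O_\Gamma$ is direct. Consequently every $\phi\in A(\lambda)$ stabilizes each $V_k=O^{\Gamma_k}(T(\lambda))$ and also $O_{(\lambda)}(T(\lambda))$, yielding restriction homomorphisms $A(\lambda)\to End_{\cal C}(V_k)$ and $A(\lambda)\to End_{\cal C}(\Delta(\lambda))$ (using $V_1=\Delta(\lambda)$), and an induced homomorphism $A(\lambda)\to End_{\cal C}(\nabla(\lambda))$. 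Since $T(\lambda)=\bigcup_k V_k$, one obtains an identification $A(\lambda)=\lim\limits_{\longleftarrow} End_{\cal C}(V_k)$ as $K$-algebras.

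Each $V_k$ is finite and indecomposable by construction, so $End_{\cal C}(V_k)$ is local finite-dimensional with residue field $K$, as $K$ is algebraically closed. To apply Lemma \ref{l36}, I would next show surjectivity of the transition maps $End_{\cal C}(V_t)\to End_{\cal C}(V_k)$ for $k\leq t$: the obstruction to extending a $\psi\in End_{\cal C}(V_k)$ to an endomorphism of $V_t$ lies in $Ext^1_{\cal C}(V_t/V_k, V_t)$, and $V_t/V_k$ carries a finite $\Delta$-filtration whose factors $\Delta(\mu)$ satisfy $\mu\in\Gamma_k\setminus\Gamma_t$. Since $S(V_t)\subseteq\Gamma_t$ by construction, each $Ext^1_{\cal C}(\Delta(\mu), V_t)=0$ for such $\mu$, and a short induction along the $\Delta$-filtration yields $Ext^1_{\cal C}(V_t/V_k, V_t)=0$. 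Lemma \ref{l36} then gives that $A(\lambda)$ is local pseudocompact with $A(\lambda)/rad A(\lambda)=K$.

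For the radical characterization, I first observe $End_{\cal C}(\Delta(\lambda))=K$: $\Delta(\lambda)$ has simple top $L(\lambda)$ of multiplicity one, any endomorphism acts by some scalar $c$ on the top, and the difference with $c$ maps $\Delta(\lambda)$ into $rad\,\Delta(\lambda)$, whose composition factors are $L(\mu)$ for $\mu<\lambda$; since $Hom_{\cal C}(\Delta(\lambda), L(\mu))=0$ for $\mu\ne\lambda$, dévissage gives $Hom_{\cal C}(\Delta(\lambda), rad\,\Delta(\lambda))=0$. The restriction $\rho:A(\lambda)\to End_{\cal C}(\Delta(\lambda))=K$ is therefore a surjective $K$-algebra homomorphism, so $\ker\rho$ is a two-sided ideal of codimension one. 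If $\phi\in\ker\rho$ were a unit in $A(\lambda)$, then $\phi|_{V_1}$ would be a unit in $End_{\cal C}(V_1)$, contradicting $\phi|_{\Delta(\lambda)}=0$ on the nonzero $V_1$; hence $\ker\rho\subseteq rad A(\lambda)$, and comparison of codimensions forces equality. The symmetric argument applied to the surjection $A(\lambda)\to End_{\cal C}(\nabla(\lambda))=K$ (where $End_{\cal C}(\nabla(\lambda))=K$ holds by the Schurian assumption) yields the costandard-top description. The main obstacle is the surjectivity of the transition maps in the inverse limit, which rests crucially on the Ext-vanishing $S(V_t)\subseteq\Gamma_t$ that was built into the construction of $T(\lambda)$; once Lemma \ref{l36} applies, the radical description is a clean codimension-one argument inside a local pseudocompact algebra.
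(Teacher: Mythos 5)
Your proof is correct and takes essentially the same route as the paper: realize $A(\lambda)$ as $\lim\limits_{\longleftarrow} End_{\cal C}(V_k)$, invoke Lemma \ref{l36}, and identify $rad\,A(\lambda)$ via the restriction to the standard bottom $V_1=\Delta(\lambda)$ (dually, the induced map on the costandard top). The only minor difference is that you obtain surjectivity of the transition maps from $Ext^1_{\cal C}(V_t/V_k,V_t)=0$, which you deduce from $S(V_t)\subseteq\Gamma_t$, whereas the paper cites Lemma \ref{l35} (using $Ext^1_{\cal C}(\Delta(\mu),T(\lambda))=0$) together with the identification $Hom_{\cal C}(V_k,T(\lambda))=End_{\cal C}(V_k)$ — the two arguments are equivalent in substance.
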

\begin{proof}
Let $T(\lambda)=\bigcup_{k\geq 1} V_k$ as in the proof of Theorem \ref{t33}.
Then $A(\lambda)=\lim\limits_{\longleftarrow} Hom_{\cal C}(V_k,T(\lambda))$ and 
$Hom_{\cal C}(V_k,T(\lambda))=End_{\cal C}(V_k)$ for every $k\geq 0$.
By Lemma \ref{l35}, for every $m\leq k$, a map $\phi_{km} : Hom_{\cal C}(V_k,T(\lambda))\to
Hom_{\cal C}(V_m,T(\lambda))$, induced by an inclusion $V_m \to V_k$, is an epimorphism of algebras. 
To see that $End_{\cal C}(V_k)$ is a local algebra, observe that 
$End_{\cal C}(V_1)=End_{\cal C}(\Delta(\lambda))=K$ and that $\psi_k\in End_{\cal C}(V_k)$ belongs to $rad(End_{\cal C}(V_k))$ if and only if
$\phi_{k1}(\psi_k)=0$.

Since $End_{\cal C}(\nabla(\lambda))=K$, a map $\phi\in End_{\cal C}(T(\lambda))$
induces either an automorphism or a zero morphism of the costandard
top of $T(\lambda)$. On the other hand, $L(\lambda)=\Delta(\lambda) +
O_{(\lambda)}(T(\lambda))/O_{(\lambda)}(T(\lambda))$ is the socle
of $\nabla(\lambda)$. Therefore, in the first case, 
$\phi|_{\Delta(\lambda)}\neq 0$ and $\phi$ is invertible, while in the second case
$\phi|_{\Delta(\lambda)}=0$.
\end{proof}

\begin{cor}\label{c33}
Let $T$ be a tilting object such that it has an increasing $\Delta$-filtration beginning with $\Delta(\lambda)$.
Then $T(\lambda)$ is a direct summand of $T$. In particular,
$T(\lambda)$ is uniquely defined by the weight $\lambda$ up to an isomorphism.
\end{cor}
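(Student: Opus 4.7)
The plan is to build mutually inverse projections between $T(\lambda)$ and $T$ on the ``bottom'' standard piece $\Delta(\lambda)$, and then use the locality of $A(\lambda)=\End_{\cal C}(T(\lambda))$ established in Theorem \ref{t34} to conclude that one of the compositions is an automorphism.

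More precisely, by hypothesis $T$ has an increasing $\Delta$-filtration
\[0=T_0\subseteq T_1\subseteq T_2\subseteq\ldots, \qquad T_1=\Delta(\lambda),\]
and by the construction of $T(\lambda)$ in Theorem \ref{t33} we have an increasing filtration $T(\lambda)=\bigcup_{k\ge 1}V_k$ with $V_1=\Delta(\lambda)$. In particular there are canonical embeddings $\iota:\Delta(\lambda)\hookrightarrow T$ and $\iota':\Delta(\lambda)\hookrightarrow T(\lambda)$ as the first terms of the respective filtrations. First, I would apply Lemma \ref{l35} with $Y=T(\lambda)$, $Z=T$, and the morphism $\iota: \Delta(\lambda)\to T$ defined on $V_1$, to extend $\iota$ to a morphism $\beta:T(\lambda)\to T$; this uses that $T$ has an increasing $\Delta$-filtration, hence $\Ext^1_{\cal C}(\Delta(\mu),T)=0$ for all $\mu$ by the tilting hypothesis. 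Symmetrically, using that $T(\lambda)$ is tilting and the filtration on $T$, Lemma \ref{l35} provides an extension $\alpha:T\to T(\lambda)$ of $\iota':\Delta(\lambda)\to T(\lambda)$.

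Now consider $\alpha\beta\in A(\lambda)=\End_{\cal C}(T(\lambda))$. By construction $(\alpha\beta)|_{\Delta(\lambda)}=\alpha\circ\iota=\iota'$, so the restriction of $\alpha\beta$ to the standard bottom of $T(\lambda)$ is the identity, hence nonzero. By Theorem \ref{t34}, an endomorphism lies in $\rad A(\lambda)$ if and only if it vanishes on the standard bottom, so $\alpha\beta\notin\rad A(\lambda)$. Since $A(\lambda)$ is local with $A(\lambda)/\rad A(\lambda)=K$, the element $\alpha\beta$ is invertible. Consequently $\beta$ is a split monomorphism, and $T(\lambda)$ is a direct summand of $T$.

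For the uniqueness statement, suppose $T'$ is another indecomposable tilting object whose increasing $\Delta$-filtration begins with $\Delta(\lambda)$. The argument just given produces a split embedding $T(\lambda)\hookrightarrow T'$; indecomposability of $T'$ forces $T(\lambda)\simeq T'$. The main (minor) obstacle is really just checking the hypothesis of Lemma \ref{l35} in both directions — i.e.\ ensuring that the filtration on $T$ consists of finite $\Delta$-quotients and that the relevant $\Ext^1$ vanishing holds — but both are immediate from the definition of a tilting object and Theorem \ref{t31}.
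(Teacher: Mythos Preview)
Your argument is correct and follows the same route as the paper's own proof: use Lemma~\ref{l35} in both directions to lift the identification of the bottom $\Delta(\lambda)$'s to morphisms $T(\lambda)\to T$ and $T\to T(\lambda)$, then observe that their composite is the identity on the standard bottom and hence an automorphism by Theorem~\ref{t34}. The paper is terser (it just asserts that $\psi\phi|_{V_1}=\mathrm{id}$ forces $\psi\phi$ to be an automorphism), while you spell out the locality argument explicitly; otherwise the proofs coincide.
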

\begin{proof} Since both $T$ and $T(\lambda)$ have increasing $\Delta$-filtrations that start with $\Delta(\lambda)$, 
the identifications of these two copies of $\Delta(\lambda)$ can by Lemma \ref{l35} be extended to the morphisms 
$\phi : T(\lambda)\to T$ and $\psi : T\to T(\lambda)$. Then $\alpha=\psi\phi$ is an
automorphism of $T(\lambda)$ because $\psi\phi |_{V_1}=id$; and therefore,
$\alpha^{-1}\psi:T\to T(\lambda)$ splits.
\end{proof}

We refine the above corollary in the next theorem which generalizes a well-known result from the classical case.
\begin{tr}(See Theorem A4.2 of \cite{don1}.)\label{t35}
Let $T$ be a $\Gamma$-restricted tilting object. Then $T$ equals a direct (possibly infinite) sum of
some indecomposable tilting subobjects of the form $T(\lambda)$.
\end{tr}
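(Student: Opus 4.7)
The plan is to iteratively extract indecomposable tilting summands $T(\mu)$ from $T$ by repeated application of Corollary \ref{c33}, choosing at each step a weight $\mu$ maximal in the current $\Delta$-support, and then to show that the resulting internal direct sum recovers all of $T$.

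At the $k$-th stage let $T^{(k)}$ denote the residual tilting subobject, with $T^{(0)} = T$, and choose $\mu_{k+1}$ to be a maximal element of the $\Delta$-support of $T^{(k)}$. Such maxima exist because the $\Delta$-support is contained in $\Gamma$ and $\Gamma$ is interval-finite, so only finitely many weights lie above any given one. By Lemma \ref{lm33}, $Ext^1_{\cal C}(\Delta(\nu),\Delta(\mu_{k+1})) = 0$ for every $\nu$ not strictly greater than $\mu_{k+1}$, so one may rearrange the $\Delta$-filtration of $T^{(k)}$ so that $\Delta(\mu_{k+1})$ is its first factor. Corollary \ref{c33} then yields $T^{(k)} = T(\mu_{k+1}) \oplus T^{(k+1)}$, where $T^{(k+1)}$ is again tilting (by Corollary \ref{c32}(3) together with the inherited vanishing $Ext^1_{\cal C}(\Delta(\lambda),T^{(k+1)}) = 0$) and $\Gamma$-restricted as a subobject of $T$.

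Iterating produces decompositions $T = T(\mu_1) \oplus \cdots \oplus T(\mu_k) \oplus T^{(k)}$ for every $k$. To finish, I would show that every finite subobject $X \subseteq T$ lies in the partial sum $T(\mu_1) \oplus \cdots \oplus T(\mu_k)$ for sufficiently large $k$. Fix $\lambda \in \Gamma$; since $T(\mu_i)$ is $(\mu_i]$-restricted, only extractions with $\mu_i \geq \lambda$ affect the multiplicities $(T^{(k)}:\Delta(\mu))$ for $\mu \geq \lambda$, and the number of such $\mu$ is finite by interval-finiteness. Each $(T:\Delta(\mu))$ is finite by $\Gamma$-restrictedness, so after finitely many qualifying extractions all these multiplicities vanish, forcing $[T^{(k)}:L(\lambda)] = 0$. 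Applied to each of the finitely many composition factor types $L(\nu_i)$ of $X$, this gives $[T^{(k)}:L(\nu_i)] = 0$ for $k$ large. The projection $p_k(X) \subseteq T^{(k)}$ is simultaneously a quotient of $X$ and a subobject of $T^{(k)}$; its composition factors therefore lie in the intersection of the two sets, which is empty for large $k$, so $p_k(X) = 0$ and $X \subseteq T(\mu_1) \oplus \cdots \oplus T(\mu_k)$. Hence $T = \bigoplus_{i \geq 1} T(\mu_i)$.

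The principal obstacle is orchestrating the extraction so that every finite piece of $T$ is covered in finitely many steps. The choice of a maximal $\mu$ at each stage, combined with the interval-finiteness of $\Gamma$ and the finite multiplicities guaranteed by $\Gamma$-restrictedness, is what makes the termination argument uniform over all composition factor types; the rearrangement freedom afforded by Lemma \ref{lm33} and the extraction mechanism of Corollary \ref{c33} supply the remaining technical input.
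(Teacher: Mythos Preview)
Your extraction mechanism via Corollary \ref{c33} and the rearrangement via Lemma \ref{lm33} are correct, and the final counting argument (showing $[T^{(k)}:L(\lambda)]=0$ forces $p_k(X)=0$) is fine. The gap is in the termination step: the rule ``choose any maximal element of the current $\Delta$-support'' does not guarantee that for a fixed $\lambda$ the finitely many qualifying extractions ever occur. You correctly observe that only extractions with $\mu_i\geq\lambda$ change $(T^{(k)}:\Delta(\mu))$ for $\mu\geq\lambda$, and that at most $\sum_{\mu\geq\lambda}(T:\Delta(\mu))<\infty$ such extractions are possible. But nothing forces the algorithm to perform them: if the $\Delta$-support of $T^{(k)}$ has several maximal elements, the algorithm may repeatedly choose ones incomparable to $\lambda$. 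Concretely, in a good finitely-generated poset one can have two incomparable infinite downward chains below the generators; if the support meets both, the algorithm can descend forever along one branch and never touch a weight $\geq\lambda$ on the other, so $(T^{(k)}:\Delta(\lambda))$ stays positive for all $k$.

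The paper avoids this by organizing the extraction via the chain $\Gamma=\Gamma_0\supseteq\Gamma_1\supseteq\ldots$ of Proposition \ref{pr2}: at stage $k$ one removes \emph{all} summands $T(\mu)$ with $\mu\in\Gamma_{k-1}\setminus\Gamma_k$ simultaneously, which forces the residual $T^{(k)}$ to belong to $\Gamma_k$. Then $[T^{(k)}:L(\lambda)]=0$ as soon as $\lambda\notin\Gamma_k$, and $\bigcap_k\Gamma_k=\emptyset$ finishes the argument. Your proof becomes correct if you impose the same discipline on the choice of $\mu_{k+1}$: always select it from the highest nonempty layer $\Gamma_j\setminus\Gamma_{j+1}$ meeting the current support. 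With that amendment your one-at-a-time version is equivalent to the paper's batch version.
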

\begin{proof} Let 
\[\Gamma=\Gamma_0\supseteq\Gamma_1\supseteq\Gamma_2\supseteq\ldots\]
be a decreasing chain of ideals of $\Lambda$
such that $|\Gamma_k\setminus\Gamma_{k+1}| <\infty$ for every $k\geq 0$, $\bigcap_{k\geq 0}\Gamma_k=\emptyset$,
and the elements of $\Gamma_k\setminus\Gamma_{k+1}$ are pairwise incomparable
generators of $\Gamma_k$ for each $k\geq 0$.
Then $T$ has a filtration
$$0=T_0\subseteq  T_1\subseteq T_2\subseteq\ldots ,$$
where $T_k=O^{\Gamma_k}(T)$ for $k\geq 0$. 
The set $\Gamma_0\setminus\Gamma_1=\{\mu_1, \ldots, \mu_s\}$ consists of maximal weights of $\Gamma$.
By Lemma \ref{lm33} there is a
$\Delta$-filtration of $T$ which begins with
$$\bigoplus_{1\leq i\leq s}\Delta(\mu_i)^{(T_1 :\Delta(\mu_i))}.$$
Corollary \ref{c33} implies that
$T=S_1\bigoplus T^{(1)}$, where
\[S_1=\bigoplus_{1\leq i\leq s}T(\mu_i)^{(T_1 :\Delta(\mu_i))}\]
and $T^{(1)}$ is a $\Gamma$-restricted tilting object
such that $O^{\Gamma_1}(T^{(1)})=0$ and $(T^{(1)} :\Delta(\mu_i))=0$ for any $1\leq i\leq s$.
In particular, $[T : L(\lambda)]=[S_1 : L(\lambda)]$ for every $\lambda\in\Gamma_0\setminus
\Gamma_1$.

In the next step, we decompose $T^{(1)}$ as $S_2\bigoplus T^{(2)}$, where $S_2$ is a finite direct sum of
indecomposable tilting subobjects and $O^{\Gamma_2}(T^{(2)})=0$. In particular, $[T^{(1)} : L(\lambda)]=[S_2 : L(\lambda)]$
for every $\lambda\in\Gamma_1\setminus
\Gamma_2$.

We can continue with similar decompositions as necessary. 

Finally, for $S=\bigoplus_{1\leq i} S_i$ we have $[T : L(\lambda)]=[S : L(\lambda)]$ for every $\lambda\in\Gamma$, which means $S=T$.
\end{proof}

The last theorem of this section extends an important property from the classical case.

\begin{tr}\label{t36}
Let $\Gamma$ and $\widetilde{\Gamma}$ be finitely-generated ideals of $\Lambda$, $T$ be a $\Gamma$-restricted tilting object, and $M$ be a
$\widetilde{\Gamma}$-restricted object. If $M$ has an increasing (or decreasing) $\nabla$-filtration,
then $Ext^1_{\cal C}(T, M)=0$. If $M$ has an increasing (or decreasing) $\Delta$-filtration and $T$ is a finite direct sum of
indecomposable tilting objects, then $Ext^1_{\cal C}(M, T)=0$.
\end{tr}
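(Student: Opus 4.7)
The plan is to handle the two claims separately, reducing each to the case of a single indecomposable tilting object $T(\lambda)$. For the first statement, I would use Theorem~\ref{t35} to decompose $T=\bigoplus_{i} T(\lambda_i)$; since $Ext^1_{\cal C}$ converts direct sums in the first argument into products, $Ext^1_{\cal C}(T,M)\cong\prod_i Ext^1_{\cal C}(T(\lambda_i),M)$, and it suffices to show $Ext^1_{\cal C}(T(\lambda),M)=0$ for each $\lambda$. Whether $M$ carries an increasing or a decreasing $\nabla$-filtration, Theorem~\ref{t31} supplies $Ext^1_{\cal C}(\Delta(\mu),M)=0$ for every $\mu\in\Lambda$, which is the only property of $M$ I would need.

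Given a short exact sequence $0\to M\to E\to T(\lambda)\to 0$ with projection $\pi$, I would first pull it back along $V_1=\Delta(\lambda)\hookrightarrow T(\lambda)$, where $T(\lambda)=\bigcup_{k}V_k$ is the increasing filtration from Theorem~\ref{t33}; this pullback splits because $Ext^1_{\cal C}(\Delta(\lambda),M)=0$, producing $\sigma_1:\Delta(\lambda)\to E$ with $\pi\sigma_1$ the inclusion. I would then invoke Lemma~\ref{l35} to extend $\sigma_1$ to $\sigma:T(\lambda)\to E$; the required hypothesis $Ext^1_{\cal C}(\Delta(\mu),E)=0$ follows from the long exact sequence, using $Ext^1_{\cal C}(\Delta(\mu),M)=0$ and $Ext^1_{\cal C}(\Delta(\mu),T(\lambda))=0$ ($T(\lambda)$ being tilting). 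The endomorphism $\phi:=\pi\sigma-id_{T(\lambda)}$ vanishes on the standard bottom $V_1$, so Theorem~\ref{t34} places $\phi$ in the radical of the local pseudocompact algebra $End_{\cal C}(T(\lambda))$. Hence $\pi\sigma$ is a unit, and composing $\sigma$ with $(\pi\sigma)^{-1}$ gives a genuine splitting.

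For the second statement, the assumption that $T$ is a \emph{finite} direct sum of indecomposable tiltings is essential: it gives $Ext^1_{\cal C}(M,T)=\bigoplus_{i=1}^{n} Ext^1_{\cal C}(M,T(\lambda_i))$, an identity that would fail for an infinite sum in the second argument. So once more it suffices to treat $T=T(\lambda)$. By Theorems~\ref{t31} and~\ref{t32}, a decreasing $\Delta$-filtration on $M$ implies $Ext^1_{\cal C}(M,\nabla(\mu))=0$ for every $\mu$, and hence also an increasing $\Delta$-filtration; I therefore reduce to $M=\bigcup_k M_k$ with each $M_k$ having a finite $\Delta$-filtration. Given $0\to T\to E\to M\to 0$ and $E_k:=\pi^{-1}(M_k)$, each $Ext^1_{\cal C}(M_k,T)$ vanishes by induction on $k$, using $Ext^1_{\cal C}(\Delta(\mu),T)=0$. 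I would construct compatible splittings $\sigma_k:M_k\to E$ by induction: at step $k+1$, any splitting $\tilde\sigma_{k+1}$ differs from $\sigma_k$ on $M_k$ by a morphism $M_k\to T$, which extends to $M_{k+1}\to T$ because $Ext^1_{\cal C}(M_{k+1}/M_k,T)=0$; subtracting this extension yields $\sigma_{k+1}$. Passing to the colimit produces $\sigma:M\to E$ splitting $\pi$.

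The hard part is the correction step in Part~1: Lemma~\ref{l35} produces a morphism $\sigma$ extending $\sigma_1$, but not a priori a section of $\pi$, and only the locality of $End_{\cal C}(T(\lambda))$ from Theorem~\ref{t34} lets us convert $\sigma$ into an honest splitting. This is the structural reason the argument must pass through indecomposable tilting summands even when $T$ itself is an arbitrary $\Gamma$-restricted tilting object.
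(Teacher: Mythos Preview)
Your argument is correct. For the first statement you follow essentially the paper's path: reduce to $T(\lambda)$ via Theorem~\ref{t35}, lift $\Delta(\lambda)$ into the extension using $Ext^1_{\cal C}(\Delta(\lambda),M)=0$, extend to all of $T(\lambda)$ via Lemma~\ref{l35}, and then invoke Theorem~\ref{t34} to conclude that $\pi\sigma$ is invertible. The paper leaves this last step implicit (it simply asserts ``$\pi\phi$ is an isomorphism''); you spell out why, which is an improvement.

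For the second statement your route genuinely differs. The paper argues in a manner dual to Part~1: given $0\to T(\lambda)\stackrel{\iota}{\to}X\to M\to 0$, it shows that $X'=X/\Delta(\lambda)$ satisfies $Ext^1_{\cal C}(X',\nabla(\mu))=0$ for all $\mu$, so by Theorem~\ref{t32} the object $X'$ (and hence $X$) has an increasing $\Delta$-filtration beginning with $\Delta(\lambda)$; then Lemma~\ref{l35} produces a retraction $\psi:X\to T(\lambda)$ extending the identity on $\Delta(\lambda)$, and Theorem~\ref{t34} forces $\psi\iota$ to be an automorphism. You instead climb the $\Delta$-filtration of $M$, building compatible sections $\sigma_k:M_k\to E$ by using the surjectivity of $Hom_{\cal C}(M_{k+1},T)\to Hom_{\cal C}(M_k,T)$ (a consequence of $Ext^1_{\cal C}(\Delta(\mu),T)=0$) to correct at each stage, and then pass to the colimit. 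The paper's version has the aesthetic advantage of reusing the same toolkit as Part~1; yours avoids the detour through Theorem~\ref{t32} and the auxiliary object $X'$, relying only on the defining vanishing $Ext^1_{\cal C}(\Delta(\mu),T)=0$.
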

\begin{proof} For the first statement, by Theorem \ref{t35} it is enough to show that $Ext^1_{\cal
C}(T(\lambda), M)=0$ for all $\lambda\in\Lambda$. Consider a
short exact sequence
$$0\to M\to X\stackrel{\pi}{\to} T(\lambda)\to 0 .$$
Since $Ext^1_{\cal C}(\Delta(\lambda), M)=0$ by Theorem \ref{t31}, the object $X$
contains a subobject which is isomorphic to the first member
$\Delta(\lambda)$ of a $\Delta$-filtration of $T(\lambda)$.
Moreover, this isomorphism is induced by the epimorphism $\pi$.
Since $Ext^1_{\cal C}(\Delta(\mu), X)=0$ for any
$\mu\in\Lambda$, Lemma \ref{l35} implies that there is a morphism $\phi :
T(\lambda)\to X$ such that $\pi\phi$ is an isomorphism. Thus
$\phi$ is an inclusion and $X=M\bigoplus Im \phi$, where $Im \phi \simeq T(\lambda)$.

For the second statement, it is enough to prove that  $Ext^1_{\cal C}(M, T(\lambda))=0$ for every $\lambda\in\Lambda$.
Consider an exact sequence
$$0\to T(\lambda)\stackrel{\iota}{\to} X\to M\to 0.$$
Then Theorem \ref{t31} implies that $Ext^1_{\cal C}(X,\nabla(\mu))=0$ for every $\mu\in\Lambda$.
For the $\widetilde{\Gamma} \bigcup (\lambda]$-restricted
object $X'=X/\Delta(\lambda)$, there is a short exact sequence
$$0\to T(\lambda)/\Delta(\lambda)\to X'\to M\to 0$$
and a fragment 
\[0=Ext^1_{\cal C}(X,\nabla(\mu))\to Ext^1_{\cal C}(X',\nabla(\mu))\to Ext^2_{\cal C}(\Delta(\lambda),\nabla(\mu))\]
of the corresponding long exact sequence for every $\mu\in\Lambda$. By Corollary \ref{c31}, the last expression in the above fragment vanishes, and this implies $Ext^1(X',\nabla(\mu))=0$ for every $\mu\in\Lambda$.
By Theorem \ref{t32}, $X'$ has an increasing $\Delta$-filtration. 
Therefore, $X$ has an increasing $\Delta$-filtration which begins with $\Delta(\lambda)$.
By Lemma \ref{l35} there is a morphism $\psi : X\to T(\lambda)$ which extends the inclusion
$\Delta(\lambda)\to T(\lambda)$. Therefore, $\psi\iota$ is an isomorphism and $X=T(\lambda)\bigoplus Ker\psi$, where
$Ker\psi\simeq M$.
\end{proof}
\begin{cor}\label{c34}
If $T$ and $T'$ are restricted tilting objects (possibly corresponding to different finitely-generated ideals $\Gamma$ and $\Gamma'$ of $\Lambda$), 
then $Ext^1_{\cal C}(T, T')=0$.
\end{cor}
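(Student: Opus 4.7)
The plan is to reduce Corollary \ref{c34} to the first statement of Theorem \ref{t36} by showing that $T'$ automatically has a decreasing $\nabla$-filtration. The essential point is that the two conditions defining a tilting object are exactly what is needed to invoke Theorem \ref{t32}.

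First I would observe that since $T'$ is a tilting object, Definition \ref{d34} guarantees that $\mathrm{Ext}^1_{\cal C}(\Delta(\lambda), T') = 0$ for every $\lambda\in\Lambda$. Combined with the standing assumption that $T'$ is $\Gamma'$-restricted, Theorem \ref{t32} applies and yields a decreasing $\nabla$-filtration of $T'$. This is the only place where the hypothesis that $T'$ is restricted is genuinely used.

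With this filtration in hand, I would then apply the first statement of Theorem \ref{t36} directly: taking $M = T'$ (which is $\widetilde{\Gamma}$-restricted for $\widetilde{\Gamma} = \Gamma'$ and carries a decreasing $\nabla$-filtration) and keeping $T$ as the given $\Gamma$-restricted tilting object gives $\mathrm{Ext}^1_{\cal C}(T, T') = 0$, which is the assertion of the corollary.

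I do not expect any real obstacle here: the asymmetry in Theorem \ref{t36} (the second statement requires $T$ to be a \emph{finite} direct sum of indecomposable tiltings, while the first does not) is precisely what allows us to drop any finiteness hypothesis on $T$ in the present setting. The only thing worth double-checking is that the hypotheses of Theorem \ref{t32} are met for $T'$, which is immediate from the definition of a tilting object and the assumption that $T'$ is restricted; no appeal to Theorem \ref{t35} or to the decomposition into indecomposable tilting summands is required.
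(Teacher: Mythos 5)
Your proposal is correct and is essentially the argument the paper intends: the corollary is stated without proof immediately after Theorem \ref{t36}, and the intended route is exactly what you describe -- Definition \ref{d34} plus Theorem \ref{t32} give $T'$ a decreasing $\nabla$-filtration, and then the first statement of Theorem \ref{t36} applies with $M=T'$. You are also right that using the first (rather than second) statement of Theorem \ref{t36} is essential, since the corollary imposes no finiteness on $T'$.
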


\section{Chevalley duality}

The goal of this section is to describe the Chevalley duality for pseudocompact algebras. 
The whole discussion can be reduced to the case when the algebra is basic.

\begin{df}\label{41}
Let $B$ be a pseudocompact algebra. If $B=\prod_{i\in I} P(i)$, where
every direct factor $P(i)$ is an indecomposable projective $B$-module and
$P(i)\not\simeq P(j)$ for $i\neq j$, then $B$ is called {\it basic}.
\end{df}
\begin{lm}\label{l41}(See also Corollary 5.5 and Proposition 5.6 from \cite{sim}.) 
For every pseudocompact algebra $B$ there is a basic pseudocompact algebra $A$ such that
a category $PC-B$ is equivalent to a category $PC-A$.
\end{lm}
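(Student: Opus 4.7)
The plan is to adapt the classical Morita reduction to a basic algebra, in the pseudocompact setting, along the lines of \cite{sim}. Starting from the decomposition $B=\prod_{i\in I}e_iB$ guaranteed by Proposition \ref{pr1}, partition $I$ into isomorphism classes under $i\sim j\iff e_iB\simeq e_jB$ and select a set $J\subseteq I$ of representatives. Since any subfamily of a summable family of pairwise orthogonal idempotents is summable by completeness of $B$, the sum $e=\sum_{j\in J}e_j$ is a well-defined idempotent in $B$, and we set $A=eBe$.

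First I would verify that $A$ is a basic pseudocompact algebra. The subset $eBe$ is closed in $B$ (being the image of the continuous idempotent map $b\mapsto ebe$, and equivalently the intersection of the closed submodules $eB$ and $Be$), hence inherits a pseudocompact topology with open-ideal basis $\{eIe\}$, where $I$ ranges over a basis of open two-sided ideals of $B$. The decomposition $A=\prod_{j\in J}e_jA$ presents $A$ as a topological product of indecomposable projective right $A$-modules; nonisomorphism of the $e_jA$ for distinct $j\in J$ will follow from the equivalence $PC\text{-}B\simeq PC\text{-}A$ applied to the nonisomorphic modules $e_jB$.

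For the equivalence itself I would use the Morita-type pair
\[
F:PC\text{-}B\to PC\text{-}A,\quad F(M)=Me,\qquad G:PC\text{-}A\to PC\text{-}B,\quad G(N)=N\,\widehat{\otimes}_A\,eB,
\]
where $eB$ carries its natural $(A,B)$-bimodule structure and $\widehat{\otimes}_A$ denotes the completed tensor product in the pseudocompact context. The composition $FG$ reduces formally via $eBe=A$ to the identity, since $(N\,\widehat{\otimes}_A\,eB)e\simeq N\,\widehat{\otimes}_A\,eBe\simeq N$. For $GF$ one must show that the natural evaluation $Me\,\widehat{\otimes}_A\,eB\to M$ is an isomorphism. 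My strategy here is to reduce to finite-dimensional discrete quotients $M/N$: both $F$ and $G$ commute with the relevant inverse limits because $Be$ and $eB$ are closed direct summands of $B$, so the claim reduces to the classical Morita theorem applied to each finite-dimensional algebra $B/I$, where the image of $e$ in $B/I$ selects one primitive idempotent from each isomorphism class and thus realizes the standard basic algebra associated to $B/I$.

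The main obstacle is the topological bookkeeping rather than the algebra. One must verify that $F$ lands in $PC\text{-}A$ (the induced topology on $Me$ from $M$ provides a neighborhood basis of closed submodules of finite codimension), that $G$ lands in $PC\text{-}B$ (neighborhoods of zero in $N\,\widehat{\otimes}_A\,eB$ are inherited from products of neighborhoods of $N$ and $eB$), and that the counit $Me\,\widehat{\otimes}_A\,eB\to M$ is a homeomorphism and not merely a continuous algebraic isomorphism. Once these continuity matters are settled, the algebraic content is exactly the Morita equivalence implemented by the progenerator $Be$ with $\mathrm{End}_B(Be)\simeq eBe=A$.
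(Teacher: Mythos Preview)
Your setup is exactly the paper's: take $A=eBe$ with $e=\sum_{j\in J}e_j$ for a set $J$ of representatives, use the Schur functor $F(M)=Me\simeq Hom_{PC\text{-}B}(eB,M)$, and use the completed tensor $G(N)=N\,\widehat{\otimes}_A\,eB$ in the other direction. The paper also invokes $G$ to prove that $F$ is dense, via precisely your isomorphism $(N\,\widehat{\otimes}_A\,eB)e\simeq N$. The only methodological difference is in the remaining half: the paper shows $F$ is full and faithful directly, by taking a projective presentation $Q_1\to Q_0\to M\to 0$ with $Q_0,Q_1$ direct factors of powers of $eB$, describing $Hom_{PC\text{-}B}(\prod_j e_jB,M)$ as summable families in $\prod_j Me_j$, and then citing the standard argument from Auslander--Reiten--Smal\o. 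You instead aim to check $GF\simeq\mathrm{id}$ by passing to finite-dimensional quotients.

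That alternative is workable, but one point needs correction. The image $\bar e$ of $e$ in $B/I$ need \emph{not} pick out exactly one primitive idempotent per isomorphism class of indecomposable projectives over $B/I$: two nonisomorphic $e_jB$ and $e_kB$ (with $j,k\in J$) can become isomorphic modulo $I$, so $\bar e(B/I)\bar e$ is not in general the basic algebra of $B/I$. What you actually need---and what is true---is only that $\bar e$ is a \emph{full} idempotent in $B/I$, i.e.\ $(B/I)\bar e(B/I)=B/I$. This holds because every simple $B/I$-module is a simple pseudocompact $B$-module $M(\lambda)$, and $M(\lambda)e_j\neq 0$ for the representative $j\in J$ of its class; hence $M(\lambda)\bar e\neq 0$. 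With $\bar e$ full, the finite-level Schur functor is an equivalence and your reduction goes through. The paper's projective-presentation route avoids this passage to quotients altogether, which is arguably cleaner given the topological subtleties of the completed tensor product that you yourself flag.
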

\begin{proof} Write 
$B=\prod_{\lambda\in\Lambda}P(\lambda)^{m_{\lambda}}$, 
where $P(\lambda)^{m_{\lambda}}$ is a direct (possibly infinite) product of all projective indecomposable factors of $B$, 
and $e_{\lambda} B=P(\lambda)=P(M(\lambda))$ for $\lambda\in\Lambda$; and set
$e=\sum_{\lambda\in\Lambda}e_{\lambda}$. 
We shall show that a Schur functor $F$ from the category $PC-B$ to a category $PC-eBe$, 
given by $M\mapsto Hom_{PC-B}(eB, M)=Me$, is full, faithful
and dense. 

To show that $F$ is full and faithful, note that every $M\in PC-B$ has a
{\it projective presentation}
$$Q_1\to Q_0\to M\to 0$$
such that $Q_0$ and $Q_1$ are direct factors of $(eB)^I$ (for a possibly
infinite index set $I$). Additionally, for any direct
product of projective modules $\prod_{j\in J}e_j B$ (where an idempotent $e_j$ is
not necessarily primitive and can be repeated), a space $Hom_{PC-B}(\prod_{j\in
J}e_j B, M)$ is isomorphic to a subspace of $\prod_{j\in J} M e_j$
consisting of all elements of the form $\prod_{j\in J} m_j$ such that the
collection $\{m_j |j\in J\}$ is summable in $M$.
Then arguments as in the proof of Proposition 2.5 of Chapter II of \cite{ars}, show that $F$ is full and faithful.

If $S\in PC-eBe$, then $S\simeq (S\hat{\otimes}_{eBe} eB)e$, where $\hat{\otimes}$ denotes the complete
tensor product. (See \cite{br}.) Therefore $F$ is dense. Theorem 1.2 of Chapter II in \cite{ars}
shows that $F$ is an equivalence of categories.
\end{proof}

Let $A$ be a basic pseudocompact algebra and let $\Lambda$ be a set whose
elements are in one-to-one correspondence with isomorphism classes of
simple pseudocompact $A$-modules. Fix a
decomposition $A=\prod_{\lambda\in\Lambda}e_{\lambda}A$, where $e_{\lambda}A=P(\lambda)$ 
is an indecomposable projective
factor of $A$ such that its top is isomorphic to a simple $A$-module $M(\lambda)$. 

If all indecomposable
projective modules $e_{\lambda}A$ are finite-dimensional and $\Lambda$
is at most countable, then $A$ is called a {\it restricted} pseudocompact
algebra. It is clear that every ascending quasi-hereditary pseudocompact
algebra is restricted. For the remainder of this section, we assume that
$A$ is restricted.

Consider a decreasing chain of subsets wherein
\[(*) \phantom{***********} \Lambda=\Gamma_0\supseteq\Gamma_1\supseteq\Gamma_2\supseteq\ldots \phantom{***********} \]
\noindent such that $\Omega_k=\Lambda\setminus\Gamma_k$ has cardinality $k$
for all $k\geq 0$, and $\bigcap_{k\geq 0}\Gamma_k=\emptyset$.
There is a
decreasing chain of two-sided closed ideals of $A$ wherein
$$A=J_0\supseteq J_1\supseteq J_2\supseteq\ldots$$
such that $\bigcap_{k\geq 0} J_k=0$,
where
$J_k=H(\Omega_k)$ for every $k\geq 0$.
We say that this chain of closed
ideals of $A$ is defined by the above chain of sets $(*)$ or, briefly, that it
is a $(*)$-chain. 
The description of the Chevalley duality shall be formulated for such $(*)$-chains.

If $A$ is basic, then every $J_k$ is open. In fact,
$A/J_k\simeq \prod_{\lambda\in\Omega_k}P(\lambda)/J_k P(\lambda)$
is finite-dimensional for every $k\geq 0$.
The structure of $A/J_k$ is clarified in the following lemma.

\begin{lm}\label{lm42}
For every $k\geq 0$, the idempotents $\{e_{\lambda}|\lambda\in\Omega_k\}$ form a complete family of pairwise
orthogonal primitive idempotents of $A$ modulo the ideal $J_k$.
\end{lm}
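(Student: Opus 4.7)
The plan is to work with the complementary idempotent $e_{\Gamma_k} = \sum_{\mu \in \Gamma_k} e_\mu$, which is a (convergent) idempotent in $A$ because $\{e_\mu\}_{\mu \in \Lambda}$ is a summable family, and to decompose the identity as $1 = \sum_{\lambda \in \Omega_k} e_\lambda + e_{\Gamma_k}$, where the first sum is finite because $|\Omega_k| = k$. Three things need to be verified: completeness modulo $J_k$, pairwise orthogonality, and primitivity of each $e_\lambda + J_k$.

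Completeness follows quickly. By Lemma \ref{lm3}, $J_k = H(\Omega_k) = A e_{\Gamma_k} A$, and since $e_{\Gamma_k}$ is idempotent we have $e_{\Gamma_k} = e_{\Gamma_k}\cdot e_{\Gamma_k} \in A e_{\Gamma_k} A = J_k$, so $1 \equiv \sum_{\lambda \in \Omega_k} e_\lambda \pmod{J_k}$. Pairwise orthogonality of $\{e_\lambda + J_k\}$ for distinct $\lambda, \lambda' \in \Omega_k$ is inherited immediately from the orthogonality $e_\lambda e_{\lambda'} = 0$ in $A$.

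The substantive step is primitivity. First I would verify that $e_\lambda \notin J_k$ for every $\lambda \in \Omega_k$, so that $e_\lambda + J_k$ is genuinely a nonzero idempotent in $A/J_k$. If $e_\lambda$ were to lie in $A e_{\Gamma_k} A$, then its image in the simple top $M(\lambda) = e_\lambda A / e_\lambda (rad\, A)$ would have to lie in $M(\lambda) e_{\Gamma_k} A$; but $M(\lambda) e_\mu = 0$ for $\mu \neq \lambda$, and $\lambda \notin \Gamma_k$ forces $M(\lambda) e_{\Gamma_k} = 0$, contradicting the fact that this image generates $M(\lambda)$. Granted this, primitivity amounts to showing that the corner ring
\[
(e_\lambda + J_k)(A/J_k)(e_\lambda + J_k) \;\simeq\; e_\lambda A e_\lambda / e_\lambda J_k e_\lambda
\]
is local. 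Since $e_\lambda A e_\lambda$ is a local pseudocompact algebra by Corollary \ref{c1}, and any nonzero quotient of a local ring by a proper two-sided ideal is again local, this concludes the argument.

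The main obstacle is the step $e_\lambda \notin J_k$: the whole lemma hinges on the primitive idempotents of $A$ indexed by $\Omega_k$ surviving nontrivially in the quotient $A/J_k$, and this is supplied by the simple-module computation above rather than by anything formal about the filtration $\{J_k\}$ itself.
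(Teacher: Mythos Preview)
Your proof is correct and rests on the same key observation as the paper's: for $\lambda\in\Omega_k$ the simple top $M(\lambda)$ is killed by $J_k$ (equivalently, $e_\lambda J_k\subseteq rad(e_\lambda A)$), so $e_\lambda$ survives nontrivially modulo $J_k$. The only difference is in packaging the primitivity step: the paper records $e_\lambda A\cap J_k = e_\lambda J_k = O^{\Omega_k}(e_\lambda A)\subseteq rad(e_\lambda A)$ and concludes that $(e_\lambda A + J_k)/J_k$ has a unique maximal submodule, hence is indecomposable; you instead pass to the corner ring $e_\lambda A e_\lambda / e_\lambda J_k e_\lambda$ and invoke Corollary~\ref{c1} to see it is a nonzero quotient of a local ring, hence local. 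These are two faces of the same fact (indecomposability of $eR$ versus locality of $eRe$). Completeness and orthogonality, which you spell out explicitly, are dismissed by the paper as ``clearly follow''.
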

\begin{proof} We have $e_{\lambda}A\bigcap J_k=e_{\lambda}J_k=
O^{\Omega_k}(e_{\lambda}A)\subseteq rad (e_{\lambda}A)$ for
every $\lambda\in\Omega_k$. Thus $(rad (e_{\lambda}A)+J_k)/J_k=
rad (e_{\lambda}A+J_k/J_k)$ is a unique maximal open submodule
of $(e_{\lambda}A+J_k)/J_k$. Therefore, $(e_{\lambda}A+J_k)/J_k$ is indecomposable, and $e_{\lambda}$ is a
primitive idempotent modulo $J_k$. The remaining assertions clearly follow.
\end{proof}

If there is a continuous anti-isomorphism $\phi : A\to A$, then one
can define a duality $\tau_{\phi} : (A-Dis)_f\to (A-Dis)_f$ by
$M\mapsto\tau_{\phi}(M)=M^*$, where $M^*$ is a left discrete
$A$-module via $(af)(m)=f(\phi(a)m)$ for $a\in A$ and $f\in M^*$.

A category $(A-Dis)_f[\Omega_k]=(A/J_k-Dis)_f$
consists of all finite-dimensional $A/J_k$-modules. If $\tau$ is a
Chevalley duality on $(A-Dis)_f$, then  $\tau_k=\tau
|_{(A/J_k-Dis)_f}$ is a Chevalley duality on $(A/J_k-Dis)_f$.
Moreover, $(A/J_k-Dis)$ is a full subcategory of $(A/J_t-Dis)$ for every
$t\geq k$ and $\tau_t |_{(A/J_k-Dis)_f}=\tau_k$.

Denote the opposite pseudocompact algebra of $A$ by $A^{\circ}$. We have
a natural anti-equivalence  $(A-Dis)_f\to (A^{\circ}-Dis)_f$ given by $M\mapsto M^*$,
where $M^*$ is a left discrete $A^{\circ}$-module via
$(af)(m)=f(a m)$ for $a\in A^{\circ}$ and $f\in M^*$. Composing this anti-equivalence with $\tau$,
we obtain an equivalence $\tau' : (A-Dis)_f\to (A^{\circ}-Dis)_f$.
It is obvious that a restriction of $\tau'$ on each $(A/J_k-Dis)_f$
induces an equivalence $\tau'_k : (A/J_k-Dis)_f\to
(A^{\circ}/J_k-Dis)_f$ compatible with the full embedding
$(A/J_k-Dis)_f\subseteq (A/J_t-Dis)_f$ for each $t\geq k$.

The next lemma allows us to proceed from equivalence of categories of finite-dimensional modules to isomorphisms
of underlying algebras.

\begin{lm}\label{lm43}
Let $\pi : (A-mod)_f\to (B-mod)_f$ be an equivalence of categories. If \ $\dim
A=\dim B <\infty$ and $A$ or $B$ is basic, then there is an
isomorphism $\phi : B\to A$ such that $\pi\simeq\bar\phi$, where
$\bar\phi (M)=M$ is a left $B$-module via $bm=\phi(b) m$ for $m\in M,
b\in B$ and $M\in A-mod$.
\end{lm}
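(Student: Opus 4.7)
The plan is to invoke classical Morita theory together with the dimension hypothesis to force the progenerator representing the equivalence $\pi$ to coincide with the regular bimodule, and thereby to identify $\pi$ with twisting by an algebra isomorphism. Without loss of generality I shall assume that $A$ is basic; if instead $B$ is basic, the same argument applied to a quasi-inverse of $\pi$ produces an isomorphism $A\to B$ whose inverse is the desired $\phi$.

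Set $P=\pi(A)$, a left $B$-module. Via $\pi$, the canonical isomorphism $A^{\circ}\simeq End_A(A)$ (realised by right multiplication) transports to a ring isomorphism $A^{\circ}\simeq End_B(P)$, so $P$ acquires a $(B,A)$-bimodule structure, and standard Morita theory yields $\pi\simeq P\otimes_A -$ with $P$ a finitely generated projective generator of $B-mod$. Using the basic decomposition $A=\bigoplus_i Ae_i$ into pairwise non-isomorphic indecomposable projectives, applying $\pi$ gives $P=\bigoplus_i Q_i$ with $Q_i:=\pi(Ae_i)$ pairwise non-isomorphic indecomposable projective $B$-modules that exhaust every isomorphism class. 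Decomposing $B$ as a left $B$-module, $B\simeq\bigoplus_i Q_i^{m_i}$ with $m_i\geq 1$, and using $Hom_B(Q_i,Q_j)\simeq Hom_A(Ae_i,Ae_j)=e_i A e_j$, I compute
\[
\dim A=\sum_{i,j}\dim e_i A e_j,\qquad \dim B=\dim End_B(B)=\sum_{i,j} m_i m_j\dim e_i A e_j.
\]
The equality $\dim A=\dim B$ becomes $\sum_{i,j}(m_i m_j-1)\dim e_i A e_j=0$, in which every summand is non-negative; the diagonal terms $(m_i^2-1)\dim e_i A e_i$ satisfy $\dim e_i A e_i\geq 1$ and thus force $m_i=1$ for every $i$. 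Hence $P\simeq B$ as left $B$-modules.

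Fix a left $B$-module isomorphism $\sigma: B\to P$. Transporting the right $A$-action from $P$ to $B$ via $\sigma$ yields a right $A$-action on $B$ commuting with left multiplication, which is described by a ring homomorphism $\psi: A\to B$ defined by $\psi(a):=\sigma^{-1}(\sigma(1)\cdot a)$; combining with the composite isomorphism $A^{\circ}\simeq End_B(P)\simeq End_B(B)=B^{\circ}$ induced by $\sigma$ shows that $\psi$ is a ring isomorphism, so I set $\phi:=\psi^{-1}: B\to A$. Then the natural identification $\pi(M)\simeq P\otimes_A M\simeq B\otimes_A M\simeq \bar\phi(M)$, $m\mapsto 1\otimes m$, provides the functorial isomorphism $\pi\simeq\bar\phi$, once one checks that under this identification $b\cdot(1\otimes m)=b\otimes m=1\otimes\phi(b)m$ corresponds to $b\cdot_{\bar\phi} m=\phi(b)m$. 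I expect the main obstacle to be the dimension computation forcing $m_i=1$ for every $i$: here the basicness of $A$ is indispensable, since otherwise the formula for $\dim A$ would carry extra multiplicities and the positivity argument would collapse. The remaining steps, namely verifying that $\psi$ is multiplicative and bijective and that the bimodule identification $P\simeq\bar\phi(A)$ yields a natural isomorphism of functors, are routine bookkeeping.
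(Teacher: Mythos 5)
Your proof is correct and follows essentially the same strategy as the paper: invoke Morita theory to represent $\pi$ by a progenerator, then use the dimension equality together with basicness to force all the multiplicities $m_i$ to equal $1$, so the progenerator is the regular bimodule and $\pi$ is twisting by an algebra isomorphism. The only presentational difference is that you use the tensor form $\pi\simeq P\otimes_A-$ with $P=\pi(A)$, whereas the paper uses the dual $Hom$-form $\pi(M)\simeq Hom_A(\alpha(B),M)$ via a quasi-inverse $\alpha$; your explicit identity $\sum_{i,j}(m_im_j-1)\dim e_iAe_j=0$ is precisely the count the paper compresses into \emph{``if some $m_i>1$ then $\dim B>\dim A$.''}
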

\begin{proof} In fact, standard Morita arguments work within
subcategories of finitely-generated modules. (See  \S 22 of \cite{un}.) In particular, if $\alpha$ is an 
equivalence inverse to $\pi$, then there are natural isomorphisms of the form 
$$\pi(M)\simeq Hom_B(B , \pi(M))\simeq Hom_A(\alpha(B), M)$$
of $B$-modules, where the last space has a left $B$-module
structure given by $bf=f\alpha(r_b)$ for $f\in Hom_A(\alpha(B), M)$ and $b\in B$, and where
$r_b\in End_B(_{B}B)$ is the right multiplication operator
$r_b(x)=xb$ for $b, x\in B$. Assume that $A$ is basic (The case wherein $B$ is basic follows similarly.) 
Assume $A=\prod_{1\leq i\leq s}Ae_i $ with each factor $Ae_i$ pairwise non-isomorphic to $Ae_j$ for $i\neq j$ and 
with $Ae_i$ indecomposable.
 By Theorem 22.1 of \cite{un}, $\alpha(B)$ is a
faithfully balanced $(B, A)$-bimodule and also an
$A$-progenerator. Therefore $\alpha(B)=\prod_{1\leq i\leq
s}(Ae_i)^{m_i}$, where each $m_i$ is non-zero. The functor $\alpha$
induces an isomorphism $\phi : B\simeq End_B({_B}B)\to
End_A(_{A}\alpha(B))$. If at least one $m_i$ is greater than $1$,
then $\dim B > \dim A$, which implies a contradiction. Hence $_{A}\alpha(B)=_{A}A$, and $\phi :
B\to End_A(_{A}A)=A$ is the required isomorphism.
\end{proof}

\begin{cor}\label{c41}
There is an isomorphism $\phi_k : A/J_k\to A^{\circ}/J_k$ induced
by the equivalence $\tau'_k$ for every $k\geq 0$. Moreover, for every $t\geq
k$, we have $\bar\phi_t|_{A/J_k-Dis}=\bar\phi_k$. In
particular, $\phi_t(J_k/J_t)=J_k/J_t$, and a restriction of $\phi_t$ to
$A/J_k$ is an isomorphism that coincides with $\phi_k$.
\end{cor}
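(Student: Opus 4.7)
The strategy is to apply Lemma \ref{lm43} to each equivalence $\tau'_k$ and then exploit the nesting of the tower $\{\tau'_k\}$ to deduce compatibility of the resulting algebra isomorphisms.

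First, I would verify the hypotheses of Lemma \ref{lm43} for $\tau'_k$. Since $A$ is basic and restricted, each $P(\lambda) = e_\lambda A$ with $\lambda \in \Omega_k$ is finite-dimensional, so Lemma \ref{lm42} yields $\dim_K A/J_k < \infty$, and $\dim_K A^\circ/J_k$ equals this as they share the same underlying vector space. Moreover, the primitive pairwise orthogonal idempotents of Lemma \ref{lm42} produce pairwise non-isomorphic indecomposable projectives of $A/J_k$ (their tops being the distinct simples $M(\lambda)$), so $A/J_k$ is basic. Lemma \ref{lm43} applied to $\tau'_k$ thus yields an algebra isomorphism between $A/J_k$ and $A^\circ/J_k$, which (after possibly inverting) we denote by $\phi_k : A/J_k \to A^\circ/J_k$, realising $\tau'_k$ via the restriction-of-scalars functor $\bar\phi_k$.

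Next, I aim to prove the equality $\bar\phi_t|_{A/J_k-Dis} = \bar\phi_k$ for $t \geq k$. By construction of the tower we have $\tau'_t|_{(A/J_k-Dis)_f} = \tau'_k$, since $\tau$ is a single functor on $(A-Dis)_f$ restricting to each $\tau_k$, and the $K$-duality anti-equivalence is natural. The main obstacle here is that Lemma \ref{lm43} produces $\phi$ only up to the choice of a progenerator identification in its proof, so a priori $\phi_t|_{A/J_k}$ and $\phi_k$ are only conjugate by an automorphism of $A/J_k$. I would resolve this inductively in $k$: given $\phi_k$, the primitive idempotent system of $A/J_t$ provided by Lemma \ref{lm42} canonically extends that of $A/J_k$, which allows the choice in the proof of Lemma \ref{lm43} for $\tau'_t$ to restrict to the choice already made for $\tau'_k$, yielding literal equality of functors.

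Finally, the ``in particular'' assertions follow at once. The functor equality forces the $A^\circ/J_t$-action on every finite-dimensional $A/J_k$-module to factor through $A^\circ/J_k$, so $\phi_t(J_k/J_t)$ annihilates every object of $(A^\circ/J_k-Dis)_f$. Since $J_k/J_t = H(\Omega_k)/J_t$ is characterised as the intersection of such annihilators in $A^\circ/J_t$, we obtain $\phi_t(J_k/J_t) \subseteq J_k/J_t$, with equality following by the symmetric argument applied to $\phi_t^{-1}$. The induced isomorphism $\phi_t|_{A/J_k} : A/J_k \to A^\circ/J_k$ then coincides with $\phi_k$, as witnessed by applying the functor equality to the left regular $A/J_k$-module.
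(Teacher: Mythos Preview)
Your proposal is correct and follows essentially the same architecture as the paper: apply Lemma~\ref{lm43} to each $\tau'_k$, then establish compatibility, then read off the annihilator statement by testing on the regular module $A/J_k$. Your third paragraph is precisely the paper's argument (``$M\phi_t(J_k/J_t)=0$, then take $M=A/J_k$'').

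The one point of divergence is how you handle compatibility. You correctly flag that Lemma~\ref{lm43} only pins down $\phi_k$ up to inner automorphism, and you propose to repair this by inductively aligning the idempotent systems from Lemma~\ref{lm42}. The paper instead goes back to the \emph{specific} natural isomorphisms produced in the proof of Lemma~\ref{lm43}, namely $\tau'(M)\simeq Hom_{A^\circ/J_t}(A^\circ/J_t,\tau'(M))\simeq Hom_{A/J_t}(A/J_t,M)\simeq M$, and observes that for $M\in (A/J_k\text{-}Dis)_f$ these isomorphisms are literally the same whether one works over $A/J_t$ or $A/J_k$ (because the Hom-sets coincide once $J_k$ annihilates everything in sight). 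This yields a commutative square identifying the two module structures on $M$ directly, without any inductive bookkeeping of choices. Your route works but is heavier; the paper's diagrammatic shortcut exploits that the $\phi_k$'s are not arbitrary representatives of their inner-automorphism class but come from a uniform Morita construction.
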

\begin{proof} For any $M\in (A/J_k-Dis)_f$, there is a commutative diagram:
$$\begin{array}{ccccccc}
\tau'_t(M) &\simeq & Hom_{A^{\circ}/J_t}(A^{\circ}/J_t , \tau'(M))
& \simeq & Hom_{A/J_t}(A/J_t , M) &\simeq & M \\
\Vert & &\Vert &&\Vert &&\Vert \\
\tau'_k(M) &\simeq & Hom_{A^{\circ}/J_k}(A^{\circ}/J_k , \tau'(M))
& \simeq & Hom_{A/J_k}(A/J_k , M) &\simeq & M
\end{array}.$$
From this diagram we infer that $M\phi_t(J_k/J_t)=0$. The claim follows when 
we consider $M=A/J_k$.
\end{proof}

The following theorem gives a characterization of the Chevalley duality for pseudocompact algebras.

\begin{tr}\label{t41}
If $A$ is a restricted and basic pseudocompact algebra, then $(A-Dis)_f$ possesses a
Chevalley duality $\tau$ if and only if there is a continuous
anti-isomorphism $\phi$ of $A$, which preserves a $(*)$-chain
such that $\tau_{\phi}\simeq \tau$.
\end{tr}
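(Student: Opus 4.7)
The plan is to prove the two implications separately, leveraging the fact that most of the technical work is already contained in Lemma \ref{lm43} and Corollary \ref{c41}, which together package a Chevalley duality into a system of compatible finite-dimensional anti-isomorphisms that can be assembled by an inverse limit.

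For the sufficiency, I would fix a continuous anti-isomorphism $\phi$ of $A$ together with a $(*)$-chain $\{J_k\}$ such that $\phi(J_k)=J_k$ for every $k\geq 0$. Then $\phi$ descends to compatible anti-automorphisms $\bar\phi_k$ of $A/J_k$, and the twisted-dual functor $\tau_\phi$ restricts to a duality of each $(A/J_k-Dis)_f$. Each such restriction induces a permutation $\sigma_k$ of the set $\Omega_k$ of simple labels, with $\tau_\phi(L(\lambda))\simeq L(\sigma_k(\lambda))$, and the compatibility $\bar\phi_t|_{A/J_k}=\bar\phi_k$ for $t\geq k$ forces $\sigma_t|_{\Omega_k}=\sigma_k$. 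Since $|\Omega_{k+1}\setminus\Omega_k|=1$, the unique new element at each step must be fixed, and induction combined with $\bigcup_k\Omega_k=\Lambda$ yields $\sigma_k=\operatorname{id}$ for every $k$. Thus $\tau_\phi$ fixes every simple isomorphism class and is a Chevalley duality.

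For the necessity, let $\tau$ be a Chevalley duality on $(A-Dis)_f$ and fix any $(*)$-chain $\{J_k\}$. Because $\tau$ preserves composition factors, each $(A/J_k-Dis)_f=(A-Dis)_f[\Omega_k]$ is stable under $\tau$, giving Chevalley dualities $\tau_k$ on the finite-dimensional subcategories. Composing with the standard $A$-to-$A^\circ$ anti-equivalence yields compatible equivalences $\tau'_k : (A/J_k-Dis)_f\to (A^\circ/J_k-Dis)_f$, and Corollary \ref{c41} then produces compatible algebra isomorphisms $\phi_k : A/J_k\to A^\circ/J_k$ with $\phi_t(J_k/J_t)=J_k/J_t$ and $\bar\phi_k\simeq\tau'_k$. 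Since $A\simeq\lim\limits_{\longleftarrow}A/J_k$, the inverse limit $\phi=\lim\limits_{\longleftarrow}\phi_k$ is a continuous isomorphism $A\to A^\circ$, equivalently a continuous anti-automorphism of $A$ satisfying $\phi(J_k)=J_k$. The natural isomorphisms at each level glue into $\tau_\phi\simeq\tau$ on $\bigcup_k(A/J_k-Dis)_f=(A-Dis)_f$.

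The main obstacle is the passage from the compatible finite-level anti-isomorphisms to a global one, which relies on the pseudocompact topology of $A$ being reconstructed as the inverse limit of the $A/J_k$. This is where the hypotheses that $A$ be restricted and basic are essential, since they ensure that Lemma \ref{lm43} applies at each finite level and that each $J_k$ is open of finite codimension. Once this is in hand, both directions reduce to bookkeeping with the commuting squares from Corollary \ref{c41}.
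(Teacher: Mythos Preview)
Your proposal is correct and follows essentially the same approach as the paper: the necessity direction is identical (assemble $\phi=\lim\limits_{\longleftarrow}\phi_k$ from Corollary~\ref{c41}), and for sufficiency both arguments hinge on the fact that $|\Omega_k\setminus\Omega_{k-1}|=1$ pins down the unique simple at each stage. The paper phrases sufficiency slightly more directly---characterizing $L(\pi)$ by the pair of conditions $J_kL=0$ and $J_{k-1}L=L$ and observing that $\tau_\phi(L)$ satisfies the same---whereas you recast this as an inductive argument on compatible permutations, but the content is the same.
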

\begin{proof} 
If $(A-Dis)_f$ has a Chevalley duality $\tau$, then 
Corollary \ref{c41} implies that $\phi=\lim\limits_{\leftarrow}\phi_k$
defines an anti-isomorphism of $A$ such that $\phi(J_k)= J_k$ for 
$k\geq 0$. Clearly $\tau_{\phi}\simeq \tau$.

Assume now that an anti-isomorphism $\phi$ of $A$ preserves a $(*)$-chain.
Then $\tau_{\phi}$ is a duality on $(A-Dis)_f$. If
$L=L(\pi)$ is a simple discrete $A$-module, then
$\pi\in\Omega_k\setminus\Omega_{k-1}$ if and only if $J_k L=0$ and
$J_{k-1}L=L$ for every $k\geq 1$.  Since $J_k\tau_{\phi}(L)=0$ and $J_{k-1}\tau_{\phi}(L)=\tau_{\phi}(L)$ for 
every $k\geq 1$, we conclude that $\tau_{\phi}(L)\simeq L$.
\end{proof}
\begin{rem}\label{rm41}
If $\phi$ preserves one $(*)$-chain, then it preserves every such chain.
\end{rem}
\begin{rem}\label{rm42}
Assume that $A$ is a basic ascending quasi-hereditary pseudocompact algebra and $(A-Dis)_f$ has
a Chevalley duality $\tau$. Then we can identify $\tau$ and $\tau_{\phi}$ as in Theorem \ref{t41}. 
If $I(\lambda)$ is a (finite-dimensional) indecomposable injective envelope of a simple module 
$L(\lambda)\in A-Dis$,
then the (right) projective pseudocompact $A$-module $P(\lambda)=I(\lambda)^*$ is finite-dimensional,
and therefore $P(\lambda)$ is discrete. As a vector space, $P(\lambda)$  coincides with $\tau_{\phi}(I(\lambda))$ 
and its structure as a left discrete $A$-module is given via the anti-isomorphism $\phi$.
Analogously, corresponding to a (left) costandard $A$-discrete module, we obtain a (right) standard pseudocompact $A$-module, 
which is also a (right) discrete $A$-module. With the help of the anti-isomorphism $\phi$, we can view this module as a left discrete $A$-module.
\end{rem}

\section{A Ringel dual}
Let $A$ be an ascending pseudocompact quasi-hereditary algebra. By Theorem \ref{tr1}, $\cal C=A-Dis$ is a highest weight category with respect to a
good finitely-generated poset $(\Gamma, \leq)$. Fix a decreasing chain of subideals
$$\Gamma=\Gamma_0\supseteq\Gamma_1\supseteq\Gamma_2\supseteq\ldots$$
as in Proposition \ref{pr2}. 

Assume that $\cal C$ has a
Chevalley duality $\tau$. Without a loss of generality one can assume that
$\tau=\tau_{\phi}$, where $\phi$ is a continuous anti-isomorphism of $A$ and $A$ is basic. As in Section 3, there is a collection
of indecomposable tilting (left discrete) modules
$\{T(\lambda)\}_{\lambda\in\Gamma}$. Let
$T=\bigoplus_{\lambda\in\Gamma} T(\lambda)$ be a {\it complete tilting module}. It is clear that $T$
is $\Gamma$-restricted. Denote the algebra $End_{\cal
C}(T)^{\circ}$ by $R$ and call it {\it the Ringel dual} of $A$. 
Let ${\cal C}(\nabla)$ denote a 
full subcategory of $\cal C$ consisting of all restricted modules
having a decreasing $\nabla$-filtration. 
By Theorem \ref{t32}, for every $k$, both
$O^{\Gamma_k}(T)=\bigoplus_{\lambda\in\Gamma\setminus\Gamma_k}O^{\Gamma_k}(T(\lambda))$
and $T/O^{\Gamma_k}(T)$ have increasing $\Delta$-filtrations. Additionally, $O^{\Gamma_k}(T)$ is a finite-dimensional module for every $k$.

\subsection{Properties of the Ringel dual}

We shall require the following lemma.

\begin{lm}\label{l51}
If $M\in {\cal C}(\nabla)$, then $Ext^1_{\cal
C}(T/O^{\Gamma_k}(T), M)=0$ for every $k\geq 0$.
\end{lm}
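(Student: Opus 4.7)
The plan is to use the short exact sequence
\[0 \to O^{\Gamma_k}(T) \to T \to T/O^{\Gamma_k}(T) \to 0\]
to reduce the vanishing of $Ext^1_{\cal C}(T/O^{\Gamma_k}(T), M)$ to an extension property of morphisms out of $O^{\Gamma_k}(T)$.

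First, since $T$ is a $\Gamma$-restricted tilting object (as observed just before the lemma) and $M \in {\cal C}(\nabla)$ is restricted with a decreasing $\nabla$-filtration, the first statement of Theorem \ref{t36} applies to give $Ext^1_{\cal C}(T, M) = 0$. The long exact sequence of $Ext^{\bullet}_{\cal C}(-, M)$ attached to the above short exact sequence then produces a surjection
\[Hom_{\cal C}(O^{\Gamma_k}(T), M) \twoheadrightarrow Ext^1_{\cal C}(T/O^{\Gamma_k}(T), M),\]
so it suffices to show that every morphism $O^{\Gamma_k}(T) \to M$ extends to a morphism $T \to M$.

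For this extension I would invoke Lemma \ref{l35} with $Z = M$, $Y = T$, and the increasing filtration $Y_j = O^{\Gamma_j}(T)$ of $T$, taking $t = k$ so that $Y_t = O^{\Gamma_k}(T)$. The hypothesis $Ext^1_{\cal C}(\Delta(\mu), M) = 0$ for every $\mu \in \Lambda$ holds by Theorem \ref{t31}, since $M$ has a decreasing $\nabla$-filtration. The equality $T = \bigcup_j Y_j$ follows from $\bigcap_j \Gamma_j = \emptyset$, and each $Y_j$ is finite-dimensional as noted just before the lemma. To verify that each quotient $Y_{j+1}/Y_j$ has a finite $\Delta$-filtration, I would identify $Y_{j+1}/Y_j$ with the subobject $O^{\Gamma_{j+1}}(T/Y_j)$ of $T/Y_j$ (via the universal property of $O^{\Gamma_{j+1}}$) and apply Corollary \ref{c32}(2) to the short exact sequence
\[0 \to Y_{j+1}/Y_j \to T/Y_j \to T/Y_{j+1} \to 0,\]
whose outer terms carry increasing $\Delta$-filtrations by the standing hypothesis recalled before the lemma.

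The main subtlety I anticipate is precisely the identification $Y_{j+1}/Y_j \simeq O^{\Gamma_{j+1}}(T/Y_j)$ and the careful application of Corollary \ref{c32}(2) to endow this quotient with a finite $\Delta$-filtration, so that Lemma \ref{l35} may be applied to the filtered module $T$. Once Lemma \ref{l35} is in hand, the map $Hom_{\cal C}(T, M) \to Hom_{\cal C}(O^{\Gamma_k}(T), M)$ is surjective, and the long exact sequence forces $Ext^1_{\cal C}(T/O^{\Gamma_k}(T), M) = 0$, as required.
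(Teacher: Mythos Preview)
Your proof is correct and takes a genuinely different route from the paper's. Both arguments start from the short exact sequence $0 \to O^{\Gamma_k}(T) \to T \to D_k \to 0$ (with $D_k = T/O^{\Gamma_k}(T)$) and the vanishing $Ext^1_{\cal C}(T, M) = 0$ supplied by Theorem \ref{t36}. You finish by proving that the restriction $Hom_{\cal C}(T, M) \to Hom_{\cal C}(O^{\Gamma_k}(T), M)$ is surjective via Lemma \ref{l35}, applied to the filtration $Y_j = O^{\Gamma_j}(T)$ of $T$. The paper does not extend morphisms at all: it first handles the special case where $M$ belongs to $\Gamma_k$, observing that then $Hom_{\cal C}(O^{\Gamma_k}(T), M) = 0$ for weight reasons; it then reduces the general case to this one by applying the long exact sequence in the \emph{second} variable to $0 \to O_{\Gamma_k}(M) \to M \to M/O_{\Gamma_k}(M) \to 0$, disposing of the finite quotient $M/O_{\Gamma_k}(M)$ directly with Theorem \ref{t31}. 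Your approach is arguably cleaner since it reuses Lemma \ref{l35} off the shelf, whereas the paper's argument avoids checking the $\Delta$-filtration of the successive quotients $Y_{j+1}/Y_j$ at the cost of juggling two long exact sequences. One minor simplification: the identification $Y_{j+1}/Y_j \simeq O^{\Gamma_{j+1}}(T/Y_j)$, while correct, is unnecessary --- the short exact sequence $0 \to Y_{j+1}/Y_j \to T/Y_j \to T/Y_{j+1} \to 0$ is immediate from the third isomorphism theorem, and Corollary \ref{c32}(2) applies directly since both outer terms carry increasing $\Delta$-filtrations by the remark preceding the lemma.
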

\begin{proof} Denote $T/O^{\Gamma_k}(T)=D_k$ and consider the following fragment 
\[Hom_{\cal C}(O^{\Gamma_k}(T), M)\to Ext^1_{\cal C}(D_k,M)\to Ext^1(T,M)\] 
of the long exact sequence.
By Theorem \ref{t36}, the third term vanishes. If $M$ belongs to $\Gamma_k$, then the first term equals zero, and consequently,
$Ext^1_{\cal C}(D_k,M)=0$.

Now consider the following fragment 
\[Ext^1_{\cal C}(D_k, O_{\Gamma_k}(M))\to Ext^1_{\cal C}(D_k, M)\to Ext^1_{\cal C}(D_k, M/O_{\Gamma_k}(M))\]
of the long exact sequence.
The first term vanishes because $O_{\Gamma_k}(M)$ belongs to $\Gamma_k$.  
By Theorem \ref{t31}, we have $Ext^1_{\cal C}(\Delta(\lambda),M)=0$ for every $\lambda\in \Lambda$, and 
we can argue, as in the proof of Theorem \ref{t32}, that 
$M/O_{\Gamma_k}(M)$ has a finite $\nabla$-filtration. Since $D_k$ has a finite $\Delta$-filtration, we can use Theorem \ref{t31} again to derive that
$Ext^1(D_k, M/O_{\Gamma_k}(M)=0$. The exactness of the above fragment concludes the proof.
\end{proof}

There is a right exact functor $F : {\cal C}\to R-mod$ defined by
$F(M)=Hom_{\cal C}(T, M)$ for $M\in\cal C$. The algebra  $R$ has a
linear topology defined by two-sided ideals $I_k=Hom_{\cal
C}(T/O^{\Gamma_k}(T), T)$ for each $k\geq 0$. Moreover, an $R$-module $F(M)$
has a linear topology defined by (right) $R$-submodules $F(M)_k=Hom_{\cal
C}(T/O^{\Gamma_k}(T), M)$ for each $k\geq 0$. 

The following proposition extends results of Theorem \ref{t34} and characterizes functor $F$ on ${\cal C}(\nabla)$.

\begin{pr}\label{p51}
If $M\in {\cal C}(\nabla)$, then a canonical homomorphism
$$F(M)\to\lim\limits_{\leftarrow}
F(M)/F(M)_k$$ is an isomorphism. In particular, $R$ is a
pseudocompact algebra and $F$ is an exact functor from $\cal{C}(\nabla)$ to $R-PC$.
\end{pr}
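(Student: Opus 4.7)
My plan is to establish the isomorphism $F(M) \to \lim\limits_{\longleftarrow} F(M)/F(M)_k$ by first proving the set-theoretic identity $T = \bigcup_{k \geq 0} O^{\Gamma_k}(T)$, and then applying $Hom_{\cal C}(-, M)$ to a short exact sequence. For the identity, note that the quotient $T / \bigcup_k O^{\Gamma_k}(T)$ can have no composition factor: any finite subobject $S$ of this quotient lifts to a finite (and therefore finite-length) subobject $S' \subseteq T$, and since $O^{\Gamma_k}(T)$ is increasing in $k$ with union $\bigcup_j O^{\Gamma_j}(T)$, the intersection $S' \cap O^{\Gamma_k}(T)$ stabilizes for $k$ sufficiently large; this forces every composition factor of $S$ to lie in $\Gamma_k$ for all large enough $k$, hence in $\bigcap_k \Gamma_k = \emptyset$. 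Thus $T = \lim\limits_{\longrightarrow} O^{\Gamma_k}(T)$ in $\cal C$.

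Next, for $M \in {\cal C}(\nabla)$, apply $Hom_{\cal C}(-, M)$ to
$$0 \to O^{\Gamma_k}(T) \to T \to T/O^{\Gamma_k}(T) \to 0.$$
Lemma \ref{l51} gives $Ext^1_{\cal C}(T/O^{\Gamma_k}(T), M) = 0$, so the long exact sequence collapses to a short exact sequence identifying $F(M)/F(M)_k$ with $Hom_{\cal C}(O^{\Gamma_k}(T), M)$. This quotient is finite-dimensional, because $O^{\Gamma_k}(T)$ was already noted to be finite-dimensional and $M$ is restricted, so $\dim Hom_{\cal C}(X, M) < \infty$ for any $X$ of finite length whose composition factors $L(\lambda)$ satisfy $[M:L(\lambda)] < \infty$ (induction on the length of $X$). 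Passing to inverse limits and using that $Hom_{\cal C}(-, M)$ converts the directed union $T = \lim\limits_{\longrightarrow} O^{\Gamma_k}(T)$ into an inverse limit, I get
$$F(M) \;\simeq\; \lim\limits_{\longleftarrow} Hom_{\cal C}(O^{\Gamma_k}(T), M) \;\simeq\; \lim\limits_{\longleftarrow} F(M)/F(M)_k.$$

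For the \emph{in particular} assertions, specialize $M = T$, which lies in ${\cal C}(\nabla)$, to obtain $R \simeq \lim\limits_{\longleftarrow} R/I_k$ with finite-dimensional quotients $R/I_k \simeq Hom_{\cal C}(O^{\Gamma_k}(T), T)$; each $I_k$ is a two-sided ideal because the functoriality of $O^{\Gamma_k}$ gives $\rho(O^{\Gamma_k}(T)) \subseteq O^{\Gamma_k}(T)$ for every $\rho \in End_{\cal C}(T)$, so both pre- and post-composition preserve vanishing on $O^{\Gamma_k}(T)$. Hence $R$ is pseudocompact, and the same argument makes each $F(M)_k$ an $R$-submodule, so $F(M)$ is a pseudocompact $R$-module. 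Exactness of $F$ on ${\cal C}(\nabla)$ then follows from Theorem \ref{t36}: for a short exact sequence $0 \to M' \to M \to M'' \to 0$ in ${\cal C}(\nabla)$, one has $Ext^1_{\cal C}(T, M') = 0$, so the long exact sequence produces a short exact sequence of pseudocompact $R$-modules. The main obstacle will be the bookkeeping of topological structure — specifically, confirming that $R/I_k$ and $F(M)/F(M)_k$ are genuinely finite-dimensional rather than only discrete, and that the algebraic inverse-limit identification carries the pseudocompact topology correctly.
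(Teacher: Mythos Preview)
Your proof is correct and follows essentially the same route as the paper: identify $F(M)/F(M)_k \simeq Hom_{\cal C}(O^{\Gamma_k}(T),M)$ via Lemma~\ref{l51}, use $T=\varinjlim O^{\Gamma_k}(T)$ to pass to the inverse limit, invoke finite-dimensionality of $O^{\Gamma_k}(T)$ together with restrictedness of $M$ for the pseudocompactness, and Theorem~\ref{t36} for exactness. You supply more detail than the paper (which simply asserts $F(M)\simeq\varprojlim Hom_{\cal C}(O^{\Gamma_k}(T),M)$ and $I_kF(M)=F(M)_k$ without argument); in particular your justification of $T=\bigcup_k O^{\Gamma_k}(T)$ is sound, though it can be shortened: $T/\bigcup_k O^{\Gamma_k}(T)$ is a quotient of $T/O^{\Gamma_k}(T)$ for every $k$, hence belongs to $\bigcap_k\Gamma_k=\emptyset$, so it vanishes --- no need to lift finite subobjects.
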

\begin{proof} By Lemma \ref{l51}, projective spectrums $\{F(M)/F(M)_k| k\geq
0\}$ and $\{Hom_{\cal C}(O^{\Gamma_k}(T), M) | k\geq 0\}$ are
canonically isomorphic and
$F(M)\simeq\lim\limits_{\leftarrow}Hom_{\cal C}(O^{\Gamma_k}(T),
M)$. $F(M)$ is a pseudocompact $R$-module because $I_k F(M)=F(M)_k$ and
$\dim_K Hom_{\cal C}(O^{\Gamma_k}(T), M) <\infty$ for each $k$ since $M$ is
restricted. By Theorem \ref{t36}, $F$ is exact.
\end{proof}
There is a decomposition $R\simeq \prod_{\lambda\in\Gamma} F(T(\lambda))=\prod_{\lambda\in\Gamma}R\hat{e}_{\lambda}$,
where each $\hat{e}_{\lambda}$ is a canonical projection $T\to T(\lambda)$. 
Since $\lambda\in \Gamma_k$ implies $\hat{e}_{\lambda}\in I_k$,
a set $\{\hat{e}_{\lambda}\}_{\lambda\in\Gamma}$ is a summable collection of idempotents of $R$. Moreover, 
$End_R(R\hat{e}_{\lambda})\simeq \hat{e}_{\lambda}R\hat{e}_{\lambda}\simeq End_{\cal C}(T(\lambda))$ is a local pseudocompact algebra by Theorem \ref{t34}. 
Therefore $\{\hat{P}(\lambda)=R\hat{e}_{\lambda}\}_{\lambda\in\Gamma}$ is a complete set of pairwise non-isomorphic indecomposable projective $R$-modules.

For each $\lambda\in\Lambda$, denote $top (\hat{P}(\lambda))$ by $\hat{M}(\lambda)$ and $F(\nabla(\lambda))$ by $\hat{\Delta}(\lambda)$.
Then $End_R(\hat{M}(\lambda))=K$ by Corollary \ref{c1}.

The next lemma establishes a reciprocity relationship.

\begin{lm}\label{lm52}(See Lemma A4.6 of \cite{don1}.)
For every $\lambda, \mu\in \Gamma$ we have $$[\hat{\Delta}(\lambda) : \hat{M}(\mu)]=(T(\mu) : \Delta(\lambda)).$$
In particular, every $\hat{\Delta}(\lambda)$ is finite-dimensional.
\end{lm}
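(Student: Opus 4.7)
The plan is to reduce both sides of the claimed equality to $\dim_K Hom_{\cal C}(T(\mu),\nabla(\lambda))$ and then to invoke Corollary \ref{c32}(5). For the left side, I would adapt the pseudocompact multiplicity theory of Section 2 to $R$ and its summable orthogonal family of primitive idempotents $\{\hat{e}_\lambda\}_{\lambda\in\Gamma}$ to obtain $[\hat{\Delta}(\lambda):\hat{M}(\mu)]=\dim_K\hat{e}_\mu\hat{\Delta}(\lambda)$. Writing $\hat{e}_\mu=\iota_\mu\pi_\mu$, where $\pi_\mu:T\twoheadrightarrow T(\mu)$ and $\iota_\mu:T(\mu)\hookrightarrow T$ are the canonical projection and inclusion associated to the direct sum decomposition $T=\bigoplus_{\lambda\in\Gamma}T(\lambda)$, acting by $\hat{e}_\mu$ on $Hom_{\cal C}(T,\nabla(\lambda))$ amounts to precomposition with $\iota_\mu\pi_\mu$, so one has the natural identification
\[\hat{e}_\mu\,Hom_{\cal C}(T,\nabla(\lambda))\;\cong\;Hom_{\cal C}(T(\mu),\nabla(\lambda)),\]
with inverse $g\mapsto g\circ\pi_\mu$. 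This gives $[\hat{\Delta}(\lambda):\hat{M}(\mu)]=\dim_K Hom_{\cal C}(T(\mu),\nabla(\lambda))$.

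For the right side, $T(\mu)$ is $(\mu]$-restricted and carries an increasing $\Delta$-filtration by its construction in Theorem \ref{t33}, so Corollary \ref{c32}(5) applies directly and delivers $(T(\mu):\Delta(\lambda))=\dim_K Hom_{\cal C}(T(\mu),\nabla(\lambda))$. Combining with the previous paragraph finishes the multiplicity identity.

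It remains to verify that $\hat{\Delta}(\lambda)$ is finite-dimensional. Writing
\[\hat{\Delta}(\lambda)=Hom_{\cal C}\!\left(\bigoplus_{\mu\in\Gamma}T(\mu),\,\nabla(\lambda)\right)\;=\;\prod_{\mu\in\Gamma}Hom_{\cal C}(T(\mu),\nabla(\lambda)),\]
I would argue that only finitely many factors are nonzero: since $\nabla(\lambda)$ has simple socle $L(\lambda)$, any nonzero map $T(\mu)\to\nabla(\lambda)$ forces $L(\lambda)$ to be a composition factor of $T(\mu)$, and $\Gamma$-restrictedness of $T$ (i.e.\ $[T:L(\lambda)]<\infty$) ensures that this happens for only finitely many $\mu$. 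Each nonzero factor is finite-dimensional by the multiplicity formula just proved (and is bounded by $[T(\mu):L(\lambda)]$), so $\hat{\Delta}(\lambda)$ is finite-dimensional.

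The main technical obstacle is the careful bookkeeping of the opposite-algebra convention $R=End_{\cal C}(T)^{\circ}$ and the resulting one-sided $R$-module structure on $F(M)$, so that the identification $\hat{e}_\mu F(\nabla(\lambda))\cong Hom_{\cal C}(T(\mu),\nabla(\lambda))$ is stated on the correct side; once that is clarified, the rest of the argument is a clean application of the results already established in Sections 2 and 3.
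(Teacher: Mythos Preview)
Your proposal is correct and follows essentially the same route as the paper: both sides are identified with $\dim_K Hom_{\cal C}(T(\mu),\nabla(\lambda))$, the left side via the idempotent multiplicity formula and the right side via Corollary \ref{c32}(5). Your write-up is simply more explicit than the paper's three-line proof---in particular you spell out the finite-dimensionality argument (only finitely many $\mu$ contribute because $T$ is $\Gamma$-restricted) and you correctly flag the left/right bookkeeping for $R=End_{\cal C}(T)^{\circ}$; note that the paper writes $\hat{\Delta}(\lambda)\hat{e}_{\mu}$ rather than $\hat{e}_{\mu}\hat{\Delta}(\lambda)$, but since $\hat{e}_{\mu}$ is an idempotent acting by precomposition with $\iota_\mu\pi_\mu$, either side yields the same subspace $\cong Hom_{\cal C}(T(\mu),\nabla(\lambda))$.
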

\begin{proof}
Since
$$[\hat{\Delta}(\lambda) : \hat{M}(\mu)]=\dim\hat{\Delta}(\lambda)\hat{e}_{\mu}=\dim Hom_{\cal C}(T(\mu), \nabla(\lambda)),
$$
part (5) of Corollary \ref{c32} concludes the proof.\end{proof}

The first main result of this section states that the Ringel dual of an ascending quasi-hereditary pseudocompact algebra is a descending
quasi-hereditary pseudocompact algebra.

\begin{tr}\label{tr51} (See Theorem A4.7 of \cite{don1}.)
The pseudocompact algebra $R$ is a descending quasi-hereditary algebra with respect to the poset $(\Gamma, \leq^{op})$.
\end{tr}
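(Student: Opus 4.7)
The plan is to take $G_n := I_n = Hom_{\cal C}(T/O^{\Gamma_n}(T), T)$ as the candidate descending system of closed ideals in $R$ (Definition \ref{d8}). These are already known to form a basis of open neighborhoods of zero in $R$ (Proposition \ref{p51}), so $\bigcap_n G_n = 0$ and condition (\ref{n1}) of Definition \ref{d8} is automatic. First I would verify that each $G_n$ is a closed two-sided ideal: closedness is immediate (each $G_n$ is even open), and two-sidedness follows because every $\phi \in End_{\cal C}(T)$ preserves $O^{\Gamma_n}(T)$. Indeed, the composition $T \xrightarrow{\phi} T \twoheadrightarrow T/O^{\Gamma_n}(T)$ lands in an object belonging to $\Gamma_n$, forcing $\phi(O^{\Gamma_n}(T)) \subseteq O^{\Gamma_n}(T)$ by minimality of $O^{\Gamma_n}$. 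Moreover each $R/G_n$ is finite-dimensional: from Proposition \ref{p51}, $R/G_n \simeq Hom_{\cal C}(O^{\Gamma_n}(T), T)$ as $K$-vector spaces, and this is finite-dimensional since $O^{\Gamma_n}(T) = \bigoplus_{\lambda \in \Gamma \setminus \Gamma_n} O^{\Gamma_n}(T(\lambda))$ is a finite direct sum of finite-dimensional modules whose images have composition factors in the finite set $\Gamma \setminus \Gamma_n$. Hence condition (\ref{n2}) may be replaced by its finite-dimensional analogue (2'), and the task reduces to showing that $R/G_n$ is a (finite-dimensional) quasi-hereditary algebra with heredity chain $\{H_k = G_{n-k}/G_n\}_{0\le k\le n}$, associated to the finite poset $(\Gamma \setminus \Gamma_n, \leq^{op})$.

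The heart of the argument is then a pseudocompact version of classical Ringel duality (Theorem A4.7 of \cite{don1}). Using reciprocity (Lemma \ref{lm52}) and standard tilting-module theory, the modules $\hat\Delta(\lambda) = F(\nabla(\lambda))$ for $\lambda \in \Gamma \setminus \Gamma_n$ become the standard modules of $R/G_n$ with respect to $(\Gamma \setminus \Gamma_n, \leq^{op})$: their composition factors $\hat{M}(\mu)$ satisfy $\mu \ge \lambda$ (equivalently $\mu \leq^{op} \lambda$), and $[\hat\Delta(\lambda) : \hat{M}(\lambda)] = (T(\lambda) : \Delta(\lambda)) = 1$. The successive quotients $H_k/H_{k-1} = G_{n-k}/G_{n-k+1}$ correspond via $F$ to the layer filtration of $T$ induced by $\Gamma_{n-k} \setminus \Gamma_{n-k+1}$, so each is (via the identifications of Proposition \ref{p51}) a direct sum of indecomposable projective $R/H_{k-1}$-modules $\hat{P}(\lambda)$ for $\lambda$ in that layer. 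The three heredity conditions of Definition \ref{d3} then follow: projectivity from the construction, $Hom(H_k/H_{k-1}, (R/G_n)/H_k) = 0$ and $Hom(H_k/H_{k-1}, rad(H_k/H_{k-1})) = 0$ from reciprocity together with the fact that the composition factors of each $\hat\Delta(\lambda)$ are strictly below $\lambda$ in $\leq^{op}$ (apart from $\hat{M}(\lambda)$ itself).

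The main obstacle is this last identification: precisely matching the filtration $\{G_{n-k}/G_n\}$ with the heredity chain built from the layers of $(\Gamma \setminus \Gamma_n, \leq^{op})$, and verifying that each $H_k/H_{k-1}$ is a heredity ideal over $R/H_{k-1}$. Because $T$ is an infinite direct sum of indecomposable tilting objects, one cannot invoke classical Ringel duality as a black box; rather, one must systematically track how the functor $F$ transports the increasing $\Delta$-filtration of each $O^{\Gamma_{n-k}}(T(\lambda))$ into projective sections of $R/G_n$. The essential tools for this verification are Lemma \ref{lm52} (reciprocity), Theorem \ref{t34} (locality of $End_{\cal C}(T(\lambda))$), Theorem \ref{t36} (vanishing of $Ext^1$ between tilting and filtered objects), and Corollary \ref{c34}. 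Once condition (2') is established for every $n$, Definition \ref{d8} applies and $R$ is descending quasi-hereditary with respect to $(\Gamma, \leq^{op})$.
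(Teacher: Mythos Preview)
Your approach is correct in outline but takes a substantially different route from the paper. You attack Definition \ref{d8} head-on, setting $G_n = I_n$ and then reducing to the finite-dimensional quasi-hereditary condition (2') for each quotient $R/G_n$; the bulk of the work is then the explicit verification that the chain $\{G_{n-k}/G_n\}$ is a heredity chain, which (as you rightly flag) requires tracking how $F$ transports the layer structure of $T$.

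The paper bypasses this entirely. Instead of verifying Definition \ref{d8} axiom by axiom, it establishes the hypotheses of Lemma \ref{lm4} for $R$ with respect to $(\Gamma,\leq^{op})$ and then (implicitly) invokes Theorem \ref{tr3}. Concretely, for each $\lambda$ it takes the \emph{decreasing $\nabla$-filtration} $T(\lambda)=T_0\supseteq T_1\supseteq\ldots$ of the tilting module and applies $F$. Since each $T_k\in{\cal C}(\nabla)$, Theorem \ref{t36} gives $Ext^1_{\cal C}(T,T_k)=0$, so $F$ is exact on this filtration and $N_k=F(T_k)$ is a descending filtration of $\hat P(\lambda)$ with quotients $\hat\Delta(\mu_k)$, $\mu_0=\lambda$ and $\mu_k>^{op}\lambda$ for $k\geq 1$. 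Lemma \ref{lm52} then identifies $\hat\Delta(\lambda)$ with $\hat P(\lambda)/O^{[\lambda)^{op}}(\hat P(\lambda))$, i.e.\ as the genuine standard module. Since $(\Gamma,\leq^{op})$ is cogood and finitely cogenerated (because $(\Gamma,\leq)$ is good and finitely generated), Theorem \ref{tr3} finishes.

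The trade-off: your approach makes the descending ideal chain explicit ($G_n=I_n$), which is conceptually satisfying and useful if one later wants to compute with it, but it forces you to redo inside each $R/G_n$ work that Theorem \ref{tr3} already packaged. The paper's approach is shorter and works one indecomposable projective at a time; its key move---applying $F$ to the $\nabla$-filtration of $T(\lambda)$ rather than analysing the $\Delta$-filtration of $O^{\Gamma_n}(T)$---is what eliminates the ``matching'' obstacle you identified.
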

\begin{proof}
Let $T(\lambda)=T_0\supseteq T_1\supseteq\ldots$ be a decreasing $\nabla$-filtration of $T(\lambda)$ such that $T_0/T_1\simeq\nabla(\lambda)$.  
For every $k\geq 0$, denote $F(T_k)$ by $N_k$. Each $N_k$ is a closed submodule of $\hat{P}(\lambda)$ and $\bigcap_{k\geq 0} N_k=0$. 
By Theorem \ref{t36}, $Ext^1_{\cal C}(T, T_k)=0$ for every $k\geq 0$. Therefore, $N_k/N_{k+1}\simeq F(T_k/T_{k+1})\simeq \hat{\Delta}(\mu_k)$. 
Moreover, $\mu_0=\lambda$ and $\mu_k >^{op}\lambda$ whenever $k\geq 1$.
Finally, Lemma \ref{lm52} implies that $\hat{\Delta}(\lambda)=\hat{P}(\lambda)/O^{[\lambda)^{op}}(\hat{P}(\lambda))$.
\end{proof}

For a pseudocompact algebra $R$, let $R-PC(\hat{\Delta})$ denote a full subcategory of $R-PC$ that consists of all restricted (pseudocompact) modules having a decreasing $\hat{\Delta}$-filtration. Using arguments from the proof of Theorem \ref{tr51}, we infer that $F$ is an exact functor from
${\cal C}(\nabla)$ to $R-PC(\hat{\Delta})$.

Let $\Omega\subseteq\Gamma$ be a finite coideal. Denote $e_{\Omega}$ by $e$ and $eAe-Dis$ by ${\cal C}_{\Omega}$. By Theorem 3.5 (b) of \cite{cps}, $eAe=End_{PC-A}(eA)\simeq End_{\cal C}(\bigoplus_{\lambda\in\Omega} I(\lambda))^{\circ}$ is a finite-dimensional quasi-hereditary algebra with respect to the poset $\Omega$. 
Explicitly, the indecomposable projective summands of $eAe$ consists of all the $e_{\lambda}Ae$, where $top(e_{\lambda}Ae)=M(\lambda)e$, for all $\lambda\in\Omega$. 
If $\nu\not\in\Omega$, then $M(\nu)e=0$ and $\nabla(\nu)^* e=0$. It follows that any $e_{\lambda}Ae$ for $\lambda\in\Omega$  
has a finite standard filtration such that its top quotient is $\nabla(\lambda)^* e$. Dually, for every $\lambda\in\Omega$,   
$eI(\lambda)$ is an indecomposable injective  envelope of $eL(\lambda)$ in ${\cal C}_{\Omega}$ 
and $eI(\lambda)$ has a finite costandard filtration such that the bottom member is $e\nabla(\lambda)$.

For $\lambda\in\Omega$, define $P(\lambda)=\tau(I(\lambda))$.
Indecomposable projective modules in ${\cal C}_{\Omega}$ are given in the following lemma.
  
\begin{lm}\label{lm53} The module $eP(\lambda)$ is a projective cover of $eL(\lambda)$ in ${\cal C}_{\Omega}$.  
\end{lm}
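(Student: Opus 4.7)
The plan is to deduce the lemma from its dual, namely the statement (just established in the preceding paragraph) that $eI(\lambda)$ is an indecomposable injective envelope of $eL(\lambda)$ in ${\cal C}_{\Omega}$. The idea is to transfer the Chevalley duality $\tau$ on $(A-Dis)_f$ to a Chevalley duality $\tau_\Omega$ on $eAe-Dis$ so that the Schur functor $e(-)$ intertwines the two dualities; then $eP(\lambda)=e\tau(I(\lambda))\cong\tau_\Omega(eI(\lambda))$ will be, by duality, an indecomposable projective cover of $\tau_\Omega(eL(\lambda))\cong eL(\lambda)$.

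The first step is to write $\tau=\tau_\phi$ for a continuous anti-isomorphism $\phi$ of $A$ (provided by Theorem \ref{t41}) and to arrange $\phi(e)=e$. Because $\tau_\phi(L(\lambda))\cong L(\lambda)$, the primitive idempotent $\phi(e_\lambda)$ generates an indecomposable right projective pseudocompact module isomorphic to $e_\lambda A$; by Proposition \ref{pr1} (Krull--Schmidt for pseudocompact algebras) this forces $\phi(e_\lambda)$ to be conjugate to $e_\lambda$ by an invertible element of $A$. Assembling the finitely many required conjugations for $\lambda\in\Omega$ into a single unit $u\in A$ and replacing $\phi$ by $\phi'(a)=u\phi(a)u^{-1}$, which alters $\tau_\phi$ only up to a natural isomorphism and preserves the $(*)$-chain (since each $J_k$ is two-sided), we may assume $\phi(e_\lambda)=e_\lambda$ for every $\lambda\in\Omega$, and hence $\phi(e)=e$.

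Once $\phi(e)=e$, the restriction $\phi|_{eAe}$ is a continuous anti-isomorphism of the finite-dimensional algebra $eAe$, and Theorem \ref{t41} applied to $eAe$ produces a Chevalley duality $\tau_\Omega=\tau_{\phi|_{eAe}}$ on $eAe-Dis$. An immediate unwinding of the definitions shows that for every $M\in (A-Dis)_f$ there is a natural isomorphism of left $eAe$-modules $e\tau(M)\cong\tau_\Omega(eM)$: on both sides the underlying vector space is $(eM)^*$ and the $eAe$-action is given by $(a\cdot f)(x)=f(\phi(a)x)$. Specializing with $M=L(\lambda)$ gives $\tau_\Omega(eL(\lambda))\cong eL(\lambda)$, confirming that $\tau_\Omega$ is a Chevalley duality on $eAe-Dis$; specializing with $M=I(\lambda)$ and using the already established injective-envelope statement for $eI(\lambda)$ yields $eP(\lambda)\cong\tau_\Omega(eI(\lambda))$ as an indecomposable projective cover of $eL(\lambda)$ in ${\cal C}_\Omega$.

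The main obstacle is the reduction $\phi(e)=e$: although the conjugacy of primitive idempotents with isomorphic projective hulls is classical in the finite-dimensional case, in the pseudocompact setting one must verify that the required units exist continuously in $A$, that the several conjugations for $\lambda\in\Omega$ can be assembled into a single inner modification, and that the resulting $\phi'$ still implements $\tau$ up to natural isomorphism and preserves a $(*)$-chain so that Theorem \ref{t41} applies to $eAe$. Everything after this reduction is a routine transport of the injective-envelope statement across the exact Schur functor $e(-)$.
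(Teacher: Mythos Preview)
Your proof is correct, but it takes a more elaborate route than the paper. The paper does not transport the Chevalley duality to ${\cal C}_\Omega$ at all; instead it argues in one line that the anti-isomorphism $\phi^{-1}$ itself furnishes an isomorphism of left $A$-modules $P(\lambda)\cong Ae_\lambda$ (identifying $P(\lambda)$ with $e_\lambda A$ under the twisted action $a\cdot x=x\phi(a)$ and applying $\phi^{-1}$), whence $eP(\lambda)\cong eAe_\lambda$ is manifestly the indecomposable projective cover of $eL(\lambda)$ in $eAe$-mod. No modification of $\phi$ to achieve $\phi(e)=e$ is needed, and no auxiliary duality $\tau_\Omega$ is constructed.

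Your approach buys something extra: you actually produce a Chevalley duality on ${\cal C}_\Omega$ compatible with the Schur functor, which is a pleasant structural fact and makes the subsequent identification of standard and tilting objects in ${\cal C}_\Omega$ (the paragraph following the lemma) conceptually transparent. The cost is the reduction $\phi(e)=e$: you correctly identify this as the main technical point, and your sketch (conjugacy of $\phi(e_\lambda)$ and $e_\lambda$, then assembling finitely many conjugations into a single inner modification) is standard, though in the pseudocompact setting one should note that it suffices to carry this out in the finite-dimensional quotient $A/J_k$ for $k$ large enough that $\Omega\subseteq\Omega_k$, where the classical argument applies verbatim. By contrast, the paper's shortcut needs only that $\phi^{-1}(e_\lambda)$ is equivalent to $e_\lambda$ (so that $A\phi^{-1}(e_\lambda)\cong Ae_\lambda$), and this follows immediately from $\tau_\phi(L(\lambda))\cong L(\lambda)$ without any global modification of $\phi$.
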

\begin{proof}
By Remark \ref{rm42}, $P(\lambda)$ can be identified with $e_{\lambda}A$, where the left $A$-module action is given by 
$ax=x\phi(a)$ for  $x\in e_{\lambda}A$ and $a\in A$.
The map $e_{\lambda}a\mapsto \phi^{-1}(a)e_{\lambda}$ defines an isomorphism $P(\lambda)\simeq Ae_{\lambda}$ of left (discrete) $A$-modules.
Therefore $eP(\lambda)$ is an indecomposable projective cover of $e \, top(P(\lambda))=e L(\lambda)$. 
\end{proof}

\subsection{Extension functors}

Lemma \ref{lm53} implies that the modules $e\Delta(\lambda)$ for $\lambda\in\Omega$ are the corresponding standard objects in $C_{\Omega}$. 
Additionally, the modules $eT(\lambda)$ are the corresponding indecomposable tilting modules. 
For each $k\geq 1$ denote $\Gamma\setminus\Gamma_k$ by $\Omega_k$ and $e_{\Omega_k}$ by 
$e_k$.
Morphisms in the category ${\cal C}$ are recognized as inverse limits of their restrictions in the next lemma.

\begin{lm}\label{lm54}
For every $M, N\in {\cal C}$ there is a natural isomorphism $$Hom_{\cal C}(M, N)\simeq\lim\limits_{\leftarrow} 
Hom_{e_k Ae_k} (e_k M, e_k N).$$
\end{lm}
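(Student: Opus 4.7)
The approach is to identify a discrete $A$-module $M$ with the filtered union of its subspaces $e_k M$, and then show that morphisms in $\cal C$ correspond bijectively to compatible systems of $e_k A e_k$-linear maps between these pieces.

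First, I would verify that $M = \bigcup_{k \geq 1} e_k M$ and that this is a directed union, i.e. $e_k M \subseteq e_{k+1} M$. The inclusion is immediate from $e_{k+1} e_k = e_k$, which follows from the pairwise orthogonality of $\{e_\lambda\}_{\lambda \in \Gamma}$ together with $\Omega_k \subseteq \Omega_{k+1}$. For the union, I would use that $M$ is discrete: for each $m \in M$ the expansion $m = 1 \cdot m = \sum_{\lambda \in \Gamma} e_\lambda m$ converges in the discrete topology and so has finite support $S_m \subseteq \Gamma$. Since $\bigcap_k \Gamma_k = \emptyset$ by Proposition \ref{pr2}, we have $\bigcup_k \Omega_k = \Gamma$, so $S_m \subseteq \Omega_k$ for $k$ large, whence $m = e_k m \in e_k M$.

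Next, I would construct the natural map $\Phi : Hom_{\cal C}(M, N) \to \lim_{\leftarrow} Hom_{e_k A e_k}(e_k M, e_k N)$ sending $f$ to $(f|_{e_k M})_k$. Each restriction lands in $e_k N$ (by $A$-linearity applied to $e_k m$) and is $e_k A e_k$-linear, and compatibility under the transition maps of the inverse system (restriction from level $\ell \geq k$ to level $k$, which makes sense since $f_\ell(e_k M) \subseteq e_k N$ by the same computation) is automatic. Injectivity of $\Phi$ follows immediately: if $f|_{e_k M} = 0$ for every $k$, then $f$ vanishes on $\bigcup_k e_k M = M$.

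For surjectivity, given a compatible family $(f_k)$ I would define $f : M \to N$ by $f(m) = f_k(m)$ for any $k$ with $m \in e_k M$; this is well-defined by compatibility and $K$-linear by construction. The only nontrivial step is $A$-linearity: given $a \in A$ and $m \in M$, choose a single $\ell$ large enough that $m \in e_\ell M$, $am \in e_\ell M$, and $af(m) \in e_\ell N$ simultaneously (possible because each of these three elements individually lies in some $e_{\ell'} M$ or $e_{\ell'} N$ and the filtration is directed). Then $am = e_\ell(am) = e_\ell a e_\ell m$, and using the $e_\ell A e_\ell$-linearity of $f_\ell$ together with $f(m) = f_\ell(m) \in e_\ell N$, I would conclude
\[ f(am) = f_\ell(e_\ell a e_\ell m) = (e_\ell a e_\ell) f_\ell(m) = e_\ell a f(m) = a f(m). \]

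The hard step is precisely this final verification: the $e_k A e_k$-linearity of each piece is a strictly weaker condition than full $A$-linearity, and making it add up requires exploiting the directedness of the filtration to find a common level $\ell$ at which $m$, $am$, and $af(m)$ are already captured. All other parts are formal manipulations with the filtered union and the orthogonality $e_{k+1} e_k = e_k$.
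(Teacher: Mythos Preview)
Your proposal is correct and follows essentially the same route as the paper's proof: establish $M=\bigcup_k e_kM$ from discreteness, observe that restriction gives an injection into the inverse limit, and for surjectivity define $f$ piecewise and verify $A$-linearity by passing to a common large index $\ell$ at which $m$, $am$, and $af(m)$ are all captured. Your bookkeeping in the $A$-linearity step (requiring $af(m)\in e_\ell N$ rather than $f(am)\in e_\ell N$) is in fact the cleaner choice, since that is exactly what is needed for the final equality $e_\ell a f(m)=af(m)$.
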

\begin{proof}
For every $m\in M$, there is an integer $k=k(m)\geq 0$ such that $e_{\Gamma_k} m=0$; consequently $e_k m=m$. Thus $M=\bigcup_{k\geq 0} e_k M$ and
$Hom_{\cal C}(M, N)\subseteq\lim\limits_{\leftarrow} Hom_{e_k Ae_k}(e_k M, e_k N)$. 

If $\lim\limits_{\leftarrow}\phi_k\in
\lim\limits_{\leftarrow} Hom_{e_k Ae_k}(e_k M, e_k N)$, then define $\phi(m)=\phi_t(m)$ for $m\in M$ and $t\geq k(m)$. For given $a\in A$ and $m\in M$, set $l=\max\{k(m), k(am), k(\phi(m)),  k(\phi(am))\}$. Then $$\phi(am)=\phi_l(am)=\phi_l(e_l ae_lm)=e_l a e_l\phi_l(m)=a \phi(m).$$
Hence $\phi\in Hom_{\cal C}(M,N)$.\end{proof}

For an injective resolution $I_N$ of $N$, there is a projective spectrum of complexes 
$\{Hom_{e_k Ae_k}(e_k M, e_k I_N)| k\geq 0\}$. Define a functor $N\to \lim\limits_{\leftarrow}H^*(Hom_{e_k Ae_k}(e_k M, e_k I_N))=
\lim\limits_{\leftarrow}Ext^*_{e_k Ae_k}(e_k M, e_k N)$. This functor is identified with the extension functor in ${\cal C}$ in the next lemma.
\begin{lm}\label{lm55}
The functor $N\to Ext^*_{\cal C}(M , N)$ is naturally isomorphic to $N\to \lim\limits_{\leftarrow} Ext^*_{e_k Ae_k}(e_k M , e_k N)$.
\end{lm}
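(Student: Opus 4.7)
The plan is to extend the isomorphism of Lemma~\ref{lm54} along an injective resolution of $N$, verify that it is carried to an injective resolution by each Schur functor $e_k\,\cdot$, and then commute cohomology with the inverse limit.

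First, fix an injective resolution $0\to N\to I^0\to I^1\to\cdots$ in ${\cal C}$. Since multiplication by the idempotent $e_k$ preserves kernels and cokernels, the Schur functor $M\mapsto e_k M$ from ${\cal C}$ to $e_k A e_k-Dis$ is exact, and applying it yields an exact complex $0\to e_k N\to e_k I^0\to e_k I^1\to\cdots$. The first technical step is to show that each $e_k I^j$ is injective in $e_k A e_k-Dis$, so that the displayed complex is a genuine injective resolution of $e_k N$ and hence $H^q(Hom_{e_k A e_k}(e_k M, e_k I^\bullet))=Ext^q_{e_k A e_k}(e_k M, e_k N)$. This amounts to verifying that the Schur functor preserves injectives; for indecomposable injective envelopes in ${\cal C}$ this is essentially contained in the remarks preceding Lemma~\ref{lm53}, and it can be extended to an arbitrary injective object by decomposing it into a product of indecomposables, or alternatively deduced from the fact that the Schur functor is right adjoint to the induction functor $V\mapsto Ae_k\otimes_{e_k A e_k}V$ whose exactness in this setting follows from the finite-dimensional quasi-hereditary structure of $e_k A e_k$.

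Second, apply Lemma~\ref{lm54} degreewise to the resolution $I^\bullet$. Naturality of that isomorphism in the second variable produces an isomorphism of complexes
\[Hom_{\cal C}(M, I^\bullet)\;\simeq\;\lim\limits_{\leftarrow}\, Hom_{e_k A e_k}(e_k M, e_k I^\bullet).\]
Taking cohomology on the left computes $Ext^*_{\cal C}(M, N)$, and what remains is to identify the cohomology on the right with $\lim\limits_{\leftarrow}\, Ext^*_{e_k A e_k}(e_k M, e_k N)$.

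The main obstacle is this final commutation of $\lim\limits_{\leftarrow}$ with $H^*$. By the Milnor short exact sequence for inverse limits it suffices to verify that the spectra $\{Hom_{e_k A e_k}(e_k M, e_k I^j)\}_k$ are Mittag-Leffler in each degree $j$. Assuming $M$ is $\Gamma$-restricted so that each $e_k M$ is finite-dimensional, I would show that the transition maps are surjective: given $\psi : e_k M\to e_k I^j$, rewrite it via the adjunction $Hom_{e_k A e_k}(e_k M, e_k I^j)\simeq Hom_{\cal C}(Ae_k\otimes_{e_k A e_k}e_k M, I^j)$, extend the resulting morphism along the canonical map $Ae_k\otimes_{e_k A e_k}e_k M\to Ae_{k+1}\otimes_{e_{k+1} A e_{k+1}}e_{k+1} M$ using the injectivity of $I^j$ in ${\cal C}$, and then apply adjunction once more at level $k+1$ to produce the required lift. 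This yields the vanishing of the higher inverse limit on the chain level and hence the commutation $H^*(\lim\limits_{\leftarrow} C^\bullet)=\lim\limits_{\leftarrow} H^*(C^\bullet)$, completing the proof.
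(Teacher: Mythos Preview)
Your approach is genuinely different from the paper's, which is a two-line appeal to the uniqueness of universal $\delta$-functors: both $N\mapsto Ext^*_{\cal C}(M,N)$ and $N\mapsto\lim\limits_{\leftarrow}Ext^*_{e_kAe_k}(e_kM,e_kN)$ are covariant $\delta$-functors erasable by injectives (the latter because each $e_kI$ is injective in $e_kAe_k-Dis$, so the limit vanishes in positive degrees on injectives), and they agree in degree~$0$ by Lemma~\ref{lm54}; Theorem~7.1 of Chapter~XX in \cite{lang} then gives the isomorphism. Your route---fixing an injective resolution, applying Lemma~\ref{lm54} degreewise, and commuting $H^*$ with $\lim\limits_{\leftarrow}$ via Mittag--Leffler---is more hands-on and makes explicit the resolution-level content that the $\delta$-functor argument packages abstractly.

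There is, however, a real gap in your surjectivity step. To extend $\tilde\psi:Ae_k\otimes_{e_kAe_k}e_kM\to I^j$ along the canonical map $Ae_k\otimes_{e_kAe_k}e_kM\to Ae_{k+1}\otimes_{e_{k+1}Ae_{k+1}}e_{k+1}M$ using injectivity of $I^j$, you need that canonical map to be a \emph{monomorphism}. This is equivalent to asking that the adjunction counit $j_!j^*M\to M$ (for the recollement attached to $e_k$) be injective, which fails in general: its kernel is only guaranteed to lie in the subcategory $\{X:e_kX=0\}$, not to vanish. You have not used any special feature of the coideal structure on $\Omega_k$ to force this injectivity, and without it the lifting argument breaks down. (You also insert the hypothesis that $M$ is $\Gamma$-restricted, which is absent from the statement; the adjunction argument you give does not in fact use finite-dimensionality of $e_kM$, so the restriction seems extraneous in any case.) If you want to salvage the Mittag--Leffler route, you would need an independent reason---specific to this quasi-hereditary setting---for the transition maps to have stable images; the paper sidesteps this by invoking the universal property directly.
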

\begin{proof}
Both functors are covariant $\delta$-functors erasable by injectives. The statement follows using Theorem 7.1 of Chapter XX in \cite{lang} and 
Lemma \ref{lm54}. 
\end{proof}

In the next step we prove a crucial proposition.
\begin{pr}\label{pr52}
For every $M\in {\cal C}(\nabla)$, there is a restricted tilting module $T$ and an epimorphism
$\pi : T\to M$ such that $\ker\pi\in {\cal C}(\nabla)$. 
\end{pr}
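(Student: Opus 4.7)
The plan is to construct $T$ as a (possibly infinite) direct sum of indecomposable tilting objects and then produce $\pi$ by extending morphisms out of the standard bottoms of its summands. Concretely, I would set $n_\lambda := (M:\nabla(\lambda)) = \dim_K Hom_{\cal C}(\Delta(\lambda), M)$ for each $\lambda\in\Gamma$, which is finite by part (4) of Corollary \ref{c32} together with the bound $n_\lambda \le [M:L(\lambda)] < \infty$, and then take $T := \bigoplus_{\lambda\in\Gamma} T(\lambda)^{n_\lambda}$. Since only weights $\lambda\ge\nu$ can contribute a composition factor $L(\nu)$ to $T(\lambda)$ and the set $\{\lambda\in\Gamma\mid\lambda\ge\nu\}$ is finite (as noted in the proof of Theorem \ref{tr2}), $T$ is $\Gamma$-restricted. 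Assembling the increasing $\Delta$-filtrations of the summands, and using that $Ext^1_{\cal C}(\Delta(\mu),-)$ commutes with this coproduct because $\Delta(\mu)$ is of finite length, shows that $T$ is a restricted tilting object.

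Next, for each $\lambda\in\Gamma$ I would pick a basis $f^\lambda_1,\dots,f^\lambda_{n_\lambda}$ of $Hom_{\cal C}(\Delta(\lambda), M)$. Since $M\in{\cal C}(\nabla)$, Theorem \ref{t31} gives $Ext^1_{\cal C}(\Delta(\mu),M)=0$ for every $\mu\in\Lambda$, so Lemma \ref{l35} applied to the filtration $0=V_0\subseteq V_1\subseteq\dots$ of $T(\lambda)$ from the proof of Theorem \ref{t33} (with $V_1=\Delta(\lambda)$) extends each $f^\lambda_i$ to a morphism $\tilde f^\lambda_i : T(\lambda)\to M$. Summing these across copies and weights defines $\pi : T \to M$. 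To check that $\pi$ is surjective, I would set $N:=\mathrm{coker}(\pi)$ and apply $Hom_{\cal C}(\Delta(\mu),-)$ to $T\to M\to N\to 0$: the vanishing $Ext^1_{\cal C}(\Delta(\mu),T)=0$ combined with the fact that by construction $Hom_{\cal C}(\Delta(\mu),T)\to Hom_{\cal C}(\Delta(\mu),M)$ already hits our chosen basis gives $Hom_{\cal C}(\Delta(\mu),N)=0$ for every $\mu$. If $N$ were nonzero, its socle would contain some $L(\mu)$, and the nonzero composition $\Delta(\mu)\twoheadrightarrow L(\mu)\hookrightarrow N$ would contradict this vanishing.

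For the kernel I would use the long exact sequence associated to $0\to\ker\pi\to T\to M\to 0$,
\[
Hom_{\cal C}(\Delta(\mu),T) \to Hom_{\cal C}(\Delta(\mu),M) \to Ext^1_{\cal C}(\Delta(\mu),\ker\pi) \to Ext^1_{\cal C}(\Delta(\mu),T)=0.
\]
The first arrow is surjective by construction, so $Ext^1_{\cal C}(\Delta(\mu),\ker\pi)=0$ for every $\mu\in\Lambda$; since $\ker\pi\subseteq T$ is $\Gamma$-restricted, Theorem \ref{t32} then yields a decreasing $\nabla$-filtration of $\ker\pi$, i.e.\ $\ker\pi\in{\cal C}(\nabla)$. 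The main obstacle will be managing the infinite direct sum: one has to verify that the increasing $\Delta$-filtration of $T$ can genuinely be assembled from those of the countably many summands and that $Ext^1_{\cal C}(\Delta(\mu),-)$ commutes with it, using that $\Gamma$ is countable (Proposition \ref{pr2}) and that each $n_\lambda$ is finite. Once these set-theoretic bookkeeping issues are settled, the whole argument is driven by the vanishing results of Theorems \ref{t31} and \ref{t32} and by the extension principle of Lemma \ref{l35}.
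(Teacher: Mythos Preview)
Your construction of $T$ and of $\pi$ is essentially the one the paper uses (the paper organizes it layer by layer along the chain $\Gamma_0\supseteq\Gamma_1\supseteq\cdots$, but the resulting object and map are the same), and your kernel argument via the long exact sequence is actually cleaner than the paper's, which routes through the Schur functors $e_k$ and Lemma~\ref{lm55}.

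The gap is in your surjectivity step. You apply $Hom_{\cal C}(\Delta(\mu),-)$ to the sequence $T\to M\to N\to 0$ and invoke $Ext^1_{\cal C}(\Delta(\mu),T)=0$; but this is not a short exact sequence, so there is no long exact sequence with that $Ext^1$ term. Splitting it as $0\to\ker\pi\to T\to\mathrm{im}(\pi)\to 0$ and $0\to\mathrm{im}(\pi)\to M\to N\to 0$, the obstruction to lifting a nonzero $\Delta(\mu)\to N$ back to $M$ is $Ext^1_{\cal C}(\Delta(\mu),\mathrm{im}(\pi))$, about which you know nothing a priori. So from $\pi_*\colon Hom(\Delta(\mu),T)\twoheadrightarrow Hom(\Delta(\mu),M)$ you only get that the composite $Hom(\Delta(\mu),M)\to Hom(\Delta(\mu),N)$ is zero, not that $Hom(\Delta(\mu),N)=0$.

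This is genuinely repairable within your framework, but it needs the layer structure. For $\lambda\in\Gamma_k\setminus\Gamma_{k+1}$ one checks that $Hom(\Delta(\lambda),M)\simeq Hom(\Delta(\lambda),M_k/M_{k+1})$ (using $Ext^1(\Delta(\lambda),M_{k+1})=0$ from Lemma~\ref{lm33}) and that each $\tilde f^\lambda_i$ composed with $T(\lambda)\twoheadrightarrow\nabla(\lambda)$ (through which any map $T(\lambda)\to\nabla(\lambda)^{n_\lambda}$ factors, by the Schurian hypothesis) gives an invertible matrix; hence $\bigoplus_{\lambda\in\Gamma_k\setminus\Gamma_{k+1}}T(\lambda)^{n_\lambda}\to M_k/M_{k+1}$ is onto. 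Inductively $T\to M/M_k$ is onto for every $k$, so $N=\mathrm{coker}(\pi)$ is a quotient of $M_k$ and hence belongs to $\Gamma_k$ for all $k$; since $\bigcap_k\Gamma_k=\emptyset$, $N=0$. The paper achieves the same thing via the idempotents $e_k$, showing $e_k\pi$ is onto for each $k$ and using $M=\bigcup_k e_kM$.
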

\begin{proof}
We shall modify the proof of Proposition A4.4 of \cite{don1}. For $k\geq 0$, set $M_k=O_{\Gamma_k}(M)$.  
By Lemma \ref{lm31}, each $M_k/M_{k+1}$ is isomorphic to a finite direct sum $\bigoplus_{\lambda\in\Gamma_k\setminus\Gamma_{k+1}}\nabla(\lambda)^{m_{\lambda}}$. 

We shall construct a direct spectrum of tilting modules and morphisms by considering, for every $k\geq 1$, the tilting module $T_k$ and the morphism $\pi_k: T_k \to M$ such that $T_k$ is a finite direct sum of $T(\lambda)$ for $\lambda\in\Omega_k$, and $\pi_k': T_k \to M \to M/M_k$  is an epimorphism whose kernel belongs to ${\cal C}(\nabla)$. Additionally, 
each $T_k$ shall be a direct summand of $T_{k+1}$ and $\pi_{k+1}|_{T_k}=\pi_k$. 

For each $k\geq 0$ there is an epimorphism $T_k'=
\bigoplus_{\lambda\in\Gamma_k\setminus\Gamma_{k+1}}T(\lambda)^{m_{\lambda}}\to M_k/M_{k+1}\to 0$ whose kernel belongs to ${\cal C}(\nabla)$.
By Theorem \ref{t36}, this epimorphism is induced by a morphism $\phi_k : T'_k\to M_k$. 
We define $T_1=T'_0$ and $\pi_1=\phi_0$. Proceeding by induction, we set 
$T_{k+1}=T_k\bigoplus T'_k$ and define $\pi_{k+1}: T_{k+1}\to M$ by 
$\pi_{k+1}(t+t')=\pi_k(t)+\phi(t')$ for $t\in T_k $ and $t'\in T'$. 

Define $T=\lim\limits_{\rightarrow} T_k$ and $\pi : T\to M$ such that $\pi=\lim\limits_{\rightarrow}\pi_k$. 
Then $T$ is clearly a restricted tilting module. Since
$e_k T=e_k T_k$ and $e_k T_k\to e_k M\simeq e_k(M/M_k)$ is an epimorphism for every $k\geq 1$, we conclude that $\pi$ is an epimorphism.
Denote $\ker\pi$ by $D$.
Then $Ext^1_{\cal C}(\Delta(\lambda), D)=\lim\limits_{\leftarrow}Ext^1_{e_k Ae_k}(e_k \Delta(\lambda), e_k D)=0$ for every $\lambda\in\Gamma$.
In fact, each $e_k D$ is isomorphic to the kernel of $e_k T\to e_k (M/M_k)$ which has a $\nabla$-filtration in ${\cal C}_{\Omega_k}$. 
Theorem \ref{t32} concludes the proof.
\end{proof}
Let $U, V\in R-PC$. Define $Ext_{R-PC}^*(U, V)$ as $H^*(Hom_{R-PC}(P_U, V))$, where $P_U$ is a projective resolution of $U$.
Corresponding to a basis $\{W\}$ of neighborhoods at zero consisting of open submodules in $V$, there is a projective spectrum of
complexes $\{Hom_{R-PC}(P_U, V/W) | W\}$.
Define a functor $U\to \lim\limits_{\leftarrow} H^*(Hom_{R-PC}(P_U, V/W))=\lim\limits_{\leftarrow} Ext_{R-PC}^*(U, V/W)$.
We shall show that this functor can be identified with the extension functor in $R-PC$.
\begin{lm}\label{lm56}
The functor $U\to Ext^*_{R-PC}(U , V)$ is naturally isomorphic to $U\to \lim\limits_{\leftarrow} Ext_{R-PC}^*(U, V/W)$.
\end{lm}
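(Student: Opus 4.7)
The plan is to fix a projective resolution of $U$ in $PC-R$ and commute $\lim\limits_{\leftarrow}$ past cohomology via a Mittag-Leffler argument at the chain level. A $\delta$-functor approach in the spirit of Lemma \ref{lm55} is also possible, but verifying that the inverse-limit functor $U\mapsto\lim\limits_{\leftarrow W}Ext^*_{R-PC}(U,V/W)$ remains a contravariant $\delta$-functor (with an exact long sequence in $U$) requires essentially the same Mittag-Leffler input, so I prefer the direct route.

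First I would choose a projective resolution $P_\bullet\to U$ in $PC-R$, so that for every open submodule $W$ of $V$ the complex $C(W)^\bullet=Hom_{R-PC}(P_\bullet,V/W)$ has cohomology $Ext^i_{R-PC}(U,V/W)$. Because $V$ is pseudocompact, the identification $V=\lim\limits_{\leftarrow W}V/W$ and the fact that continuous morphisms into a pseudocompact module correspond to compatible families of continuous morphisms into its discrete finite-dimensional quotients give $Hom_{R-PC}(P_i,V)=\lim\limits_{\leftarrow W}C(W)^i$ compatibly with differentials, so that $Hom_{R-PC}(P_\bullet,V)=\lim\limits_{\leftarrow W}C(W)^\bullet$ as a complex. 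The key step is then to pass $H^i$ through the limit. For $W\subseteq W'$ the projection $V/W\to V/W'$ is a continuous epimorphism of discrete modules, and projectivity of each $P_i$ in $PC-R$ lifts this to a surjection $C(W)^i\to C(W')^i$. Thus the inverse system $\{C(W)^\bullet\}_W$ has surjective transition maps in every degree, satisfies the Mittag-Leffler condition in its strong form, and has vanishing $\lim^1$. The standard short exact sequence
\[0\to{\lim}^1_W H^{i-1}C(W)^\bullet\to H^i(\lim\limits_{\leftarrow W}C(W)^\bullet)\to\lim\limits_{\leftarrow W}H^iC(W)^\bullet\to 0\]
then collapses to $Ext^i_{R-PC}(U,V)\simeq\lim\limits_{\leftarrow W}Ext^i_{R-PC}(U,V/W)$, and naturality in $U$ follows from the functoriality of lifting morphisms across projective resolutions in $PC-R$.

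The main obstacle will be the Mittag-Leffler step; once surjectivity of the maps $C(W)^i\to C(W')^i$ is extracted from projectivity of the $P_i$, everything else is formal. A secondary point to be careful about is that $Hom_{R-PC}(P,-)$ genuinely commutes with the topological inverse limit defining $V$: this uses both completeness and Hausdorff-ness of $V$ together with the continuity constraint built into morphisms in $PC-R$, so that a map $P\to V$ is nothing but a compatible family $P\to V/W$ of continuous morphisms into the finite-dimensional discrete quotients.
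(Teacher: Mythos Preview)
Your approach is different from the paper's: the paper argues in two lines that both functors are contravariant $\delta$-functors coerasable by projectives (for the inverse-limit side this uses that $Ext^i_{R-PC}(P,V/W)=0$ whenever $P$ is projective and $i>0$, together with $Hom_{R-PC}(-,V)\simeq\lim\limits_{\leftarrow}Hom_{R-PC}(-,V/W)$ in degree~$0$), and then invokes the uniqueness theorem for universal $\delta$-functors (Lang, Ch.~XX, Thm.~7.1$'$). Your remark that verifying the $\delta$-functor property of $U\mapsto\lim\limits_{\leftarrow}Ext^*_{R-PC}(U,V/W)$ also requires some Mittag--Leffler-type input is fair; the paper does not spell that step out.

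That said, your direct argument has a genuine gap at the words ``then collapses''. Surjectivity of the transition maps $C(W)^i\to C(W')^i$, which you correctly extract from projectivity of each $P_i$, gives Mittag--Leffler for the \emph{cochain} system and hence the existence of the Milnor short exact sequence you wrote down. But the left-hand term of that sequence is $\lim^1_W H^{i-1}C(W)^\bullet=\lim^1_W Ext^{i-1}_{R-PC}(U,V/W)$, i.e.\ $\lim^1$ of the \emph{cohomology}, not of the cochains. Mittag--Leffler on cochains does not in general force Mittag--Leffler on cohomology: the transition map $Ext^{i-1}(U,V/W)\to Ext^{i-1}(U,V/W')$ comes from the long exact sequence attached to $0\to W'/W\to V/W\to V/W'\to 0$, and its cokernel sits inside $Ext^i_{R-PC}(U,W'/W)$, which has no reason to vanish for arbitrary $U\in R-PC$. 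So you have not shown that $\lim^1_W Ext^{i-1}_{R-PC}(U,V/W)=0$, and without that the sequence does not collapse. To close the gap you would need an extra ingredient---for instance, finite-dimensionality of each $Ext^{i-1}_{R-PC}(U,V/W)$ (so that the descending images stabilize over a countable cofinal subsystem of the $W$'s), or a direct argument that the cohomology transition maps are themselves surjective. Neither follows from what you have written.
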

\begin{proof}
Both functors are contravariant $\delta$-functors coerasable by projectives. The claim follows from Theorem 7.1' of Chapter XX in 
\cite{lang}.
\end{proof}
For a finite coideal $\Omega$ of $\Gamma$, denote $\{\Delta(\lambda) |\lambda\in\Omega\}$ by $\Delta_{\Omega}$ and 
$e=e_{\Omega}$ as above.
The next lemma deals with an isomorphism of extension functors.
\begin{lm}\label{lm57} If $X$ has a finite $\Delta_{\Omega}$-filtration and $Y$ is restricted, then the natural map 
$Ext^i_{\cal C}(X, Y)\to Ext^i_{{\cal C}_{\Omega}}(eX, eY)$ is an isomorphism for every $i\geq 0$. 
\end{lm}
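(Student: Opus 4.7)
The natural comparison map arises from the exact Schur functor $e\cdot$, which carries each standard $\Delta(\lambda)$ for $\lambda\in\Omega$ to the standard $e\Delta(\lambda)$ in ${\cal C}_\Omega$. The plan is to reduce to a finite-dimensional computation via Lemma \ref{lm55} and then compare projective resolutions inside $e_kAe_k$-mod and $eAe$-mod for sufficiently large $k$.

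Since $X$ has a finite $\Delta_\Omega$-filtration and each $\Delta(\lambda)$ is finite-dimensional, $X$ is finite-dimensional. Using the finiteness of $\Omega$ and of the set of composition factors of $X$, together with $\bigcap_k\Gamma_k=\emptyset$, I choose $k_0$ so that for every $k\geq k_0$ both $\Omega$ and all composition factors of every $\Delta(\lambda)$ with $\lambda\in\Omega$ lie in $\Omega_k$. Then $e\leq e_k$, $e_kX=X$, $e_k\Delta(\lambda)=\Delta(\lambda)$, $ee_k=e$, and $\Omega$ becomes a coideal of the finite poset $\Omega_k$. By Lemma \ref{lm55}, $Ext^i_{\cal C}(X,Y)=\lim\limits_{\leftarrow}Ext^i_{e_kAe_k}(X,e_kY)$, so it suffices to produce, for each $k\geq k_0$, a natural isomorphism $Ext^i_{e_kAe_k}(X,e_kY)\simeq Ext^i_{eAe}(eX,eY)$. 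Naturality then forces the transition maps in the inverse system to be identities, and the limit collapses to the constant value $Ext^i_{eAe}(eX,eY)$.

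Inside the finite-dimensional quasi-hereditary algebra $e_kAe_k$ I would construct a projective resolution $P_\bullet\to X$ built exclusively from the indecomposable projectives $e_kAe_\lambda$ with $\lambda\in\Omega$. Surjecting onto the top of $X$ from a direct sum of such projectives produces a kernel which inherits a $\Delta$-filtration by the standard quasi-hereditary fact; because $\Omega$ is a coideal, every newly introduced section $\Delta(\mu)$ has $\mu>\lambda$ for some $\lambda\in\Omega$, forcing $\mu\in\Omega$. Iterating keeps every successive syzygy $\Delta_\Omega$-filtered, yielding the desired resolution. Applying the exact functor $e\cdot$ produces $eP_\bullet\to eX$, which is a projective resolution in $eAe$-mod because $e(e_kAe_\lambda)=eAe_\lambda=(eAe)e_\lambda$ is a direct summand of $eAe$. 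The two Hom-complexes then match term by term via $Hom_{e_kAe_k}(e_kAe_\lambda,e_kY)=e_\lambda e_kY=e_\lambda Y=e_\lambda eY=Hom_{eAe}(eAe_\lambda,eY)$, and the resulting Ext groups coincide.

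The main technical obstacle will be the construction of the projective resolution and the control of its kernels: one must carefully verify that each successive syzygy is again $\Delta_\Omega$-filtered, which uses both the coideal property of $\Omega$ and the identification of the standard modules of $\cal C$ with those of $e_kAe_k$-mod for $k\geq k_0$. Once this resolution is in hand, the Hom-complex comparison and the collapse of the inverse limit are routine.
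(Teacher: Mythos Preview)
Your approach is correct and takes a genuinely different route from the paper's. The paper does not pass through Lemma~\ref{lm55} at all: it simply runs Donkin's proof of Proposition~A3.13 directly in ${\cal C}$, and records the two places where the possibly infinite-dimensional (but restricted) $Y$ requires care --- namely that a nonzero $\phi:\Delta(\lambda)\to Y$ remains nonzero on the top after applying $e$, and that $[Y:L(\lambda)]=[eY:eL(\lambda)]<\infty$ for $\lambda\in\Omega$. These are exactly the inputs needed for the $i=0$ step and the subsequent dimension shift to go through unchanged. Your argument instead uses Lemma~\ref{lm55} to absorb the infinite-dimensionality of $Y$ into an inverse limit, landing in the finite-dimensional algebra $e_kAe_k$ (where $e_kY$ is finite because $Y$ is restricted and $\Omega_k$ is finite), and then proves the purely finite-dimensional comparison $Ext^i_{e_kAe_k}(e_kX,e_kY)\simeq Ext^i_{eAe}(eX,eY)$ via an $\Omega$-indexed projective resolution. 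Your route cleanly separates the two issues (reduction to finite dimensions, then the coideal comparison) and reuses machinery already in place; the paper's route is shorter and more self-contained. One point you should make explicit: the isomorphism you build from the resolution is precisely the map induced by the Schur functor $e\cdot$, since applying $e$ to your chosen resolution computes the functorial comparison on Ext. This is what guarantees both that it agrees with the ``natural map'' of the statement and that $\alpha_k\circ\tau_{k,k'}=\alpha_{k'}$, so the transition maps really do collapse as you claim.
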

\begin{proof}
To modify the proof of Proposition A3.13 from \cite{don1} we need to make the following two observations.
First, if $\phi : \Delta(\lambda)\to Y$ is a non-zero morphism, then $\phi$ induces an isomorphism $e\Delta(\lambda)/eM\simeq e\phi(\Delta(\lambda))/e\phi(M)$, where $M=rad\Delta(\lambda)$. Secondly, $[Y : L(\lambda)]=[eY : eL(\lambda)]<\infty$ for every
$\lambda\in\Omega$. 
\end{proof}
To prepare the next statement, assume that $A$ is finite-dimensional, and therefore $\Gamma$ is finite. If $\Omega$ is a coideal of $\Gamma$, then
there are two Ringel functors 
$F : {\cal C}(\nabla)\to R-PC(\hat{\Delta})$ and $F' : {\cal C}_{\Omega}(\nabla)\to R'-PC(\hat{\Delta})$, where $R=End_{\cal C}(T)^{\circ}$,
$R'=End_{{\cal C}_{\Omega}}(eT)^{\circ}\simeq R/I$, and $I=Hom_{\cal C}(T/O^{\Gamma\setminus\Omega}(T), T)$.
In this setting we obtain the following commutative diagram.
\begin{lm}\label{lm58}
For every $M, N\in {\cal C}(\nabla)_f$ and $i\geq 0$ there is a commutative diagram,
$$\begin{array}{ccc}
Ext^i_{\cal C}(M, N) &\to & Ext^i_R(F(M), F(N)) \\
\downarrow & & \downarrow \\
Ext^i_{{\cal C}_{\Omega}}(eM, eN) &\to & Ext^i_{R'}(F'(eM), F'(eN))
\end{array},$$
where the horizontal rows are isomorphisms from Proposition A4.8(i) of \cite{don1}.
\end{lm}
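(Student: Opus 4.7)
The plan is to identify all four arrows as natural transformations, check commutativity in degree zero by a direct computation, and bootstrap to higher $i$ via a universality argument for $\delta$-functors. The horizontal maps are the ones provided by Proposition A4.8(i) of \cite{don1}, namely the isomorphisms induced by the Ringel functors $F$ and $F'$. The left vertical arrow is induced by the Schur functor $X\mapsto eX$, which is exact since $e$ is an idempotent, and which carries ${\cal C}(\nabla)_f$ into ${\cal C}_\Omega(\nabla)_f$: indeed, because $\Omega$ is a coideal and the composition factors of $\nabla(\lambda)$ lie among the $L(\mu)$ with $\mu\le\lambda$, we have $e\nabla(\lambda)=\nabla(\lambda)$ for $\lambda\in\Omega$ and $e\nabla(\lambda)=0$ otherwise. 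The right vertical arrow is induced by the natural transformation $\eta_M:F(M)\to F'(eM)$, $\psi\mapsto e\psi$, which is compatible with the algebra surjection $R\twoheadrightarrow R'\simeq R/I$.

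First I would verify commutativity at $i=0$. Given $\phi\in Hom_{\cal C}(M,N)$, traversing right-then-down produces the map $e\psi\mapsto e\phi\circ e\psi=e(\phi\psi)$, while traversing down-then-right produces $e\psi\mapsto (e\phi)(e\psi)=e(\phi\psi)$. So the two composites agree on $Hom_{\cal C}(M,N)$, yielding commutativity in degree zero.

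For $i\ge 1$ I would argue by universality of $\delta$-functors: fix $M\in{\cal C}(\nabla)_f$ and view both composites as morphisms of $\delta$-functors in $N$ from $Ext^i_{\cal C}(M,-)$ to $Ext^i_{R'}(F'(eM),F'(e(-)))$. Proposition \ref{pr52} supplies resolutions by restricted tilting objects inside ${\cal C}(\nabla)$, and exactness of $e(-)$ together with exactness of $F'$ on ${\cal C}_\Omega(\nabla)$ (Proposition \ref{p51} applied to $eAe$) shows that both composite $\delta$-functors are effaceable. Hence any two natural transformations that coincide in degree zero must coincide in every degree. The main obstacle is ensuring that the right vertical arrow genuinely descends to $Ext^i_{R'}$: one must check the identification $F(M)/F(M)\cdot I\simeq F'(eM)$ (so the surjection $R\to R'$ carries $F(M)$ onto $F'(eM)$), and that a projective resolution of $F(M)$ in $R$-$PC$ reduces, after quotienting by $I$, to a projective resolution of $F'(eM)$ in $R'$-$PC$; this is precisely the content of the change-of-rings compatibility for the Ringel construction and is where the finite-dimensionality of $A$ enters, so that everything is controlled by the finite quasi-hereditary quotient $eAe$ together with $R'$. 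Once this verification is made, naturality of all constructions propagates the degree-zero commutative square to all $i$.
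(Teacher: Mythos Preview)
Your degree-$0$ verification is correct and matches the paper, and the overall idea---reduce to $i=0$ via a $\delta$-functor argument---is exactly what the paper does. The paper's one-line proof says that ``the same dimension shift argument used in Proposition A4.8(i) of \cite{don1}'' reduces the general case to $i=0$. That dimension shift is carried out in the \emph{first} variable: one chooses a short exact sequence $0\to M'\to T_0\to M\to 0$ with $T_0$ tilting and $M'\in{\cal C}(\nabla)_f$ (Proposition A4.4 of \cite{don1}, the finite-dimensional precursor of Proposition~\ref{pr52}), applies all four corners, and inducts on $i$ using the resulting commutative ladder of long exact sequences. Since $F(T_0)$ is projective in $R$-mod and $F'(eT_0)$ is projective in $R'$-mod, all four $Ext^i$ groups at $T_0$ vanish for $i\ge 1$, and the shift goes through uniformly.

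Your write-up has a small mismatch worth noting. You set up universality in the covariant variable $N$, but then cite Proposition~\ref{pr52}, which produces tilting \emph{covers} $T_0\twoheadrightarrow M$---the tool for the contravariant shift in $M$, not for effacing in $N$. If you really want to run the argument in $N$, effaceability comes from embedding $N$ into an injective (which lies in ${\cal C}(\nabla)_f$ because $A$ is finite-dimensional quasi-hereditary), not from tilting covers. Either variable works, but shifting in $M$ is tidier: it mirrors Donkin's proof of the horizontal isomorphisms, so the four long exact sequences and all connecting maps are compatible by construction, and the change-of-rings compatibility you correctly flag for the right vertical arrow is absorbed into the same step (the tilting resolution of $M$ goes under $F$ to a projective resolution of $F(M)$, and reduction modulo $I$ gives a projective resolution of $F'(eM)$).
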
 
\begin{proof}
The same dimension shift argument used in Proposition A4.8(i) of \cite{don1} reduces the general case to the case $i=0$ which is obvious.
\end{proof}

The second main result of this section states that the functor $F$ preserves extensions of modules.

\begin{tr}\label{t52}
Let $M, N\in {\cal C}(\nabla)$. Then $Ext_{R-PC}^*(F(M), F(N))\simeq Ext^*_{\cal C}(M, N)$.
\end{tr}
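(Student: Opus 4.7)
The approach is to build a projective resolution of $F(M)$ in $R-PC$ by transporting a tilting resolution of $M$ through $F$, then compare Hom-complexes term by term.

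First, iterating Proposition \ref{pr52} yields an exact sequence
\[
\cdots \to T_1 \to T_0 \to M \to 0
\]
in ${\cal C}(\nabla)$ with each $T_i$ a restricted tilting module and each syzygy again in ${\cal C}(\nabla)$. Since $F$ is exact on ${\cal C}(\nabla)$ by Proposition \ref{p51}, the induced complex $\cdots \to F(T_1)\to F(T_0)\to F(M)\to 0$ remains exact in $R-PC$. Writing each $T_i=\bigoplus_\lambda T(\lambda)^{m_\lambda^{(i)}}$ via Theorem \ref{t35}, I identify $F(T_i)$ with the pseudocompact product $\prod_\lambda \hat{P}(\lambda)^{m_\lambda^{(i)}}$ of the indecomposable projectives $\hat{P}(\lambda)=R\hat{e}_\lambda$, so $F(T_\bullet)\to F(M)\to 0$ is a projective resolution in $R-PC$.

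The same decomposition produces natural isomorphisms
\[
Hom_{R-PC}(F(T_i),F(N)) \simeq \prod_\lambda (F(N)\hat{e}_\lambda)^{m_\lambda^{(i)}} \simeq \prod_\lambda Hom_{\cal C}(T(\lambda),N)^{m_\lambda^{(i)}} \simeq Hom_{\cal C}(T_i,N),
\]
compatible with the differentials. Therefore $Ext^*_{R-PC}(F(M),F(N))\simeq H^*(Hom_{\cal C}(T_\bullet,N))$. To conclude that this equals $Ext^*_{\cal C}(M,N)$, I must show the tilting resolution is $Hom_{\cal C}(-,N)$-acyclic, i.e., $Ext^i_{\cal C}(T_j,N)=0$ for all $i\geq 1$. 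The case $i=1$ is Theorem \ref{t36}. For $i\geq 2$, I invoke Lemma \ref{lm55} to write $Ext^i_{\cal C}(T_j,N)\simeq \lim_{\leftarrow k} Ext^i_{e_k A e_k}(e_k T_j,e_k N)$; each $e_k T_j$ inherits $\Delta_{\Omega_k}$- and $\nabla_{\Omega_k}$-filtrations from $T_j$ and is thus a tilting module in the finite-dimensional highest weight category ${\cal C}_{\Omega_k}$, while $e_k N$ is $\nabla_{\Omega_k}$-filtered, so classical tilting theory forces every term of the limit to vanish.

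The main obstacle, I expect, is the identifications used in the second paragraph: confirming that $F$ applied to the direct sum $\bigoplus_\lambda T(\lambda)^{m_\lambda^{(i)}}$ yields a genuine product in the pseudocompact category $R-PC$ (so that $F(T_i)$ is indeed projective there), and that the resulting Hom-identification matches the pseudocompact topology on $F(N)$ dictated by Proposition \ref{p51}. This bookkeeping relies on the complete-product formalism for pseudocompact modules together with the finite-dimensional Ringel duality of Lemma \ref{lm58}, and it is precisely what ties the tilting resolution on the ${\cal C}$ side to the projective resolution on the $R-PC$ side.
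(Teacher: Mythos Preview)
Your outline shares the paper's scaffolding: build a tilting resolution $T_\bullet\to M$ from Proposition~\ref{pr52}, push it through $F$ using the exactness of Proposition~\ref{p51}, and declare $F(T_\bullet)\to F(M)$ a projective resolution in $R\text{-}PC$. The divergence is in how the Hom-complexes are matched. The paper does \emph{not} attempt the identification $Hom_{R\text{-}PC}(F(T_i),F(N))\simeq Hom_{\cal C}(T_i,N)$ directly. Instead it uses Lemma~\ref{lm56} to write $Ext^*_{R\text{-}PC}(F(M),F(N))$ as an inverse limit over the finite truncations $R_k=R/I_k$, identifies each level with $Ext^*_{e_kAe_k}(e_kM,e_kN)$ via Lemma~\ref{lm57} and the finite-dimensional Ringel isomorphism of Lemma~\ref{lm58}, and then reassembles on the ${\cal C}$-side through Lemma~\ref{lm55}.

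The obstacle you flag is exactly the point where your route and the paper's meet. The equality $F\bigl(\bigoplus_\mu T(\mu)^{m_\mu}\bigr)\simeq\prod_\mu\hat P(\mu)^{m_\mu}$ in $R\text{-}PC$, and the subsequent claim that $Hom_{R\text{-}PC}$ out of this product equals the full product $\prod_\mu Hom_{\cal C}(T(\mu),N)^{m_\mu}$, are not automatic: by the description in the proof of Lemma~\ref{l41}, morphisms from an infinite product of indecomposable projectives are \emph{summable} families rather than arbitrary ones, and verifying that summability is vacuous here (because $N$ is restricted and the topology on $F(N)$ is given by the $F(N)_k$) amounts to the same level-by-level passage through $\Omega_k$ that the paper performs via Lemmas~\ref{lm56}--\ref{lm58}. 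So your approach is sound but does not actually shorten the argument; it relocates the inverse-limit bookkeeping from the $R$-side comparison to the Hom-identification and the acyclicity check. Your treatment of $Ext^{\geq 2}_{\cal C}(T_j,N)=0$ via Lemma~\ref{lm55} and finite tilting theory is correct and is essentially an independent proof of Corollary~\ref{c51}.
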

\begin{proof}
By Proposition \ref{pr52}, there is a restricted tilting resolution, 
\[ \ldots\to T(i)\to\ldots\to T(1)\to M\to 0,\]
such that
$\ker(T(1)\to M)\in {\cal C}(\nabla)$ and $\ker(T(i)\to T(i-1))\in {\cal C}(\nabla)$ for $i\geq 2$. 
We shall denote this resolution by $T_M$.
Then $F(T_M)$ is a projective resolution of $F(M)$. 
Fix an integer $k\geq 0$. 
Then $R/I_k\simeq End_{{\cal C}_{\Omega_k}}(e_k T)^{\circ}=R_k$ by Lemma \ref{lm57}, and this isomorphism is compatible with 
$F(V)/F(V) I_k\simeq Hom_{\cal C}(O^{\Gamma_k}(T), V)\simeq Hom_{{\cal C}_{\Omega_k}}
(e_k T, e_k V)$ for $V\in {\cal C}(\nabla)$. In particular, $F(T_M)/F(T_M)I_k\simeq F_k(e_k T_M)$ is a projective resolution of $F(M)/F(M) I_k\simeq
F_k(e_k M)$ in $R_k-PC$. Lemma \ref{lm56} implies that $Ext^*_{R-PC}(F(M), F(N))$ is a projective limit of the spectrum
$\{Ext^*_{R_k -PC}(F_k(e_k M) , F_k(e_k N))| k\geq 1\}$. By Lemma \ref{lm58}, this spectrum is isomorphic to 
$\{Ext^*_{e_k Ae_k}(e_k M, e_k N)| k\geq 1\}$. Lemma \ref{lm55} concludes the proof.
\end{proof}

\begin{cor}\label{50}
There is an equivalence of categories of finite modules in ${\cal C}$ filtered by $\nabla$ and those finite modules filtered by standard modules over $R$.
\end{cor}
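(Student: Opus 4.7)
The plan is to show that the Ringel functor $F$, restricted to the full subcategory of finite objects in ${\cal C}(\nabla)$, lands in the category of finite $\hat{\Delta}$-filtered pseudocompact $R$-modules and is an equivalence there. Three things must be verified: that $F$ is well-defined on this subcategory, that it is fully faithful, and that it is essentially surjective.

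Well-definedness is essentially immediate. By Proposition \ref{p51}, $F$ is exact on ${\cal C}(\nabla)$, and by definition $F(\nabla(\lambda))=\hat{\Delta}(\lambda)$. Given a finite $M\in{\cal C}(\nabla)_f$ with a finite $\nabla$-filtration $0=M_0\subseteq M_1\subseteq\ldots\subseteq M_n=M$, applying $F$ yields a filtration of $F(M)$ whose successive quotients are the modules $\hat{\Delta}(\lambda_i)$. Since each $\hat{\Delta}(\lambda)$ is finite-dimensional by Lemma \ref{lm52}, $F(M)$ is a finite-dimensional $\hat{\Delta}$-filtered $R$-module. Fully faithfulness is the $i=0$ instance of Theorem \ref{t52}: for $M,N\in{\cal C}(\nabla)_f$ we have a natural isomorphism $\mathrm{Hom}_{\cal C}(M,N)\simeq \mathrm{Hom}_{R-PC}(F(M),F(N))$.

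For essential surjectivity, I would argue by induction on the length $n$ of a $\hat{\Delta}$-filtration of a finite module $X$ in $R-PC$. The base case $n=1$ is handled by $X\simeq\hat{\Delta}(\lambda)=F(\nabla(\lambda))$. For the inductive step, write a short exact sequence
\[0\to \hat{\Delta}(\lambda)\to X\to X'\to 0\]
with $X'$ of filtration length $n-1$. By induction there is a finite $M'\in{\cal C}(\nabla)_f$ with $F(M')\simeq X'$. The corresponding extension class lies in $\mathrm{Ext}^1_{R-PC}(X',\hat{\Delta}(\lambda))\simeq\mathrm{Ext}^1_{R-PC}(F(M'),F(\nabla(\lambda)))$, which by Theorem \ref{t52} is naturally isomorphic to $\mathrm{Ext}^1_{\cal C}(M',\nabla(\lambda))$. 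Pulling the class back, we obtain an exact sequence
\[0\to\nabla(\lambda)\to M\to M'\to 0\]
in ${\cal C}$. Then $M$ is a finite object (as a finite extension of finite objects) with a $\nabla$-filtration, so $M\in{\cal C}(\nabla)_f$. Applying the exact functor $F$ produces an extension of $X'$ by $\hat{\Delta}(\lambda)$ whose class corresponds to that of the original $X$ under the isomorphism of $\mathrm{Ext}^1$ groups; by the five lemma and fully faithfulness of $F$ (applied to the pair of Yoneda extensions), $F(M)\simeq X$.

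I expect the main technical subtlety to lie in the essential surjectivity step, specifically in checking that the naturality of the isomorphism in Theorem \ref{t52} makes the Yoneda extension class of $F(M)$ match that of $X$, rather than producing merely an abstract extension with the correct sub- and quotient modules. Once that compatibility is in place the induction terminates after finitely many steps because the filtrations considered are finite, and the restriction of $F$ to ${\cal C}(\nabla)_f$ is the sought equivalence onto the full subcategory of finite $\hat{\Delta}$-filtered $R$-modules.
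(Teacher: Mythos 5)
Your proposal is correct and reconstructs the argument that the paper leaves implicit, since Corollary~\ref{50} is stated without a written proof immediately after Theorem~\ref{t52}. The three-step scheme you use (exactness of $F$ for well-definedness, the $i=0$ case of Theorem~\ref{t52} for fully faithfulness, and induction on $\hat{\Delta}$-filtration length using the $i=1$ case for essential surjectivity) is exactly the standard way to deduce such an equivalence from an Ext-preservation theorem for an exact functor. Concerning the subtlety you flag — whether the isomorphism of Theorem~\ref{t52} is the map $F_*$ induced by the exact functor on Yoneda classes, rather than an abstract isomorphism — the proof of Theorem~\ref{t52} does resolve this: it proceeds by converting a tilting resolution $T_M$ into the projective resolution $F(T_M)$, so the resulting comparison of $\mathrm{Ext}$ groups is the natural one, and your pull-back of the extension class gives $F(M)\simeq X$ as required.
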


The following corollary generalizes the first part of Theorem \ref{t36} and Corollary \ref{c34}.
\begin{cor}\label{c51}
Let $\Gamma$ and $\widetilde{\Gamma}$ be finitely-generated ideals of $\Lambda$, $T$ be a $\Gamma$-restricted tilting object, and $M$ be a
$\widetilde{\Gamma}$-restricted object. If $M$ has an increasing (or decreasing) $\nabla$-filtration,
then $Ext^i_{\cal C}(T, M)=0$ for every $i\geq 1$.
\end{cor}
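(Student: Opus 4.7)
The plan is to reduce the statement to individual indecomposable tilting summands of $T$ and then invoke Theorem \ref{t52}. First, by Theorem \ref{t35}, I would decompose the $\Gamma$-restricted tilting object as $T=\bigoplus_{\lambda\in\Gamma}T(\lambda)^{n_{\lambda}}$. Since $Ext^{i}_{\cal C}(-,M)$ converts direct sums in its first argument into direct products (as one sees by computing through an injective resolution $I^{\bullet}$ of $M$, using exactness of products and the commutation of $H^{i}$ with products of complexes), this yields $Ext^{i}_{\cal C}(T,M)\simeq\prod_{\lambda\in\Gamma}Ext^{i}_{\cal C}(T(\lambda),M)^{n_{\lambda}}$. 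Hence it suffices to prove that $Ext^{i}_{\cal C}(T(\lambda),M)=0$ for each $\lambda\in\Gamma$ and every $i\geq 1$.

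Next, I would verify that both $T(\lambda)$ and $M$ lie in ${\cal C}(\nabla)$. The tilting object $T$ is $\Gamma$-restricted and satisfies $Ext^{1}_{\cal C}(\Delta(\mu),T)=0$ for every $\mu\in\Lambda$, so by Theorem \ref{t32} it admits a decreasing $\nabla$-filtration and therefore belongs to ${\cal C}(\nabla)$; part (3) of Corollary \ref{c32} then transfers this property to the direct summand $T(\lambda)$. For the object $M$, the decreasing case is immediate from the hypothesis, while in the increasing case Theorem \ref{t31} produces $Ext^{1}_{\cal C}(\Delta(\mu),M)=0$ for every $\mu$, and a second application of Theorem \ref{t32} supplies a decreasing $\nabla$-filtration, placing $M$ in ${\cal C}(\nabla)$.

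With both modules in ${\cal C}(\nabla)$, I would apply Theorem \ref{t52} to obtain a natural isomorphism $Ext^{i}_{\cal C}(T(\lambda),M)\simeq Ext^{i}_{R-PC}(F(T(\lambda)),F(M))$. Recalling from the paragraph following Proposition \ref{p51} that $F(T(\lambda))\simeq R\hat{e}_{\lambda}=\hat{P}(\lambda)$ is an indecomposable projective module in $R-PC$, the right-hand side vanishes for every $i\geq 1$, which concludes the argument.

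The main technical point, and essentially the only obstacle, is the initial splitting of $Ext^{i}_{\cal C}(T,M)$ across the possibly infinite direct sum $T=\bigoplus_{\lambda}T(\lambda)^{n_{\lambda}}$; once this is justified by the standard injective-resolution argument in the Grothendieck category ${\cal C}$, the remainder is a straightforward assembly of results already established in Sections 3 and 5.
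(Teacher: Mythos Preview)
Your proof is correct and follows essentially the same approach as the paper: both arguments hinge on Theorem~\ref{t52} together with the fact that $F$ sends tilting objects to projective pseudocompact $R$-modules. The only difference is organizational: the paper applies Theorem~\ref{t52} directly to the pair $(T,M)$ and then observes that $F(T)$ is projective in $R\text{-}PC$, whereas you first decompose $T$ via Theorem~\ref{t35}, split the $Ext$ as a product, and then apply Theorem~\ref{t52} to each indecomposable summand $T(\lambda)$. Your route has the minor advantage that the projectivity of $F(T(\lambda))=\hat{P}(\lambda)$ is explicitly recorded in the text, while the projectivity of $F(T)$ for an arbitrary (possibly infinite) direct sum of $T(\lambda)$'s is left implicit in the paper's proof; conversely, the paper's version avoids the small detour through the sum--product compatibility of $Ext$. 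Your verification that $M\in{\cal C}(\nabla)$ in the increasing case (via Theorems~\ref{t31} and~\ref{t32}) is also a detail the paper's terse proof leaves to the reader.
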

\begin{proof}
It is enough to observe that $F(T)$ is a projective pseudocompact $R$-module and use Theorem \ref{t52}.
\end{proof}

\section{Examples and concluding remarks}

The purpose of this section is to provide examples illustrating previously introduced concepts.

Let $G=GL(m|n)$ be a general linear supergroup. The category ${\cal C}=G-smod$ of (left) rational $G$-supermodules (with even morphisms) is equivalent to the category of (right) supercomodules over its coordinate Hopf superalgebra $K[G]$. 
For the definitions of $G$ and $K[G]$, we refer to \cite{bk,z}. In this section we shall only consider the example $G=GL(1|1)$.

\subsection{Category of $GL(1|1)$-supermodules} 

We start by explicitly describing the comultiplication map $\delta$ on $K[G]=K[c_{ij}| 1\leq i, j\leq 2]_{c_{11}c_{22}}$, where $|c_{ij}|=0$ if $i=j$ and $|c_{ij}|=1$ otherwise.
Define the left weight of a (rational) monomial $m=c_{11}^a c_{12}^b c_{21}^c c_{22}^d$, where $a, b, c, d\in\mathbb{Z}$ and $0\leq b, c\leq 1$, as $\lambda_l(m)=(a+b|c+d)$ (and similarly define $\lambda_r(m)=(a+c|b+d)$ for the right weight). The subspace of
$K[G]$ generated by all monomials with $\lambda_l(m)=\lambda$ shall be denoted by $_{\lambda}K[G]$.
Assume $\lambda=(i|j)$ and denote $|\lambda|=i+j$ by $r$. Then the superspace $_{\lambda}K[G]$ has a basis consisting of elements:
\[A_{\lambda}=c_{11}^i c_{22}^j, B_{\lambda}=c_{11}^{i-1} c_{12}c_{22}^j, C_{\lambda}=c_{11}^i c_{21}c_{22}^{j-1}, \text{ and } D_{\lambda}=c_{11}^{i-1}c_{12}c_{21}c_{22}^{j-1}.\]
We have 
\[\delta(A_{\lambda})=A_{\lambda}\otimes A_{\lambda} +iB_{\lambda}\otimes C_{\lambda-\pi}+
jC_{\lambda}\otimes B_{\lambda+\pi}+ijD_{\lambda}\otimes D_{\lambda},\]
\[\delta(B_{\lambda})=B_{\lambda}\otimes Y_{\lambda-\pi} +X_{\lambda}\otimes B_{\lambda},
\delta(C_{\lambda})=C_{\lambda}\otimes X_{\lambda+\pi}+Y_{\lambda}\otimes C_{\lambda}, \text{ and }\]
\[\delta(D_{\lambda})=D_{\lambda}\otimes A_{\lambda}-C_{\lambda}\otimes B_{\lambda+\pi}+B_{\lambda}\otimes C_{\lambda-\pi} +(A_{\lambda}+(j-i)D_{\lambda})\otimes D_{\lambda},\]
where $X_{\lambda}=A_{\lambda}+jD_{\lambda}, Y_{\lambda}=A_{\lambda}-iD_{\lambda}$ and $\pi=(1|-1)$. 
Since $K[G]$ is a direct sum of right $K[G]$-supercomodules $_{\lambda}K[G]$, each $_{\lambda}K[G]$ is an injective $G$-supermodule.

The category $\cal C$  is a highest weight category with respect to a poset $\Lambda$ that consists of weights $\lambda^{\epsilon}$, where $\lambda=(i|j)\in\mathbb{Z}^2$ and $\epsilon\in\{0, 1\}$.
The weights of $\Lambda$ are ordered by the dominant order $(i|j)^{\epsilon_1}\leq (k|l)^{\epsilon_2}$ if and only if $i\leq k$ and
$i+j=k+l$ (cf. Theorem 5.5 of \cite{z}). The poset $\Lambda$ is interval-finite and good; each weight $\lambda^{\epsilon}$ has exactly 
two predecessors, $(\lambda-\pi)^0$ and $(\lambda-\pi)^1$. 
For $V\in {\cal C}$, let $V^c$ denote {\it a parity shift} of $V$;
that is, $V^c_{\epsilon}=V_{\epsilon+1\pmod 2}$ for $\epsilon\in\{0, 1\}$. For instance, $L(\lambda^0)^c=L(\lambda^1)$. The costandard and standard objects in the category $\cal C$ are the induced supermodules $H^0(\lambda^{\epsilon})$ and the Weyl supermodules $V(\lambda^{\epsilon})$, respectively. In the notations of \cite{z}, we have $H^0(\lambda^0)=H^0(\lambda), V(\lambda^0)=V(\lambda)$, $H^0(\lambda^1)=H^0(\lambda)^c$ and  $V(\lambda^1)=V(\lambda)^c$.    

The following lemma describes explicitly the simple, costandard and injective $G$-supermodules corresponding the weight $\lambda^0$.
Analogous statements are valid for the weight $\lambda^1$.

\begin{lm}\label{lm61} a) The supermodule $\nabla(\lambda^0)$ is isomorphic to the supermodule $K$-spanned by $B_{\lambda}$ and $X_{\lambda}$.

b) If $p$ divides $r$, then $L(\lambda^0)=KX_{\lambda}$; otherwise $L(\lambda^0)=\nabla(\lambda^0)(=\Delta(\lambda^0))$.

c) If $p$ divides $r$, then $_{\lambda}K[G]=I(\lambda^0)$; otherwise $_{\lambda}K[G]=\nabla(\lambda^0)\bigoplus$ $\nabla((\lambda+\pi)^1)$ and $I(\lambda^0)=\nabla(\lambda^0)$. In both cases $I(\lambda^0)$ is a tilting module.\end{lm}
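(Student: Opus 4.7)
The plan is to prove all three parts by direct computation with the given comultiplication formulas on $_{\lambda}K[G]$, leveraging the relations $X_{\lambda}=A_{\lambda}+jD_{\lambda}$, $Y_{\lambda}=A_{\lambda}-iD_{\lambda}$, and in particular $X_{\lambda}-Y_{\lambda}=rD_{\lambda}$ to see how divisibility of $r$ by the characteristic $p$ controls the subcomodule structure.

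For part (a), I first compute $\delta(X_{\lambda})=X_{\lambda}\otimes X_{\lambda}+rB_{\lambda}\otimes C_{\lambda-\pi}$ from $\delta(A_{\lambda})+j\delta(D_{\lambda})$: the two $C_{\lambda}\otimes B_{\lambda+\pi}$ contributions cancel, the two $B_{\lambda}\otimes C_{\lambda-\pi}$ contributions combine with coefficient $i+j=r$, and the remaining $A_{\lambda}$- and $D_{\lambda}$-terms collapse to $X_{\lambda}\otimes X_{\lambda}$. Together with the given $\delta(B_{\lambda})$, this shows $V := KB_{\lambda}+KX_{\lambda}$ is a subcomodule of $_{\lambda}K[G]$. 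Reading off parities and right weights identifies $V$ as a two-dimensional module with weight spaces $V_{\lambda^0}=KX_{\lambda}$ and $V_{(\lambda-\pi)^1}=KB_{\lambda}$, whose composition factors lie in the ideal $(\lambda^0]$. Since $V$ is essential over its socle $L(\lambda^0)$, it sits inside $I(\lambda^0)$, and a short inspection of all subcomodules of $_{\lambda}K[G]$ supported in the weight set $\{(\lambda-\pi)^1,\lambda^0\}$ (the coaction forces $\alpha i+\beta=\alpha j-\beta=0$ on any weight-$\lambda^0$ generator $\alpha A_{\lambda}+\beta D_{\lambda}$ of a one-dimensional subcomodule) shows $V$ is maximal with this property, hence $V=O_{(\lambda^0]}(I(\lambda^0))=\nabla(\lambda^0)$.

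For part (b), I split on whether $p\mid r$. If $p\mid r$, then $\delta(X_{\lambda})=X_{\lambda}\otimes X_{\lambda}$, so $KX_{\lambda}$ is a one-dimensional simple subcomodule of weight $\lambda^0$, giving $L(\lambda^0)=KX_{\lambda}$. If $p\nmid r$, inspection of $\delta(cX_{\lambda}+dB_{\lambda})$ shows that the terms $crB_{\lambda}\otimes C_{\lambda-\pi}$ and $dX_{\lambda}\otimes B_{\lambda}$ force any nonzero subcomodule of $V$ to contain both $X_{\lambda}$ and $B_{\lambda}$, so $V$ is simple. Thus $L(\lambda^0)=\nabla(\lambda^0)$; applying the Chevalley duality $\tau$ (which fixes every simple) gives $\Delta(\lambda^0)=\tau(\nabla(\lambda^0))=L(\lambda^0)$ as well.

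For part (c), a parallel calculation yields $\delta(Y_{\lambda})=Y_{\lambda}\otimes Y_{\lambda}+rC_{\lambda}\otimes B_{\lambda+\pi}$, so $W := KC_{\lambda}+KY_{\lambda}$ is a subcomodule isomorphic (by the symmetric version of (a)) to $\nabla((\lambda+\pi)^1)$. If $p\nmid r$, then $X_{\lambda}-Y_{\lambda}=rD_{\lambda}\neq 0$ forces $V\cap W=0$, so by dimension $_{\lambda}K[G]=V\oplus W$, and the direct summand containing $L(\lambda^0)$ is $I(\lambda^0)=V=\nabla(\lambda^0)$. If $p\mid r$, then $X_{\lambda}=Y_{\lambda}$, and a weight-by-weight examination of the coactions shows $KX_{\lambda}$ is the only simple subcomodule of $_{\lambda}K[G]$, so this injective module equals the injective envelope $I(\lambda^0)$. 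For the tilting assertion, when $p\nmid r$ the module $I(\lambda^0)=L(\lambda^0)=\nabla(\lambda^0)=\Delta(\lambda^0)$ is trivially tilting; when $p\mid r$, the $\nabla$-filtration $0\subset V\subset {_{\lambda}K[G]}$ is available from part (a), and I would exhibit a $\Delta$-filtration via the subcomodule $KX_{\lambda}+KC_{\lambda}$, which closes up under $\delta$ precisely because $Y_{\lambda}=X_{\lambda}$ lets $\delta(C_{\lambda})=C_{\lambda}\otimes X_{\lambda+\pi}+X_{\lambda}\otimes C_{\lambda}$ land in the span; its socle $L(\lambda^0)$ and top $L((\lambda+\pi)^1)$ mark it as $\Delta((\lambda+\pi)^1)$, with quotient $\Delta(\lambda^0)$. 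The main obstacle is the $p\mid r$ subcase: both the uniqueness of the simple socle and the correct identification of the $\Delta$-subcomodule depend delicately on the characteristic-induced collapse $X_{\lambda}=Y_{\lambda}$, without which the relevant subspaces fail to close up under the coaction.
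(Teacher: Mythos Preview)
Your argument is correct and follows essentially the same route as the paper: compute $\delta(X_\lambda)$ and $\delta(Y_\lambda)$, identify the two 2-dimensional subcomodules, split on whether $p\mid r$ via the relation $X_\lambda-Y_\lambda=rD_\lambda$, and read off the $\nabla$- and $\Delta$-filtrations of $_\lambda K[G]$.

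One slip worth fixing: your blanket assertion that $W=KC_\lambda+KY_\lambda$ is isomorphic to $\nabla((\lambda+\pi)^1)$ ``by the symmetric version of (a)'' is only valid when $p\nmid r$. When $p\mid r$ one has $\delta(Y_\lambda)=Y_\lambda\otimes Y_\lambda$, so the socle of $W$ is $KY_\lambda=L(\lambda^0)$, not $L((\lambda+\pi)^1)$; hence $W\simeq\Delta((\lambda+\pi)^1)$ rather than $\nabla((\lambda+\pi)^1)$. This is harmless for your proof, since you only invoke $W\simeq\nabla((\lambda+\pi)^1)$ in the $p\nmid r$ branch (where $\nabla=\Delta=L$ anyway), and in the $p\mid r$ branch you correctly re-identify the same subspace $KX_\lambda+KC_\lambda=KY_\lambda+KC_\lambda$ as $\Delta((\lambda+\pi)^1)$. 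The paper avoids this by observing instead that the \emph{quotient} $_\lambda K[G]/\nabla(\lambda^0)\simeq\nabla((\lambda+\pi)^1)$, which holds uniformly. Also, in (a) your parenthetical about one-dimensional subcomodules is slightly off target: what you need is that the $C_\lambda$-coefficient $\alpha j-\beta$ vanishes for any weight-$\lambda^0$ element $\alpha A_\lambda+\beta D_\lambda$ lying in $O_{(\lambda^0]}({}_\lambda K[G])$, forcing it into $KX_\lambda$; the extra condition $\alpha i+\beta=0$ is not needed here.
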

\begin{proof} 
a) Denote $V=KC_{\lambda}+KY_{\lambda}$ and $W=KB_{\lambda}+KX_{\lambda}$. Using the above formulas for $\delta$, we obtain $\delta(X_{\lambda})=X_{\lambda}\otimes X_{\lambda}+rB_{\lambda}\otimes C_{\lambda-\pi}$ and $\delta(Y_{\lambda})=Y_{\lambda}\otimes Y_{\lambda}+rC_{\lambda}\otimes B_{\lambda+\pi}$. Therefore, $V$ and $W$ are subsupermodules of $_{\lambda}K[G]$. 
Elementary computation shows that $W=O_{(\lambda^0]}(_{\lambda}K[G])$. Thus $W$ has a $\nabla$-filtration; and since $B_{\lambda}$ generates $W$,  $W=\nabla(\lambda^0)$.

b) This follows immediately from a). 

c) We compute $_{\lambda}K[G]/\nabla(\lambda^0)\simeq\nabla((\lambda+\pi)^1)$. 
If $p$ does not divide $r$, then $V\bigcap W=0$. This implies $_{\lambda}K[G]=V\bigoplus W$ and
$V=\nabla((\lambda+\pi)^1)=\Delta((\lambda+\pi)^1)$. 
If $p|r$, then $X_{\lambda}=Y_{\lambda}$ and $V\bigcap W$ is the socle of $_{\lambda}K[G]$. 
In this case, the supermodule
$_{\lambda}K[G]$ has a composition series:
$$\begin{array}{ccccc}&&L(\lambda^0)&&\\&\diagup&&\diagdown&\\
L((\lambda-\pi)^1)&&&&L((\lambda+\pi)^1)\\
&\diagdown&&\diagup&\\
&&L(\lambda^0)&&
\end{array}.$$
In particular, $_{\lambda}K[G]=I(\lambda^0)$ has a $\Delta$-filtration with quotients $\Delta((\lambda+\pi)^1)$ and $\Delta(\lambda^0)$. Thus
$I(\lambda^0)$ is a tilting module.
\end{proof}
\begin{rem}\label{rm61}
Each $I(\lambda^{\epsilon})$ is selfinjective and Chevalley dual to itself. In particular, it is a projective cover of
$L(\lambda^{\epsilon})$.
\end{rem}

Fix $r\in \mathbb{Z}$, consider an ideal $\Gamma_r=((r|0)^0]\bigcup ((r|0)^1]$, and denote ${\cal C}_r={\cal C}[\Gamma_r]$.
To simplify notations denote $(r-i|i)^{\epsilon}$ by $i^{\epsilon}$ for each $i\geq 0$.
In particular, $i^{\epsilon_1} > j^{\epsilon_2}$ if and only if $i< j$.
Denote $O_{\Gamma_r}(K[G])=\nabla(0^0)\bigoplus_{i > 0} \phantom{i}_{i}K[G]$ by $C_r$. Then $C_r$ is a subsupercoalgebra of $K[G]$ and the category ${\cal C}_r$ coincides with the category 
of (right) $C_r$-supercomodules (cf. \cite{don2, green, z}). 
In fact, $M\in {\cal C}_r$ if and only if $M$ is embedded in $O_{\Gamma_r}(\bigoplus_{i^{\epsilon}\in\Gamma_r}
I(i^{\epsilon})^{m_{i^{\epsilon}}})$ which is true if and only if $M$ is embedded in $C_r^m\bigoplus (C^c_r)^n$ (where $m$ and $n$ are possibly infinite).  
This follows from $cf(C_r)=cf(C_r^c)=C_r$. 

As above, ${\cal C}_r$ is equivalent to the category of (left) discrete $S_r$-supermodules
over the pseudocompact superalgebra $S_r=C^{*}_{r}$. 
Recall that if $\delta(c)=\sum c_1\otimes c_2$ and $\tau_M(m)=\sum m_1\otimes c_2$ for $c\in C_r$, $M\in {\cal C}_r$ and $m\in M$, then 
$(xy)(c)=\sum(-1)^{|y||c_1|}x(c_1)y(c_2)$ and $xm=\sum(-1)^{|m_1||x|}x(c_2)m_1$ for $x, y\in S_r$.   
We shall call $S_r$ a {\it pseudocompact Schur superalgebra} corresponding to the ideal $\Gamma_r$. 

\subsection{Pseudocompact Schur algebra $S_r$}

The following lemma describes multiplication of generating elements in the dual of a supercoalgebra $C$.
\begin{lm}\label{lm62}
Let $C$ be a supercoalgebra; and let $\{c_i\}_{i\in I}$ be a homogeneous basis of $C$ and $\delta_C(c_i)=\sum_{k, l\in I}\alpha_{i, kl} c_k\otimes c_l$ 
for $i\in I$. Then $c^*_k c^{*}_{l}=(-1)^{|c_k||c_l|}\sum_{i\in I}\alpha_{i, kl}c^{*}_{i}$ for every $k, l\in I$.
\end{lm}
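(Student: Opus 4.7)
The plan is to prove Lemma \ref{lm62} by a direct computation using the given formula $(xy)(c)=\sum(-1)^{|y||c_1|}x(c_1)y(c_2)$ for the product in $C^*$ together with the basis duality $c_i^*(c_j)=\delta_{ij}$. Since both sides of the asserted identity are linear functionals on $C$, it suffices to check that they agree when evaluated on an arbitrary basis element $c_i$.

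First I would fix $k,l\in I$ and evaluate the left-hand side on $c_i$. Expanding via the comultiplication
\[
\delta_C(c_i)=\sum_{k',l'\in I}\alpha_{i,k'l'}\,c_{k'}\otimes c_{l'},
\]
the definition of the product in $C^*$ gives
\[
(c_k^*c_l^*)(c_i)=\sum_{k',l'\in I}(-1)^{|c_l^*||c_{k'}|}\alpha_{i,k'l'}\,c_k^*(c_{k'})\,c_l^*(c_{l'}).
\]
The duality relations $c_k^*(c_{k'})=\delta_{k,k'}$ and $c_l^*(c_{l'})=\delta_{l,l'}$ collapse the double sum to the single term obtained by setting $k'=k$ and $l'=l$, yielding $(-1)^{|c_l||c_k|}\alpha_{i,kl}$, where I use that a homogeneous dual basis element has the same parity as the basis element it is dual to, that is, $|c_l^*|=|c_l|$.

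Next, I would evaluate the right-hand side on $c_i$. Again using $c_j^*(c_i)=\delta_{ij}$, the expression $(-1)^{|c_k||c_l|}\sum_{j\in I}\alpha_{j,kl}c_j^*$ evaluated at $c_i$ produces $(-1)^{|c_k||c_l|}\alpha_{i,kl}$, which matches the value computed above. Since both sides agree on every basis element $c_i$ (and only finitely many of the coefficients $\alpha_{j,kl}$ need be non-zero to define the functional, with the sum interpreted in the pseudocompact topology on $C^*$), the identity holds as linear functionals on $C$.

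I do not anticipate a serious obstacle; the only subtlety is the bookkeeping of the Koszul sign, specifically recognizing that $|c_l^*|=|c_l|$ and that the sign factor appearing in the multiplication formula depends on $|c_{k'}|$ which, after applying $c_k^*(c_{k'})=\delta_{k,k'}$, becomes $|c_k|$. Once that is done correctly, the verification is completely formal.
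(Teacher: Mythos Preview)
Your proof is correct and is precisely the straightforward computation the paper has in mind; the paper's own proof consists of the single sentence ``This follows from straightforward computation,'' and your argument supplies exactly those details.
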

\begin{proof}
This follows from straightforward computation.
\end{proof}

The structure of $S_r$ is determined in the following lemma.

\begin{lm}\label{lm625}
If $p$ does not divide $r$, then $S_r$ is a product of matrix superalgebras $M(1|1)$.
If $p$ divides $r$, then $S_r$ is a product of indecomposable projective factors $S_r(i)$ for $i\geq 0$. Besides, for $i>0$, each $S_r(i)$ has a composition series:
$$\begin{array}{ccccc}&&M(i^0)&&\\
&\diagup&&\diagdown&\\
M((i-1)^{1})&&&&M((i+1)^{1})\\
&\diagdown&&\diagup&\\
&&M(i^0)&&
\end{array};$$
and $S_r(0)$ has a composition series:
$$\begin{array}{c}
M(0^0) \\
| \\
M(1^1)
\end{array}.
$$
\end{lm}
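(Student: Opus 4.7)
The plan is to exploit the duality $S_r = C_r^*$ together with the given decomposition $C_r = \nabla(0^0) \oplus \bigoplus_{i > 0} {}_{(r-i|i)}K[G]$ and to apply Lemma \ref{lm61}. First I would observe that the composition series of an indecomposable projective pseudocompact right $S_r$-supermodule is obtained by reversing the composition series of the corresponding indecomposable injective discrete left $S_r$-supercomodule under the $*$-duality (top and socle swap, and middle Loewy layers reverse).

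Next, using Lemma \ref{lm61}, I would compute the indecomposable injective envelopes $I_{{\cal C}_r}(i^0) = O_{\Gamma_r}(I(i^0))$. When $p \nmid r$, each $I(i^0) = \nabla(i^0) = L(i^0)$ is simple. When $p \mid r$ and $i > 0$, all composition factors of $I(i^0) = {}_{(r-i|i)}K[G]$ lie in $\Gamma_r$, so $I_{{\cal C}_r}(i^0) = I(i^0)$ carries the diamond composition series from Lemma \ref{lm61}(c). When $p \mid r$ and $i = 0$, the composition factor $L((r+1|-1)^1)$ of $I(0^0)$ lies outside $\Gamma_r$, and a Loewy analysis of the diamond shows $O_{\Gamma_r}(I(0^0)) = \nabla(0^0)$ has socle $L(0^0)$ and top $L(1^1)$. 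Dualizing yields the claimed composition series for each $S_r(i)$.

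For the direct product decomposition $S_r = \prod_{i \geq 0} S_r(i)$, I would refine the given supercomodule decomposition to a decomposition of $C_r$ as a direct sum of indecomposable subcoalgebras, then dualize. For $p \nmid r$, each block turns out to be a $4$-dimensional simple matrix subcoalgebra of $K[G]$; one verifies via Lemma \ref{lm62} and the explicit formulas for $\delta$ that each simple $2$-dimensional supercomodule $L$ in $C_r$ together with its parity shift $L^c$ spans such a matrix block, and that the dual of this block is $M(1|1)$. Summing over $i \geq 0$ gives $S_r \cong \prod_{i \geq 0} M(1|1)$. For $p \mid r$, the analogous block decomposition dualizes to the factors $S_r(i)$.

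The main obstacle will be passing cleanly from the given supercomodule decomposition of $C_r$ to its block (coalgebra) decomposition and extracting the indexing by $i \geq 0$. The formulas $\delta(B_\lambda) = B_\lambda \otimes Y_{\lambda-\pi} + X_\lambda \otimes B_\lambda$ and $\delta(C_\lambda) = C_\lambda \otimes X_{\lambda+\pi} + Y_\lambda \otimes C_\lambda$ show that the coefficient subcoalgebra of a single simple supercomodule spans two adjacent weight spaces of $K[G]$, so the block decomposition cannot be read off from the weight grading alone. Tracking how the coefficient subcoalgebras coming from adjacent summands ${}_{(r-i|i)}K[G]$ and ${}_{(r-i-1|i+1)}K[G]$ glue together into a single block, absorbing parity shifts in the matrix factor $M(1|1)$, and matching the result with the stated indexing of $S_r(i)$, is the substantive part of the argument.
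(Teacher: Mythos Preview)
Your approach is correct and close to the paper's, with one simplification you are missing and one minor difference in technique.

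The simplification: for $p\mid r$, the phrase ``product of indecomposable projective factors $S_r(i)$'' means the decomposition of $S_r$ as a right $S_r$-module into indecomposable projectives, not a block (two-sided ideal) decomposition. This is immediate from dualizing the given right-supercomodule decomposition $C_r=\nabla(0^0)\oplus\bigoplus_{i>0}{}_{(r-i|i)}K[G]$ into indecomposable injectives; no coalgebra block analysis is needed. Your ``main obstacle'' paragraph is therefore a red herring in this case. (Indeed, when $p\mid r$ the algebra $S_r$ is \emph{not} a product of the $S_r(i)$ as algebras: the composition series you are asked to verify show that adjacent $S_r(i)$ share composition factors.)

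For $p\nmid r$ you do need a coalgebra decomposition, and here the paper does exactly what you propose: each block $C_r(i)$ is the $4$-dimensional subcoalgebra spanned by $X_i,B_i,Y_{i+1},C_{i+1}$ (that is, $\nabla(i^0)\subset{}_iK[G]$ paired with $\nabla(i^1)\subset{}_{i+1}K[G]$), and one checks $(C_r(i))^*\simeq M(1|1)$ via Lemma~\ref{lm62}. The gluing you worry about is just this pairing of adjacent weight spaces, and it is no harder than writing down the four $\delta$-formulas.

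The minor technical difference: for the composition series when $p\mid r$, you propose to dualize Lemma~\ref{lm61}(c) directly, which works. The paper instead computes the full multiplication table of $S_r$ explicitly via Lemma~\ref{lm62} and reads the composition series off from $S_r(i)=X_i^*S_r$. Your route is shorter for this lemma; the paper's route yields the quiver-with-relations presentation stated immediately after it.
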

\begin{proof}
Assume that $p$ does not divide $r$. Then ${\cal C}_r$ is a semisimple category and $\Delta(i^{\epsilon})=\nabla(i^{\epsilon})=L(i^{\epsilon})$. In this case, $C_r=
\bigoplus_{i\geq 0}(L(i^0)\bigoplus L(i^1))$, where each $C_r(i)=L(i^0)\bigoplus L(i^1)$ is a subsupercoalgebra
generated by $X_i, B_i, Y_{i+1}$ and $C_{i+1}$. 
Lemma \ref{lm62} implies that $S_r\simeq\prod_{i\geq 0}(C_r(i))^*$, where each $(C_r(i))^*\simeq M(1|1)$ is a matrix superalgebra. 
The isomorphism is given by $X_i^*\mapsto e_{11}, B_i^*\mapsto (-r)^{\frac{1}{2}}e_{12}, Y_{i+1}^*\mapsto e_{22}$ and $C_{i+1}^*\mapsto (-r)^{\frac{1}{2}}e_{21}$.
Moreover,
$V\in {\cal C}_r(\nabla)={\cal C}_r$ if and only if $V=\bigoplus_{i\geq 0} V(i)$, where $S_r(j) V_i=\delta_{ij}V_i$ for $i, j\geq 0$ and every summand $V_i$ is a finite direct sum of simple supermodules isomorphic to the standard $M(1|1)$-supermodule $K^{1|1}$ or to its parity shift.

If $p$ divides $r$, then $S_r$ is a product of its indecomposable projective factors: $S_r (0)=\nabla(0^0)^*$ and $S_r (i)=\phantom{i}_iK[G]^*$ for $i>0$. In this case $C^*_r$ is a $K$-span of elements $X_i^* =Y_i^*$ and $B_i^*$ for $i\geq 0$, and 
$C_i^*$ and $D_i^*$ for $i>0$.
By Lemma \ref{lm62}, the multiplication in
$C^{*}_{r}$ is given by 
\[
\begin{array}{|c|c|c|c|c|} \hline
      & X^{*}_{j} & B^{*}_{j} & C^{*}_{j} & D^{*}_{j} \\ \hline 
X^{*}_{i} & \delta_{ij}X^{*}_{j} & \delta_{ij}B^{*}_{j} & \delta_{ij}C_j^* & \delta_{ij}D^{*}_{j} \\ \hline 
B^{*}_{i} & \delta_{i,j-1}B^{*}_{j-1} & 0 & -(1-\delta_{1,j})\delta_{i,j-1}D^{*}_{j-1} & 0 \\ \hline
C^{*}_{i} & \delta_{i,j+1}C^{*}_{j+1} & \delta_{i,j+1}D^{*}_{j+1} & 0 & 0 \\ \hline
D^{*}_{i} & \delta_{ij}D^{*}_{j} & 0 & 0 & 0 \\ \hline
\end{array}.
\]
Thus $S_r(0)=X^{*}_{0} S_r$ and $S_r(i) =X^{*}_{i} S_r$ for $i>0$. 
The claim follows.
\end{proof}

More explicitly, the algebra $S_r$ can be described by the quiver 
\[0^0\overset{\underrightarrow{\beta_1}}{_{\underleftarrow{\gamma_0}}}
1^1 \overset{\underrightarrow{\beta_2}}{_{\underleftarrow{\gamma_1}}} 2^0 \ldots (i-1)^{(-1)^{i-1}} \overset{\underrightarrow{\beta_{i}}}{_{\underleftarrow{\gamma_{i-1}}}}
i^{(-1)^i} \overset{\underrightarrow{\beta_{i+1}}}{_{\underleftarrow{\gamma_{i}}}} 
(i+1)^{(-1)^{i+1}} \overset{\underrightarrow{\beta_{i+2}}}{_{\underleftarrow{\gamma_{i+1}}}}
\ldots, 
\]
and the relations $\beta_{i+1}\beta_i=\gamma_{i-1}\gamma_i=0$ and $\gamma_{i}\beta_{i+1}=-\beta_{i}\gamma_{i-1}$ for every $i>0$, and 
$\gamma_0\beta_1=0$.

The algebra $S_r$ is an ascending quasi-hereditary pseudocompact algebra with a defining system of ascending ideals, each denoted by some $H_i$ for $i\geq 0$, where
each $H_i$ is a $K$-span of elements $X^*_j, B^*_j$ for $0\leq j\leq i$ and $C^*_j, D^*_j$ for $1\leq j\leq i+1$.

\begin{rem}\label{r61}

We would like to compare the structure of the above algebra $S_r$ to the structure of the Schur superalgebra $S_r(1|1)$ described  
in \cite{mz}. The latter is a finite-dimensional algebra that corresponds to polynomial representations of $GL(1|1)$ of degree $r$; and in the case when $p$ divides $r$ it is not quasi-hereditary, whereas our above algebra $S_r$ is an ascending quasi-hereditary pseudocompact algebra. 
Hence it appears that the framework of pseudocompact algebras is more suitable in this setting.
\end{rem}

Every superalgebra $A$ is a $\mathbb{Z}_2$-module, where the generator of $\mathbb{Z}_2$ acts on $A$ as $a\mapsto (-1)^{|a|}a$.
Denote the semi-direct product algebra $A \rtimes \mathbb{Z}_2$ by $\Hat{A}$. If $A$ is pseudocompact, then so is $\Hat{A}$. 
We could switch from the category of $A$-supermodules to that of modules over $\Hat{A}$ using the following lemma.

\begin{lm}\label{lm63}
The category $\Hat{A}-Dis$ is naturally identified with the category of (left) discrete $A$-supermodules.
\end{lm}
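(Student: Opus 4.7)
The plan is to exhibit mutually inverse functors between $\Hat{A}-Dis$ and the category of discrete $A$-supermodules, which on underlying $K$-spaces act as the identity. The key observation is that, assuming char $K \neq 2$ (as is standard for superalgebras), the idempotents $e_{0}=(1+t)/2$ and $e_{1}=(1-t)/2$ in $\Hat{A}$ associated to the generator $t$ of $\mathbb{Z}_2$ provide the bridge between the two descriptions.

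First, I would construct a functor $\Phi$ from discrete $A$-supermodules to $\Hat{A}-Dis$. Given a discrete $A$-supermodule $M=M_0\oplus M_1$, extend the $A$-action to $\Hat{A}$ by letting $t$ act as $tm=(-1)^{|m|}m$ for homogeneous $m$. To verify this is well-defined, one checks the relation $ta=(-1)^{|a|}at$ in $\Hat{A}$ on a homogeneous $m$: both sides give $(-1)^{|a|+|m|}am$. Conversely, I would define a functor $\Psi$ from $\Hat{A}-Dis$ back to discrete $A$-supermodules by setting $M_0=e_0M$ and $M_1=e_1M$; since $t$ normalizes $A$ with $tat=(-1)^{|a|}a$, it follows immediately that $A_0$ preserves each $M_\epsilon$ while $A_1$ swaps them, which is precisely the super-structure axiom. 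These assignments are clearly mutually inverse on objects, natural in morphisms (a $K$-linear map is $\Hat{A}$-linear iff it is $A$-linear and preserves the $t$-action, iff it is $A$-linear and respects the $\mathbb{Z}_2$-grading), and the identifications are canonical.

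Next, I would verify compatibility with the discrete topologies. An open $\Hat{A}$-submodule $N\subseteq M$ is a fortiori an open $A$-submodule, and since $t$ stabilizes $N$ it is automatically $\mathbb{Z}_2$-graded via $N=e_0N\oplus e_1N$, hence a sub-supermodule of finite codimension. Conversely, any open graded $A$-sub-supermodule $N$ of $M$ is stable under the parity involution and therefore $\Hat{A}$-stable. Thus the lattice of open submodules coincides under $\Phi$ and $\Psi$, so the topologies on the underlying spaces transfer. This ensures the duality functor levels and the full subcategory of finite-dimensional objects also match up.

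The main (minor) obstacle is the characteristic-two case, where $(-1)^{|a|}=1$ makes $t$ central in $\Hat{A}$ and the idempotents $e_0,e_1$ do not exist, so the equivalence as stated fails in char $2$. Under the running assumption that the super-theoretic context excludes characteristic $2$, the verifications are straightforward book-keeping; everything of substance is contained in checking the single defining relation of the semi-direct product against the sign rule of the super-action.
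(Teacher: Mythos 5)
Your proof is correct and follows essentially the same route as the paper: extend the $A$-action by letting the group generator act via the sign $(-1)^{|m|}$, and recover the supergrading from the eigenspace (equivalently, idempotent) decomposition of that involution — the paper writes $W_\epsilon=\{w\mid tw=(-1)^\epsilon w\}$ where you write $e_\epsilon W$, which is the same thing when $\mathrm{char}\,K\neq 2$. You are also right that $\mathrm{char}\,K\neq 2$ is genuinely needed for the eigenspace decomposition $W=W_0\oplus W_1$ (the paper's proof tacitly assumes this as well), and your verification of topology compatibility, while routine, is a legitimate point the paper leaves unstated.
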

\begin{proof}
$\Hat{A}$ has a $\mathbb{Z}_2$-grading $A_0\bigoplus A_1$, where each $A_{\epsilon}$ coincides with $A$ as a vector space.
If we write each $\hat{a}\in \Hat{A}$ as $\hat{a}=a_0+a_1$, where $a_0\in A_0$ and $a_1\in A_1$, then $a_{\epsilon}b_{\mu}=(-1)^{|b|\epsilon}(ab)_{\epsilon+\mu\pmod 2}$.
If $V$ is a discrete $A$-supermodule, then $\Hat{A}$ acts on $V$ via $a_{\epsilon}v=(-1)^{|v|\epsilon}av$. Conversely, if $W\in \Hat{A}-Dis$,
then $W$ has an $A$-supermodule structure given by $W_{\epsilon}=\{w\in W| (1_A)_1 w=(-1)^{\epsilon}w\}$.  
\end{proof}

\subsection{Ringel dual of $\Hat{S_r}$}

Denote by $R_r$ the Ringel dual of $\Hat{S_r}$.
The structure of $R_r$ is given in the following lemma.

\begin{lm}\label{lm64}
If $p$ does not divide $r$, then $R_r$ is isomorphic to a product $\prod_{i^{\epsilon}\in\Gamma_r} \Hat{M}(i^{\epsilon})$, where 
each $\Hat{M}(i^{\epsilon})\simeq K$.
If $p$ divides $r$, then $R_r$ is a product of $R^0_r$ and $R_r^1$, where $R_r^{\epsilon}=\prod_{i\geq 0}\hat{P}(i^{i+\epsilon\pmod 2})$ for $\epsilon=0, 1$.
Here, for $i>0$, each $\hat{P}(i^{\epsilon})$ has a composition series:  
$$\begin{array}{ccccc}&&\hat{M}(i^{\epsilon})&&\\&\diagup&&\diagdown&\\
\hat{M}((i+1)^{\epsilon'})&&&&\hat{M}((i-1)^{\epsilon'})\\
&\diagdown&&\diagup&\\
&&\hat{M}(i^{\epsilon})&&
\end{array};$$
and $\hat{P}(0^{\epsilon})$ has a composition series:
$$\begin{array}{c}
\hat{M}(0^{\epsilon}) \\
| \\
\hat{M}(1^{\epsilon'}) \\
| \\
\hat{M}(0^{\epsilon})
\end{array}.
$$
\end{lm}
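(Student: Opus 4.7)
The plan is to determine $R_r$ by applying the Ringel duality machinery of Section~5 to $A=\Hat{S_r}$. By Theorem~\ref{tr2} the algebra $\Hat{S_r}$ is ascending quasi-hereditary with respect to $\Gamma_r$, so by Theorem~\ref{tr51} its Ringel dual $R_r$ is a descending quasi-hereditary pseudocompact algebra whose indecomposable projective factors are $\hat{P}(j^{\epsilon})=F(T(j^{\epsilon}))$. The strategy is to identify the indecomposable tilting supermodules $T(j^{\epsilon})$, apply the exact functor $F$ (Proposition~\ref{p51}) to their $\nabla$-filtrations to obtain descending $\hat{\Delta}$-filtrations of the $\hat{P}(j^{\epsilon})$, and then use the reciprocity $[\hat{\Delta}(\lambda):\hat{M}(\mu)]=(T(\mu):\Delta(\lambda))$ of Lemma~\ref{lm52} to compute composition factors.

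The case $p\nmid r$ is immediate. Lemma~\ref{lm625} shows that $\Hat{S_r}-Dis$ is semisimple, hence $L(j^{\epsilon})=\Delta(j^{\epsilon})=\nabla(j^{\epsilon})=T(j^{\epsilon})$ for every vertex. Schur's lemma gives $End_{\cal C}(T(j^{\epsilon}))\simeq K$, whence $\hat{P}(j^{\epsilon})=\hat{M}(j^{\epsilon})\simeq K$ and $R_r\simeq \prod_{j^{\epsilon}\in\Gamma_r}\hat{M}(j^{\epsilon})$.

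The main work is in the case $p\mid r$. Lemma~\ref{lm61}(c) asserts that each four-dimensional injective $I((j+1)^{\epsilon})$ (for $j\geq 0$) is a tilting module whose increasing $\Delta$-filtration has bottom subobject $\Delta(j^{1-\epsilon})$ and top quotient $\Delta((j+1)^{\epsilon})$. Since $(j+1)^{\epsilon}<j^{1-\epsilon}$ in the order on $\Gamma_r$ (the first coordinates satisfy $r-j-1<r-j$), Corollary~\ref{c33} identifies $I((j+1)^{\epsilon})$ with the indecomposable tilting $T(j^{1-\epsilon})$; equivalently, $T(j^{\epsilon})=I((j+1)^{1-\epsilon})$ for all $j\geq 0$ and $\epsilon\in\{0,1\}$. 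The corresponding decreasing $\nabla$-filtration of $T(j^{\epsilon})$ (provided by Theorem~\ref{t32}) has top quotient $\nabla(j^{\epsilon})$ and bottom $\nabla((j+1)^{1-\epsilon})$. Applying $F$ converts this into a descending $\hat{\Delta}$-filtration of $\hat{P}(j^{\epsilon})$ with top layer $\hat{\Delta}(j^{\epsilon})$ and bottom layer $\hat{\Delta}((j+1)^{1-\epsilon})$. Plugging each of the four-dimensional tiltings into Lemma~\ref{lm52} produces $[\hat{\Delta}(j^{\epsilon}):\hat{M}(j^{\epsilon})]=1$ and, for $j\geq 1$, $[\hat{\Delta}(j^{\epsilon}):\hat{M}((j-1)^{1-\epsilon})]=1$, while all other multiplicities vanish; thus $\hat{\Delta}(j^{\epsilon})$ has top $\hat{M}(j^{\epsilon})$ and socle $\hat{M}((j-1)^{1-\epsilon})$ when $j\geq 1$, whereas $\hat{\Delta}(0^{\epsilon})=\hat{M}(0^{\epsilon})$. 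Stacking the two $\hat{\Delta}$-layers produces the diamond composition series for $\hat{P}(j^{\epsilon})$ when $j>0$ and the three-term chain $\hat{M}(0^{\epsilon}),\hat{M}(1^{1-\epsilon}),\hat{M}(0^{\epsilon})$ when $j=0$.

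The block decomposition $R_r=R_r^0\times R_r^1$ then follows by reading off from these composition series the observation that each $\hat{P}(i^{\epsilon})$ has composition factors only at vertices $j^{\epsilon'}$ with $j+\epsilon'\equiv i+\epsilon\pmod 2$; hence the summable family $\{\hat{e}_{j^{\epsilon}}\}$ of primitive orthogonal idempotents of $R_r$ splits into two orthogonal central components, giving precisely $R_r^{\epsilon}=\prod_{i\geq 0}\hat{P}(i^{i+\epsilon\pmod 2})$. The main technical obstacle I anticipate is the careful bookkeeping of parities alongside the reversed correspondence between the index $j$ and the poset order; in particular, one must be careful that the bottom subobject of the $\Delta$-filtration of $I((j+1)^{\epsilon})$ from Lemma~\ref{lm61}(c) is labelled by the highest-weight $j^{1-\epsilon}$ of the tilting rather than by $(j+1)^{\epsilon}$. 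Once this identification is settled, the remaining computations are purely mechanical applications of Lemmas~\ref{lm52} and \ref{lm61}(c) and the exactness of $F$.
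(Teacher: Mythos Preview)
Your proposal is correct and follows the same overall strategy as the paper: both identify the indecomposable tiltings as $T(i^{\epsilon})=I((i+1)^{\epsilon'})$ from Lemma~\ref{lm61}(c), and then analyse $\hat P(i^{\epsilon})=F(T(i^{\epsilon}))$. The difference is in how the structure of $\hat P(i^{\epsilon})$ is read off. You apply the exact functor $F$ to the $\nabla$-filtration of $T(i^{\epsilon})$ and invoke the reciprocity of Lemma~\ref{lm52} to obtain the composition factors of each $\hat\Delta$; the paper instead writes down an explicit basis $e_{i^{\epsilon}},b_{i^{\epsilon}},c_{i^{\epsilon}},d_{i^{\epsilon}}$ of $\hat P(i^{\epsilon})$ and the full multiplication table of $R_r$, from which the composition series is read off directly. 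Your route is cleaner conceptually; the paper's route has the advantage that the multiplication table is exactly what is used afterwards to exhibit the explicit isomorphism $S_r\simeq R_r^{\epsilon}/\langle d_{0^{\epsilon}}\rangle$.

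One small point: ``stacking the two $\hat\Delta$-layers'' immediately yields the four composition factors, but not yet the diamond Loewy shape (it does not by itself exclude a uniserial length-$4$ chain). To finish, note that $N_1=\hat\Delta((i+1)^{\epsilon'})\subseteq rad\,\hat P(i^{\epsilon})$ since $top(N_1)=\hat M((i+1)^{\epsilon'})\not\simeq \hat M(i^{\epsilon})$; hence $rad\,\hat P/N_1\simeq rad\,\hat\Delta(i^{\epsilon})=\hat M((i-1)^{\epsilon'})$ surjects onto one summand of $rad/rad^2$, while the image of $N_1$ in $rad/rad^2$ is $\hat M((i+1)^{\epsilon'})$ (it cannot be zero, since $rad^2\hat P$ is at most one-dimensional by BGG reciprocity or by the symmetric $\hat\nabla$-filtration coming from the Chevalley duality). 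This gives $rad/rad^2\simeq \hat M((i-1)^{\epsilon'})\oplus\hat M((i+1)^{\epsilon'})$ and hence the diamond. In the paper this step is absorbed into the explicit multiplication table.
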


\begin{proof}
If $p$ does not divide $r$, then $R_r\simeq K^{|\Gamma_r|}=\prod_{i^{\epsilon}\in\Gamma_r}Ke_{i^{\epsilon}}$, where $Ke_{i^{\epsilon}}=\Hat{\Delta}(i^{\epsilon})=
\Hat{M}(i^{\epsilon})$. Thus $R_r-PC=R_r-PC(\Hat{\Delta})$; and any pseudocompact $R_r$-supermodule
$V$ is isomorphic to $\prod_{i^{\epsilon}\in\Gamma_r}V_{i^{\epsilon}}$, where each $V_{i^{\epsilon}}=
e_{i^{\epsilon}}V$ is finite-dimensional. In particular, $F$ induces an equivalence ${\cal C}_r\simeq R_r-PC$. 

Next, assume that $p|r$. For simplicity, denote $\epsilon +1\pmod 2$ by $\epsilon'$. By the above,
$T(i^{\epsilon})=I((i+1)^{\epsilon'})$. By parts (4) and (5) of Corollary \ref{c32}, the projective factor  
$$\hat{P}(i^{\epsilon})=Hom_{{\cal C}_r}(I((i+2)^{\epsilon})\bigoplus I((i+1)^{\epsilon'})\bigoplus
I(i^{\epsilon})  , I((i+1)^{\epsilon'}))$$
is four-dimensional for $i > 0$, and 
$$\hat{P}(0^{\epsilon})=Hom_{{\cal C}_r}(I(2^{\epsilon})\bigoplus I(1^{\epsilon'})  , I(1^{\epsilon'}))$$
is three-dimensional. 
If $i> 0$, then the elements $e_{i^{\epsilon}}$, 
$$b_{(i-1)^{\epsilon}} : I(i^{\epsilon})\to \Delta(i^{\epsilon})\to I((i+1)^{\epsilon'}),$$
$$c_{(i+1)^{\epsilon}} : I((i+2)^{\epsilon})\to \nabla((i+1)^{\epsilon'})\to I((i+1)^{\epsilon'}), \text{ and}$$ 
$$d_{i^{\epsilon}} : I((i+1)^{\epsilon'})\to L((i+1)^{\epsilon'})\to I((i+1)^{\epsilon'})$$
form a basis of $\hat{P}(i^{\epsilon})$.
The module $\hat{P}(0^{\epsilon})$ has a basis consisting of elements $e_{0^{\epsilon}}, d_{0^{\epsilon}}$, and
$c_{1^{\epsilon}}$. The multiplication table of basis elements is given as
$$
\begin{array}{|c|c|c|c|c|} \hline
      & e_{j^{\epsilon}} & b_{j^{\epsilon}} & c_{j^{\epsilon}} & d_{j^{\epsilon}} \\ \hline 
e_{i^{\mu}} & \delta_{ij}\delta_{\mu\epsilon}e_{j^{\epsilon}} & \delta_{i,j}\delta_{\mu'\epsilon}b_{j^{\epsilon}} & \delta_{i,j}\delta_{\mu'\epsilon}c_{j^{\epsilon}} & \delta_{ij}\delta_{\mu\epsilon}d_{j^{\epsilon}} \\ \hline 
b_{i^{\mu}} & \delta_{i,j-1}\delta_{\mu\epsilon}b_{(j-1)^{\epsilon}} & 0 & \delta_{i, j-1}\delta_{\mu'\epsilon}d_{(j-1)^{\epsilon}} & 0 \\ \hline
c_{i^{\mu}} & \delta_{i,j+1}\delta_{\mu\epsilon}c_{(j+1)^{\epsilon}} & \delta_{i, j+1}\delta_{\mu'\epsilon}d_{(j+1)^{\epsilon}} & 0 & 0 \\ \hline
d_{i^{\mu}} & \delta_{ij}\delta_{\mu\epsilon}d_{j^{\epsilon}} & 0 & 0 & 0 \\ \hline
\end{array}.
$$
The composition series for $\hat{P}(i^{\epsilon})$ is then clear.
The algebra $R_r$ is a product of $R^0_r$ and $R_r^1$, where $R_r^{\epsilon}=\prod_{i\geq 0} \hat{P}(i^{i+\epsilon\pmod 2})$ for $\epsilon=0, 1$.
\end{proof}

The algebra $R_r$ is a descending quasi-hereditary pseudocompact algebra with a defining system of descending ideals, each denoted by some $G_i$ for $i\geq 0$, where
$G_i$ is generated topologically by elements $e_{j^{\epsilon}}$ and $c_{j^{\epsilon}}$ for $j\geq i$ and 
$b_{j^{\epsilon}}$ and $d_{j^{\epsilon}}$ for $j\geq i-1$ for $\epsilon=0,1$. Thus elements of $G_i$ are infinite sums of these generators multiplied by coefficients from $K$.

Comparing the corresponding multiplication tables, we see that $S_r$ is isomorphic to factoralgebras $R_r^0/\langle d_{0^0} \rangle$ and  $R_r^1/\langle d_{0^1} \rangle$ 
via the maps  
$e_{i^{\epsilon}}\leftrightarrow X^{*}_{i}, (-1)^i b_{i^{\epsilon'}}\leftrightarrow B^{*}_{i}, c_{i^{\epsilon'}}\leftrightarrow C^{*}_{i}$ and 
$(-1)^{i+1} d_{i^{\epsilon}}\leftrightarrow D^{*}_{i}$ for $\epsilon=0,1$.

\subsection{Topics for further investigation}
We conclude with some open problems.

It is natural to expect that $F$ induces an equivalence ${\cal C}(\nabla)\simeq
R-PC(\Hat{\Delta})$.

For any $m, n\geq 1,$ the category $GL(m|n)-smod$ of supermodules over the general linear supergroup $GL(m|n)$ is a highest weight category with respect to a poset $\Lambda=\{\lambda^{\epsilon}=(\lambda_1, \ldots, \lambda_{m+n})^{\epsilon}|
\lambda_1\geq\ldots\geq\lambda_m ; \lambda_{m+1}\geq\ldots\geq\lambda_{m+n}; \epsilon=0, 1\}$. (See \cite{z}.)
For a (finitely-generated) ideal $\Gamma\subseteq
\Lambda$, one can define a pseudocompact Schur superalgebra $S_{\Gamma}$ analogously as before.

We conclude by proposing the following topics for further investigations.

1) Describe Schur superalgebras $S_{\Gamma}$ for all $\Gamma$.

2) Describe tilting objects in $(GL(m|n)-smod)[\Gamma]$, and determine if they are finite-dimensional.

3) Describe the Ringel dual of $S_{\Gamma}$.

4) Let $OSp(m|2n)$ be an ortho-symplectic supergroup. Determine whether the category of rational $OSp(m|2n)$-supermodules is a highest weight category.

\end{document}